\newenvironment{NB}{
\color{red}{\bf NB}. \footnotesize
}{}
\newenvironment{NB2}{
\color{blue}{\bf NB}. \footnotesize
}{}
\newenvironment{NB3}{
\color{green}{\bf NB}. \footnotesize
}{}
\newenvironment{NB4}{
\color{red}x{\bf NB}. \footnotesize
}{}
\newtheorem{thm}{Theorem}[section]
\newtheorem{defn}[thm]{Definition}
\newtheorem{ex}[thm]{Example}
\newtheorem{prop}[thm]{Proposition}
\newtheorem{cor}[thm]{Corollary}
\newtheorem{lem}[thm]{Lemma}
\newtheorem{rem}[thm]{Remark}
\newcommand{\mf}[1]{{\mathfrak{#1}}}
\newcommand{\mr}[1]{{\mathrm{#1}}}
\newcommand{\mb}[1]{{\mathbf{#1}}}
\newcommand{\bb}[1]{{\mathbb{#1}}}
\newcommand{\mca}[1]{{\mathcal{#1}}}
\newcommand{\Hom}{\mr{Hom}}
\newcommand{\Ext}{\mr{Ext}}
\newcommand{\homd}{\mr{hom}}
\newcommand{\ext}{\mr{ext}}
\newcommand{\End}{\mr{End}}
\newcommand{\ghom}{\mca{H}om}
\newcommand{\gend}{\mca{E}nd}
\newcommand{\im}{\mr{im}}
\newcommand{\coker}{\mr{coker}}
\newcommand{\dimv}{\underline{\dim}\,}
\newcommand{\rk}{r}
\newcommand{\Z}{\bb{Z}}
\newcommand{\C}{\bb{C}}
\newcommand{\CP}{\bb{P}}
\newcommand{\OO}{\mca{O}}
\newcommand{\R}{\bb{R}}
\newcommand{\LL}{\bb{L}}
\newcommand{\Q}{\tilde{Q}}
\newcommand{\A}{\tilde{A}}
\newcommand{\V}{\tilde{V}}
\newcommand{\F}{\tilde{F}}
\newcommand{\tz}{\tilde{\zeta}}
\newcommand{\vv}{\mb{v}}
\newcommand{\q}{\mb{q}}
\newcommand{\perv}{\mr{Per}(Y/X)}
\newcommand{\pervc}{\mr{Per}_c(Y/X)}
\newcommand{\tperv}{\overline{\mr{Per}}(Y/X)}
\newcommand{\tpervc}{\overline{\mr{Per}}_c(Y/X)}
\newcommand{\fperv}{\mr{Per}(Y^+/X)}
\newcommand{\ftpervc}{\overline{{}^0\mr{Per}}_c(Y^+/X)}
\newcommand{\coh}{\mr{Coh}(Y)}
\newcommand{\cohc}{\mr{Coh}_c(Y)}
\newcommand{\fcoh}{\mr{Coh}(Y^+)}
\newcommand{\fcohc}{\mr{Coh}_c(Y^+)}
\newcommand{\amod}{A\text{-}\mr{Mod}}
\newcommand{\afmod}{A\text{-}\mr{mod}}
\newcommand{\hamod}{\tilde{A}\text{-}\mr{Mod}}
\newcommand{\hafmod}{\tilde{A}\text{-}\mr{mod}}
\newcommand{\mA}{\mca{A}}
\newcommand{\mamod}{\mr{Coh}(\mA)}
\newcommand{\macmod}{\mr{Coh}_c(\mA)}
\newcommand{\hmamod}{\mr{Coh}(\tilde{\mA})}
\newcommand{\hmacmod}{\mr{Coh}_c(\tilde{\mA})}
\newcommand{\M}[2]{\mf{M}_{#1}(#2)}
\newcommand{\Ms}[2]{\mf{M}^{\mathrm{s}}_{#1}(#2)}
\newcommand{\HN}{the Harder-Narasimhan filtration }
\newcommand{\JH}{a Jordan-H\"older filtration }
\newcommand{\pc}{perverse coherent }
\newcommand{\shfO}{\mathcal O}
\renewcommand{\labelenumi}{\textup{(\theenumi)}}%
\numberwithin{equation}{section}
\title{Counting invariant of perverse coherent sheaves and its wall-crossing}
\author{Kentaro Nagao and Hiraku Nakajima \\
Department of Mathematics, Kyoto University\\
  Kyoto 606-8502, Japan
}
\begin{document}

\maketitle

\begin{abstract}
  We introduce moduli spaces of stable perverse coherent systems on
  small crepant resolutions of Calabi-Yau $3$-folds and consider their
  Donaldson-Thomas type counting invariants. The stability depends on
  the choice of a component (= a chamber) in the complement of
  finitely many lines (= walls) in the plane. We determine all walls
  and compute generating functions of the invariants for all choices of
  chambers when the Calabi-Yau is the resolved conifold. For suitable
  choices of chambers, our invariants are specialized to
  Donaldson-Thomas, Pandharipande-Thomas and Szendroi invariants.
\end{abstract}

\begin{NB2}
If you write something in 
\verb+\begin{NB}+ and
\verb+\end{NB}+, it appears in the red. But if you define
\verb+\excludeversion{NB}+ for a final draft, it will disappear.

It will be no problem if we use this after a paragraph, but
if you use this in a sentence, put \verb+%+ after
\verb+\end{NB}+ like \verb+\end{NB}%+
to avoid a wrong spacing.
\end{NB2}

\section*{Introduction}

In this paper we study {\it variants\/} of Donaldson-Thomas (DT in
short) invariants \cite{thomas-dt} for the crepant resolution $f\colon
Y \to X = \{ xy - zw = 0\}$ of the conifold where $Y$ is the total
space of the vector bundle $\OO_{\CP^1}(-1)\oplus\OO_{\CP^1}(-1)$. The
ordinary DT invariants are defined by the virtually counting 
\begin{NB} countings? \end{NB}%
of moduli
spaces of ideal sheaves of curves. 
A variant has been introduced by Pandharipande-Thomas (PT in short)
recently \cite{pt1}. 
PT invariants are defined by the virtual counting of
moduli spaces of stable coherent systems, i.e., pairs of
$1$-dimensional sheaves $F$ and homomorphisms $s\colon \shfO_Y \to F$.
Both DT and PT invariants are defined for arbitrary Calabi-Yau
$3$-folds.
For the resolved conifold $Y$, yet another variant was introduced by
Szendroi \cite{szendroi-ncdt} (see also \cite{young-conifold}). 
His invariants are defined by the virtual counting of moduli spaces of
representations of a certain noncommutative algebra. This
noncommutative algebra has its origin in the celebrated works of
Bridgeland~\cite{bridgeland-flop} and
Van~den~Bergh~\cite{vandenbergh-3d}. In particular, we can interpret the invariants as virtual counting of moduli spaces of {\it perverse
  ideal sheaves\/}, originally introduced in order to describe the
flop $f^+\colon Y^+\to X$ of $Y$ as a moduli space.
Those three classes of invariants have been computed for the resolved
conifold and their generating functions are given by infinite products.  

In this paper we introduce more variants by using moduli spaces of
stable {\it perverse coherent systems\/}, i.e., pairs of
$1$-dimensional perverse coherent sheaves $F$ and homomorphisms
$s\colon \shfO_Y \to F$. The stability condition is determined by a
choice of parameter $\zeta = (\zeta_0,\zeta_1)$ in the complement of
finitely many lines in $\R^2$. (The number of lines increases when the Hilbert
polynomial of $F$ becomes large.) The invariants depend only on the
{\it chamber\/} containing $\zeta$. When we choose the chamber
appropriately, our new invariants recover DT, PT invariants for $Y$
and the flop $Y^+$, as well as Szendroi's invariants. See
Figure~\ref{fig:zeta}.
Our main results are
\begin{enumerate}
  \renewcommand{\theenumi}{\alph{enumi}}%
  \renewcommand{\labelenumi}{(\theenumi)}%
\item the determination of all walls, and
\item the computation of invariants for any choice of a
chamber. 
\end{enumerate}
The generating function of invariants is given by an infinite
product, dropping certain factors from the generating function of
Szendroi's invariants. The stability parameter determines which
factors we should drop in a very simple way (see
Theorems~\ref{wallcrossing}, \ref{thm-main}).
Finally in the chamber $\zeta_0,\zeta_1>0$ the generating function is simply $1$. 

\begin{figure}[htbp]
\def\JPicScale{.8}
  \centering
  \includegraphics{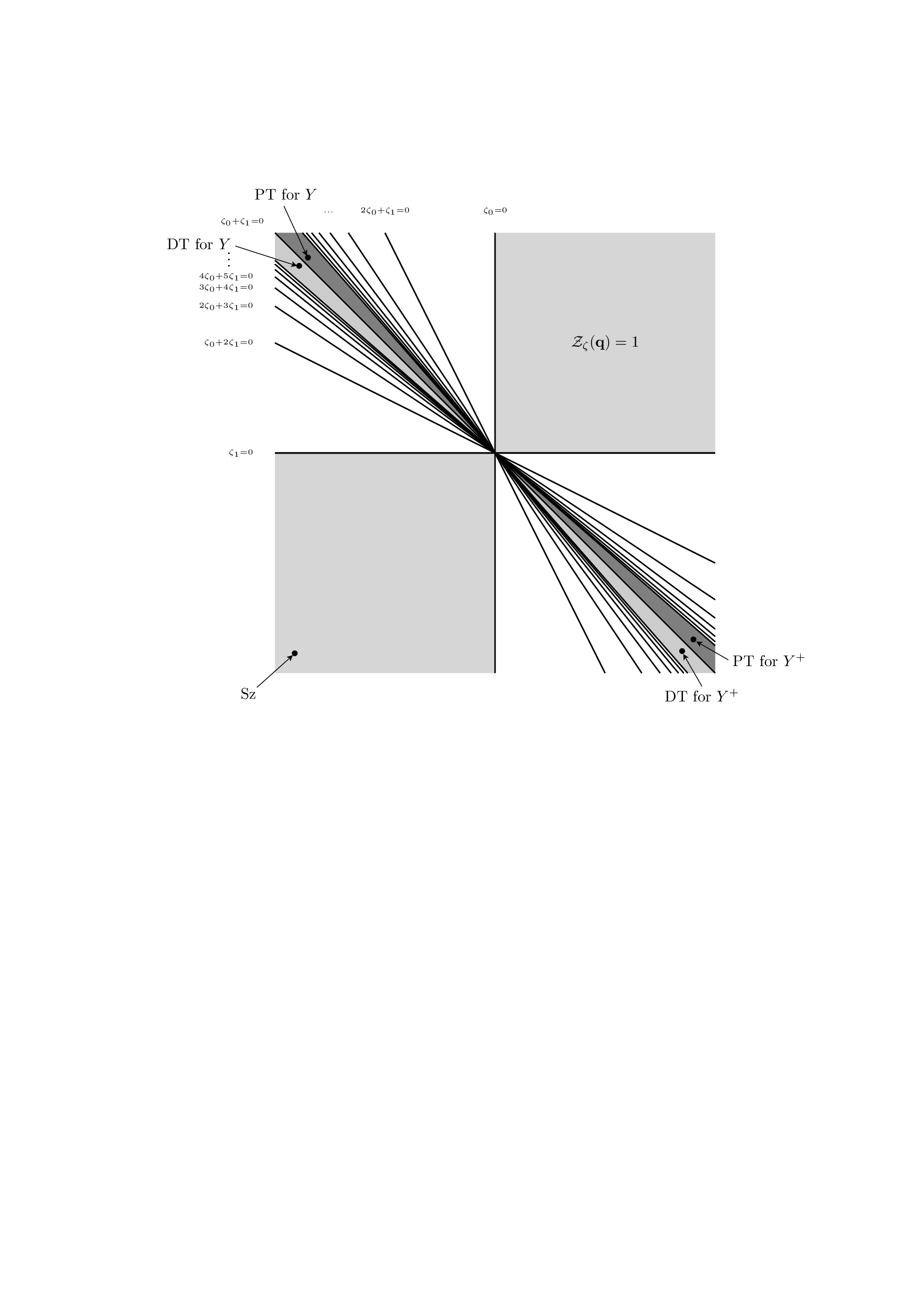}
  \caption{chamber structure}
\label{fig:zeta}
\end{figure}

Besides Szendroi's work \cite{szendroi-ncdt} our definition is
motivated also by the second named author's work with K\={o}ta
Yoshioka \cite{ny-perv1,ny-perv2}, where very similar moduli spaces
and chamber structures have been studied for the case when $Y\to X$ is
the blow-up of a nonsingular complex surface. A similarity is natural
as $Y$ is locally the total space of $\shfO_{\CP^1}(-1)$, instead of
$\shfO_{\CP^1}(-1)\oplus \shfO_{\CP^1}(-1)$. In \cite{ny-perv2} the
virtual Betti numbers of moduli spaces are computed. When the rank of
sheaves is $1$, the generating function is again an infinite product,
dropping factors from the generating function of Betti numbers of
Hilbert schemes of points on the blow-up $Y$, given by the famous
G\"ottsche formula (\cite{gottsche}).

We compute invariants by proving the wall-crossing formula which
describes how the generating function of invariants changes when the
stability parameter crosses a wall except for the wall $\{(\zeta_0,\zeta_1)\mid \zeta_0+\zeta_1=0\}$ (Theorem~\ref{wallcrossing}).
%
In the companion paper \cite{3tcy} the first named author will
generalize the main result to more general small crepant resolutions
of toric Calabi-Yau 3-folds. 
The wall-crossing formula is an example of Joyce's wall-crossing
formulas (\cite{joyce-4}), and ones in more recent work by Kontsevich
and Soibelman (\cite{ks}) (see \cite[\S ~4]{3tcy}). 
Note that there is no substantial difference between virtual counting and actual Euler numbers in our setting (Theorem \ref{thm-sign}), so our computation can be examples of both
works.
In this paper we give an alternative elementary proof independent
of \cite{joyce-4,ks}.
The wall $\{(\zeta_0,\zeta_1)\mid \zeta_0+\zeta_1=0\}$, which we do not deal with, corresponds to the DT-PT conjecture (\cite{pt1}). 
See Remark \ref{rem-final} for this wall.

The paper is organized as follows: In \S 1 we introduce perverse
coherent systems and their stability. In \S 2 we show that our
moduli spaces parameterize ideal sheaves or stable coherent systems in
suitable choices of the stability parameters. This will be used to
establish that our invariants are specialized to DT and PT invariants.
In \S 3 we prove our main results.

After we posted a previous version of this paper to the arXiv, two physics papers \cite{jafferis-moore} and \cite{chuang-jafferis} appeared on the arXiv.
In \cite{jafferis-moore}, Jafferis and Moore provide a wall-crossing formula, which looks quite similar to ours.
In \cite{chuang-jafferis}, Chuang and Jafferis 
associate a new quiver ($\tilde{A}_m^\pm$ in \S \ref{subsec-potential}) to each chamber and conjecture that the moduli spaces are isomorphic to the corresponding moduli spaces of $\zeta$-stable perverse coherent systems. 
The quiver $\tilde{A}_m^-$ is obtained from our quiver ($\tilde{A}$ in Figure \ref{newquiver}) by successive mutations in the sense of \cite{quiver-with-potentials}.
We devote \S 4 to a proof of their conjecture and to a proof of Theorem \ref{thm-sign}.

\subsection*{Acknowledgements}
The first named author (KN) is supported by JSPS Fellowships for Young Scientists (No.\ 19-2672). 
He is grateful to Y.~Kimura for teaching him some references on Calabi-Yau algebras.
He is also grateful to E.~Macri for useful comments.

The second named author (HN) is supported by the Grant-in-aid for
Scientific Research (No.\ 19340006), JSPS. This work was started while
he was visiting the Institute for Advanced Study with supports by the
Ministry of Education, Japan and the Friends of the Institute.
He is grateful to K.~Yoshioka for discussions on the result of this
paper, in particular, the explanation how to construct moduli spaces
for a projective $Y$ in \S \ref{moduli}.
He is also grateful to Y.~Soibelman for sending us a preliminary version
of \cite{ks}.

While the authors were preparing this paper, they are informed that
Bridgeland and Szendroi also reproduce the formula of Szendroi's
invariants \cite{young-conifold} via the wall-crossing formula in the
spirit of \cite{joyce-4,BTL,ks}. They thank T.~Bridgeland for discussion.

They thank W. Chuang for answering questions on the paper \cite{chuang-jafferis}. They also thank the referee for the valuable comments.

\section{Perverse coherent systems}
\subsection{Category of perverse coherent systems}\label{sec1.1}
Let $f\colon Y\to X$ be a projective morphism between quasi-projective varieties over $\C$ such that the fibers of $f$ have dimensions less than $2$ and such that $\R f_*\OO_Y = \OO_X$.
\begin{NB}
  Corrected. June 14 

  But I do not check the `birationality' is unnecessary or not. June 17
\end{NB}

A {\it perverse coherent sheaf\/}%
\begin{NB}
  I put words in italic. June 14
\end{NB}
(\cite{bridgeland-flop}, \cite{vandenbergh-3d}) is an object $E$ of $D^b(\coh)$ satisfying
\begin{itemize}
\item $H^i(E)=0$ unless $i=0,-1$,
\item $\R^1f_*(H^0(E))=0$ and $\R^0f_*(H^{-1}(E))=0$,
\item $\Hom(H^{0}(E),C)=0$ for any sheaf $C$ on $Y$ satisfying $\R f_*(C)=0$.
\end{itemize}
\begin{NB}
  I change the definition. This is ${}^{-1}\perv$ in Van den Bergh's notation. Bridgeland's t-structure is ${}^{0}\perv$ in his notation.  I use this since we have
\[
{}^{-1}\perv\simeq \mamod \simeq {}^0\fperv,
\]
and so this is suitable to describe the relation with coherent systems on $Y^+$. 6/18
\end{NB}%
Let $\perv\subset D^b(\coh)$ be the full subcategory consisting 
\begin{NB} corrected 7/5 \end{NB}%
of all perverse coherent sheaves. 
This is the core of a t-structure of $D^b(\coh)$. In particular, $\perv$ is an abelian category. 

\begin{rem}
The abelian category $\perv$ we define here is ${}^{-1}\perv$ in the notations of \cite{bridgeland-flop} and \cite{vandenbergh-3d}.
We will also use ${}^{0}\fperv$ to study the flop $Y^+\to X$ later (\S \ref{flop}). \begin{NB} added. 6/18 \end{NB}%
\end{rem}

\begin{NB}
The following definition is modified after the Morita
equivalence. July 30, HN.

Note that $\R f_*(E)$ is a coherent sheaf on $X$ for $E\in\perv$.
Let $D^b_c(\coh)$ denote the subcategory of $D^b(\coh)$ consisting of objects
$E$ such that $\R f_*(E)$ have $0$-dimensional supports
\begin{NB2}
  Changed, original was `compact supports'
\end{NB2}%
and $\mathrm{Per}_c(Y/X):=D^b_c(\coh)\cap \perv$. Here we suppose that
the $0$ object has a $0$-dimensional support as a convention.

\begin{NB2}
  When $X$ is not necessarily affine, this condition is probably better.
\end{NB2}

\end{NB}

\begin{defn}
A {\it perverse coherent system} on $Y$ is a pair $(F,W,s)$ of a \pc sheaf $F$, a vector space $W$ and a homomorphism
\begin{NB}
Corrected. June 14
\end{NB}%
$s\colon W\otimes_{\C}\OO_Y\to F$. 
We call $W$ the {\it framing} of a perverse coherent system.
\begin{NB4}
original: a finite dimensional vector space $W$
\end{NB4}

A morphism between \pc systems $(F,W,s)$ and $(F',W',s')$ is a pair of morphisms $F\to F'$ and $W\to W'$ which are compatible with $s$ and $s'$. 
\end{defn}
We sometimes denote a \pc system $(F,W,s)$ with $W=\C$ by $(F,s)$, and a \pc system $(F,W,s)$ with $W=0$ by $F$ for brevity.
\begin{NB}
  Corrected, June 17.
\end{NB}

Let $\tperv$ denote the category of \pc systems on $Y$.
It is an abelian category.

\begin{NB}
Original, kept on July 30, HN:

Let $\tperv$ denote the category of \pc systems on $Y$ and 
$\tpervc$ denote the subcategory of \pc systems $(F,W,s)$ with $F\in\pervc$.
They are abelian categories.
\end{NB}

\subsection{NCCR with framing}\label{framednccr}
\begin{NB}
  I have rewrote every definitions and statements so that they can be applied for the global situation (6/18) . 
\end{NB}%
\begin{NB}
I further rewrite the definition to begin with the case when $X$ is
quasi-projective. July 30, HN.
\end{NB}

When $X = \mr{Spec}(R)$ is affine, $\mca{P}\in \perv$ is called {\it a
  projective generator\/} if it is a projective object in $\perv$ and
$\Hom_{\perv}(\mca{P},E) = 0$ implies $E = 0$ for $E\in\perv$. For a
general $X$, $\mca{P}\in\perv$ is called {\it a local projective
  generator\/} if there exists an open covering $X = \bigcup U_i$ such
that $\mca{P}|_{U_i}$ is a projective generator of
$\mr{Per}(f^{-1}(U_i)/U_i)$ for all $i$.

A local projective generator $\mca{P}$ exists and can be taken as a vector
bundle (see \cite[Proposition 3.3.2]{vandenbergh-3d}).
When $X=\mr{Spec}(R)$ is affine, $\mca{P}$ is constructed as follows:
Let $\mca{L}$ be an ample line bundle on $Y$ generated by its global sections. 
Let $\mca{P}_0$ be a vector bundle given by an extension 
\begin{equation}\label{eq:ext}
  0\to \OO_Y^{\oplus r-1}\to\mca{P}_0\to \mca{L}\to 0 
\end{equation}
associated to a set of generators of $H^1(Y,\mca{L}^{-1})$ as an
$R$-module. Then $\mca{P} := \mca{P}_0\oplus \shfO_Y$ is a projective
generator \cite[Proposition 3.2.5]{vandenbergh-3d}. The general case can
be reduced to the affine case. We consider only a local projective generator
of a form $\mca{P} = \mca{P}_0\oplus \shfO_Y$ hereafter.

\begin{NB}
Original : kept on July 30, HN   

Assume that $X=\mr{Spec}(R)$ is affine. 
Let $\mca{L}$ be an ample line bundle on $Y$ generated by its global sections. 
Let $\mca{P}$ be a vector bundle given by an extension 
\[
0\to \OO_Y^{\oplus (r-1)}\to\mca{P}\to \mca{L}\to 0 
\] 
associated to a set of generators of $H^1(Y,\mca{L}^{-1})$ as an $R$-module. 
In the case $X$ is not affine, we can construct a vector bundle $\mca{P}$ as well (see \cite[Proposition 3.3.2]{vandenbergh-3d}).

\begin{prop}[\protect{\cite[Proposition 3.2.5 and Proposition 3.3.2]{vandenbergh-3d}}]
\begin{NB2}
    Corrected to a standard form.
\end{NB2}%
The vector bundle $\OO_Y\oplus \mca{P}$ is
\begin{NB2}
  corrected.
\end{NB2}%
a local projective generator in $\perv$. 
\end{prop}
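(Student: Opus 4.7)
The plan is to establish the two defining properties of a local projective generator---projectivity in $\perv$ and the detection of the zero object via $\Hom_\perv(\OO_Y\oplus\mca{P},-)$---after first reducing to an affine base. Since being a local projective generator is by definition a local property on $X$, and the construction of $\mca{P}$ from an $R$-module generating set of $H^1(Y,\mca{L}^{-1})$ is intrinsic to each affine patch $\mr{Spec}(R)\subset X$, it suffices to treat the case $X=\mr{Spec}(R)$ and then glue across an affine cover.

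For projectivity in the affine case, I would verify $\mr{Ext}^1_\perv(\OO_Y\oplus\mca{P},E)=0$ for every $E\in\perv$. Since $\perv$ is the core of a t-structure, the relevant $\mr{Ext}^1$ groups agree with those computed in $D^b(\coh)$, and the splitting of the generator reduces the check to each summand. For $\OO_Y$ the Ext computation unpacks through a local-to-global spectral sequence whose relevant entries $R^i f_* H^j(E)$ are killed exactly by the two vanishings $R^1 f_* H^0(E)=0$ and $R^0 f_* H^{-1}(E)=0$ built into the definition of $\perv$. For $\mca{P}$ I would apply $\mr{RHom}(-,E)$ to the extension \eqref{eq:ext} and chase the resulting long exact sequence; the key input is that the extension class realizes a set of $R$-module generators of $H^1(Y,\mca{L}^{-1})$, which is exactly the data needed to annihilate the remaining obstruction living in $\mr{Ext}^1(\mca{L},E)$.

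For the generating property I would show that $\Hom_\perv(\OO_Y\oplus\mca{P},E)=0$ forces both cohomology sheaves of $E$ to vanish. Since $X$ is affine and $f$ is projective, the vanishing of $\Hom(\OO_Y,E)=H^0(Y,H^0(E))$ forces the coherent sheaf $f_*H^0(E)$ to be zero; combining this with the third axiom $\Hom(H^0(E),C)=0$ for $C$ with $Rf_*(C)=0$ then forces $H^0(E)=0$. The vanishing of $\Hom(\mca{P},E)$ together with the ampleness of $\mca{L}$---which after suitable twists recovers enough sections---pushes $f_*H^{-1}(E)$ to zero, and then the second perverse axiom combined with adjunction kills $H^{-1}(E)$ as well.

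The step I expect to be hardest is the technical heart of Van den Bergh's construction: confirming that generation of $H^1(Y,\mca{L}^{-1})$ as an $R$-module---rather than merely as a $\C$-vector space---is the correct condition to annihilate the $\mr{Ext}^1$ obstructions for \emph{arbitrary} $E\in\perv$, not just for coherent sheaves concentrated in degree zero. I would handle this by re-expressing the obstructions via Grothendieck duality and the adjunction between $f^*$ and $Rf_*$, so that the $R$-linear action on cohomology becomes visible; once this bookkeeping is in place, the remainder of the argument becomes essentially formal and reduces to the already cited statements in \cite[\S 3.2--3.3]{vandenbergh-3d}.
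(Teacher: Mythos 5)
The paper does not prove this statement; it is cited directly from Van den Bergh \cite[Propositions 3.2.5 and 3.3.2]{vandenbergh-3d}, with the only added comment being that the non-affine case is reduced to the affine case by working over an open cover. So there is no internal proof to compare against; what you have written is a reconstruction of the strategy behind the citation, which is reasonable in spirit but has a logical muddle in the generation step.

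Specifically, in your third paragraph you attribute the vanishing $f_*H^{-1}(E)=0$ to the combination of $\Hom(\mca{P},E)=0$ and ampleness of $\mca{L}$, and then say the second perverse axiom ``kills $H^{-1}(E)$''. This inverts the actual logic. The vanishing $R^0f_*H^{-1}(E)=0$ \emph{is} the second perverse axiom --- it holds for every $E\in\perv$ and requires no hypothesis on $\mca{P}$. Moreover, on an affine base $\Hom(\OO_Y,E)=0$ already gives $R^1f_*H^{-1}(E)=0$ (and $f_*H^0(E)=0$) via the hypercohomology spectral sequence, so $\R f_*H^{-1}(E)=0$ comes entirely from $\OO_Y$ and the axioms. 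The genuine content of the second summand $\mca{P}$ in the generating argument is different: once you know $H^0(E)=0$ and $\R f_*H^{-1}(E)=0$, the sheaf $H^{-1}(E)$ is concentrated on the exceptional fibres, and you must use $\Hom(\mca{P},E)=\Ext^1(\mca{P},H^{-1}(E))=0$, together with the extension defining $\mca{P}$ by generators of $H^1(Y,\mca{L}^{-1})$ as an $R$-module, to kill such a sheaf. As written, your sketch neither isolates this step nor explains why $\Ext^1(\mca{P},C)=0$ forces $C=0$ for $C$ with $\R f_*(C)=0$; this is exactly the lemma in Van den Bergh (his Lemma~3.2.3/3.2.4) that does the real work, and it is the point you would need to make precise rather than defer with a vague appeal to Grothendieck duality. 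The projectivity half of your sketch is fine --- $\Ext^1_{\perv}$ coincides with $\Hom_{D^b}(-,-[1])$ for a heart, and the spectral-sequence computation for $\OO_Y$ genuinely uses the two $\R^i f_*$ vanishings --- but the generating half as stated would not compile into a correct proof.
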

\end{NB}

\begin{NB}
I change $\mca{P}$, $r$ as in \cite{vandenbergh-3d}. 
It seems that this must be $r-1$ in the formula in 
\S\ref{cs-pcs}. July 30, HN.
\end{NB}

\begin{thm}[\protect{\cite[Corollary 3.2.8 and Theorem
    A]{vandenbergh-3d}}]
\label{cor:Morita}
\begin{NB}
    Corrected to a standard form.
\end{NB}%
We denote the $\OO_X$-algebra $f_*\gend_Y(\mca{P})$ by $\mA$. 
Then the functors $\R f_*\R\ghom_Y(\mca{P},-)$ 
\begin{NB} corrected 7/5 \end{NB}%
and $-{\otimes}^{\LL}_\mA(\mca{P})$ give equivalences between
$D^b(\coh)$ and $D^b(\mamod)$, which are inverse to each other.
Here $\mamod$ denotes the category of coherent right $\mA$-modules.

Moreover, these equivalences restrict to equivalences between $\perv$
and
$\mamod$.

\begin{NB4}
I chage the notation.
This is the notation used in Van den Bergh's papaer.
We should explain the definition of ``coherent $\A$-module'' (I cann't find the definition in Van den Bergh's papaer!). 
\end{NB4}
\begin{NB}
Since I postpone to give a definition of $\pervc$, I delete the
corresponding statement. July 31, HN.  
\end{NB}

\begin{NB} I am not sure whether we should politely explain the affine case (6/19).

In particular, in the case $X$ is affine, the functors $\R\Hom_Y(\OO_Y\oplus \mca{P},-)$ and $-{\otimes}^{\LL}_A(\OO_Y\oplus \mca{P})$ give equivalences between $D^b(\coh)$ \textup(resp.\ $\perv$, $D^b_c(\coh)$, $\mathrm{Per}_c(Y/X)$\textup) and $D^b(\amod)$ \textup(resp.\ $\amod$, $D^b_f(\amod)$. 
$\afmod$\textup), 
Here we put $A=\End_Y(\OO_Y\oplus \mca{P})$ and 
$D^b_f(A\text{-}\mr{mod})$ is the subcategory of
$D^b(A\text{-}\mr{mod})$ consisting of objects with finite dimensional
cohomologies
and $A\text{-}\mr{fmod}:=D^b_f(A\text{-}\mr{mod})\cap A\text{-}\mr{mod}$.
\end{NB}
\end{thm}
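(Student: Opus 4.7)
The plan is to invoke Rickard's tilting theorem, reducing the statement to verifying two properties of $\mca{P}$: (i) $\Ext^i_Y(\mca{P},\mca{P}) = 0$ for all $i > 0$, and (ii) $\mca{P}$ classically generates $D^b(\coh)$ as a triangulated category. Condition (i) is immediate from the hypothesis that $\mca{P}$ is a local projective generator of $\perv$: since $\perv$ is the heart of a $t$-structure on $D^b(\coh)$, projectivity in this heart gives $\Ext^i_Y(\mca{P},E) = 0$ for every $E \in \perv$ and every $i > 0$, in particular for $E = \mca{P}$. The same vanishing, combined with the local-to-global $\gext$ spectral sequence, forces $\R^{>0} f_* \gend_Y(\mca{P}) = 0$, so the algebra $\mA = f_* \gend_Y(\mca{P})$ agrees with $\End_{D^b(\coh)}(\mca{P})$ concentrated in degree zero --- precisely the endomorphism algebra demanded by the Rickard construction.

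Condition (ii) is the main obstacle. Generation is a local question on $X$, so one may assume $X = \mr{Spec}(R)$ is affine. The defining extension \eqref{eq:ext} places $\mca{P}_0$ in the thick triangulated subcategory generated by $\shfO_Y$ and $\mca{L}$, so it suffices to show that $\shfO_Y \oplus \mca{L}$ generates $D^b(\coh)$. Here relative ampleness of $\mca{L}$ furnishes finite-length resolutions of an arbitrary coherent sheaf by sums of negative twists of $\mca{L}$, while the hypotheses $\dim f^{-1}(x) \leq 1$ and $\R f_* \OO_Y = \OO_X$ allow one to bootstrap these resolutions back to $\mca{L}$ and $\shfO_Y$ themselves via Koszul-type arguments in the fibres of $f$. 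Once (i) and (ii) are in hand, the general tilting machinery produces the quasi-inverse equivalences $\R f_* \R\ghom_Y(\mca{P},-)$ and $- \otimes^{\LL}_{\mA} \mca{P}$ between $D^b(\coh)$ and $D^b(\mamod)$.

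It remains to check that these equivalences restrict to an equivalence of hearts $\perv \simeq \mamod$. For $E \in \perv$, the complex $\R f_* \R\ghom_Y(\mca{P},E)$ is concentrated in degree zero: working locally over $X$, this reduces to the vanishing of $\Ext^i_Y(\mca{P},E)$ for $i > 0$, which again holds by projectivity of $\mca{P}$ in $\perv$. Hence the equivalence sends $\perv$ into $\mamod$. The reverse inclusion follows by a standard $t$-structure argument: the equivalence transports the $\perv$-$t$-structure on $D^b(\coh)$ to a bounded $t$-structure on $D^b(\mamod)$ whose heart contains the generator $\mA$ of $\mamod$ and is contained in the standard heart $\mamod$, and two such hearts of bounded $t$-structures on the same triangulated category must coincide. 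This yields the stated equivalence of abelian categories, completing the proof plan.
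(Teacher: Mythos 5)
The paper does not actually give a proof of Theorem~\ref{cor:Morita} --- it is quoted verbatim from Van den Bergh \cite[Corollary 3.2.8 and Theorem A]{vandenbergh-3d}. What you have written is a sketch of the tilting (Rickard--Morita) argument that underlies Van den Bergh's proof, which is the right strategy in spirit, so your proposal is not \emph{wrong} in approach; but as a proof it has real gaps.

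The most serious one is in step (i). Projectivity of $\mca{P}$ in the heart $\perv$ gives only $\Ext^1_{\perv}(\mca{P},E)=\Hom_{D^b(\coh)}(\mca{P},E[1])=0$. It does \emph{not} by itself imply $\Hom_{D^b(\coh)}(\mca{P},E[i])=0$ for $i\ge 2$: for a general heart of a $t$-structure, Yoneda $\Ext^i$ in the heart and $\Hom_D(-,-[i])$ in the triangulated category agree only for $i\le 1$. In the present situation the higher vanishing is true, but for a different reason: since $\mca{P}$ is a vector bundle, $\perv$ is concentrated in cohomological degrees $-1,0$, and the fibres of $f$ have dimension $\le 1$ (so $\R^{\ge 2}f_*=0$), one checks directly that $\Hom_{D^b(\coh)}(\mca{P},E[i])=0$ for $i\ge 2$ and $E\in\perv$. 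You need both inputs; citing ``projectivity in the heart'' alone is a non sequitur, and this is exactly the kind of point Van den Bergh has to argue.

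Step (ii), the classical generation of $D^b(\coh)$ by $\mca{P}$, is also dispatched too lightly. The reduction from $\mca{P}$ to $\shfO_Y\oplus\mca{L}$ via the extension \eqref{eq:ext} is fine, but the ``Koszul-type bootstrap in the fibres'' is a genuine lemma in Van den Bergh's paper, not a formality, and the form of the argument there is different (roughly: a complex right-orthogonal to $\mca{P}$ has all perverse cohomology objects orthogonal to the projective generator of $\perv$, hence zero). Finally, your closing claim that two bounded hearts on the same triangulated category with one contained in the other must coincide is true but needs a sentence of justification; a cleaner route is that the transported $t$-structure has the same aisle as the standard one because both are characterized by vanishing of $\Hom(\mA,-[i])$ in the appropriate range. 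In short: correct skeleton, but several load-bearing steps are asserted rather than proved, and the justification offered for (i) is incorrect as stated.
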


\begin{defn}
  Let $\macmod$ \textup(resp.\ $D^b_c(\mamod)$\textup) denote the full
  subcategory of $\mamod$ \textup(resp.\ $D^b(\mamod)$\textup)
  consisting of objects $E$ which \textup(resp.\ whose cohomology
  groups\textup) have $0$-dimensional supports. Let $\pervc$ and 
  $D^b_c(\coh)$ be the corresponding categories under the equivalences
  in Corollary~\ref{cor:Morita}.
  Let $\tpervc$ denote the category of \pc systems $(F,W,s)$ such that $F\in\pervc$ and such that $W$ is finite dimensional.
\begin{NB4}
the definition of $\tpervc$ is added.
\end{NB4}
\end{defn}

\begin{NB3}
\begin{ex}\label{conifold}
Let $f\colon Y\to X$
\begin{NB}
  Changed `:' to \verb+\colon+
\end{NB}%
be the crepant resolution of the conifold, that is, $X=\{(x,y,z,w)\in \C^4\mid xy-zw=0\}$ and $Y$ is the total space of the vector bundle $\pi\colon \OO_{\CP^1}(-1)\oplus\OO_{\CP^1}(-1)\to\CP^1$.
Let us take $\mca{L}:=\pi^*\OO_{\CP^1}(1)$
\begin{NB}
  Corrected, June 16
\end{NB}%
as an ample line bundle.  Since $H^1(Y,\mca{L}^{-1})=0$, we have a
projective generator $\mca{P} := \OO_Y\oplus \mca{L}$. 
\begin{NB}
  corrected 7/5 
\end{NB}%

The endomorphism algebra $A$ is described as a quiver with
relations\textup:
\begin{NB}
  I changed the period to colon. June 16
\end{NB}%
Let $Q$ be the quiver in Figure \ref{quiver}.
Then we have
\[
A\simeq \C Q/(a_1b_ia_2=a_2b_ia_1,b_1a_ib_2=b_2a_ib_1)_{i=1,2}.
\begin{NB}
  Corrected, June 17.
\end{NB}
\]
Note that this relation is derived from the superpotential $\omega=a_1b_1a_2b_2-a_1b_2a_2b_1$ (\cite{berenstein-douglas} for example. See also \cite{quiver-with-potentials}). 
\begin{NB}
There is no explanation of (or the reference to) the `superpotential'.
\end{NB}
\end{ex}
\end{NB3}

Giving an object $V\in\mamod$ (resp.\ $\in\macmod$) is equivalent to giving the following data:
\begin{itemize}
\item a coherent (resp.\ finite length) $\OO_X$-module $V_0$,
\item an $\mA':=f_*\gend_Y(\mca{P}_0)$-module $V_1$ which is coherent (resp.\ finite length) as an $\OO_X$-bimodule, 
\begin{NB4}
coherent as an $\A$-bimodule ?  We should check.
\end{NB4}
\item a homomorphism $f_*\ghom_Y(\OO_Y,\mca{P}_0)\to \ghom_X(V_0,V_1)$
\begin{NB}
I changed $\Hom_C$ to $\Hom_\C$. June 17.
\end{NB}%
of $(\OO_X,\mA')$-bimodules, 
\item a homomorphisms $f_*\ghom_Y(\mca{P}_0,\OO_Y)\to \ghom_X(V_1,V_0)$ of $(\mA',\OO_X)$-bimodules. 
\end{itemize}


\begin{defn}
A framed $\mA$-module is a pair $(V,V_\infty,\iota)$ of an $\mA$-module $V$, a vector space $V_\infty$ and a linear map $\iota\colon V_\infty\to H^0(X,V_0)$.
\begin{NB4}
I omit ''finite dimensional'' as before.
\end{NB4}

A morphism between framed $\mA$-modules $(V,V_\infty,\iota)$ and $(V',V'_\infty,\iota')$ is a pair of an $\mA$-module homomorphism $V\to V'$ and a linear map $V_\infty\to V'_\infty$ which are compatible with $\iota$ and $\iota'$. 
\end{defn}

\begin{NB3}
\begin{ex}\label{ex-1.7}
Let $f\colon Y\to X$ be the crepant resolution of the conifold. 
We define the following new quiver with relations:
\[
\A\simeq\C \Q/(a_1b_ia_2=a_2b_ia_1,b_1a_ib_2=b_2a_ib_1)_{i=1,2},
\begin{NB}
  Corrected, June 17.
\end{NB}
\]
where $\Q$ is the quiver in Figure \ref{newquiver}.
The abelian category of framed $\mA$-modules $(V,V_\infty,\iota)$ (resp.\ framed $\mA$-modules $(V,V_\infty,\iota)$ with $\V_\infty\in \macmod$) is equivalent to the abelian category of finitely generated $\A$-modules (resp.\ finite dimensional $\A$-modules).
\end{ex}
\end{NB3}
We denote, with a slight abuse of notations, by $\hmamod$ the abelian category of framed $\mA$-modules and 
by $\hmacmod$ the subcategory of framed $\mA$-modules $(V,V_\infty,\iota)$ such that $V\in\macmod$ and such that $V_\infty$ is finite dimensional.
\begin{NB4}
``such that $V_\infty$ is finite dimensional'' added
\end{NB4}

\begin{prop}\label{prop-framed-morita}
The category $\tperv$ \textup(resp.\ $\tpervc$\textup) is equivalent
to the category $\hmamod$ \textup(resp.\ $\hmacmod$\textup). 
\end{prop}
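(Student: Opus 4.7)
The plan is to deduce the proposition directly from the Morita equivalence $\perv\simeq\mamod$ of Corollary \ref{cor:Morita} together with the adjunction that turns the framing map $s\colon W\otimes\OO_Y\to F$ into a linear map $\iota\colon W\to H^0(X,V_0)$. All of the categorical content for the unframed parts is already in Corollary \ref{cor:Morita}; only the framing needs to be handled by hand.

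First, for a \pc system $(F,W,s)$, I set $V:=\R f_*\R\ghom_Y(\mca{P},F)$; by Corollary \ref{cor:Morita} this lies in $\mamod$. The decomposition $\mca{P}=\mca{P}_0\oplus\OO_Y$ induces $V=V_0\oplus V_1$ with $V_0=f_*\ghom_Y(\OO_Y,F)=f_*F$ and $V_1=f_*\ghom_Y(\mca{P}_0,F)$, and the two bimodule maps $f_*\ghom_Y(\OO_Y,\mca{P}_0)\to\ghom_X(V_0,V_1)$, $f_*\ghom_Y(\mca{P}_0,\OO_Y)\to\ghom_X(V_1,V_0)$ appearing in the description of $\mamod$ before the proposition are simply the composition maps of morphisms into $F$ along sections of $\mca{P}_0\leftrightarrow\OO_Y$. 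Thus Morita alone already produces the unframed correspondence $F\leftrightarrow V$ and identifies $V_0$ with $f_*F$.

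Next, I convert the framing via the adjunction
\[
\Hom_Y(W\otimes\OO_Y,F)\;\simeq\;\Hom_\C(W,H^0(Y,F))\;=\;\Hom_\C(W,H^0(X,V_0)),
\]
which uses only $f^*\OO_X=\OO_Y$ and the projection formula. Setting $V_\infty:=W$ and defining $\iota$ as the image of $s$ gives the forward map. For the inverse, given $(V,V_\infty,\iota)$ one sets $F:=V\otimes^{\LL}_\mA\mca{P}$ from Corollary \ref{cor:Morita}, takes $W:=V_\infty$, and recovers $s$ by running the displayed iso in reverse. Naturality in $W$ and $F$ then guarantees that a morphism $(\phi,\psi)\colon(F,W,s)\to(F',W',s')$ of \pc systems --- i.e., the commutativity $\phi\circ s=s'\circ(\psi\otimes\mathrm{id}_{\OO_Y})$ --- translates to the compatibility between the framings $\iota,\iota'$ on the $\mA$-module side, establishing $\tperv\simeq\hmamod$.

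The compact version is automatic: $F\in\pervc$ iff $V\in\macmod$ by the very definition of $\pervc$, and finite dimensionality of the framing is imposed identically on both sides, so the equivalence restricts to $\tpervc\simeq\hmacmod$. The main step in this proof is really only book-keeping: one must pin down which idempotent summand of $V$ in the decomposition coming from $\mca{P}=\mca{P}_0\oplus\OO_Y$ plays the role of $V_0$ and verify that it agrees with $f_*F$. Once that identification is set, the rest reduces formally to Morita equivalence combined with the standard adjunction for $\OO_Y$.
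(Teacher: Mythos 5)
Your proof is correct and takes essentially the same approach as the paper's: both reduce the unframed part to the Morita equivalence of Theorem~\ref{cor:Morita} and then apply the adjunction $\Hom_Y(W\otimes_\C\OO_Y,F)\simeq\Hom_\C(W,H^0(X,V_0))$ to translate the framing map $s$ into the linear map $\iota$. The paper's proof is just a terser version of the same argument (invoking the $p^*\dashv p_*$ adjunction for the projection $p\colon Y\to\mathrm{pt}$ rather than the projection formula, but the content is identical).
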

\begin{proof}
First we put $W=V_\infty$.
Using the adjunction, we have
\begin{NB} changed ($\pi\to p$) 7/5 \end{NB}%
\begin{align*}
\Hom_Y(W\otimes_{\C}\OO_Y,F)&=\Hom_Y(p^*W,F)\\
&=\Hom_\C(V_\infty,p_*F)\\
&=\Hom_\C(V_\infty,H^0(X,V_0)),
\end{align*}
where $p$ is the projection from $Y$ to a point. The equivalences follow immediately.
\end{proof}

\subsection{Stability}

Let $\tz = (\zeta_0, \zeta_1, \zeta_\infty)$ be a triple of real
numbers. For a nonzero object $\F=(F,W,s)\in\tpervc$ we define
\begin{equation*}
   \theta_{\tz}(\F) :=
   \frac{\zeta_0\dim H^0(F) + \zeta_1\dim H^0(F\otimes\mca{P}_0^\vee)
   + \zeta_\infty\dim W}
   {\dim H^0(F) + \dim H^0(F\otimes\mca{P}_0^\vee) + \dim W}.
\end{equation*}

\begin{defn}\label{sfam}
A \pc system $\F\in\tpervc$ is
$\theta_{\tz}$-(semi)stable if we have
\begin{equation*}
   \theta_{\tz}(\F') \,(\le)\, \theta_{\tz}(\F)
\end{equation*}
for any nonzero proper subobject $0\neq \F'\subsetneq \F$ in $\tpervc$.
\end{defn}

Here we adapt the convention for the short-hand notation. The above
means two assertions: semistable if we have `$\le$', and stable if we
have `$<$'.

\begin{NB}
  This definition is slightly different from the stability of 
  framed modules. For the latter, we usually consider
  submodules $(F',W',s')$ with either $W'=0$ or $W' = W$. This
  corresponds to the stability condition for the `new quiver' in the
  sense of \cite[\S4]{ny-perv1}.
\end{NB}

\begin{rem}\label{rem:normalization}
\begin{NB}
added (7/5).
\end{NB}
\begin{NB}
    Added July 31, HN.
\end{NB}%
\begin{enumerate}
\item
As we shall see later, the space of the parameters $\tz$ has a
  chamber structure defined by integral hyperplanes so that the
  (semi)stability is unchanged if we stay in a chamber (see \S \ref{subsec-classification}).
  \begin{NB}
    Need a reference.
  \end{NB}%
%
\item
Given a real number $c$ let $\tz'$ be the triple of real numbers $(\zeta_0+c, \zeta_1+c, \zeta_\infty+c)$. 
Then we have
\[
\theta_{\tz'}(\F)=\theta_{{\tz}}(\F)+c.
\]
Hence $\theta_{\tz'}$-(semi)stability and $\theta_{\tz}$-(semi)stability are equivalent. 
In particular, given a $\theta_{\tz}$-(semi)stable \pc system $\F\in\tpervc$ we can normalize $\tz$ so that $\theta_{\tz}(\F)=0$. 
\item
This stability condition depends on the choice of $\tz$, {\it as
    well as\/} the choice of $\mca{P}_0$.
\item
Given a pair of real numbers $\zeta=(\zeta_0,\zeta_1)$, we define the $\theta_{\zeta}$-(semi)stability for a \pc sheaf by the same conditions. In other words, a \pc system $F\in\pervc$ is $\theta_{\zeta}$-(semi)stability if and only if the \pc system $(F,0,0)$ with trivial framing is $\theta_{\tz}$-(semi)stability for some (equivalently any) $\zeta_\infty$. 
\end{enumerate}
\end{rem}

\begin{thm}[\protect{\cite{rudakov}}]\begin{NB}added (6/19)\end{NB}%
\begin{NB} I changed the notations 7/5 \end{NB}%
Let a stability parameter $\tz\in\R^3$ be fixed.
\begin{enumerate}
\item A \pc system $\F\in\tpervc$ has a unique Harder-Narasimhan filtration:
\[
\F=\F^0\supset \F^1\supset \cdots \supset \F^L\supset \F^{L+1}=0
\]
such that $\F^l/\F^{l+1}$ is $\theta_{\tz}$-semistable for $l=0,1,\ldots,L$ and 
\[
\theta_{\tz}(\F^0/\F^{1})<\theta_{\tz}(\F^1/\F^2)<\cdots<\theta_{\tz}(\F^L/\F^{L+1}).
\]

\item A $\theta_{\tz}$-semistable \pc system $\F\in\tpervc$ has a Jordan-H\"older filtration:
\[
\F=\F^0\supset \F^1\supset \cdots \supset \F^L\supset \F^{L+1}=0
\]
such that $\F^l/\F^{l+1}$ is $\theta_{\tz}$-stable for $l=0,1,\ldots,L$ and 
\[
\theta_{\tz}(\F^0/\F^{1})=\theta_{\tz}(\F^1/\F^2)=\cdots=\theta_{\tz}(\F^L/\F^{L+1}).
\]
\end{enumerate}
\end{thm}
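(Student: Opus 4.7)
The plan is to invoke Rudakov's general framework for stability in abelian categories \cite{rudakov}: once we exhibit on $\tpervc$ a strictly positive, additive-on-exact-sequences invariant $d(-)$ together with a slope function $\theta_{\tz}$ satisfying the see-saw property, both the existence and uniqueness of Harder-Narasimhan filtrations and the existence of Jordan-H\"older filtrations for semistables follow formally, as soon as the category is of finite length. So the task reduces to verifying these hypotheses in our setting.

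First I would use Proposition~\ref{prop-framed-morita} to pass to the Morita-equivalent category $\hmacmod$ of framed $\mA$-modules $(V,V_\infty,\iota)$ with $V \in \macmod$ and $\dim V_\infty < \infty$; in this description every object has finite length. The three nonnegative integers
\[
d_0(\F) := \dim H^0(F), \qquad d_1(\F) := \dim H^0(F \otimes \mca{P}_0^\vee), \qquad d_\infty(\F) := \dim W
\]
are additive on short exact sequences in $\tpervc$, so their sum $d(\F)$ and $N(\F) := \zeta_0 d_0(\F) + \zeta_1 d_1(\F) + \zeta_\infty d_\infty(\F)$ are both additive, with $d(\F)>0$ on nonzero objects. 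Writing $\theta_{\tz}(\F) = N(\F)/d(\F)$ and applying the mediant inequality to any short exact sequence $0 \to \F' \to \F \to \F'' \to 0$ yields
\[
\min\{\theta_{\tz}(\F'),\theta_{\tz}(\F'')\} \le \theta_{\tz}(\F) \le \max\{\theta_{\tz}(\F'),\theta_{\tz}(\F'')\},
\]
with all three numbers equal iff $\theta_{\tz}(\F') = \theta_{\tz}(\F'')$. This is the see-saw property; it implies in particular the chain of equivalences $\theta_{\tz}(\F') \le \theta_{\tz}(\F) \Leftrightarrow \theta_{\tz}(\F) \le \theta_{\tz}(\F'') \Leftrightarrow \theta_{\tz}(\F') \le \theta_{\tz}(\F'')$, together with the analogues for strict inequalities.

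For (1), note that for any $\F \in \tpervc$ the invariant $d(-)$ is bounded on subobjects by $d(\F)$, so only finitely many slope values occur among subobjects of $\F$; pick $\F^L \subset \F$ realizing the maximum slope, and of maximum $d$-value among those. The see-saw property implies that $\F^L$ is uniquely characterized by these two properties, is $\theta_{\tz}$-semistable, and satisfies $\theta_{\tz}(\F^L) > \theta_{\tz}(\F/\F^L)$; iterating on $\F/\F^L$ the process terminates in at most $d(\F)$ steps, and stepwise uniqueness assembles into uniqueness of the whole filtration. For (2), set $c := \theta_{\tz}(\F)$ and let $\mca{S}_c \subset \tpervc$ be the full subcategory of $\theta_{\tz}$-semistable objects of slope $c$ (together with the zero object); the see-saw property shows that $\mca{S}_c$ is closed under subobjects, quotients and extensions, hence is an abelian subcategory whose simple objects are precisely the $\theta_{\tz}$-stable objects of slope $c$. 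Since $d$ still bounds chain lengths, $\F$ admits a composition series in $\mca{S}_c$, providing the required Jordan-H\"older filtration. The main step requiring care is the verification of the see-saw property, since the three invariants $d_0, d_1, d_\infty$ are defined rather asymmetrically on the \pc side (two come from cohomology of the sheaf and one from the framing); the uniform Morita-equivalent description makes them components of a single dimension vector and turns $\theta_{\tz}$ into a standard Rudakov-type slope, after which the remaining arguments are formal.
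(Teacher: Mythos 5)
Your proposal is correct, and it supplies precisely the content the paper delegates to the citation of \cite{rudakov}: the paper states the theorem without proof and simply references Rudakov's general abelian-category stability framework, and your argument verifies the hypotheses of that framework (additivity of the numerator $N$ and positive denominator $d$, the see-saw/mediant property, and boundedness of chain lengths via the dimension vector in the Morita-equivalent module category $\hmacmod$). Passing through Proposition~\ref{prop-framed-morita} to reduce to finite-dimensional framed $\mA$-modules is the right move to get the finite-length property cheaply; the rest of the argument (maximal destabilizing subobject for HN, the abelian subcategory $\mca{S}_c$ of semistables of fixed slope for JH) is the standard Rudakov machinery, correctly executed.
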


\subsection{Moduli spaces of perverse coherent systems}\label{moduli}
In this subsection, we assume that $\zeta_0$, $\zeta_1$ and $\zeta_\infty$ are rational numbers.
\begin{thm}\label{thm-construction-1}
There is a coarse moduli scheme parameterizing $S$-equivalence classes of $\theta_{\tilde{\zeta}}$-semistable objects in $\tpervc$.
\end{thm}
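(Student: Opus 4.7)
The plan is to reduce via the Morita equivalence of Proposition~\ref{prop-framed-morita} to constructing moduli of $\theta_{\tz}$-semistable framed $\mA$-modules $(V,V_\infty,\iota)$ with $V\in\macmod$ and $V_\infty$ finite-dimensional. First I would fix numerical invariants: because $V$ has $0$-dimensional support, $\dim_{\C} H^0(X,V_0)$ and $\dim_{\C} H^0(X,V_1)$ are finite, and together with $\dim V_\infty$ these three integers are preserved by the Jordan-H\"older factors of any semistable object. So it suffices to construct a moduli scheme for each choice of numerical type, and then take a disjoint union.

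For the affine case $X=\mr{Spec}(R)$, I would fix vector spaces $V_0$, $V_1$, $V_\infty$ of the prescribed dimensions and write down the affine scheme $Z$ parameterizing framed $\mA$-module structures on $V_0\oplus V_1$ with framing from $V_\infty$; concretely, $Z$ is cut out inside a product of $\Hom$-spaces by the relations defining $\mA=\End_{\mr{Per}}(\OO_Y\oplus\mca{P}_0)$, together with the compatibility of $\iota$. The reductive group $G=GL(V_0)\times GL(V_1)$ acts on $Z$ (with $V_\infty$ rigidified), and the rational triple $\tz$ defines, after clearing denominators, an integral character $\chi_{\tz}$ of $G$. I would then invoke King's GIT construction to form the quotient $Z/\!/_{\chi_{\tz}}G$. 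The heart of the argument is the standard translation between $\chi_{\tz}$-semistability of a point of $Z$ and $\theta_{\tz}$-semistability of the corresponding object of $\hmacmod$: destabilizing $G$-invariant subspaces match exactly destabilizing subobjects, because the numerator of $\theta_{\tz}$ is built from the same three dimensions that enter $\chi_{\tz}$. This yields a coarse moduli space whose closed points parameterize $S$-equivalence classes, as is standard for GIT quotients at the semistable locus.

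For general quasi-projective $X$ the affine construction has to be globalized. I would follow the strategy indicated in the acknowledgement to Yoshioka: adapt the framed-sheaf moduli construction of Huybrechts-Lehn and Yoshioka to the present setting, using the Morita equivalence to identify framed $\mA$-modules with \pc systems, and replacing the usual Quot-scheme input by a Quot-scheme for quotients of a large power of $\OO_Y\oplus\mca{P}_0$ regarded as an object of $\tperv$. A $G$-action by a product of linear groups and a linearization built from $\tz$ then replay the affine argument relatively over $X$. The principal obstacle is this globalization step: constructing a universal parameter space in which the set-theoretic support of $V$ is allowed to vary over $X$, showing it is of finite type and that semistable points form an open subscheme, and verifying that King's character-theoretic stability glues and still matches $\theta_{\tz}$-semistability fiberwise. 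Once the parameter space and $G$-action are in place, boundedness is automatic from the fixed numerical type together with $0$-dimensional support, and the remaining properties of the moduli space (separatedness, being a coarse moduli space, the $S$-equivalence description) follow from standard GIT arguments.
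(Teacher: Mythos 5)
Your affine-case reduction and your invocation of King's GIT construction match the alternative route the paper itself mentions for affine $X$, and the translation between character-semistability and $\theta_{\tz}$-semistability is indeed the standard one. But your treatment of general quasi-projective $X$ is where the argument breaks down, and you yourself flag globalization as ``the principal obstacle'' without resolving it. Directly gluing King-style quotients over an open cover of $X$, or building an ad hoc relative parameter space over $X$, runs into exactly the problems you list (finite type of the parameter space, openness of semistability, gluing of linearizations), and none of them is actually disposed of in the proposal.

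The paper's resolution is different and decisive on this point. It first proves a moduli theorem (Theorem~\ref{thm-construction-2}) for \emph{projective} $X$ by running a Simpson-type construction for the sheaf of algebras $\mA$: a Quot-scheme for quotients $V\otimes\mA(-m)\twoheadrightarrow\overline{F}$, combined with a Grassmannian for the framing, and a GIT quotient whose polarization is built from a second local projective generator $\mca{P}'$. To pass to quasi-projective $X$ it then chooses a projective compactification $\overline{X}\supset X$ and observes that because every object of $\tpervc$ has $0$-dimensional support, the moduli space over $X$ is an \emph{open subscheme} of the moduli space over $\overline{X}$. This compactification step is exactly the device that kills the globalization problem you are stuck on; your proposal never considers it.

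A second ingredient you miss is the precise mechanism encoding the rational weights. The paper normalizes so that $\theta_{\tz}(\F)=0$, clears denominators, picks a large $c$ and sets $\mca{P}=\shfO_Y^{\oplus(c+\zeta_0)}\oplus\mca{P}_0^{\oplus(c+\zeta_1)}$, $\mca{P}'=\shfO_Y\oplus\mca{P}_0$ and $\alpha=\zeta_\infty$; the \emph{asymmetric} multiplicities $c+\zeta_0$, $c+\zeta_1$ in the twist bundle $\mca{P}$ are what make the $(\mca{P},\mca{P}',\alpha)$-twisted Hilbert-polynomial inequality \eqref{eq-twisted-stability} reduce, after cancellation of $c$, to the $\theta_{\tz}$-semistability inequality of Definition~\ref{sfam}. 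In your sketch the Quot-scheme input is ``a large power of $\OO_Y\oplus\mca{P}_0$,'' i.e.\ a symmetric multiplicity, which does not encode a general $\zeta$; you would also need the two-generator $(\mca{P},\mca{P}')$-twisted stability (separate bundles for the Quot-scheme and for the GIT polarization) to carry out the matching, and this modification is not in the proposal.
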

The theorem is deduced from a more general construction (Theorem \ref{thm-construction-2}), which was explained to the second named author by K\={o}ta Yoshioka . 
We only give a sketch of the proof, as we are mainly interested in the
case when $X$ is affine, and hence we can alternatively use the
construction in \cite{king} (see \S \ref{defofncdt}), 
and Yoshioka wrote a paper containing the proof (\cite[Proposition 1.6.1]{yoshioka_perverse}).

For a while, we assume that $X$ is projective. 
Take an ample line bundle $\OO_X(1)$ over $X$.
We define the $\mca{P}$-twisted Hilbert polynomial of $F\in\perv$ by
(cf.\ \cite[\S4]{Ytwisted2})
\begin{equation*}
   \chi(\mca{P},F(n)) = \chi(\R f_*(\mca{P}^\vee\otimes F)(n)).
\end{equation*}
From Theorem~\ref{cor:Morita} this is nothing but the usual Hilbert
polynomial for the corresponding sheaf $\R f_*(\mca{P}^\vee\otimes
F)$. We expand this as
\begin{equation*}
  \chi(\mca{P},F(n)) = \sum_i a_i^{\mca{P}}(F) \binom{n+i}{i}.
\end{equation*}
We say $F$ is {\it $d$-dimensional\/}, if $a_d^{\mca{P}}(F) > 0$ and
$a_i^{\mca{P}}(F) = 0$, $i > d$. Then an $d$-dimensional object
$F\in\perv$ is {\it $\mca{P}$-twisted (semi)stable\/} if
\begin{equation*}
  \chi(\mca{P},F'(n)) (\le) \frac{a_d^{\mca{P}}(F')}{a_d^{\mca{P}}(F)}
  \chi(\mca{P},F(n)) \qquad \text{for $n\gg 0$}
\end{equation*}
holds for any proper subobject $0\neq F'\subsetneq F$ in $\perv$.
From the inequality, $F$ cannot contain a nonzero subobject $F'$ with
$a_d^{\mca{P}}(F') = 0$. This condition is referred as `$F$ is {\it of
  pure dimension $d$}' in the usual stability for coherent
sheaves. Under that condition the above is equivalent to
\[
  \chi(\mca{P},F'(n))/a_d^{\mca{P}}(F')
  (\nolinebreak\le\nolinebreak)\linebreak[2]
  \chi(\mca{P},F(n))/ {a_d^{\mca{P}}(F)}.
\]

We can construct the moduli space of $\mca{P}$-twisted semistable
sheaves by modifying the construction of the moduli space of usual stable
sheaves by Simpson \cite{S:1} (see \cite{HL}) as follows:
By Theorem~\ref{cor:Morita} we may construct it as a moduli space of
semistable $\mA$-modules $\overline{F} = \R f_*(\mca{P}^\vee\otimes
F)$, where the stability condition is defined as usual.
Now $\mA$ is an example of a sheaf of rings of differential operators
on $X$ in the sense of \cite[\S2]{S:1}, hence the moduli space can be
constructed. (See also \cite{Ytwisted2}.)
For a later purpose, we review the argument briefly.
The moduli space is a GIT quotient of the scheme $Q$ parameterizing
all quotients $\left[V\otimes_\C\mA(-m) \twoheadrightarrow
  \overline{F}\right]$ of $\mA$-modules by $\operatorname{SL}(V)$ for
the vector space $V = \Hom_\mA(\mA,\overline{F}(m))
\begin{NB}
= \Hom_{\perv} (\mca{P}, F(m))  
\end{NB}
$ for a fixed sufficiently large $m$.
\begin{NB}
  In $\perv$, $Q$ is considered as the scheme parameterizing quotients
\(
   V\otimes_\C \mca{P}(-m)
   \twoheadrightarrow F.
\)
\end{NB}%
The scheme $Q$ is a closed subscheme of the usual quot-scheme
parameterizing quotients in $\mr{Coh}(X)$.
\begin{NB}
We have a natural homomorphism
\(
   \shfO_X \to \mA
\)
induced from
\(
   \shfO_Y \to \mca{P}^\vee\otimes_{\shfO_Y}\mca{P}.
\)
Thus we have a homomorphism
\(
   \Hom_\mA(\mA,\overline{F}(m))
\to
   \Hom_{\shfO_X}(\shfO_X,\overline{F}(m)),
\)
which is clearly injective. The condition is closed.
\end{NB}%
The polarization of $Q$ comes from the embedding into the Grassmann
variety of quotients
\(
  \left[H^0(V\otimes_\C \mA(l-m)) \twoheadrightarrow 
     H^0(\overline{F}(l))\right]
\)
for sufficiently large $l$.
In $\perv$, the Grassmann variety parameterizes quotients
\(
  \left[V\otimes_\C H^0(\mca{P}^\vee\otimes \mca{P}(l-m))
    \twoheadrightarrow 
     H^0(\mca{P}^\vee\otimes {F}(l))\right]
\)
under the equivalence $\overline{F} = \R f_*(\mca{P}^\vee\otimes F)$.

We next generalize the stability condition slightly. Suppose
$\mca{P}$, $\mca{P}'$ are local projective generators. We say
$F\in\perv$ is {\it $(\mca{P},\mca{P}')$-twisted (semi)stable\/} if
\begin{equation*}
  \chi(\mca{P},F'(n)) (\le) 
  \frac{\chi(\mca{P}',F'(m))}{\chi(\mca{P}',F(m))}
  \chi(\mca{P},F(n)) \qquad \text{for $m\gg n\gg 0$}
\end{equation*}
holds for any proper subobject $0\neq F'\subsetneq F$ in $\perv$.
If $\mca{P} = \mca{P}'$, $(\mca{P},\mca{P}')$-twisted (semi)stability
is equivalent to the above $\mca{P}$-twisted stability.
\begin{NB}
  This is natural from the construction of moduli spaces: first take
  $m$ large to consider the quot-scheme, and then take $l\gg m$ to
  define a polarization. But it also follows from the direct argument:
  Suppose $F$ is $(\mca{P},\mca{P})$-twisted semistable. Considering
  the leading coefficients with respect to $m$, we find
  \begin{equation*}
      \chi(\mca{P},F'(n)) \le
  \frac{a_d^{\mca{P}}(F')}{a_d^{\mca{P}}(F)}
  \chi(\mca{P},F(n))
  \end{equation*}
  for large $n$. Therefore $F$ is $\mca{P}$-twisted semistable.
  Moreover the equality holds here if and only if the equality holds
  in the inequality in the definition of the
  $(\mca{P},\mca{P})$-twisted semistability. Therefore the converse
  and the equivalence on twisted stabilities follow.
\end{NB}%
The moduli space of $(\mca{P},\mca{P}')$-twisted semistable sheaves
can be constructed as above. In fact, the scheme $Q$ for which we take a GIT
quotient is the same as above, but we use the different polarization
from the embedding into the Grassmann variety using $\mca{P}'$ instead
of $\mca{P}$, i.e., quotients
\(
  \left[V\otimes_\C H^0(\mca{P}^{\prime\vee}\otimes \mca{P}(l-m))
    \twoheadrightarrow 
     H^0(\mca{P}^{\prime\vee}\otimes {F}(l))\right]
\)
for sufficiently large $l$.
This modification is very similar to (in fact, simpler than) the stability
condition considered in \cite[Sect.~2]{ny-perv2}.

\begin{NB}
Let us show that the set of type $\lambda$ sheaves with respect to the
$(\mca{P},\mca{P'})$-twisted semistability with fixed 
$\mca{P}$ and $\mca{P}'$-twisted Hilbert polynomials is bounded.

We have (\cite[(4.5)]{Ytwisted2})
\begin{equation*}
   a_d^{\mca{P}}(F) = \operatorname{rank}\mca{P} \cdot
   a_d^{\mca{O}_Y}(F),
\qquad
   a_{d-1}^{\mca{P}}(F) = \operatorname{rank}\mca{P} \cdot
   (a_{d-1}^{\mca{O}_Y}(F) - a_d^{\mca{O}_Y}(F)) + a_{d-1}^{\mca{O}_Y}(F|_D),
\end{equation*}
where $D$ is an effective divisor taken as in \cite[one line above of
(4.5)]{Ytwisted2}.
Therefore
\(
   \frac{a_d^{\mca{P}'}(F)}{a_d^{\mca{P}}(F)}
\)
is the constant $\operatorname{rank}P'/\operatorname{rank}P$
independent of $F$ and
\begin{equation*}
   \frac{a_{d-1}^{\mca{P}'}(F)}{a_d^{\mca{P}}(F)}
   = \frac{\operatorname{rank}P'}{\operatorname{rank}P}
   \left(
   \frac{a_{d-1}^{\mca{O}_Y}(F)}{a_d^{\mca{O}_Y}(F)} - 1
   \right)
   + \frac{a_{d-1}^{\mca{O}_Y}(F|_D)}{\operatorname{rank}P\cdot a_d^{\mca{O}_Y}(F)}.
\end{equation*}
The remaining argument is the same as in \cite[Prop.~4.2]{Ytwisted2}.
\end{NB}

We can also construct moduli spaces of perverse coherent systems. 
Let $\alpha$ be a polynomial of rational coefficients such
that $\alpha(n) > 0$ for $n\gg 0$. A perverse coherent system
$(F,W,s)$ with a finite dimensional framing $W$ is
{\it $(\mca{P},\mca{P}',\alpha)$-(semi)stable\/} if
\begin{multline}\label{eq-twisted-stability}
  \dim W' \cdot \alpha(n) + \chi(\mca{P},F'(n)) (\le) 
  \frac{\chi(\mca{P}',F'(m))}{\chi(\mca{P}',F(m))}
  \left(\dim W  \cdot \alpha(n) + \chi(\mca{P},F(n))\right)
\\
  \text{for $m\gg n\gg 0$}
\end{multline}
holds for any proper subobject $0\neq (F',W',s')\subsetneq (F,W,s)$ in
$\tperv$. If the homomorphism $s\colon W\to \Hom_Y(\shfO_Y,F)$ has
nontrivial kernel, $(F',W',s') = (0,\ker s,0)$ violates the
inequality. Therefore $W$ can be considered as a subspace of
$\Hom_Y(\shfO_Y,F)$.  We can further consider $W$ as a subspace of
$\Hom_\mA(\mA,\overline{F}(m))\otimes \Hom_\mA(\mA,\mA(m))^\vee$ for
sufficiently large $m$ thanks to the projection $\mca{P}\to\shfO_Y$.
\begin{NB}
  Since $\mca{P} = \shfO_Y\oplus\mca{P}_0$, we have
  $\Hom_Y(\shfO_Y,F)\subset \Hom_{\perv}(\mca{P},F)
  \cong \Hom_\mA(\mA, \overline{F})$. Then we further compose
  $\Hom_\mA(\mA, \overline{F}) \subset
  \Hom_\mA(\mA, \overline{F}(m))\otimes \Hom_\mA(\mA,\mA(m))^\vee$.
\end{NB}
Now we can construct the moduli space of 
$(\mca{P},\mca{P}',\alpha)$-semistable perverse coherent systems as a
GIT quotient of a closed subscheme of the product of the quot-scheme
and the Grassmann variety as in \cite{He,LePotier}.
In summary, we have the following theorem:

\begin{thm}\label{thm-construction-2}
Under the assumption that $X$ is projective, there is a coarse moduli scheme parameterizing $S$-equivalence classes of $(\mca{P},\mca{P}',\alpha)$-semistable 
perverse coherent systems with a finite dimensional framing.
\end{thm}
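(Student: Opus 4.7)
The plan is to reduce to a GIT problem on the $\mA$-module side via the Morita equivalence of Theorem~\ref{cor:Morita}, and adapt Simpson's construction~\cite{S:1} together with its framed version~\cite{He,LePotier} and the twisted version of~\cite{Ytwisted2}. The new ingredients beyond the classical setup are the use of a second local projective generator $\mca{P}'$ purely for the polarization, and a Grassmannian factor encoding the framing subspace $W$, whose contribution to the numerical criterion is weighted by $\alpha(n)$.

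First I would verify boundedness of the family of $(\mca{P},\mca{P}',\alpha)$-semistable perverse coherent systems with fixed $\mca{P}$-twisted Hilbert polynomial and fixed $\dim W$. On the $\mA$-module side $\overline{F} = \R f_*(\mca{P}^\vee\otimes F)$, the $(\mca{P},\mca{P}')$-stability inequality bounds the $\mca{P}$-twisted Hilbert polynomials of all Harder-Narasimhan subquotients by comparing leading coefficients in $m$; this reduces to the Simpson-Grothendieck boundedness argument for coherent sheaves on $X$, essentially as in~\cite[Prop.~4.2]{Ytwisted2}. Since the stability inequality forces $W\hookrightarrow \Hom_Y(\shfO_Y,F)$, boundedness of the framing data is automatic once $F$ is bounded.

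Next, for $m\gg 0$ I fix a vector space $V$ of dimension $\chi(\mca{P},F(m))$ and introduce the quot-scheme $Q$ of quotients $V\otimes_\C \mA(-m)\twoheadrightarrow \overline{F}$ in $\mamod$, a closed subscheme of the usual quot-scheme on $X$ as observed in the excerpt. Let $R\subset Q\times \mr{Gr}(d_W,V)$ be the closed subscheme of pairs $(q,\,W\subset V)$ such that, identifying $V$ with $\Hom_\mA(\mA,\overline{F}(m))$, the subspace $W$ lies in the image of $\Hom_Y(\shfO_Y,F(m))$ via the projection $\mca{P}\twoheadrightarrow \shfO_Y$. The group $\mr{SL}(V)$ acts on $R$, and its orbits in the locus where $V\cong \Hom_\mA(\mA,\overline{F}(m))$ canonically parametrize isomorphism classes of $(F,W,s)$.

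Finally, I linearize by combining Simpson's polarization on $Q$, coming from the embedding into the Grassmannian of quotients $H^0(V\otimes_\C \mca{P}^{\prime\vee}\otimes\mca{P}(l-m))\twoheadrightarrow H^0(\mca{P}^{\prime\vee}\otimes F(l))$ for $l\gg m$, with the Pl\"ucker embedding of $\mr{Gr}(d_W,V)$ weighted by $\alpha(l)$. The main obstacle is verifying that GIT (semi)stability for this linearization matches the numerical condition~\eqref{eq-twisted-stability}: a one-parameter subgroup of $\mr{SL}(V)$ is encoded by a weighted filtration of $\overline{F}$, which by Theorem~\ref{cor:Morita} is a filtration of $F$ in $\perv$, together with an induced filtration on $W$, and one must check that the resulting Hilbert-Mumford weight coincides with the expression in~\eqref{eq-twisted-stability} applied to the subobjects $(F',W',s')$ in the double limit $l\to\infty$ after $m\to\infty$. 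This is the standard Simpson-type computation of~\cite[\S4]{S:1} adapted as in~\cite{He,LePotier,Ytwisted2}; once it is carried out, $R/\!\!/\mr{SL}(V)$ is the desired coarse moduli scheme and $S$-equivalence corresponds to orbit-closure equivalence on the semistable locus.
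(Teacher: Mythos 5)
Your proposal follows essentially the same approach as the paper: reduce to $\mA$-modules via the Morita equivalence of Theorem~\ref{cor:Morita}, use Simpson's quot-scheme construction with the $\mca{P}'$-polarization, add a Grassmannian factor for the framing as in \cite{He,LePotier}, and match the Hilbert--Mumford weight to the numerical inequality~\eqref{eq-twisted-stability}. The paper's own argument is terse (it defers most of the work to the cited references), and your proposal correctly identifies the extra ingredients beyond the classical case: the second local projective generator $\mca{P}'$ entering only through the linearization, and the $\alpha$-weighted Grassmannian factor.

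There is one technical slip worth fixing. You place $W$ in $\mathrm{Gr}(d_W, V)$ with $V = \Hom_\mA(\mA,\overline{F}(m))$, requiring it to lie in the image of $\Hom_Y(\shfO_Y,F(m))$. But the framing of a perverse coherent system is a map $s\colon W \to \Hom_Y(\shfO_Y,F)$, and $\Hom_Y(\shfO_Y,F)$ does not embed canonically into $\Hom_Y(\shfO_Y,F(m))$; there is no natural untwisting available. The paper instead uses the embedding
\[
\Hom_Y(\shfO_Y,F)\ \hookrightarrow\ \Hom_{\perv}(\mca{P},F)\ \cong\ \Hom_\mA(\mA,\overline{F})\ \hookrightarrow\ \Hom_\mA(\mA,\overline{F}(m))\otimes \Hom_\mA(\mA,\mA(m))^\vee,
\]
so the correct parameter space is $Q\times\mathrm{Gr}\bigl(d_W,\, V\otimes \Hom_\mA(\mA,\mA(m))^\vee\bigr)$, not $Q\times\mathrm{Gr}(d_W,V)$. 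This is the same device used by Le Potier and He; with this replacement, the rest of your GIT and Hilbert--Mumford analysis goes through unchanged.
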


Now, we will explain how Theorem \ref{thm-construction-1} is deduced from Theorem \ref{thm-construction-2}.

First, we replace $X$ by a projective scheme $\overline{X}$ containing $X$ as an open subscheme and construct a moduli space for $\overline{X}$.
Then the moduli space for $X$ is an open subscheme of the moduli space for $\overline{X}$
(here we use the assumption that the objects are in $\tpervc$, not just in $\tperv$).
Therefore we may assume $X$ is projective from the beginning.  

Note that for an object $F\in \pervc$, the Hilbert polynomials are constant. 
We may assume that $\zeta_0$, $\zeta_1$ and $\zeta_\infty$ are integers and normalized so that $\theta_{\tz}(\F) = 0$ as we mentioned in Remark~\ref{rem:normalization} (2). 
Taking sufficiently large $c\in\R$ so that $\zeta_0+c$, $\zeta_1 + c
> 0$, we put $\mca{P} = \shfO_Y^{\oplus (c+ \zeta_0)} \oplus \mca{P}_0^{\oplus (c + \zeta_1)}$, $\mca{P}' =\shfO_Y\oplus \mca{P}_0$ and $\alpha = \zeta_\infty$.
Then the inequality \eqref{eq-twisted-stability} turns out to be
\begin{multline*}
  \frac{ \zeta_0 \dim H^0(F') + \zeta_1 \dim H^0(F'\otimes
    \mca{P}_0^\vee) + \zeta_\infty \dim W'}
    {\dim H^0(F') + \dim H^0(F'\otimes \mca{P}_0^\vee)}
\\
    (\le) 
  \frac{ \zeta_0 \dim H^0(F) + \zeta_1 \dim H^0(F\otimes
    \mca{P}_0^\vee) + \zeta_\infty \dim W}
    {\dim H^0(F) + \dim H^0(F\otimes \mca{P}_0^\vee)} = 0,
\end{multline*}
where $c$ cancels out in both hand sides. This is equivalent to the
inequality in Definition~\ref{sfam}.
Therefore we can apply the above construction of the moduli
space provided $\zeta_\infty = -\zeta_0 \dim H^0(F) - \zeta_1 \dim
H^0(F\otimes \mca{P}_0^\vee) > 0$. Fortunately this condition is not
restrictive, as there is no $\theta_{\tz}$-stable object (with $W\neq
0$) except $F = 0$ if $\zeta_\infty \le 0$.
\begin{NB}
Consider the subobject $(F,0,0)\subsetneq (F,W,s)$.

Even if $(F,W,s)$ are merely $\theta_{\tz}$-semistable, we can replace
it by $(F,0,0)\oplus (0,W,0)$ in its $S$-equivalence class. Therefore
there is no problem to construct the moduli space.
\end{NB}

\section{Coherent systems as \pc systems}\label{cs-pcs}
\subsection{Chambers corresponding to DT and PT}
\begin{NB} This subsection is rewritten (6/19).\end{NB}%
Fix an ample line bundle $\mca{L}$ and a vector bundle $\mca{P}_0$ on
$Y$ as in \S \ref{framednccr}.
\begin{NB}
Please change $\mca{P}$ to $\mca{P}_0$ after this point. July 31, HN.
\end{NB}%
Take $\mca{L}$ as a polarization of $Y$. 
\begin{NB}
  Since I have used $\shfO_Y(1)$ for different thing (i.e.,
  $f^*\shfO_X(1)$), please delete this and replace any $\shfO_Y(1)$ to
  $\mca{L}$ hereafter. July 31, HN.
\end{NB}
For an element $E\in D^b_c(\coh)$, let $r(E)$ denote the degree one coefficient of the Hilbert polynomial $\chi(E\otimes \mca{L}^{\otimes k})$.
\begin{NB}
  Please change the notation $\mr{rk}(E)$ to $r(E)$ as `$\mr{rk}(E)$' is
  usually the rank of a torsion free sheaf. July 31, HN.
\end{NB}

\begin{defn}\label{def-st}
Let $(\zeta_0,\zeta_1)$ be a pair of real numbers.
A \pc system $(F,s)\in \tpervc$ with a $1$-dimensional framing 
is said to be $(\zeta_0,\zeta_1)$-(semi)stable if it is $\theta_{\tz}$-(semi)stable for
\[
\tz=(\zeta_0, \zeta_1,-\zeta_0\cdot\dim H^0(E)-\zeta_1\cdot\dim H^0(E\otimes \mca{P}_0^\vee)).
\]
\textup{(See the normalization in Remark~\ref{rem:normalization}(2).)}
\end{defn}
\begin{NB}
I exchanged the definition and the lemma (remark). (7/5).
\end{NB}
\begin{lem}\label{spcs}
A \pc system $(F,s)\in \tpervc$ is $(\zeta_0,\zeta_1)$-(semi)stable if
the following conditions are satisfied:
\begin{enumerate}
\item[\textup{(A)}] for any nonzero subobject $0\neq E\subseteq F$, we have
\[
\zeta_0\cdot\dim H^0(E)+\zeta_1\cdot\dim H^0(E\otimes \mca{P}_0^\vee)\,(\le)\,0,
\]
\item[\textup{(B)}] for any proper subobject $E\subsetneq F$ through which $s$ factors, we have
\[
\zeta_0\cdot\dim H^0(E)+\zeta_1\cdot\dim H^0(E\otimes \mca{P}_0^\vee)\,(\le)\,\zeta_0\cdot\dim H^0(F)+\zeta_1\cdot\dim H^0(F\otimes \mca{P}_0^\vee).
\]
\end{enumerate}
\end{lem}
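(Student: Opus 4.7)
The plan is to translate the definition of $(\zeta_0,\zeta_1)$-(semi)stability in Definition~\ref{def-st} directly into the two conditions (A) and (B). With the prescribed normalization
\[
\zeta_\infty = -\zeta_0\dim H^0(F) - \zeta_1\dim H^0(F\otimes \mca{P}_0^\vee),
\]
one has $\theta_{\tz}(F,\C,s) = 0$ by design, so $\theta_{\tz}$-(semi)stability of $(F,\C,s)$ amounts to requiring that the numerator of $\theta_{\tz}(\F')$ satisfy $(\le)\,0$ for every nonzero proper subobject $\F' = (F',W',s')$ of $(F,\C,s)$ in $\tpervc$. Under the equivalence of Proposition~\ref{prop-framed-morita}, the denominator of $\theta_{\tz}(\F')$ equals the total dimension of the framed $\mA$-module corresponding to $\F'$, which is strictly positive whenever $\F' \neq 0$, so the sign of the inequality is governed entirely by the numerator.

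I would then case-split on the framing $W' \subseteq W = \C$. When $W' = 0$, the subobject is of the form $(F',0,0)$ with $0 \neq F' \subseteq F$ in $\perv$ (the condition $\F' \neq \F$ is automatic), and the numerator reduces to $\zeta_0\dim H^0(F') + \zeta_1\dim H^0(F'\otimes \mca{P}_0^\vee)$, which is exactly the quantity controlled by condition (A). When $W' = \C$, compatibility of $s'$ with the inclusion $(F',\C,s') \hookrightarrow (F,\C,s)$ forces the section $s$ to factor through $F' \hookrightarrow F$; the subobject is proper iff $F' \subsetneq F$; and substituting the formula for $\zeta_\infty$ above, the numerator becomes
\[
\bigl(\zeta_0\dim H^0(F') + \zeta_1\dim H^0(F'\otimes \mca{P}_0^\vee)\bigr) - \bigl(\zeta_0\dim H^0(F) + \zeta_1\dim H^0(F\otimes \mca{P}_0^\vee)\bigr),
\]
so the inequality $(\le)\,0$ here is precisely condition (B).

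Since these two cases exhaust all nonzero proper subobjects of $(F,\C,s)$ in $\tpervc$, this establishes the lemma. I expect essentially no serious obstacle: the argument is a bookkeeping translation of Definition~\ref{sfam} under the specific choice of $\zeta_\infty$ made in Definition~\ref{def-st}, and the only minor point requiring verification is the positivity of the denominator, which follows immediately from Proposition~\ref{prop-framed-morita}.
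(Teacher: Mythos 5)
Your proof is correct; the paper states this lemma without proof, treating it as a direct unwinding of Definitions~\ref{sfam} and~\ref{def-st}, and your case split on $W' \in \{0,\C\}$ together with the normalization $\theta_{\tz}(F,\C,s)=0$ is exactly the expected argument.
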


Let $r$ be the positive integer in \eqref{eq:ext}.
\begin{NB}
Original : July 31, HN
appeared when we constructed $\mca{P}_0$ in \S \ref{framednccr}. 
\end{NB}
\begin{lem}\label{lem2.3}
For any $E\in\pervc$, we have
\[
r\cdot \dim H^0(Y,E)- \dim H^0(Y,E\otimes \mca{P}_0^\vee)=\rk(E).
\]
\end{lem}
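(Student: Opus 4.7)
The plan is to reduce the identity to an Euler-characteristic computation using the defining extension \eqref{eq:ext} of $\mca{P}_0$, and then upgrade $\chi$ to $\dim H^0$ via the Morita equivalence.

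First I would dualize \eqref{eq:ext}, which is a short exact sequence of vector bundles, to obtain
\[
0 \to \mca{L}^{-1} \to \mca{P}_0^\vee \to \OO_Y^{\oplus r-1} \to 0.
\]
Tensoring with $E$ produces a distinguished triangle in $D^b(\coh)$, and additivity of $\chi(Y,-)$ gives
\[
\chi(Y, E \otimes \mca{P}_0^\vee) = \chi(Y, E \otimes \mca{L}^{-1}) + (r-1)\,\chi(Y, E).
\]
Since $E \in \pervc$ is supported on the exceptional locus of $f$, which has dimension at most one, the polynomial $k \mapsto \chi(E \otimes \mca{L}^{\otimes k})$ has degree $\le 1$, so by the definition of $\rk(E)$ it is $\rk(E)\cdot k + \chi(Y, E)$. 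Evaluating at $k=-1$ and substituting into the display above yields
\[
r\cdot\chi(Y, E) - \chi(Y, E \otimes \mca{P}_0^\vee) = \rk(E).
\]

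The remaining task is to replace each Euler characteristic by the corresponding $\dim H^0$. Here I would invoke Theorem~\ref{cor:Morita}: the object $E \in \pervc$ corresponds to the coherent $\mA$-module $V = \R f_*(\mca{P}^\vee \otimes E) \in \macmod$, which is concentrated in cohomological degree $0$ and has $0$-dimensional support on $X$. Using the decomposition $\mca{P} = \OO_Y \oplus \mca{P}_0$, the summands $\R f_* E$ and $\R f_*(\mca{P}_0^\vee \otimes E)$ of $V$ are each honest coherent sheaves on $X$ with $0$-dimensional support, hence have no higher cohomology. Therefore $H^i(Y, E) = 0 = H^i(Y, E \otimes \mca{P}_0^\vee)$ for $i > 0$, so $\chi$ and $\dim H^0$ coincide for both objects, which finishes the proof.

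The only nontrivial point is this last step: the conclusion that $\R f_*(\mca{P}_0^\vee \otimes E)$ lives in degree $0$ with $0$-dimensional support uses in an essential way that $\mca{P}_0$ appears as a direct summand of the local projective generator $\mca{P}$, not merely that it is some vector bundle on $Y$. Once this is granted, the manipulation of Hilbert polynomials in the first part of the argument is routine.
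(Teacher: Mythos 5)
Your argument is correct and is essentially the paper's proof, just spelled out in more detail: the paper likewise computes $r(E)=\chi(E)-\chi(E\otimes\mca{L}^{-1})$ from the linearity of the Hilbert polynomial, expands $\chi(E\otimes\mca{P}_0^\vee)=(r-1)\chi(E)+\chi(E\otimes\mca{L}^{-1})$ via the dualized extension, and replaces $\chi$ by $\dim H^0$ using $E\in\pervc$. Your closing remark about why $\mca{P}_0$ must be a summand of the local projective generator $\mca{P}$ is a useful elaboration of the step the paper leaves implicit.
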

\begin{proof}
Since the Hilbert polynomial $\chi(E\otimes\mca{L}^{\otimes k})$ is degree one, we have $r(E)=\chi(E)-\chi(E\otimes \mca{L}^{-1})$.
\begin{NB}
This equality is the `trivial consequence' of the fact that
$\chi(E(k))$ is linear in $k$.  
\end{NB}
On the other hand, since $E\in\pervc$ we have
\[
H^0(E)=\chi(E),\quad
H^0(E\otimes \mca{P}_0^\vee)=\chi(E\otimes \mca{P}_0^\vee)=(r-1)\chi(E)+\chi(E\otimes \mca{L}^{-1}).
\]
Hence the claim follows.
\end{proof}

We set $\zeta^\circ=(-r,1)$ and $\zeta^\pm=(-r\pm\varepsilon,1)$ for
sufficiently small $\varepsilon>0$ specified later.
\begin{NB} changed 7/5 \end{NB}%
\begin{NB}
  For the resolved conifold, we have $r=1$. Therefore $\zeta^\circ$ is
  on the imaginary root hyperplane. (In particular, the definition of
  $r$ must be the same as in \cite{vandenbergh-3d}.
\end{NB}%
\begin{figure}[htbp]\label{parameter}
  \centering
  \includegraphics{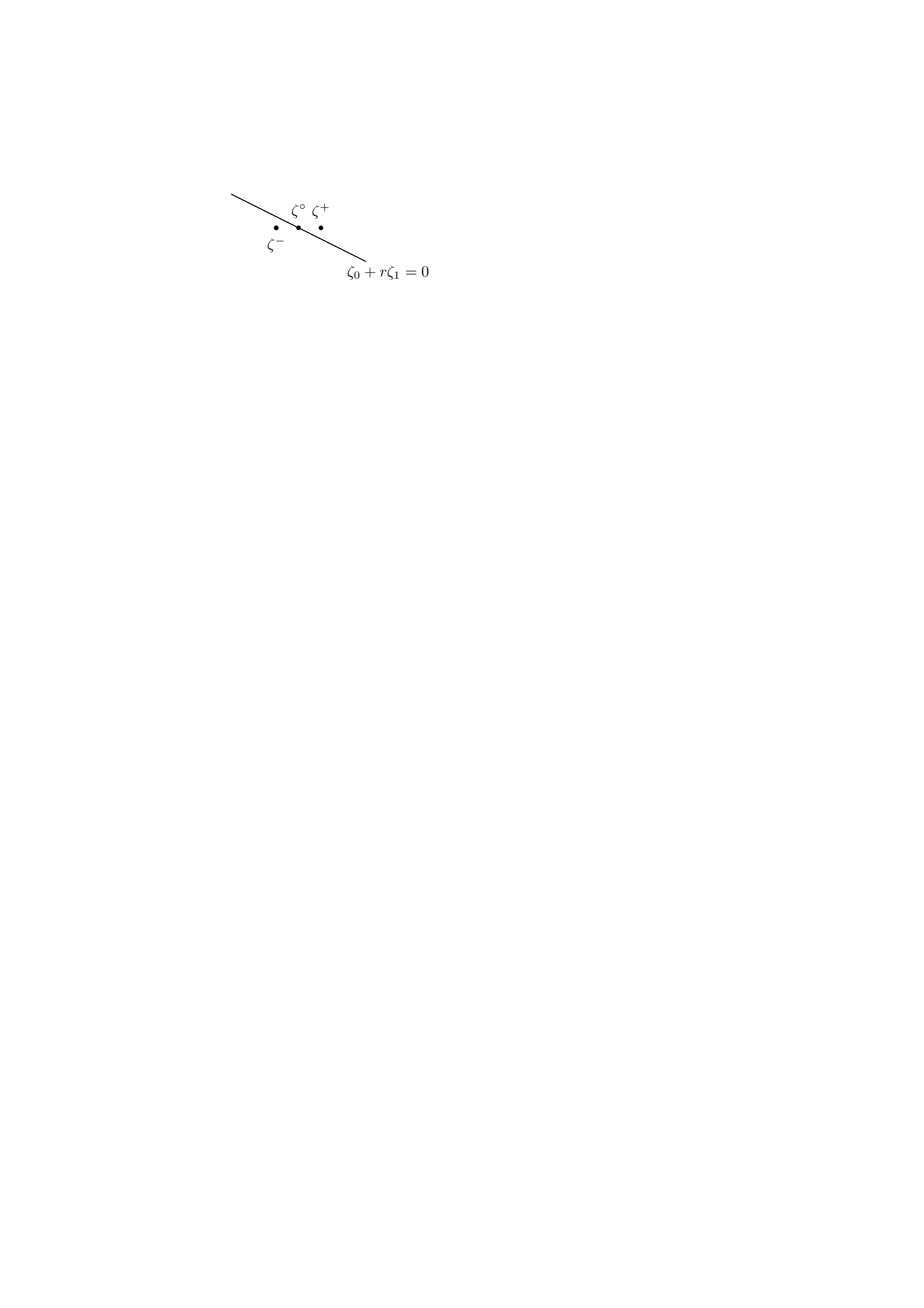}
  \caption{stability parameters}
\end{figure}

\begin{cor}\label{cor2.4}
\begin{NB} changed 7/5 \end{NB}%
A \pc system $(F,s)\in\tpervc$ is $\zeta^\circ$-(semi)stable (resp.\ $\zeta^\pm$-(semi)stable) if and only if the following conditions are satisfied:
\begin{enumerate}
\item[\textup{(A)}] for any nonzero subobject $0\neq E\subsetneq F$, we have 
\[
-\rk (E)\,(\le)\,0\quad (\text{resp.}\ -\rk (E)\pm\varepsilon\cdot\chi(E)\,(\le)\,0),
\]
\item[\textup{(B)}] for any nonzero subsheaf $0\neq E\subsetneq F$ through which $s$ factors, we have
\[
0\,(\le)-\rk (F/E)\quad (\text{resp.}\ 0\,(\le)-\rk (F/E)\pm\varepsilon\cdot\chi(F/E)).
\]
\end{enumerate}
\end{cor}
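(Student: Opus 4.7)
The plan is to combine Lemma~\ref{spcs}, which translates $\theta_{\tz}$-stability into the two numerical inequalities (A) and (B), with the dimension identity of Lemma~\ref{lem2.3}. First I would note that Lemma~\ref{spcs} is in fact an equivalence: the only subobjects of $(F,s)\in\tpervc$ are of the form $(E,0,0)\subseteq(F,\C,s)$ (giving condition (A)) and $(E,\C,s)\subseteq(F,\C,s)$ with $s$ factoring through $E$ (giving condition (B)), since the framing space $\C$ has only $0$ and $\C$ as subspaces. With the normalization of Definition~\ref{def-st}, $\theta_{\tz}(F,s)=0$, so each subobject inequality of Definition~\ref{sfam} becomes a numerator inequality, producing exactly (A) or (B) depending on the type of subobject.

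Next I would simply substitute. For $\zeta^\circ=(-r,1)$, inequality (A) of Lemma~\ref{spcs} reads
\[
-r\,\dim H^0(E)+\dim H^0(E\otimes\mca{P}_0^\vee)\,(\le)\,0,
\]
which by Lemma~\ref{lem2.3} is the desired $-\rk(E)\,(\le)\,0$. For $\zeta^\pm=(-r\pm\varepsilon,1)$, the same substitution produces an extra term $\pm\varepsilon\,\dim H^0(E)$. Since any $E\in\pervc$ has (after equivalence with $\mamod$) $0$-dimensional support on $X$, higher cohomology vanishes and $\dim H^0(E)=\chi(E)$, so the inequality becomes $-\rk(E)\pm\varepsilon\,\chi(E)\,(\le)\,0$.

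For condition (B), I would rearrange the inequality in Lemma~\ref{spcs}(B) by moving the $F$-terms to the opposite side. Because $\pervc$ objects have vanishing higher cohomology, $\dim H^0$ and $\dim H^0(-\otimes\mca{P}_0^\vee)$ are both additive on short exact sequences in $\pervc$, so the inequality becomes
\[
\zeta_0\dim H^0(F/E)+\zeta_1\dim H^0((F/E)\otimes\mca{P}_0^\vee)\,(\ge)\,0.
\]
Substituting the values of $\zeta^\circ$ and $\zeta^\pm$ and applying Lemma~\ref{lem2.3} to $F/E$ (which is again an object of $\pervc$) then yields the stated inequalities $0\,(\le)\,-\rk(F/E)$ and $0\,(\le)\,-\rk(F/E)\pm\varepsilon\,\chi(F/E)$ respectively.

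The only subtle point, and hence the main thing I would pay attention to, is the additivity step used in (B): it relies on $F/E$ (taken in the abelian category $\perv$) lying in $\pervc$, so that the two linear functionals $\dim H^0$ and $\dim H^0(-\otimes\mca{P}_0^\vee)$ are both exact. This is automatic because $\pervc$ is a Serre subcategory of $\perv$ under the Morita equivalence of Theorem~\ref{cor:Morita} (being defined by $0$-dimensional support on $X$), so quotients of objects in $\pervc$ remain in $\pervc$. Once this is in place, the corollary reduces to the two substitutions above.
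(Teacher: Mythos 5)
Your proof is correct and takes the same route the paper intends: the paper leaves this corollary without a written proof because it follows immediately from Lemma~\ref{spcs}, Lemma~\ref{lem2.3}, and the normalization in Definition~\ref{def-st}. You supply exactly the right details -- upgrading Lemma~\ref{spcs} to an equivalence by classifying the subobjects of $(F,\C,s)$ via the framing (only $W'=0$ or $W'=\C$ are possible), substituting the parameter values, identifying $\dim H^0(E)$ with $\chi(E)$ for $E\in\pervc$, and observing that additivity in condition (B) requires $F/E\in\pervc$, which holds since $\pervc$ is a Serre subcategory.
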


\begin{lem}\label{lem1}
Let $0\to E\to F\to G\to 0$ be an exact sequence in $\coh$. 
If $F$ is perverse coherent, then so is $G$. 
\end{lem}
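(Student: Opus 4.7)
The plan is to verify the three defining conditions of \pc sheaves for $G$, using the standard long exact sequence for $\mathrm{R} f_*$ together with the surjectivity of $F \twoheadrightarrow G$. Since the short exact sequence lives in $\coh$, the object $G$ is concentrated in degree $0$, so $H^i(G)=0$ for $i\neq 0$ and the first condition (and the vanishing $\R^0 f_*(H^{-1}(G))=0$) are immediate.

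For the second condition I would run the long exact sequence of higher direct images:
\[
\cdots \to \R^1 f_* F \to \R^1 f_* G \to \R^2 f_* E \to \cdots
\]
The right term vanishes because $f$ has fibers of dimension less than $2$, so $\R^i f_* = 0$ for $i \geq 2$. The left term vanishes since $F\in\perv$. Hence $\R^1 f_* G = 0$.

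For the third condition, let $C$ be a coherent sheaf on $Y$ with $\R f_*(C)=0$, and suppose $\phi\colon G\to C$ is a morphism. Denote by $\pi\colon F\twoheadrightarrow G$ the given surjection. Then $\phi\circ\pi\colon F\to C$ vanishes by the third axiom for $F$, and because $\pi$ is surjective this forces $\phi=0$. This exhausts the three conditions, so $G\in\perv$.

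The argument is entirely formal; the only ingredient that could be considered an obstacle is the vanishing $\R^2 f_* E = 0$, which is built into the running hypothesis that $f$ has fibers of dimension less than $2$. No further structure on $E$ (such as perversity) is needed.
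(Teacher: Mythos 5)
Your proof is correct and follows essentially the same approach as the paper: the key step is the long exact sequence of higher direct images $\R^1 f_* F \to \R^1 f_* G \to \R^2 f_* E$, combined with $\R^1 f_* F = 0$ (since $F\in\perv$) and $\R^2 f_* E = 0$ (from the fiber-dimension hypothesis). You are actually slightly more thorough than the paper's written proof, which only records the verification of $\R^1 f_* G = 0$ and leaves the first condition and the $\Hom$-vanishing condition implicit; your one-line argument for the third condition (factoring $G\to C$ through the surjection $F\twoheadrightarrow G$) is the right way to dispose of it.
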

\begin{proof}

Applying the functor $f_*$ to the short exact sequence, we get the exact sequence
\[
\R^1f_*F\to\R^1f_*G\to\R^2f_*E.
\]
Since $F$ is perverse coherent we have $\R^1f_*F=0$. 
Because dimensions of the fibers of $f\colon Y\to X$ are less than $2$, we have $\R^2f_*E=0$.
Hence we get $\R^1f_*G=0$.  
\begin{NB}
For ${}^0\perv$ we should also check the following:

Let $C$ be a sheaf satisfying $\R f_*(C)=0$. 
Applying the functor $\Hom(-,C)$ to the short exact sequence, we get an exact sequence
\[
0\to\Hom(G,C)\to\Hom(F,C). 
\]
Since $F$ is perverse coherent we have $\Hom(F,C)=0$. 
So we get $\Hom(G,C)=0$.
\end{NB}
\end{proof}
\begin{cor}\label{cor1}
Let $0\to E\to F\to G\to 0$ be an exact sequence in $\coh$. If $E$ and $F$ are perverse coherent, then this is an exact sequence in $\perv$ as well.
\end{cor}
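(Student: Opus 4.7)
The plan is to lift the short exact sequence in $\coh$ to a distinguished triangle in $D^b(\coh)$ and then use that $\perv$ is the heart of a t-structure. More precisely, the given short exact sequence $0\to E\to F\to G\to 0$ in $\coh$ canonically yields a distinguished triangle
\[
E\longrightarrow F\longrightarrow G\longrightarrow E[1]
\]
in $D^b(\coh)$. By hypothesis $E$ and $F$ lie in $\perv$, and Lemma~\ref{lem1} (just proved) shows that $G\in\perv$ as well. Hence all three terms of the triangle lie in the heart of the perverse t-structure.

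The second step is to invoke the general fact that for any t-structure on a triangulated category, a distinguished triangle whose three vertices all lie in the heart $\mca{A}$ is the image of a short exact sequence in $\mca{A}$ under the inclusion $\mca{A}\hookrightarrow D^b(\coh)$, and conversely every short exact sequence in $\mca{A}$ arises this way. Applying this to our triangle gives a short exact sequence
\[
0\longrightarrow E\longrightarrow F\longrightarrow G\longrightarrow 0
\]
in $\perv$. Since the inclusion $\perv\hookrightarrow D^b(\coh)$ is fully faithful, the morphisms appearing in this sequence are literally the ones coming from the original sequence in $\coh$, so no identification is needed.

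The only real step is the abstract t-structure fact, which is standard; there is no serious obstacle. One could alternatively give a self-contained argument using the Morita equivalence of Theorem~\ref{cor:Morita}: translate everything to $\mamod$, where exactness is exactness of $\OO_X$-modules, and observe that $\R f_*\R\ghom_Y(\mca{P},-)$ is exact on $\perv$ by definition of the t-structure, so the triangle becomes a short exact sequence of $\mA$-modules, which is then transported back via the inverse equivalence. Either route proves the corollary directly from Lemma~\ref{lem1} with no further computation.
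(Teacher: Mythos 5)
Your proof is correct and is essentially the intended one: the paper gives no explicit argument for the corollary, and the natural (and presumably intended) justification is exactly what you wrote — use Lemma~\ref{lem1} to conclude $G\in\perv$, then invoke the standard fact that a distinguished triangle in $D^b(\coh)$ with all three vertices in the heart of a t-structure is a short exact sequence in that heart. One small remark: in your second, Morita-flavored route, the exactness of $\R f_*\R\ghom_Y(\mca{P},-)$ on $\perv$ is not really "by definition of the t-structure" but rather because that functor is an equivalence of abelian categories $\perv\simeq\mamod$; and the step you then need in $\mamod$ (a triangle of modules with all three vertices in degree zero is a short exact sequence) is the same t-structure fact in concrete form, so the alternative route is not logically independent, just more concrete.
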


\begin{lem}\label{cokerisperv}
Let $(F,s)$ be a perverse coherent system such that $F$ is a sheaf. 
Then the sequence
\[
0 \to \im_{\coh}(s)  \to  F  \to  \coker_{\coh}(s)  \to  0 
\]
is exact in $\perv$.
\end{lem}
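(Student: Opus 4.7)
The plan is to reduce everything to Lemma \ref{lem1} and Corollary \ref{cor1}. The key observation is that $\im_{\coh}(s)$ is a quotient of $\OO_Y$ in $\coh$, via the natural surjection $\OO_Y \twoheadrightarrow \im_{\coh}(s)$ coming from $s\colon \OO_Y \to F$.

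First I would check that $\OO_Y$ itself belongs to $\perv$. The cohomological concentration is obvious. The vanishing $\R^1 f_* \OO_Y = 0$ is part of our standing hypothesis $\R f_* \OO_Y = \OO_X$. For the third condition, given any sheaf $C$ with $\R f_*(C)=0$, we have $\Hom(\OO_Y, C) = H^0(Y,C) = H^0(X, \R f_* C) = 0$, which handles the remaining condition. Hence $\OO_Y \in \perv$.

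Next, since $\im_{\coh}(s)$ is a quotient of $\OO_Y$ inside $\coh$, Lemma \ref{lem1} applies and tells us $\im_{\coh}(s) \in \perv$. Now in the given short exact sequence
\[
0 \to \im_{\coh}(s) \to F \to \coker_{\coh}(s) \to 0
\]
both the first and second terms are in $\perv$ (the second by hypothesis, since $(F,s)$ is a perverse coherent system whose underlying $F$ is a sheaf lying in $\perv$). Therefore Corollary \ref{cor1} applies directly and gives exactness in $\perv$, which is exactly the conclusion we want; as a bonus we also recover that $\coker_{\coh}(s) \in \perv$ via Lemma \ref{lem1}.

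The only place requiring genuine verification is the perverse coherence of $\OO_Y$, in particular the third axiom. This is not really an obstacle since it follows from the Leray spectral sequence identity $H^0(Y,C) = H^0(X, \R f_* C)$, but it is the one step that is not immediate from the earlier lemmas; everything else in the proof is a mechanical application of Lemma \ref{lem1} and Corollary \ref{cor1}.
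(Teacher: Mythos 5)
Your proof is correct and takes essentially the same route as the paper: apply Lemma \ref{lem1} to $\OO_Y \twoheadrightarrow \im_{\coh}(s)$ to put $\im_{\coh}(s)$ in $\perv$, then invoke Corollary \ref{cor1}. The paper leaves the fact that $\OO_Y\in\perv$ implicit; you spell it out, which is a harmless and arguably helpful addition.
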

\begin{proof}
By Lemma \ref{lem1} applied to $\mathcal{O}_Y\to\im_{\coh}(s)$, we see that $\im_{\coh}(s)$ is perverse coherent. 
Then the claim follows from Corollary \ref{cor1}.
\end{proof}

\begin{lem}\label{fispc}
Let $(F,s)$ be a coherent system such that $\dim\coker_{\coh}(s)=0$. 
\begin{NB} corrected 7/5 \end{NB}%
Then $F$ is perverse coherent.
\end{lem}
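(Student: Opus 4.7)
My plan is to verify the three defining conditions of a perverse coherent sheaf directly for $F$, splitting along the section $s$ via the two exact sequences
\[
0\to \ker(s)\to \OO_Y\to \im_{\coh}(s)\to 0,\qquad 0\to \im_{\coh}(s)\to F\to \coker_{\coh}(s)\to 0
\]
in $\coh$. Since $F$ is a sheaf, $H^0(F)=F$ is the only nonzero cohomology object, so condition (i) and the vanishing $\R^0 f_*(H^{-1}(F))=0$ are automatic; it suffices to establish $\R^1 f_* F=0$ and $\Hom_Y(F,C)=0$ for every sheaf $C$ satisfying $\R f_*C=0$.

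The first step is to observe that $\OO_Y$ lies in $\perv$: the hypothesis $\R f_*\OO_Y=\OO_X$ yields $\R^1 f_*\OO_Y=0$, and for any $C$ with $\R f_*C=0$ one has $\Hom(\OO_Y,C)=H^0(Y,C)=H^0(X,f_*C)=0$. Applying Lemma~\ref{lem1} to the first exact sequence then transfers perverse coherence from $\OO_Y$ to its quotient $\im_{\coh}(s)$.

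For the two remaining conditions I would apply $\R f_*$ and $\Hom_Y(-,C)$ to the second exact sequence. The long exact sequence of direct images gives
\[
\R^1 f_*\im_{\coh}(s)\to \R^1 f_* F\to \R^1 f_*\coker_{\coh}(s),
\]
where the left term vanishes by the previous step and the right term vanishes because $\coker_{\coh}(s)$ has $0$-dimensional support; hence $\R^1 f_* F=0$. Applying $\Hom_Y(-,C)$ gives the left-exact sequence
\[
0\to \Hom(\coker_{\coh}(s),C)\to \Hom(F,C)\to \Hom(\im_{\coh}(s),C),
\]
whose right term is zero by the third perverse coherent axiom applied to $\im_{\coh}(s)$. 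The left term is handled by the auxiliary observation that any $C$ with $\R f_*C=0$ has no nonzero $0$-dimensional subsheaf: if $T\subset C$ were such, then $f_*T\hookrightarrow f_*C=0$ and faithfulness of $f_*$ on $0$-dimensional sheaves (its stalk at $f(p)$ recovers the stalk of $T$ at $p$) forces $T=0$. This kills $\Hom(\coker_{\coh}(s),C)$, so $\Hom(F,C)=0$.

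The only genuinely delicate step is this auxiliary vanishing; once one notes that $\R f_*C=0$ forbids $0$-dimensional torsion in $C$, the $0$-dimensional cokernel $\coker_{\coh}(s)$ may freely sit on top of the perverse coherent subobject $\im_{\coh}(s)$ without destroying perverse coherence of the extension $F$.
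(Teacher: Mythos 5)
Your argument is correct and matches the paper's proof: both split $s$ through its image, apply Lemma~\ref{lem1} to the sequence $0\to\ker(s)\to\OO_Y\to\im_{\coh}(s)\to 0$ to obtain perverse coherence of $\im_{\coh}(s)$, and then handle the $0$-dimensional sheaf $\coker_{\coh}(s)$ via the second exact sequence. The only difference is that the paper asserts outright that a $0$-dimensional sheaf is perverse coherent and that an extension in $\coh$ of perverse coherent sheaves is perverse coherent, whereas you explicitly verify the two nontrivial vanishings $\R^1f_*F=0$ and $\Hom(F,C)=0$, including the supporting observation that a sheaf $C$ with $\R f_*C=0$ has no nonzero $0$-dimensional subsheaf.
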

\begin{proof}
We have the exact sequences 
\[
\begin{array}{ccccccccc}
0 & \to & \ker(s) & \to & \OO_Y & \to & \im(s) & \to & 0 \\
0 & \to & \im(s) & \to & F & \to & \coker(s) & \to & 0 
\end{array}
\]
in $\coh$. 
Applying Lemma \ref{lem1} for the first exact sequence, we have $\im(s)$ is perverse coherent. 
Since $\coker(s)$ is $0$-dimensional, it is perverse coherent.
Hence $F$ is \pc by the second exact sequence.
\end{proof}

\begin{lem}\label{fissheaf}
If a \pc system $(F,s)\in\tpervc$ is $\zeta^\circ$-semistable, 
\begin{NB} changed 7/5 \end{NB}%
then $F$ is a sheaf.
\end{lem}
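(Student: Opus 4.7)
My plan is to assume for contradiction that $F$ is not a sheaf, so that $T := H^{-1}(F) \neq 0$, and exhibit $T[1]$ as a subobject that destabilizes $(F,s)$. Because $\R^0 f_* T = 0$ (one of the defining conditions of $F$ being perverse coherent), the shift $T[1]$ lies in $\perv$, and it lies in $\pervc$ since $F$ does. The canonical triangle coming from the t-structure yields a short exact sequence $0 \to T[1] \to F \to H^0(F) \to 0$ in $\perv$, and hence an inclusion $T[1] \hookrightarrow F$. Correspondingly $(T[1],0,0) \subsetneq (F,\C,s)$ is a proper subobject in $\tpervc$ (proper thanks to the differing framings, even in the degenerate case $T[1]=F$ as perverse coherent sheaves).

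I then apply condition (A) of Lemma \ref{spcs} at the parameter $\zeta^\circ = (-r,1)$ to the subobject $E = T[1]$: by Lemma \ref{lem2.3} this translates into $r(T[1]) \geq 0$. On the other hand, since $T$ is a coherent sheaf supported on the exceptional locus of $f$ (which has dimension $\leq 1$), its Hilbert polynomial is $\chi(T \otimes \mca{L}^{\otimes k}) = \deg_\mca{L}(T)\cdot k + \chi(T)$, so that
\[
r(T[1]) \;=\; \chi(T[1]) - \chi(T[1]\otimes\mca{L}^{-1}) \;=\; -\deg_\mca{L}(T).
\]
Ampleness of $\mca{L}$ on $Y$ forces $\deg_\mca{L}(T) \geq 0$, with equality iff $T$ is $0$-dimensional. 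Combining the two inequalities, $\deg_\mca{L}(T) = 0$ and $T$ is $0$-dimensional.

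Finally, once $T$ is $0$-dimensional the higher direct images $\R^i f_* T$ vanish for $i \geq 1$, so $\R f_* T = f_* T$. The perverse coherent condition $\R^0 f_* T = 0$ then gives $f_* T = 0$, and for a $0$-dimensional sheaf this forces $T=0$ by inspecting stalks (each $T_y$ is finite length supported at $y$ and injects into a stalk of $f_* T$). This contradicts $T \neq 0$, so $H^{-1}(F) = 0$ and $F$ is a sheaf. The main subtlety is identifying $T[1]$ as a bona fide subobject in $\tpervc$ and recognizing that Lemma \ref{lem2.3} converts the semistability inequality into the geometric statement $\deg_\mca{L}(T) \leq 0$; after that, ampleness together with the dual role played by $\R^0 f_* H^{-1}(F) = 0$ — first to place $T[1]$ in $\perv$, and then to eliminate $0$-dimensional $T$ — closes the argument.
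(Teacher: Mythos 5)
Your proof is correct and follows the paper's own argument: both use the canonical exact sequence $0\to H^{-1}(F)[1]\to F\to H^0(F)\to 0$ in $\perv$, apply the semistability inequality (Corollary~\ref{cor2.4}(A), which you obtain by combining Lemma~\ref{spcs}(A) with Lemma~\ref{lem2.3}) to conclude $\rk(H^{-1}(F))\le 0$, deduce that $H^{-1}(F)$ is $0$-dimensional, and then use $\R^0 f_*(H^{-1}(F))=0$ to force $H^{-1}(F)=0$. You are merely more explicit about the degenerate case $H^0(F)=0$ and about why a $0$-dimensional sheaf with vanishing pushforward is zero, but the route is identical.
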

\begin{proof}
We have the canonical exact sequence
\[
0\to H^{-1}(F)[1]\to F\to H^0(F)\to 0
\]
in $\perv$.  
By the condition (A) in Corollary \ref{cor2.4}
\begin{NB} corrected 7/5 \end{NB}%
, we have $\rk(H^{-1}(F))=-\rk(H^{-1}(F)[1])\leq 0$.
Since $H^{-1}(F)$ is a sheaf, this inequality means that $H^{-1}(F)$ is $0$-dimensional. 
The defining condition of \pc sheaves requires $\R^0f_*(H^{-1}(F))=0$.
So we have $H^{-1}(F)=0$.
\end{proof}

\begin{NB} I omitted this lemma since it is not clear what "sufficiently small" means. 7/5
\begin{lem}\label{dtlem}
For a nonzero coherent sheaf $F\in\cohc$ we have
\[
-\rk (E)-\varepsilon\cdot\chi(E)<0.
\] 
\end{lem}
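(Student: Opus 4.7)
My plan is to analyze $F$ by the dimension of its support. Since $F\in\cohc$ has $0$-dimensional derived pushforward and the fibers of $f\colon Y\to X$ have dimension less than $2$ by the standing hypothesis in \S\ref{sec1.1}, the sheaf $F$ itself is supported in dimension at most $1$. I would therefore split into the cases $\dim F=0$ and $\dim F=1$ and verify the inequality $\rk(F)+\varepsilon\cdot\chi(F)>0$ in each.

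If $\dim F=0$, then $\chi(F\otimes\mca{L}^{\otimes k})$ is the constant $\chi(F)=\dim H^0(Y,F)>0$, so $\rk(F)=0$ and the claim reduces to $-\varepsilon\cdot\chi(F)<0$, which is immediate since $\varepsilon>0$. If $\dim F=1$, the ampleness of $\mca{L}$ along the $1$-dimensional components of $\mathrm{supp}(F)$ forces $\rk(F)>0$, so the inequality is automatic when $\chi(F)\ge 0$ and reduces to $\varepsilon<\rk(F)/|\chi(F)|$ when $\chi(F)<0$.

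The main obstacle is exactly the quantifier \emph{sufficiently small $\varepsilon$}: no single positive $\varepsilon$ works against every such $F$ at once, since for fixed $\rk(F)=1$ (take, for example, $F=\OO_{\CP^1}(-N)$ on the exceptional curve of the resolved conifold) the Euler characteristic $\chi(F)=1-N$ is unbounded below. To obtain a uniform $\varepsilon$ one must first restrict to a bounded family of $F$'s so that $|\chi(F)|/\rk(F)$ is bounded above. In the intended application, $F$ appears as a subobject of a fixed ambient \pc system when testing $\zeta^-$-stability, and subobjects of such a fixed system form a bounded family with bounded numerical invariants, from which a uniform $\varepsilon$ can be chosen; this is the subtlety flagged in the accompanying remark, and the reason the statement was ultimately set aside rather than given as a clean lemma.
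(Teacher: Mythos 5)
Your proof takes essentially the same route as the paper's (which consists of exactly the dimension case split you give: $\rk = 0$ forces $\chi > 0$; $\rk > 0$ is claimed to be handled by $\varepsilon$ small). More importantly, your diagnosis of the defect is dead on. This lemma appears in the source only inside a commented-out block, with the authors' note that it was omitted precisely because it is ``not clear what `sufficiently small' means'' --- which is the issue you articulate with the $\OO_{\CP^1}(-N)$ example. As stated, the lemma quantifies $\varepsilon$ before $F$, and no fixed $\varepsilon>0$ works for all $F\in\cohc$ with $\rk(F)>0$ and $\chi(F)\to-\infty$.

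One small refinement worth noting: the fix the paper actually adopts is lighter than appealing to boundedness of a family. Immediately before Proposition~\ref{dt-ncdt} the paper fixes the numerical class of the ambient object $F$ and takes $\varepsilon$ with $\varepsilon\cdot\chi(F)<1$; then in the proofs of Propositions~\ref{dt-ncdt} and~\ref{pt-ncdt} the relevant sub- or quotient objects $E$ are shown to be \emph{perverse coherent}, so $\chi(E)=\dim H^0(E)\ge 0$ and, when needed, $\chi(E)\le\chi(F)$ since the complementary piece is also perverse coherent. These two nonnegativity facts, together with the single bound $\varepsilon\chi(F)<1$, do all the work; one never needs $\varepsilon < \rk(E)/|\chi(E)|$ uniformly over a family of sheaves $E$ of negative Euler characteristic, because such sheaves simply do not arise as the test objects. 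So your identification of the gap and the reason for removal is correct; the repair is a bit more pointwise than you suggest.
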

\begin{proof}
If $\rk(E)>0$, the equation holds since $\varepsilon$ is sufficiently small.
Otherwise, $E$ is $0$-dimensional and so $\chi(E)>0$. Thus the equation holds as well. 
\end{proof}
\end{NB}
Fixing the numerical class of $F\in D^b_c(\coh)$, 
we take sufficiently small $\varepsilon>0$ so that $\varepsilon\cdot\chi(F)<1$. \begin{NB} added 7/5 \end{NB}
\begin{prop}\label{dt-ncdt}
Given a $\zeta^-$-stable \pc system $(F,s)\in\tpervc$, then $F$ is a sheaf and $s$ is surjective in $\coh$.
On the other hand, given a coherent sheaf $F\in\cohc$ and a surjection $s\colon \OO_Y\to F$ in $\coh$, then $F$ is \pc and the \pc system $(F,s)$ is $\zeta^-$-stable.
\end{prop}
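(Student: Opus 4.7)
The plan is to combine Corollary~\ref{cor2.4}'s reformulation of the two stabilities with the structural Lemmas~\ref{cokerisperv}, \ref{fispc}, \ref{fissheaf} and Corollary~\ref{cor1}.

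For the forward direction, suppose $(F,s)$ is $\zeta^-$-stable. I would first deduce that $(F,s)$ is $\zeta^\circ$-semistable and invoke Lemma~\ref{fissheaf} to conclude $F\in\coh$. Given any nonzero $E\subseteq F$ in $\perv$, both $E$ and $F/E$ lie in $\pervc$ since it is closed under subobjects and quotients; for any $G\in\pervc$ one has $\chi(G)=\dim H^0(Y,G)\ge 0$, and additivity of $\chi$ then gives $\chi(E)\le\chi(F)$. Condition (A) of Corollary~\ref{cor2.4} applied to $\zeta^-$ reads $-\rk(E)-\varepsilon\chi(E)<0$, so $\rk(E)>-\varepsilon\chi(E)\ge-\varepsilon\chi(F)>-1$ by the choice $\varepsilon\chi(F)<1$; integrality of $\rk(E)$ forces $\rk(E)\ge 0$, which is (A) for $\zeta^\circ$. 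Similarly (B) for $\zeta^-$ gives $-\rk(F/E)>\varepsilon\chi(F/E)\ge 0$, hence $\rk(F/E)\le 0$. Thus $(F,s)$ is $\zeta^\circ$-semistable and Lemma~\ref{fissheaf} yields that $F$ is a sheaf. To finish, set $E:=\im_{\coh}(s)$; Lemma~\ref{cokerisperv} identifies $0\to E\to F\to \coker_{\coh}(s)\to 0$ as exact in $\perv$ with $s$ factoring through $E$. If $E\subsetneq F$ strictly, condition (B) for $\zeta^-$ would give $\rk(\coker_{\coh}(s))<0$, contradicting $\rk\ge 0$ for a sheaf; hence $s$ is surjective in $\coh$.

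For the converse, let $F\in\cohc$ and $s\colon\OO_Y\to F$ be surjective in $\coh$. Then $\coker_{\coh}(s)=0$ is $0$-dimensional, so Lemma~\ref{fispc} gives $F\in\pervc$. To verify $\zeta^-$-stability via Corollary~\ref{cor2.4}, take a nonzero $E\subsetneq F$ in $\perv$: since $H^{-1}(F)=0$, the long exact sequence forces $H^{-1}(E)=0$, so $E\in\pervc$ is itself a sheaf; hence $\rk(E)\ge 0$ and $\chi(E)\ge 0$. If $\rk(E)\ge 1$ then $-\rk(E)-\varepsilon\chi(E)\le -1<0$; if $\rk(E)=0$ then $E$ is $0$-dimensional with $\chi(E)>0$ and $-\varepsilon\chi(E)<0$. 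Either way (A) holds. For (B), Corollary~\ref{cor1} shows that $s$ is also surjective in $\perv$, so any proper subobject of $F$ in $\perv$ through which $s$ factors would have to equal $F$, a contradiction; condition (B) is therefore vacuous.

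The main obstacle is the first step of the forward direction: the $\varepsilon$-correction in the $\zeta^-$-stability inequality could a priori flip the sign of $-\rk(E)$. What saves the argument is the combination of $\rk(E)\in\Z$, the bound $\chi(E)\le\chi(F)$ coming from positivity of $\chi$ on $\pervc$, and the normalization $\varepsilon\chi(F)<1$ fixed just before the proposition; once $F$ is known to be a sheaf, the remaining steps are essentially formal applications of Lemmas~\ref{cokerisperv}, \ref{fispc} and Corollary~\ref{cor1}.
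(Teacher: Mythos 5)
Your forward direction is correct and essentially the paper's, with a worthwhile improvement: you make explicit that $\zeta^-$-stability implies $\zeta^\circ$-semistability (via positivity of $\chi$ on $\pervc$, integrality of $\rk$, and the normalization $\varepsilon\chi(F)<1$) before invoking Lemma~\ref{fissheaf}, whereas the paper applies that lemma without comment. Your converse direction is correct for condition (A). However, there is a genuine gap in your treatment of condition (B).

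You claim that $s\colon\OO_Y\to F$ being surjective in $\coh$ implies $s$ is an epimorphism in $\perv$, citing Corollary~\ref{cor1}, and conclude that (B) is vacuous. This is false, and Corollary~\ref{cor1} does not apply here: it would require $\ker_{\coh}(s)=\I_Z$ to be perverse coherent, but $R^1f_*\I_Z$ need not vanish. More to the point, $\coker_{\perv}(s)=H^0_{\perv}(\I_Z[1])$ corresponds under the Morita equivalence to $R^1f_*(\mca{P}^\vee\otimes\I_Z)$, which is generally nonzero. In fact ``$s$ epi in $\perv$'' is precisely the cyclic (NCDT) stability condition at $\zeta_{\mathrm{cyclic}}=(\zeta_0,\zeta_1)$ with both components negative; if surjectivity in $\coh$ forced cyclicity, the moduli spaces for $\zeta^-$ and $\zeta_{\mathrm{cyclic}}$ would coincide, contradicting the nontrivial wall-crossing factors in Theorem~\ref{thm-main}. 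The subtlety you are missing is that a subobject $E\hookrightarrow F$ in $\perv$ need not be a subsheaf: even though $E$ is a sheaf (as you correctly deduce), the morphism $E\to F$ can have a kernel $K\neq 0$ in $\coh$ while being a monomorphism in $\perv$, as long as $R^0f_*K=0$. When $s$ factors through such an $E$, $E\to F$ is surjective in $\coh$, so $G=F/E=H^{-1}(G)[1]$ is a nonzero shifted sheaf with $r(G)\leq 0$, and $E\neq F$. Condition (B) must actually be checked on these. The paper does exactly this: it observes $G$ is a shifted sheaf, splits into the cases $r(G)\leq -1$ (where $-r(G)-\varepsilon\chi(G)\geq 1-\varepsilon\chi(F)>0$, using $0\leq\chi(G)\leq\chi(F)$) and $r(G)=0$. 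You should replace the ``vacuous'' claim with this verification.
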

\begin{proof}
\begin{NB}
I modified, so as not to use above lemma. 7/5
\end{NB}
Assume that $(F,s)\in\tpervc$ is $\zeta^-$-stable. 
By Lemma \ref{fissheaf}, $F$ is a sheaf.   
By Lemma \ref{lem1}, $\coker_{\coh}(s)$ is perverse coherent. 
Assume $s$ is not surjective in $\coh$. 
Since $\coker_{\coh}(s)$ is a sheaf, we have $r(\coker_{\coh}(s))\geq 0$.
Since $\coker_{\coh}(s)$ is perverse coherent, we have $\chi(\coker_{\coh}(s))\geq 0$. 
Hence we have 
\[
0> -r(\coker_{\coh}(s))-\varepsilon\cdot\chi(\coker_{\coh}(s)).
\]
By Lemma \ref{cokerisperv}, this contradicts the condition (B) of $\zeta^-$-stability of $(F,s)$. 
So $s$ is surjective in $\coh$.

On the other hand, assume that $s$ is surjective in $\coh$.
Let 
\[
0\to E\to F\to G\to 0
\]
be an exact sequence in $\perv$. 
Since $H^{-2}(G)=H^{-1}(F)=0$, so we have $H^{-1}(E)=0$ by the long exact sequence.
Thus $E$ is a sheaf, and so we have $\rk (E)\geq 0$. 
Since $E$ is \pc we have $\chi(E)\geq 0$. 
So the condition (A) holds.
Moreover, assume that $s$ factors through $E$. Then $E\to F$ is surjective in $\coh$ and so $G$ is a sheaf shifted by $[1]$.  
Suppose $r(G)\leq -1$. Since $F$ is perverse coherent, we have
\[
\varepsilon\cdot\chi(G)=\varepsilon(\chi(F)-\chi(E))\leq\varepsilon\cdot\chi(F)<1.
\]
So the condition (B) holds.
Suppose $r(G)=0$. 
Then $\chi(G)<0$   
and the condition (B) holds.
\end{proof}

\begin{prop}\label{pt-ncdt}
Given a $\zeta^+$-stable \pc system $(F,s)\in\tpervc$, then $F$ is a sheaf and $(F,s)$ is a stable pair in the sense of \cite{pt1}, that is, the following conditions are satisfied:
\begin{enumerate}
\item $F$ is pure of dimension $1$, and 
\item $\coker_{\coh}(s)$ is $0$-dimensional. 
\end{enumerate}
On the other hand, given a stable pair $(F,s)$, then $F$ is \pc and the  \pc system $(F,s)$ is $\zeta^+$-stable.
\end{prop}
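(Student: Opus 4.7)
The plan is to run a proof parallel to that of Proposition \ref{dt-ncdt}, swapping the roles of $\zeta^-$ and $\zeta^+$ and replacing ``surjective in $\coh$'' by ``$0$-dimensional cokernel.'' The recurring numerical input is Lemma \ref{lem2.3}, which implies $\chi(E)=\dim H^0(Y,E)\geq 0$ for every $E\in\pervc$; combined with additivity of $\rk$ and $\chi$ in short exact sequences in $\perv$ (and the fact that $\pervc$ is a Serre subcategory), it allows the single bound $\varepsilon\chi(F)<1$ to control every relevant subobject and quotient.

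For the forward direction, suppose $(F,s)$ is $\zeta^+$-stable. Since $\chi(E)\geq 0$ on $\pervc$, condition (A) of $\zeta^+$-stability already implies condition (A) of $\zeta^\circ$-semistability, so Lemma \ref{fissheaf} gives that $F$ is a sheaf. Purity follows at once, for any nonzero $0$-dimensional subsheaf $T\subseteq F$ would give $-\rk(T)+\varepsilon\chi(T)=\varepsilon\chi(T)>0$, contradicting stability. By Lemma \ref{cokerisperv}, $\im_{\coh}(s)$ sits in a short exact sequence in $\perv$ and $s$ factors through it, so condition (B) of Corollary \ref{cor2.4} reads $\rk(\coker_{\coh}(s))<\varepsilon\chi(\coker_{\coh}(s))$; the inequality $\chi(\im_{\coh}(s))\geq 0$ forces $\chi(\coker_{\coh}(s))\leq\chi(F)<1/\varepsilon$, and hence $\rk(\coker_{\coh}(s))=0$, i.e., $\coker_{\coh}(s)$ is $0$-dimensional.

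Conversely, let $(F,s)$ be a stable pair. Lemma \ref{fispc} gives $F\in\perv$. For condition (A) of Corollary \ref{cor2.4}, let $0\neq E\subseteq F$ in $\perv$; the long exact sequence of cohomology sheaves, using $H^{-1}(F)=0$, makes $E$ a sheaf, and the monomorphism $E\to F$ in $\perv$ has nonzero sheaf image, which must then be a $1$-dimensional subsheaf of the pure $F$, so $\rk(E)\geq 1$. Combined with $0\leq\chi(E)\leq\chi(F)$ (via $\chi(F/E)\geq 0$ on $\pervc$), this yields $-\rk(E)+\varepsilon\chi(E)\leq -1+\varepsilon\chi(F)<0$. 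For condition (B), suppose $s$ factors through $E\subsetneq F$; then $\im_{\coh}(s)\subseteq E$ in $\perv$, and by the third isomorphism theorem $G=F/E$ is a quotient in $\perv$ of the $0$-dimensional $\coker_{\coh}(s)$. A diagram chase in the long exact sequence shows $\rk(G)\leq 0$ and $\chi(G)\geq 0$; moreover, since $E\subsetneq F$ we have $G\neq 0$, and by Lemma \ref{lem2.3} an object of $\pervc$ with $\rk=\chi=0$ must vanish, so either $\rk(G)<0$ or $\chi(G)>0$, and in either case $-\rk(G)+\varepsilon\chi(G)>0$. The main technical point is controlling $\rk(G)$ for a perverse (as opposed to coherent) quotient of a $0$-dimensional sheaf, which is handled by noting that any subobject in $\perv$ of a $0$-dimensional sheaf is itself a sheaf whose sheaf image is $0$-dimensional, so it contributes non-negatively to $\rk$.
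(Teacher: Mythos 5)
Your proof is correct, and it follows the same overall structure as the paper's argument. Two remarks on the comparison.

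In the forward direction, you make explicit a step the paper's proof elides: $F$ is a sheaf because condition (A) of $\zeta^+$-stability together with $\chi(E)\ge 0$ on $\pervc$ implies condition (A) of $\zeta^\circ$-semistability, so Lemma~\ref{fissheaf} applies. The paper dives straight into the purity argument (tacitly assuming $F$ is a sheaf, presumably carrying over the citation of Lemma~\ref{fissheaf} from Proposition~\ref{dt-ncdt}); your explicit justification is welcome. The purity argument and the treatment of $\coker_{\coh}(s)$ are then identical to the paper's.

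In the converse direction, the argument for condition (A) is again the same. For condition (B) you take a slightly different, more structural route: you observe that if $s$ factors through $E$, then $\im_{\coh}(s)\subseteq E$ in $\perv$, so by the third isomorphism theorem $G=F/E$ is a $\perv$-quotient of the $0$-dimensional sheaf $\coker_{\coh}(s)$; then $\rk(G)=-\rk(K)$ for the kernel $K\hookrightarrow\coker_{\coh}(s)$, and $K$ is a sheaf (from the long exact sequence of cohomology sheaves) so $\rk(K)\ge 0$; combining $\rk(G)\le 0$, $\chi(G)\ge 0$, $G\neq 0$, and Lemma~\ref{lem2.3} finishes it. The paper instead decomposes $G$ into $H^{-1}(G)$ and $H^0(G)$, surjects $\coker_{\coh}(s)$ onto $H^0(G)$ to conclude that $H^0(G)$ is $0$-dimensional, and checks the cases $H^{-1}(G)=0$ and $H^{-1}(G)\neq 0$ separately. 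Both arguments reduce to the same numerical facts and both are valid; yours packages the case analysis inside the observation that a $\perv$-quotient of a $0$-dimensional sheaf has nonpositive $\rk$, which is a small but genuine simplification. One minor nit: the clause ``whose sheaf image is $0$-dimensional, so it contributes non-negatively to $\rk$'' is redundant --- $\rk(K)\ge 0$ simply because $K$ is a sheaf --- but the conclusion is unaffected.
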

\begin{proof}
\begin{NB}
I modified, so as not to use above lemma. 7/5
\end{NB}
Assume that $(F,s)\in\tpervc$ is $\zeta^+$-stable. 
Let 
\[
0\to E\to F\to G\to 0
\]
be an exact sequence in $\coh$. 
Suppose that $E$ is $0$-dimensional. 
Then $E$ is \pc and by Corollary \ref{cor1} $E$ is a subobject of $F$ in $\perv$ as well. 
We have $\rk (E)=0$ and $\chi(E)>0$ because $E$ is $0$-dimensional.  
This contradicts with the inequality
\[
-\rk (E)+\varepsilon\cdot\chi(E)<0
\] 
in the condition (A) of $\zeta^+$-stability.
So $F$ is pure of dimension $1$.
By Lemma \ref{cokerisperv} and the condition (B) of $\zeta^+$-stability, we have  
\[
0<-r(\coker_{\coh}(s))+\varepsilon\cdot\chi(\coker_{\coh}(s)),
\]
unless $\coker_{\coh}(s)=0$. 
Since $\im_{\coh}(s)$ is perverse coherent, we have
\[
\varepsilon\chi(\coker_{\coh}(s))=\varepsilon(\chi(F)-\chi(\im_{\coh}(s))\leq\varepsilon\cdot\chi(F)<1.
\]
So $\rk (\coker_{\coh}(s))$ can not be positive, that is, $\coker_{\coh}(s)$ is $0$-dimensional.

Assume that $(F,s)$ is a stable pair.
Let 
\[
0\to E\to F\to G\to 0
\]
be an exact sequence in $\perv$. 
As in the proof of Proposition \ref{dt-ncdt}, $E$ is a sheaf. 
We have the exact sequences
\[
\begin{array}{ccccccccc}
0 & \to & H^{-1}(G) & \to & E & \to & \im_{\coh}(E\to F) & \to & 0 \\
0 & \to & \im_{\coh}(E\to F) & \to & F & \to & H^0(G) & \to & 0 
\end{array}
\]
in $\coh$. 
Since $F$ is pure of dimension $1$, $\im_{\coh}(E\to F)$ is $1$-dimensional unless it is zero. 
As in the proof of Lemma \ref{fissheaf}, $G$ is $1$-dimensional unless $G=0$.
Hence we have $r(E)\geq 1$ unless $E=0$. 
Because $G$ is perverse coherent, we have
$\chi(E)=\chi(F)-\chi(G)\leq\chi(F)$.
So the condition (A) is satisfied.

Moreover, assume that $s$ factors through $E$. 
Then $\im_{\coh}(E\to F)\supset \im_{\coh}(s)$ in $\coh$ and so 
\[
\coker_{\coh}(s)\twoheadrightarrow \coker_{\coh}(E\to F)=H^0(G),
\]
in $\coh$. 
In particular, $H^0(G)$ is $0$-dimensional. 
By an argument as in the proof of Proposition \ref{fissheaf}, $H^{-1}(G)$ is $1$-dimensional unless $H^{-1}(G)=0$. 
Because $E$ is perverse coherent, we have
$\chi(G)=\chi(F)-\chi(E)\leq\chi(F)$.
Hence we have
\[
-r(G)+\varepsilon\cdot\chi(G)=-r(H^0(G))+r(H^{-1}(G))+\varepsilon\cdot\chi(G)>0.
\]
\end{proof}

\subsection{Coherent systems on the flop}\label{flop}
\begin{NB} This subsection is added (6/19).\end{NB}%
Assume further $f$ is isomorphic in codimension $1$.
Let $f^+\colon Y^+\to X$ be the flop. 
Let ${}^{0}\fperv$ be the full subcategory of $D^b(\fcoh)$ consisting of objects $E$ satisfying the following conditions: 
\begin{itemize}
\item $H^i(E)=0$ unless $i=0,-1$,
\item $\R^1f_*(H^0(E))=0$ and $\R^0f_*(H^{-1}(E))=0$,
\item $\Hom(H^0(E),C)=0$ for any sheaf $C$ on $Y$ satisfying $\R f_*(C)=0$.
\end{itemize}
We can associate $\mca{L}$ and $\mca{P}_0$ with an ample line bundle $\mca{L}^+$ and a vector bundle $\mca{Q}^+_0$ on $Y^+$ such that
\begin{itemize}
\item they are involved in an exact sequence 
\[
0\to (\mca{L}^+)^{-1}\to \mca{Q}^+_0 \to \OO_{Y^+}^{\oplus r-1}\to 0,
\]
where $r$ coincides with what appeared in the defining sequence of $\mca{P}_0$,  
\item $\mca{Q}^+_0$ is a local projective generator of ${}^0\fperv$, 
\item $f^+_*\gend(\OO_{Y^+}\oplus\mca{Q}^+_0)=\mA$,  
\end{itemize}
and hence we have the following equivalences:
\[
\begin{array}{ccccc}
D^b(\coh) & \simeq & D^ b(\mamod) & \simeq & D^b(\fcoh)\\
\cup & & \cup & & \cup\\
{}^{-1}\perv & \simeq & \mamod & \simeq & {}^{0}\fperv,
\end{array}
\]
(see \cite[Theorem 4.4.2]{vandenbergh-3d}). 
Here we denote by ${}^{-1}\perv$ what we have denoted simply by $\perv$ to emphasize the difference between ${}^{-1}\perv$ and ${}^0\fperv$.
By the same argument as the previous subsection, we can verify the following propositions:

\begin{prop}\label{fdt-ncdt}
Given a $(-\zeta^+)$-stable \pc system $(F^+,s^+)\in\ftpervc$, then $F^+$ is a sheaf and $s^+$ is surjective in $\fcoh$.
On the other hand, given a coherent sheaf $F^+\in\fcohc$ and a surjection $s^+\colon \OO_{Y^+}\to F^+$ in $\fcoh$, then $F^+$ is \pc and the \pc system $(F^+,s^+)$ is $(-\zeta^+)$-stable.
\end{prop}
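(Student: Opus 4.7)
The plan is to parallel the proof of Proposition~\ref{dt-ncdt}, transporting every step from $Y$ to $Y^+$ and from ${}^{-1}\perv$ to ${}^{0}\fperv$; the role of $\mca{P}_0$ is played by $\mca{Q}_0^+$, and the sign of the stability parameter flips from $\zeta^-$ to $-\zeta^+$. The first step is to establish the flop analog of Lemma~\ref{lem2.3}. Dualizing the defining sequence $0\to(\mca{L}^+)^{-1}\to \mca{Q}_0^+\to \OO_{Y^+}^{\oplus r-1}\to 0$ yields $0\to \OO_{Y^+}^{\oplus r-1}\to (\mca{Q}_0^+)^\vee\to \mca{L}^+\to 0$. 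Polarizing $Y^+$ by $\mca{L}^+$ so that $\rk(E^+)=\chi(E^+\otimes\mca{L}^+)-\chi(E^+)$, a direct Hilbert-polynomial computation for $E^+\in\fpervc$ gives
\[
r\cdot\dim H^0(Y^+,E^+)-\dim H^0(Y^+,E^+\otimes(\mca{Q}_0^+)^\vee)=-\rk(E^+),
\]
the opposite sign from Lemma~\ref{lem2.3}. Consequently, substituting $-\zeta^+=(r-\varepsilon,-1)$ into Lemma~\ref{spcs} converts conditions (A) and (B) of $(-\zeta^+)$-stability on $Y^+$ into exactly the inequalities $-\rk(E^+)-\varepsilon\chi(E^+)\,(\le)\,0$ and $0\,(\le)\,-\rk(F^+/E^+)-\varepsilon\chi(F^+/E^+)$, i.e., the same form as the $\zeta^-$-stability conditions of Corollary~\ref{cor2.4}.

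Next, I would establish the ${}^{0}\fperv$-versions of Lemmas~\ref{lem1}, \ref{cokerisperv}, \ref{fispc}, and \ref{fissheaf}. The arguments carry over: quotients in $\fcoh$ of objects of ${}^{0}\fperv$ lie in ${}^{0}\fperv$ because $\R^2 f^+_*$ vanishes (fibers of dimension less than $2$); the image of $s^+\colon\OO_{Y^+}\to F^+$ in $\fcoh$ is therefore perverse, so the short exact sequence $0\to\im_{\fcoh}(s^+)\to F^+\to\coker_{\fcoh}(s^+)\to 0$ is exact in ${}^{0}\fperv$; and $(-\zeta^+)$-semistability forces $H^{-1}(F^+)=0$ by the same rank/perversity argument as in Lemma~\ref{fissheaf}.

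With these ingredients in place, Proposition~\ref{fdt-ncdt} follows by the verbatim translation of Proposition~\ref{dt-ncdt}. For the ``only if'' direction, the flop version of Lemma~\ref{fissheaf} forces $F^+$ to be a sheaf; if $s^+$ were not surjective in $\fcoh$ then $\coker_{\fcoh}(s^+)$ would be a nonzero perverse sheaf with $\rk\ge 0$ and $\chi\ge 0$, making $-\rk-\varepsilon\chi<0$ and violating condition~(B). For the converse, given a surjection $s^+\colon\OO_{Y^+}\twoheadrightarrow F^+$, any exact sequence $0\to E^+\to F^+\to G^+\to 0$ in ${}^{0}\fperv$ forces $E^+$ to be a sheaf via the long exact sequence, yielding $\rk(E^+),\chi(E^+)\ge 0$ for condition~(A); and if $s^+$ factors through $E^+$ then $G^+[-1]$ is a sheaf and the normalization $\varepsilon\chi(F^+)<1$ handles condition~(B) by the same case split on $\rk(G^+)\le -1$ versus $\rk(G^+)=0$.

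The only step with genuine content, rather than mechanical transport, is the sign-flipped Hilbert-polynomial identity above; once it is verified, everything else is bookkeeping. The minor point to be careful about is that $H^0(Y^+,E^+\otimes(\mca{Q}_0^+)^\vee)=\chi(E^+\otimes(\mca{Q}_0^+)^\vee)$ for $E^+\in\fpervc$, which follows because $E^+$ has compact support and $(\mca{Q}_0^+)^\vee$ is locally free, so higher cohomologies vanish by the same affine-base argument used implicitly in Lemma~\ref{lem2.3}.
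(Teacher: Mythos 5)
Your proposal is correct and follows the paper's own approach exactly: the paper proves Propositions~\ref{fdt-ncdt} and \ref{fpt-ncdt} by the single sentence ``By the same argument as the previous subsection,'' and your write-up is a careful unpacking of that sentence. You correctly identified the sole nontrivial ingredient, namely the sign flip in the analog of Lemma~\ref{lem2.3} coming from the opposite orientation of the defining sequence $0\to(\mca{L}^+)^{-1}\to\mca{Q}_0^+\to\OO_{Y^+}^{\oplus r-1}\to 0$; once $r\cdot\dim H^0(E^+)-\dim H^0(E^+\otimes(\mca{Q}_0^+)^\vee)=-\rk(E^+)$ is in hand, plugging in $-\zeta^+=(r-\varepsilon,-1)$ recovers the exact inequalities of Corollary~\ref{cor2.4} for $\zeta^-$, and the remaining lemmas carry over verbatim to ${}^0\fperv$.
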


\begin{prop}\label{fpt-ncdt}
Given a $(-\zeta^-)$-stable \pc system $(F^+,s^+)\in\ftpervc$, then $F^+$ is a sheaf and $(F^+,s^+)$ is a stable pair.
On the other hand, given a stable pair $(F^+,s^+)$ on $Y^+$, then $F^+$ is \pc and the  \pc system $(F^+,s^+)$ is $(-\zeta^-)$-stable.\begin{NB} corrected 7/7 \end{NB}
\end{prop}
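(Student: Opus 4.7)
The plan is to mirror the proof of Proposition~\ref{pt-ncdt} on the flop side, using the symmetry between $(Y,\mca{P}_0)$ and $(Y^+,\mca{Q}_0^+)$ laid out in \S\ref{flop}. First I would establish the $Y^+$-analogue of Lemma~\ref{lem2.3}. Dualising the defining exact sequence
\[
0\to (\mca{L}^+)^{-1}\to \mca{Q}_0^+ \to \OO_{Y^+}^{\oplus r-1}\to 0
\]
yields $0\to \OO_{Y^+}^{\oplus r-1}\to (\mca{Q}_0^+)^\vee \to \mca{L}^+\to 0$, so for $F^+\in\fpervc$, writing $r^+(F^+)$ for the degree-one coefficient of the Hilbert polynomial with respect to $\mca{L}^+$, one reads off
\[
\dim H^0(F^+\otimes (\mca{Q}_0^+)^\vee) \;=\; r\,\chi(F^+) + r^+(F^+).
\]
Plugging this into Definition~\ref{def-st} with the pair $-\zeta^-=(r+\varepsilon,-1)$ collapses the linear form $\zeta_0\dim H^0(E)+\zeta_1\dim H^0(E\otimes(\mca{Q}_0^+)^\vee)$ into $-r^+(E)+\varepsilon\,\chi(E)$, which is literally the same expression that governs $\zeta^+$-stability on $Y$ in Corollary~\ref{cor2.4}.

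Next I would transport Lemmas~\ref{lem1}, \ref{cokerisperv}, \ref{fispc} and \ref{fissheaf} to the flop side. The category ${}^0\fperv$ is cut out by the same three vanishing conditions as $\perv$ and $f^+$ also has fibres of dimension less than two, so the proofs apply verbatim with $Y,\perv,\coh,\OO_Y$ replaced by $Y^+,{}^0\fperv,\fcoh,\OO_{Y^+}$. With these analogues and the parameter identification in hand, the proof of Proposition~\ref{pt-ncdt} then translates word for word. For the forward direction, a nonzero $0$-dimensional subsheaf of $F^+$ is a subobject of $F^+$ in ${}^0\fperv$ violating condition~(A), so $F^+$ is pure of dimension one; Lemma~\ref{cokerisperv}, condition~(B) and the bound $\varepsilon\,\chi(F^+)<1$ then force $\coker_{\fcoh}(s^+)$ to be $0$-dimensional. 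For the converse, starting from $0\to E\to F^+\to G\to 0$ in ${}^0\fperv$, the argument producing the two short exact sequences in $\fcoh$ involving $\im_{\fcoh}(E\to F^+)$ goes through unchanged, and purity of $F^+$ together with $0$-dimensionality of $\coker_{\fcoh}(s^+)$ yields (A) and (B) with strict inequality.

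The main obstacle, and really the only new calculation, is the sign bookkeeping in the first step: one must check that the correct flop parameter is $-\zeta^-$ rather than $\zeta^-$ or $-\zeta^+$, and this is exactly what the identity $\dim H^0(F^+\otimes(\mca{Q}_0^+)^\vee)=r\chi(F^+)+r^+(F^+)$ pins down. Once that identity is established, the rest is a formal translation of the arguments in \S\ref{cs-pcs} via the flop equivalence ${}^{-1}\perv\simeq \mamod\simeq {}^0\fperv$.
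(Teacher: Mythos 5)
Your proposal is correct and matches the paper's intended argument exactly: the paper's own justification for Propositions~\ref{fdt-ncdt} and~\ref{fpt-ncdt} is the single sentence ``By the same argument as the previous subsection,'' and you have supplied precisely that argument. In particular, the flop-side identity $r\cdot\dim H^0(E^+)-\dim H^0(E^+\otimes(\mca{Q}_0^+)^\vee)=-r^+(E^+)$ (obtained from $\chi(E^+\otimes(\mca{Q}_0^+)^\vee)=r\chi(E^+)+r^+(E^+)$) is the correct replacement for Lemma~\ref{lem2.3}, and it is exactly this sign flip relative to $Y$ that makes $-\zeta^-$ the parameter reproducing the $\zeta^+$-inequalities of Corollary~\ref{cor2.4}, so that the proof of Proposition~\ref{pt-ncdt} carries over verbatim.
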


\section{Counting invariants on the resolved conifold}
In this section, we study the counting invariants on the resolved conifold.

Let $f\colon Y\to X$ be the crepant resolution of the conifold, that is, $X=\{(x,y,z,w)\in \C^4\mid xy-zw=0\}$ and $Y$ is the total space of the vector bundle $\pi\colon \OO_{\CP^1}(-1)\oplus\OO_{\CP^1}(-1)\to\CP^1$. 
This satisfies the assumptions at the beginning of \S \ref{sec1.1}.

\subsection{Quivers for the resolved conifold}\label{subsec-quiver-for-conifold}
Let $Q$ be the quiver in Figure \ref{quiver} and $A$ be the algebra defined by the following quiver with the relations:
\[
A:= \C Q/(a_1b_ia_2=a_2b_ia_1,b_1a_ib_2=b_2a_ib_1)_{i=1,2}.
\]
\begin{figure}[htbp]
  \centering
  \includegraphics{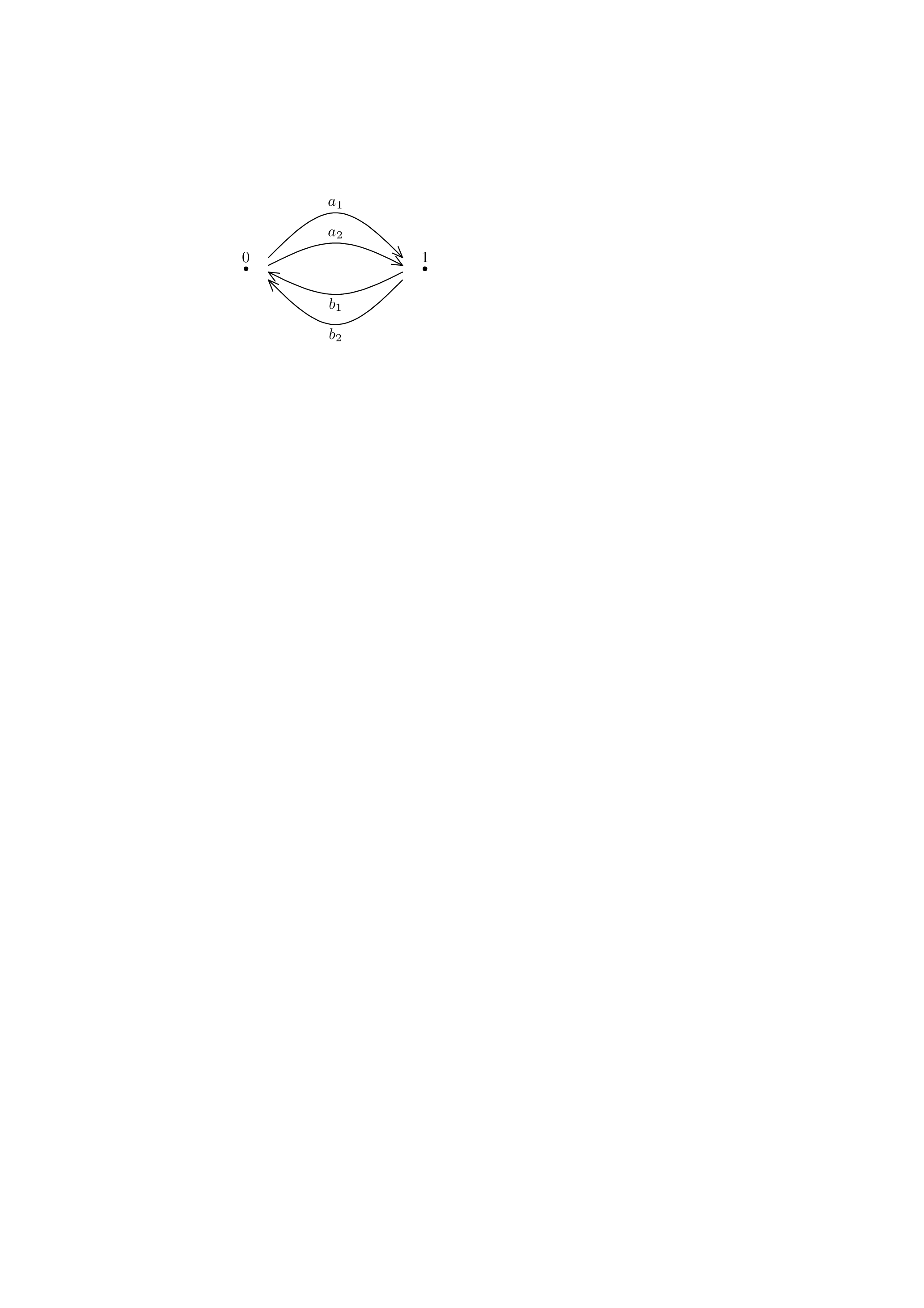}
  \caption{quiver $Q$}\label{quiver}
\end{figure}
\begin{rem}
Note that this relation is derived from the superpotential $\omega=a_1b_1a_2b_2-a_1b_2a_2b_1$ \textup{(\cite{berenstein-douglas} for example. See also \cite{quiver-with-potentials})}. 
\end{rem}
Let $\amod$ (resp. $\afmod$) denote the category of right $A$-modules (resp. finite dimensional right $A$-modules). 
For a finite dimensional $A$-module $V$, let $V_0$ and $V_1$ denote the vector spaces corresponding to the vertices $0$ and $1$ and $\dimv V:=(\dim V_0,\dim V_1)\in(\Z_{\geq 0})^2$.
\begin{NB4}
the description on $\mathrm{Mod}(A)$ is added
\end{NB4}

Let us take $\mca{L}:=\pi^*\OO_{\CP^1}(1)$ as an ample line bundle.  
Since $H^1(Y,\mca{L}^{-1})=0$, we have a projective generator $\mca{P} := \OO_Y\oplus \mca{L}$. 
The endomorphism algebra $\End_Y(\mca{P})$ is isomorphic to $A$ and we have the following equivalence:
\begin{prop}\textup{(see Theorem \ref{cor:Morita})}
\[
\perv\simeq \amod,\quad
\pervc\simeq \afmod.
\]
\end{prop}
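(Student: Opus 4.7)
The plan is to deduce this as a direct specialization of Theorem~\ref{cor:Morita}, after verifying that the choice $\mca{P} = \OO_Y \oplus \mca{L}$ with $\mca{P}_0 = \mca{L}$ fits the general setup of \S\ref{framednccr}, and then passing from $\mA$-modules to $A$-modules using that $X$ is affine.

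The cohomological verification is to check that the defining extension \eqref{eq:ext} reduces to $\mca{P}_0 = \mca{L}$ when we take $r = 1$; equivalently, $H^1(Y, \mca{L}^{-1}) = 0$. Since $\pi\colon Y \to \CP^1$ is affine with $\pi_*\OO_Y = \mr{Sym}^\bullet(\OO(1) \oplus \OO(1))$,
\[
H^i(Y, \mca{L}^{-1}) \;=\; \bigoplus_{n \geq 0} (n+1)\, H^i\bigl(\CP^1, \OO(n-1)\bigr),
\]
and every $H^1$ term vanishes because $n - 1 \geq -1$. So $\mca{P}$ is a local projective generator by \cite[Proposition 3.2.5]{vandenbergh-3d}, and Theorem~\ref{cor:Morita} yields $\perv \simeq \mamod$ with $\mA = f_*\gend_Y(\mca{P})$.

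Because $X = \mr{Spec}(R)$ is affine and $\mA$ is a coherent $\OO_X$-algebra, the global sections functor produces an equivalence $\mamod \simeq \amod$ with $A = H^0(X, \mA) = \End_Y(\mca{P})$; this endomorphism algebra has already been identified with the quiver-with-relations $A$ in the paragraph preceding the proposition. For the second equivalence, $F \in \pervc$ corresponds to an $A$-module $V$ whose support over $R$ is $0$-dimensional, which---since $R$ is a finitely generated $\C$-algebra and $V$ is finitely generated through $A$---is equivalent to $V$ being finite-dimensional over $\C$, giving $\pervc \simeq \afmod$. The only genuinely computational step, namely the explicit isomorphism $\End_Y(\mca{P}) \simeq \C Q/I$ via generators $a_i$ (the two homogeneous coordinates on $\CP^1$) and $b_i$ (the two fiber directions of $\OO(-1) \oplus \OO(-1)$) satisfying the superpotential relations, is treated as input from the preceding discussion, so the proposition reduces to the cohomology vanishing above and the standard affine-scheme dictionary.
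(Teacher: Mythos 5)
Your proof is correct and follows the same route the paper takes: establish $H^1(Y,\mca{L}^{-1})=0$ so that $r=1$ in \eqref{eq:ext} and $\mca{P}=\OO_Y\oplus\mca{L}$ is a (local) projective generator, invoke Theorem~\ref{cor:Morita} to get $\perv\simeq\mamod$, then use affineness of $X$ to pass to module categories over $A=\End_Y(\mca{P})$ and match $0$-dimensional support with finite $\C$-dimension. The paper states the vanishing and the identification $\End_Y(\mca{P})\simeq A$ without spelling out the $\pi_*$ computation, so you have simply made explicit a step the authors left to the reader.
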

Under these equivalences, we have
\[
V_0=H^0(Y,F),\quad V_1=H^0(Y,F\otimes \mathcal{L}^{-1})
\]
for a perverse coherent sheaf $F$.

Moreover, let $\Q$ be the quiver in Figure \ref{newquiver} and $\A$ be the following quiver with the relations:
\[
\A:= \C \Q/(a_1b_ia_2=a_2b_ia_1,b_1a_ib_2=b_2a_ib_1)_{i=1,2}.
\]
\begin{figure}[htbp]
  \centering
  \includegraphics{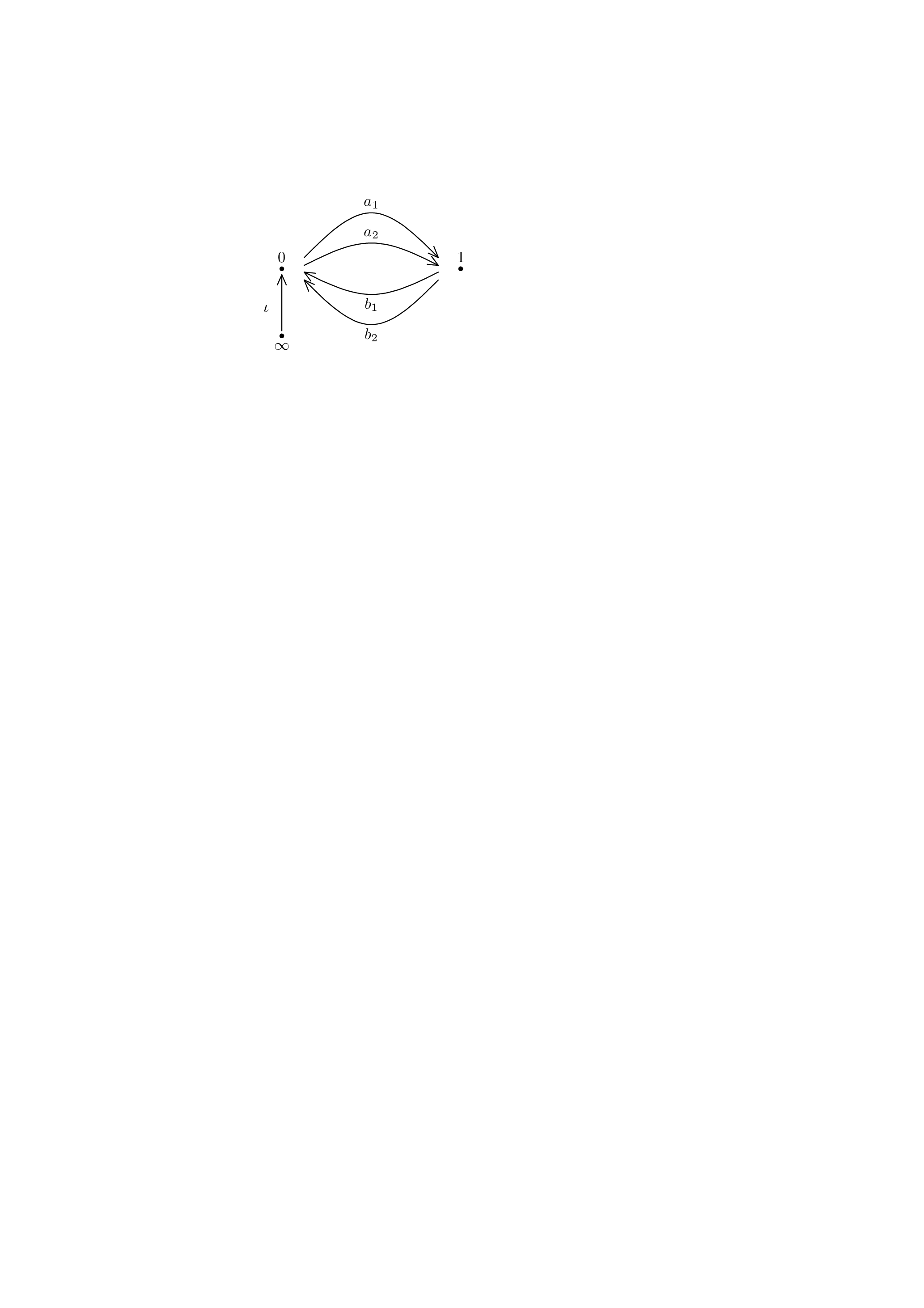}
  \caption{quiver $\Q$}\label{newquiver}
\end{figure}
Let $\hamod$ (resp. $\hafmod$) denote the category of $\A$-modules (resp. finite dimensional $\A$-modules), which is equivalent to the category $\hmamod$ (resp. $\hmacmod$) defined in \S \ref{framednccr}. 
\begin{NB4}
the description on $\mathrm{Mod}(\A)$ is added
\end{NB4}

For an $\A$-module $\tilde{V}$, let $V_0$, $V_1$ and $V_\infty$ denote the vector spaces corresponding to the vertices $0$, $1$ and $\infty$ and $\dimv \V:=(\dim V_0,\dim V_1,\dim V_\infty)\in(\Z_{\geq 0})^3$. 
\begin{prop}\textup{(see Proposition \ref{prop-framed-morita})}
\[
\tperv\simeq \hamod,\quad 
\tpervc\simeq \hafmod.
\]
\end{prop}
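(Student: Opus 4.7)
The plan is to specialize Proposition~\ref{prop-framed-morita} to the conifold setting, using the unframed equivalences $\perv \simeq \amod$ and $\pervc \simeq \afmod$ recorded just above. Proposition~\ref{prop-framed-morita} already gives
\[
\tperv \simeq \hmamod, \qquad \tpervc \simeq \hmacmod,
\]
so it suffices to identify the category $\hmamod$ of framed $\mca{A}$-modules with the category $\hamod$ of $\tilde A$-modules (and their finite-dimensional subcategories).

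For this, I would unpack the data. A framed $\mca{A}$-module $(V, V_\infty, \iota)$ consists of an $\mca{A}$-module $V$ together with a linear map $\iota \colon V_\infty \to H^0(X, V_0)$. In the conifold case $\mca{A} = A$ is the path algebra of $Q$ with the two conifold relations, so $V$ is precisely the datum of vector spaces $V_0, V_1$ together with maps $a_i, b_i$ satisfying those relations. Moreover $X$ is affine and $V_0 = H^0(Y, F)$, hence $H^0(X, V_0) = V_0$, and $\iota$ becomes a plain linear map $V_\infty \to V_0$. This is exactly the extra datum attached to the new arrow $\infty \to 0$ of $\tilde Q$; since the relations defining $\tilde A$ are the same as those of $A$ and involve none of the new arrow, nothing further is imposed. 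Thus giving a framed $\mca{A}$-module is the same as giving a representation of $\tilde Q$ satisfying the conifold relations, i.e., an $\tilde A$-module.

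For the finite-dimensional versions, I note that under the equivalence $\perv \simeq \amod$ the condition $V \in \macmod$ corresponds to $V_0, V_1$ being finite-dimensional (zero-dimensional support on the affine $X$ being finite-dimensionality over $\C$); together with finite-dimensionality of $V_\infty$ this is exactly the condition defining $\hafmod$. Functoriality follows from the functoriality of the equivalence in Proposition~\ref{prop-framed-morita}.

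The only nontrivial point—and really the only place to check—is that the adjunction isomorphism
\[
\Hom_Y(W \otimes_\C \OO_Y, F) \cong \Hom_\C(V_\infty, V_0)
\]
used in the proof of Proposition~\ref{prop-framed-morita} is compatible with the interpretation of $\iota$ as the label of the arrow $\infty \to 0$ in $\tilde Q$; this is immediate from $V_0 = H^0(Y,F)$, since the adjunction simply extracts global sections. No further obstacle arises.
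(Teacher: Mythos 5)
Your proposal is correct and follows essentially the same route as the paper: the paper presents this proposition with no separate proof, merely citing Proposition~\ref{prop-framed-morita} and identifying $\hmamod$ with $\hamod$ (as stated in the preceding paragraph defining $\hamod$), and you fill in exactly those details — that over affine $X$ a framed $\mca{A}$-module becomes a representation of $\tilde Q$ with the conifold relations, with $\iota\colon V_\infty\to V_0$ giving the new arrow, and that zero-dimensional support corresponds to finite dimensionality.
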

Under these equivalences, we have $V_\infty=W$.


\subsection{Definition of the counting invariants}\label{defofncdt}
\begin{NB} The definition of the moduli spaces should be written somewhere. In \S 1? (6/19) \end{NB}
For $\zeta\in\R^2$ and $\vv=(\mathrm{v}_0,\mathrm{v}_1)\in(\Z_{\geq 0})^2$, let $\M{\zeta}{\vv}$ (resp.\ $\mf{M}_{\zeta}^{\,\mr{s}}(\vv)$) denote the moduli space of $\zeta$-semistable (resp.\ $\zeta$-stable) $\A$-modules $\V$ with $\dimv \V=(\mathrm{v}_0,\mathrm{v}_1,1)$.
They can be constructed by applying the result of \cite{king}.
We define the generating function
\[
\mca{Z}'_{\zeta}(\q):=
\sum_{n\in\Z}\chi\left(\M{\zeta}{\vv}\right)\cdot\q^\vv
\]
where $\q^\vv=q_0^{\mathrm{v}_0}q_1^{\mathrm{v}_1}$ and $q_0$, $q_1$ are formal variables.

A $4$-dimensional torus $(\C^*)^4$ acts on the moduli space $\M{\zeta}{\vv}$ 
by rescaling the maps associated to the four arrows of the quiver $Q$. 
Since the subtorus
\[
\C^*\simeq 
\bigl\{[(\alpha,\alpha,\alpha^{-1},\alpha^{-1})]\in T\bigr\}
\]
acts trivially, we have the action of the $3$-dimensional torus $T:=(\C^*)^4/\C^*$.
We will show that
\begin{itemize}
\item
the set of $T$-fixed closed points $\M{\zeta}{\vv}^T$ is isolated (Proposition \ref{prop-parameterization}). 
\end{itemize}
Hence we have
\[
\mca{Z}'_{\zeta}(\q)=
\sum_{n\in\Z}\left|\,\M{\zeta}{\vv}^T\right|\cdot\q^\vv.
\]

We also define more sophisticated invariants. 
Let $\nu\colon\M{\zeta}{\vv}\to \Z$ be the constructible function defined in \cite{behrend-dt} (Behrend function).
We define the counting invariants 
\[
D_{\zeta}(\vv):=\sum_{n\in\Z}n\cdot\chi(\nu^{-1}(n))
\]
and encode them into the generating function
\[
\mca{Z}_{\zeta}(\q):=\sum_{\vv\in(\Z_{\geq 0})^2}D_{\zeta}(\vv)\cdot\q^\vv.
\]

The Behrend function is defined for any scheme over $\mathbb{C}$. 
In \cite{behrend-dt}, Behrend showed that if an proper scheme has a symmetric obstruction theory then the virtual counting, which is defined by integrating the constant function $1$ over the virtual fundamental cycle, coincides with the weighted Euler characteristic weighted by the Behrend function as above.
Based on this result, he proposed to define the virtual counting for a non-proper variety with a symmetric obstruction theory as the weighted Euler characteristic. 

A stability parameter $\zeta\in\R^2$ is said to be generic if $\zeta$-semistability and $\zeta$-stability are equivalent.
Since the defining relation of $A$ is derived from the derivations of the superpotential, the moduli space $\M{\zeta}{\vv}$ for a generic $\zeta$ has a symmetric obstruction theory (\cite[Theorem 1.3.1]{szendroi-ncdt}). 
\begin{NB}
We define the counting invariants 
\[
D_{\zeta}(\vv):=\sum_{n\in\Z}n\cdot\chi(\nu^{-1}(n))
\]
and encode them into the generating function
\[
\mca{Z}_{\zeta}(\q):=\sum_{\vv\in(\Z_{\geq 0})^2}D_{\zeta}(\vv)\cdot\q^\vv.
\]
\end{NB}
We define the $2$-dimensional subtorus
\[
T':=\bigl\{[(\alpha_1,\alpha_2,\beta_1,\beta_2)]\in T\mid \alpha_1\alpha_2\beta_1\beta_2=1\bigr\}
\]
of $T$. The symmetric obstruction theory above lifts to a $T'$-equivariant symmetric obstruction theory.
We will show the following propositions in \S \ref{appendix} (see \cite[Proposition 2.5.1 and Corollary 2.5.3]{szendroi-ncdt}):
\begin{itemize}
\item
$\M{\zeta}{\vv}^{T'}=\M{\zeta}{\vv}^{T}$ (Proposition \ref{prop-parameterization}).
\item
For each $T'$-fixed closed point $P\in\M{\zeta}{\vv}^{T'}$, the Zariski tangent space to $\M{\zeta}{\vv}$ at $P$ has no $T'$-invariant subspace (Proposition \ref{prop-isolated}).
\item
For each $T'$-fixed point $P\in\M{\zeta}{\vv}^{T'}$, the parity of the dimension of the Zariski tangent space to $\M{\zeta}{\vv}$ at $P$ is same as the parity of $\mathrm{v}_1$ (Corollary \ref{cor-zariski}).
\end{itemize}
According to these propositions and Behrend-Fantechi's result \cite[Theorem3.4]{behrend-fantechi}, we have the following formula (see \cite[Theorem 2.7.1]{szendroi-ncdt}):
\begin{equation}\label{eq-virtual-nonvirtual}
\mca{Z}_{\zeta}(\q)=\sum_{n\in\Z}(-1)^{\mathrm{v}_1}\left|\,\M{\zeta}{\vv}^T\right|\cdot\q^\vv \quad \text{(Theorem \ref{thm-sign})}.
\end{equation}
In particular, we have 
\[
\mca{Z}'_{\zeta}(\q)=\mca{Z}_{\zeta}(q_0,-q_1).
\]

\begin{NB3}
\begin{prop}[\protect{see \cite[Proposition 2.5.1]{szendroi-ncdt}}]
\begin{enumerate}
\item[(1)] (Proposition \ref{prop-parameterization})
The set of $T$-fixed closed points $\M{\zeta}{\vv}^T$ is isolated. 
\item[(2)] (Proposition \ref{prop-isolated})
For each $T$-fixed closed point $P\in\M{\zeta}{\vv}$, the Zariski tangent space to $\M{\zeta}{\vv}$ at $P$ has no $T$-invariant subspace.
\end{enumerate}
\end{prop}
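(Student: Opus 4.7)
The plan is to prove both parts by endowing the underlying $\tilde A$-module $\V$ at each $T$-fixed point with an explicit $T$-equivariant structure, and then carrying out all computations weight by weight; the statements then reduce to combinatorial identities about characters of $T$ occurring in $\V$, which are independent of the chamber containing $\zeta$. For (1), a closed point $P = [\V] \in \M{\zeta}{\vv}^T$ is fixed iff $\V \cong \V^t$ for every $t \in T$, and a standard argument upgrades this to a genuine lift of the $T$-action to a $T$-equivariant structure on $V_0, V_1, V_\infty$. Normalizing so that $V_\infty = \C$ carries the trivial character, $V_0$ and $V_1$ decompose into weight spaces, and the arrows $a_i, b_j$ have weights $t_i, s_j$ (subject to $t_1 t_2 = s_1 s_2$ on $T$). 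The $\zeta$-stability condition forces $\V$ to be generated by $\iota(1) \in V_0$, so every weight occurring in $V_0 \oplus V_1$ is a monomial in the $t_i, s_j$; a direct induction using the commutation relations $a_1 b_i a_2 = a_2 b_i a_1$ and $b_1 a_i b_2 = b_2 a_i b_1$ then shows that any two words with the same weight act proportionally on $\iota(1)$, so each weight space is at most one-dimensional. Consequently $\V$ is determined up to isomorphism by the finite sets $S_0, S_1$ of weights that actually appear in $V_0, V_1$, and there are only finitely many such configurations realizing a given $\vv$. This gives (1).

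For (2), the Zariski tangent space at $P$ is computed from the standard deformation complex of the quiver with superpotential, and inherits a natural $T$-action sitting in a $T$-equivariant four-term exact sequence
\[
0 \to \mr{End}(V_0)\oplus\mr{End}(V_1) \xrightarrow{d_0} \bigoplus_{\alpha} \mr{Hom}(V_{s(\alpha)},V_{t(\alpha)})\otimes \chi_\alpha \xrightarrow{d_1} \bigoplus_{R} \mr{Hom}(V_{s(R)},V_{t(R)})\otimes \chi_R \to T_P \M{\zeta}{\vv} \to 0,
\]
where $\alpha$ runs over the five arrows of $\Q$ (with $\chi_c = 1$ for the framing arrow) and $R$ runs over the four relations obtained as cyclic derivatives of $\omega$. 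Taking $T$-invariants of each term gives a sum indexed by pairs of weights in $V_0 \oplus V_1$ satisfying explicit character identities such as $\nu/\mu = \chi_\alpha$, so the problem reduces to showing that every weight-zero cocycle in the middle term is already a coboundary.

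The hard part is this weight-zero cancellation, and it is where the Calabi-Yau structure is decisive. The superpotential $\omega = a_1 b_1 a_2 b_2 - a_1 b_2 a_2 b_1$ has $T$-weight $t_1 t_2 s_1 s_2 = 1$, producing a Serre-duality-type pairing between the arrow and relation terms of the deformation complex in weight zero; combined with the injectivity of $d_0$ (from the simplicity of $\V$ as a framed stable module), one obtains that the alternating sum of $T$-invariant dimensions of the four-term complex vanishes, and a weight-by-weight bookkeeping shows the invariants of $\mr{Ext}^1$ collapse into the image of $d_0$ and the preimage of the invariants of $\mr{Ext}^2$. The explicit verification will be carried out in \S\ref{appendix} following the pattern of \cite[Proposition 2.5.1]{szendroi-ncdt}; since that argument depends only on the combinatorial data $(S_0, S_1)$ established in step (1), it extends from Szendroi's original chamber to arbitrary $\zeta$ with no essential change.
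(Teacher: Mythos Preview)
There is a genuine gap in your argument for (1). You claim that ``the $\zeta$-stability condition forces $\V$ to be generated by $\iota(1)\in V_0$,'' but this holds only in the NCDT chamber $\zeta_{\mathrm{cyclic}}$. In other chambers it fails: for instance, apply Proposition~\ref{prop3.5}(1) at the wall $L^-_-(1)$. Every $\zeta^{2,-}$-stable $\A$-module $\V'$ then sits in an exact sequence $0\to\V\to\V'\to C^-_-(1)^{\oplus k}\to 0$ in $\hafmod$ with the framing living in $\V$; whenever $k>0$ the image of $\iota$ generates only the proper submodule $\V$, so $\V'$ is \emph{not} cyclic. Without cyclicity your multiplicity-one claim for $T$-weight spaces has no foundation, and the combinatorial parametrization of fixed points does not follow.

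The paper's route is essentially different and cannot be skipped. Rather than proving cyclicity in $\tpervc$, it first changes the tilting bundle (equivalently, mutates the quiver) to replace $\A$ by a new framed algebra $A_m^\pm$ depending on the chamber, and establishes the isomorphism
\[
\M{\zeta^{m,\pm}}{\vv}\;\simeq\;\mathfrak{M}^{A_m^\pm}_{\zeta_{\mathrm{cyclic}}}(\vv')\qquad(\text{Theorem~\ref{thm-moduli}}),
\]
on the right-hand side of which the stability \emph{is} cyclic. Only then does one get a surjection from a fixed projective $P$ onto the underlying $A$-module, the description of $T'$-weight spaces as $\C[c]\cdot v_B$ indexed by stones of a pyramid, and the monomial-ideal classification of $T'$-fixed points (Proposition~\ref{prop-parameterization}). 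The tangent-space statement is likewise proved through this identification, via the exact sequence of Proposition~\ref{prop-ext^1} and the explicit Lemmas~\ref{lem_T'1} and~\ref{lem_T'2}; note that Lemma~\ref{lem_T'1} handles the $\zeta^{m,+}$ and $\zeta^{m,-}$ cases by genuinely different arguments, so your assertion that Szendr\H{o}i's proof ``extends\ldots with no essential change'' is precisely what the machinery of \S\ref{appendix} is there to supply.
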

\begin{prop}[\protect{see \cite[Corollary 2.5.3]{szendroi-ncdt}}]
For each $T$-fixed point $P\in\M{\zeta}{\vv}$, the parity of the Zariski tangent space to $\M{\zeta}{\vv}$ at $P$ is the same as the parity of $\mathrm{v}_1$.
\end{prop}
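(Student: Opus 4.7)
The plan is to compute the Zariski tangent space $T_P\M{\zeta}{\vv}$ at a $T$-fixed point $P$ as a $T'$-representation, and to extract its dimension modulo $2$ from the Calabi-Yau $3$-structure of the underlying algebra combined with an explicit contribution from the framing.

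First, I would realize $T_P\M{\zeta}{\vv}$ via the deformation theory of framed $\tilde A$-modules: writing $\tilde V$ for the framed $\tilde A$-module corresponding to $P$, the tangent space is the middle cohomology of the three-term complex
\[
\bigoplus_{v\in\{0,1\}}\End(V_v)\;\xrightarrow{d_0}\;\bigoplus_{a\in\tilde Q_1}\Hom(V_{s(a)},V_{t(a)})\;\xrightarrow{d_1}\;\bigoplus_{r\in R}\Hom(V_{s(r)},V_{t(r)}),
\]
where $d_0$ is the commutator with the representation data, $d_1$ is given by the derivatives $\partial\omega/\partial a$ of the superpotential $\omega=a_1b_1a_2b_2-a_1b_2a_2b_1$, and both sums include the framing arrow $\infty\to 0$ together with the relations that involve it. The whole complex is $T$-equivariant, and $T_P\M{\zeta}{\vv}\cong\ker d_1/\operatorname{im} d_0$ at the stable point $\tilde V$.

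Second, I would split this complex into an unframed part (supported on the vertices $\{0,1\}$ and on arrows among them) and a framing sub-complex (involving the vertex $\infty$). The unframed part is the deformation complex of the underlying $A$-module $V=V_0\oplus V_1$, and since $A=\C Q/(\partial\omega)$ is a $3$-Calabi-Yau algebra by construction, it enjoys a Serre-type duality $\Ext^i_A(V,V)^\vee\cong\Ext^{3-i}_A(V,V)$ that is equivariant with respect to the subtorus $T'=\bigl\{[(\alpha_1,\alpha_2,\beta_1,\beta_2)]\in T\mid \alpha_1\alpha_2\beta_1\beta_2=1\bigr\}$ preserving $\omega$. Combined with Proposition \ref{prop-isolated}, this forces the $T'$-weights of the unframed contribution to pair up as $(\chi,-\chi)$ with $\chi\neq 0$, so the unframed contribution to $\dim T_P\M{\zeta}{\vv}$ is even.

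Third, I would compute the framing contribution. The framing arrow $\infty\to 0$ supplies a $\Hom(V_\infty,V_0)\cong V_0$ summand in degree one, and the superpotential-derived relations that involve the framing arrow contribute a corresponding piece in degree two. A direct dimension count of this framing sub-complex, using the dimension vector $(\mathrm{v}_0,\mathrm{v}_1,1)$ and the cancellations already provided by the CY-$3$ pairing on the unframed part, yields
\[
\dim T_P\M{\zeta}{\vv}\equiv \mathrm{v}_1\pmod 2.
\]
The principal obstacle is the precise interaction between the CY-$3$ self-duality and the framing: the Serre-type pairing is only available on the unframed deformation complex, so one has to verify carefully that after taking cohomology and applying Proposition \ref{prop-isolated} exactly the residual framing contribution congruent to $\mathrm{v}_1\pmod 2$ survives. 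In \cite{szendroi-ncdt} this is carried out for the Szendroi chamber by the pyramid-partition parameterization of $T$-fixed points and a combinatorial weight count; an analogous approach, adapted to the chamber-dependent parameterization of $T$-fixed points developed in the subsequent sections, should go through here as well.
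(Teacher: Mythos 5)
Your general plan --- realize the Zariski tangent space via a Koszul-type deformation complex, separate a contribution coming from the underlying $A$-module $V$ from a contribution coming from the framing, and use the $3$-CY structure of $A$ --- is in the same spirit as what the paper does (the proof routes the computation through Theorem~\ref{thm-moduli}, Proposition~\ref{prop-ext^1}, and Lemma~\ref{lem-alt}). However, two of your intermediate claims are wrong, and they happen to cancel in a way that lands you on the correct statement without a valid argument.

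First, the CY-$3$ Serre-type duality is a pairing $\Ext^i_A(V,V)^\vee\cong\Ext^{3-i}_A(V,V)$; it pairs $\Ext^1$ against $\Ext^2$, not $\Ext^1$ against itself, so it does \emph{not} force the $T'$-weights of $\Ext^1_A(V,V)$ to come in $(\chi,-\chi)$ pairs. Proposition~\ref{prop-isolated} rules out \emph{zero} weights, but gives no internal pairing. Consequently the ``unframed contribution'' is \emph{not} even: what is actually true (Lemma~\ref{lem-alt}, quoting \cite[Theorem 7.1]{ncdt-brane}, whose proof rests on the skew-symmetry of the middle differential $d_2$ of the Koszul complex of $A$, hence even rank of $d_2$) is
\[
\dim\Ext^1_A(V,V)-\dim\Hom_A(V,V)\;\equiv\;\dim V_0+\dim V_1\pmod 2,
\]
which in the paper's notation is $\equiv\mathrm{v}_0+\mathrm{v}_1$, typically odd.

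Second, the framing piece you isolate is misidentified. The framing arrow $\infty\to 0$ contributes $\Hom(V_\infty,V_0)\cong V_0$ in degree one, which has dimension $\mathrm{v}_0$, not $\mathrm{v}_1$; and for the quiver $\tilde Q$ the superpotential $\omega$ does not involve the framing arrow, so there is no degree-two framing term at all (the quivers with potential $\tilde A_m^\pm$ of \S\ref{subsec-potential} \emph{do} carry framing relations, but those belong to a different model). The actual mod-$2$ bookkeeping, carried out in the paper via Proposition~\ref{prop-ext^1}, which uses the short exact sequence $0\to V\to\tilde V\to S_\infty\to 0$ together with the crucial vanishing $\Ext^2_{A_m^\pm}(S_\infty,V)=0$ of Lemma~\ref{lem-ext^2} (coming from $F\in\pervc$), is
\[
\dim T_P\equiv\bigl(\mathrm{v}_0+\mathrm{v}_1\bigr)+\mathrm{v}_0\equiv\mathrm{v}_1\pmod 2.
\]
So your conclusion is right, but both summands enter with parities opposite to what you assert, and the CY-$3$ pairing argument you lean on does not establish either of them. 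To make your approach rigorous you would need to replace the Serre-duality heuristic by the Euler-characteristic-plus-skew-symmetry argument of Lemma~\ref{lem-alt}, and replace the vague framing count by the exact-sequence computation of Proposition~\ref{prop-ext^1} (including the $\Ext^2$-vanishing input).
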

According to these propositions and Behrend-Fantechi's result \cite[Theorem3.4]{behrend-fantechi}, we have the following formula:
\begin{prop}[\protect{see \cite[Theorem 2.7.1]{szendroi-ncdt}}]
\begin{equation}\label{eq-virtual-nonvirtual}
\mca{Z}_{\zeta}(\q)=\sum_{n\in\Z}(-1)^{\mathrm{v}_1}\left|\,\M{\zeta}{\vv}^T\right|\cdot\q^\vv.
\end{equation}
\end{prop}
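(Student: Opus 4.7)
The statement is the comparison between the virtual (Behrend-weighted) count and a signed naive count over the torus fixed locus, and it follows by combining Behrend–Fantechi's localization for symmetric obstruction theories with the three bulleted properties of the $T'$-action. The plan is to reduce the weighted Euler characteristic on $\M{\zeta}{\vv}$ to a pointwise sum over $\M{\zeta}{\vv}^{T'}$, evaluate the Behrend function at each fixed point using the tangent space data, and then rewrite in terms of $T$-fixed points via Proposition~\ref{prop-parameterization}.

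First, recall that since $\zeta$ is generic, the moduli space $\M{\zeta}{\vv}$ carries a symmetric obstruction theory, which by assumption lifts to a $T'$-equivariant one. By \cite[Theorem~3.4]{behrend-fantechi} applied to a $T'$-fixed point $P$ with isolated fixed locus and with Zariski tangent space $T_P\M{\zeta}{\vv}$ having no $T'$-invariant subspace, the Behrend function satisfies
\[
   \nu(P) \;=\; (-1)^{\dim T_P \M{\zeta}{\vv}}.
\]
The two hypotheses required here are exactly supplied by Proposition~\ref{prop-parameterization} (isolatedness) and Proposition~\ref{prop-isolated} (no $T'$-invariant subspace in $T_P$).

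Next, by Corollary~\ref{cor-zariski} the parity of $\dim T_P\M{\zeta}{\vv}$ equals the parity of $\mathrm{v}_1$, independently of $P$. Hence $\nu(P) = (-1)^{\mathrm{v}_1}$ uniformly on $\M{\zeta}{\vv}^{T'}$. Now apply $T'$-localization for the weighted Euler characteristic: because $\nu$ is $T'$-invariant and every $T'$-orbit on $\M{\zeta}{\vv}\setminus \M{\zeta}{\vv}^{T'}$ has vanishing Euler characteristic, one obtains
\[
   D_{\zeta}(\vv) \;=\; \sum_{n\in\Z} n\cdot \chi(\nu^{-1}(n)) \;=\; \sum_{P\in\M{\zeta}{\vv}^{T'}} \nu(P) \;=\; (-1)^{\mathrm{v}_1}\,\bigl|\M{\zeta}{\vv}^{T'}\bigr|.
\]
Finally, Proposition~\ref{prop-parameterization} identifies $\M{\zeta}{\vv}^{T'} = \M{\zeta}{\vv}^{T}$, so summing over $\vv$ yields the desired identity for $\mca{Z}_{\zeta}(\q)$.

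The nontrivial inputs are the three bulleted facts about the $T'$-fixed locus, all of which are deferred to \S\ref{appendix}; modulo these, the remaining content here is a formal citation of \cite[Theorem~3.4]{behrend-fantechi} together with $T'$-localization of the constructible Euler characteristic. The main obstacle is thus not in the present section but in establishing the parity statement (Corollary~\ref{cor-zariski}), which requires an explicit analysis of the tangent complex at each torus-fixed representation of $\tilde A$; once that parity is uniform in $\mathrm{v}_1$, the theorem follows immediately from the chain of equalities above.
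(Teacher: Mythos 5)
Your proof is correct and follows exactly the route the paper takes: the paper's own proof of Theorem~\ref{thm-sign} is precisely a citation of Behrend--Fantechi \cite[Theorem~3.4]{behrend-fantechi} together with Proposition~\ref{prop-parameterization}, Corollary~\ref{cor-zariski}, and Proposition~\ref{prop-isolated}, and you have simply spelled out the implied chain of equalities (uniform value of the Behrend function at fixed points, $T'$-localization of the weighted Euler characteristic, and the identification $\M{\zeta}{\vv}^{T'} = \M{\zeta}{\vv}^{T}$).
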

We also define the generating function
\[
\mca{Z}'_{\zeta}(\q):=\sum_{n\in\Z}\left|\,\M{\zeta}{\vv}^T\right|\cdot\q^\vv=\sum_{n\in\Z}\chi\left(\M{\zeta}{\vv}\right)\cdot\q^\vv.
\]
By the equation \eqref{eq-virtual-nonvirtual}, we have
\[
\mca{Z}'_{\zeta}(\q)=\mca{Z}_{\zeta}(q_0.-q_1).
\] 
\end{NB3}

\begin{NB3}
old version:

The $3$-dimensional torus $T$ acts on $Y$, and so on $\M{\zeta}{\vv}$ too.  
As we will see in \S \ref{appendix}, the set of $T$-fixed points $\M{\zeta}{\vv}^T$ is isolated. 
The argument of the proof of Theorem 2.7.1 in \cite{szendroi-ncdt} also works in our case and we have
\[
\mca{Z}_{\zeta}(\q)=\sum_{n\in\Z}(-1)^{\mathrm{v}_1}\left|\,\M{\zeta}{\vv}^T\right|\cdot\q^\vv.
\]
For simplicity, in this paper we also study
\[
\mca{Z}'_{\zeta}(\q)=\mca{Z}_{\zeta}(q_0,-q_1)=\sum_{n\in\Z}\left|\,\M{\zeta}{\vv}^T\right|\cdot\q^\vv=\sum_{n\in\Z}\chi\left(\M{\zeta}{\vv}\right)\cdot\q^\vv.
\]
\end{NB3}

\subsection{Classification of walls}\label{subsec-classification}
In this subsection, we will classify non-generic
parameters. The argument is a straightforward modification of one
in \cite[\S2]{ny-perv1}.

\begin{lem}
Let $W$ be a non-zero $\theta_\zeta$-stable $A$-module for some $\zeta\in\R^2$. Then at least one of the following
\begin{NB}
  Corrected, June 17
\end{NB}%
holds:

\[
(1)\ \dim W_0=\dim W_1 = 1
\begin{NB}
  Added, June 17
\end{NB}%
,\quad
(2)\ a_1=a_2=0,\quad
(3)\ b_1=b_2=0.
\]
\end{lem}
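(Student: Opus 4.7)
The plan is to analyze $W$ via natural subrepresentations arising from kernels and images of the quiver arrows, combine these with the stability inequality, and then use the superpotential relations to identify $W$ with an indecomposable module supported at a single point of the conifold $X$; a geometric case analysis at that point forces the conclusion.

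If $\dim W_0 = 0$ then $a_1 = a_2 = 0$ vacuously and (2) holds; similarly (3) when $\dim W_1 = 0$, so I may assume $w_0, w_1 \ge 1$. Set $K_a = \ker a_1 \cap \ker a_2 \subset W_0$, $K_b = \ker b_1 \cap \ker b_2 \subset W_1$, $I_a = a_1(W_0) + a_2(W_0) \subset W_1$ and $I_b = b_1(W_1) + b_2(W_1) \subset W_0$. A direct check shows that $(K_a, 0)$, $(0, K_b)$, $(W_0, I_a)$ and $(I_b, W_1)$ are all sub-$A$-modules of $W$. The $\theta_\zeta$-stability inequality for each proper nonzero such subrepresentation yields a sign condition on $\zeta_0 - \zeta_1$: $K_a \ne 0$ or $I_a \subsetneq W_1$ forces $\zeta_0 < \zeta_1$, while $K_b \ne 0$ or $I_b \subsetneq W_0$ forces $\zeta_0 > \zeta_1$. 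Dealing with $\zeta_0 = \zeta_1$ separately (strict stability then forces $W$ simple) and swapping vertices $0, 1$ if necessary (which interchanges (2) and (3)), I assume $\zeta_0 < \zeta_1$; then $K_b = 0$ and $I_b = W_0$, i.e.\ $(b_1, b_2) : W_1 \to W_0 \oplus W_0$ is injective with $b_1(W_1) + b_2(W_1) = W_0$.

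Next, the superpotential relations enter. Using $a_1 b_j a_2 = a_2 b_j a_1$ and $b_1 a_i b_2 = b_2 a_i b_1$, a short computation shows that the elements $x = b_1 a_1$, $y = b_2 a_2$, $z = b_1 a_2$, $w = b_2 a_1$ of $e_0 A e_0$ pairwise commute and satisfy $xy = zw$. Hence $\OO(X) = \C[x, y, z, w]/(xy - zw)$ sits inside $e_0 A e_0$; the analogous picture at vertex $1$ (via the loops $a_i b_j$) puts $W_0$ and $W_1$ into the category of finitely supported $\OO(X)$-modules, with the $a_i, b_j$ intertwining the two structures. A finitely supported $\OO(X)$-module decomposes as a direct sum indexed by its support, and a $\theta_\zeta$-stable module is indecomposable (its endomorphism algebra is $\C$), so $W$ is supported at a single point $p \in X$. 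If $p \in X \setminus \{0\}$, then $A$ is Morita equivalent to $\OO_{X, p}$ near $p$ (because $f$ is an isomorphism there), the only simple is the residue field, and the corresponding $A$-module has dimension vector $(1, 1)$, giving (1). If $p = 0$, then $x, y, z, w$ act nilpotently on $W_0, W_1$; combined with the injectivity and surjectivity of $(b_1, b_2)$, this forces $a_1 = a_2 = 0$ (case (2)) unless $W$ carries a proper subrepresentation with the same $\theta_\zeta$-value, contradicting strict stability.

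The main obstacle will be the final step for $p = 0$: ruling out higher-dimensional indecomposable $\theta_\zeta$-stable modules supported at the singular point on which both some $a_i$ and some $b_j$ are nonzero. The argument must combine the nilpotency of the central $\OO(X)$-action with the injectivity/surjectivity of $(b_1, b_2)$ (e.g.\ by filtering by powers of the maximal ideal of $\OO(X)$ and taking the socle) to extract a destabilizing $(1, 1)$-subrepresentation.
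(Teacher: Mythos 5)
Your proposal takes a genuinely different route from the paper's and is correct in its first stages, but the crucial final step is not just unfinished -- it needs an ingredient that your set-up does not supply. Your submodules $(K_a,0)$, $(0,K_b)$, $(W_0,I_a)$, $(I_b,W_1)$ do give the sign dichotomy on $\zeta_0-\zeta_1$, and the indecomposability-forces-single-support argument correctly handles the case where the support lies in $X\setminus\{0\}$ (via Morita equivalence to the local ring there and the observation that all Jordan--H\"older factors then have the same $\theta$-value, forcing $W$ simple of dimension vector $(1,1)$). The paper instead works with the kernels and images of the four \emph{composed} loops $b_ja_i\colon W_0\to W_0$ and $a_ib_j\colon W_1\to W_1$: the defining relations make $\bigl(\ker(b_ja_i),\ker(a_ib_j)\bigr)$ and $\bigl(\mathrm{im}(b_ja_i),\mathrm{im}(a_ib_j)\bigr)$ a pair of $A$-submodules with complementary dimension vectors, so stability forces both $\theta$-values to equal zero and hence each to be $0$ or all of $W$. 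This yields the sharp dichotomy that each pair $(a_i,b_j)$ is either \emph{both isomorphisms} or satisfies $b_ja_i=a_ib_j=0$; the paper then splits into the ``all isomorphisms'' branch (handled by simultaneous eigenvectors of the commuting central loops, giving dimension vector $(1,1)$) and the ``all zero'' branch (handled essentially as your $K_a,K_b$ step).

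The gap in your proposal is precisely that your support argument only gives that the central loops are \emph{nilpotent} on $W$, which is strictly weaker than the paper's iso-or-zero dichotomy. Nilpotent rules out the isomorphism branch, but without the dichotomy it does not give $b_ja_i=0$, and hence does not let you run the $K_b=0\Rightarrow a_i=0$ implication (that step needs $a_i(W_0)\subset\ker b_1\cap\ker b_2$, i.e.\ $b_ja_i=0$, not merely $b_ja_i$ nilpotent). The sketch of filtering by powers of $\mathfrak m$ or passing to the socle does not obviously close this gap: the socle $(W_0^{\mathfrak m},W_1^{\mathfrak m})$ is indeed an $A$-submodule, and stability gives $\zeta_0\dim W_0^{\mathfrak m}+\zeta_1\dim W_1^{\mathfrak m}<0$ when it is proper, but this constrains a ratio of dimensions rather than producing $a_i=0$, and I see no route from there to the conclusion. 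If you want to keep the geometric flavor, the cleanest fix is to import the paper's $(S,T)$-trick for a single pair: once you know $b_1a_1$ and $a_1b_1$ are each an isomorphism or zero (and similarly for the other three pairs), nilpotency immediately collapses all four to zero, and then your $K_b=0$ hypothesis finishes the $p=0$ case.
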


\begin{proof}\begin{NB}
  I changed \verb+\proof+ to \verb+\begin{proof}+ and \verb+\end{proof}+.
\end{NB}
(See \cite[Lemma 2.9]{ny-perv1}.)
Without loss of generality, we can assume $\zeta_0 \dim W_0 + \zeta_1
\dim W_1 = 0$ by Remark~\ref{rem:normalization}.
\begin{NB}
  Added, June 17.
\end{NB}%
If $W_0$ or $W_1 = 0$, we trivially have (2) or (3). Therefore we may
assume $W_0$, $W_1\neq 0$.

We set $S_0=\ker(a_1b_1)$, $T_0=\im(a_1b_1)$, $S_1=\ker(b_1a_1)$ and $T_1=\im(b_1a_1)$. By the defining relation of $Q$ we can check $(S_0,S_1)$ and $(T_0,T_1)$ are $A$-submodules of $W$. The $\theta_\zeta$-stability of $W$ implies 
\[
\zeta_0\dim S_0+\zeta_1\dim S_1\leq 0,\ \zeta_0\dim T_0+\zeta_1\dim T_1\leq 0.
\]
Since 
\[
\zeta_0\dim W_0+\zeta_1\dim W_1=0,\ \dim S_i+\dim T_i=\dim W_i,
\]
the above inequalities should be equalities. 
Again, the stability of $W$ implies $S_0=S_1=0$ or
$(S_0,S_1)=(W_0,W_1)$. 
\begin{NB}
  Corrected, June 17
\end{NB}
In the previous case, $a_1$ and $b_1$ are isomorphisms and $\dim W_0=\dim W_1$.

Taking arbitrary pairs $a_i$, $b_j$ ($i,j=1,2$), we may assume either
(a) $a_1$, $b_1$ are isomorphisms and $\dim W_0 = \dim W_1$, or
(b) $a_ib_j=0$, $b_ja_i=0$ for any $i$ and $j$. 
First we consider the case (b).
\begin{NB}
  Changed, June 17
\end{NB}%
Without loss of generality we also assume $\zeta_0\ge 0$. 
Apply the stability conditions for an $A$-submodule $(\ker a_1\cap\ker a_2,0)$, we get $\ker a_1\cap\ker a_2=0$.
The equations $a_ib_j=0$ mean $\im b_1, \im b_2\subset \ker a_1\cap\ker a_2=0$, that is $b_1=b_2=0$. 
Similarly, for the case $\zeta_0\le 0$ we have $a_1=a_2=0$.

Next consider the case (a), and hence $\zeta_0 + \zeta_1 = 0$. We
first assume $\zeta_0 < 0$.
From the defining equation of $Q$, four linear maps $b_1 a_1$, $b_2
a_1$, $b_1 a_2$, $b_2 a_2$ are pairwise commuting. We take a
simultaneous eigenvector $0\neq w_0 \in W_0$ and set
\begin{equation*}
   S_0' := \C w_0, \quad S_1' := \C a_1 w_0 + \C a_2 w_0.
\end{equation*}
Then $(S_0',S_1')$ is an $A$-submodule of $W$, and hence
\begin{equation*}
  \zeta_0\dim S_0'+\zeta_1\dim S_1'\leq 0
\end{equation*}
by the $\theta_\zeta$-semistability of $W$. Therefore we have
$\dim S_1' \le \dim S_0' = 1$. 
On the other hand, $a_1 w_0\neq 0$ as $a_1$ is an isomorphism by the
assumption. Therefore we have $\dim S'_1 = 1$, so the equality
holds in the above inequality, so we have $(S_0',S_1') = (W_0,W_1)$ by
the $\theta_\zeta$-stability of $W$. In particular, $\dim W_0 = \dim
W_1 = 1$. Exchanging $0$ and $1$, we have the same assertion when
$\zeta_1 < 0$.

Finally suppose $\zeta_0 = \zeta_1 = 0$. We define $S_0'$, $S_1'$ as
above. Since the equality
$\zeta_0\dim S_0'+\zeta_1\dim S_1' = 0$ holds, the
$\theta_\zeta$-stability of $W$ implies
$(S_0',S_1') = (W_0, W_1)$. In particular, $\dim W_0 = 1$. Exchanging
$0$ and $1$, we also get $\dim W_1 = 1$.
\end{proof}

\begin{NB}
The statement and proof of the converse are missing.
\end{NB}



In the case (1) with $\zeta_0 < 0$ (resp.\ $\zeta_0 > 0$), the
$\zeta$-stable $A$-modules are parameterized by $Y$ (resp.\
$Y^+$). \begin{NB} changed 6/19\end{NB}%
This is well-known, and can be checked easily
(cf.\ \cite[\S2.3]{ny-perv1}).

In the cases (2) or (3), the representation can be considered as a
representation of the Kronecker quiver. Then by the argument in
\cite[Lemma~2.12]{ny-perv1}, we have $(W_0, W_1) = (\C^m,\C^{m+1})$ or
$(W_0, W_1) = (\C^{m},\C^{m-1})$ \begin{NB} changed 6/19\end{NB}%
for some $m\ge 0$ or $m\ge 1$ in the latter case. Moreover, the
$\zeta$-stable $A$-module is unique up to isomorphism.
In the case (2), we denote the $\zeta$-stable $A$-module by $C^-_\pm(m)$ if $(W_0, W_1) = (\C^m,\C^{m\mp 1})$. 
Similarly, we denote the module by $C^+_\pm(m)$ in the case (3).
We can visualize these modules as in Figure~\ref{fig:C}. Each dot corresponds to a basis
vector of $W_0$ and $W_1$, and right-up (resp.\ right-down, left-up,
left-down) arrows are $a_1$ (resp.\ $a_2$, $b_1$, $b_2$).
\begin{NB}
  The figure may be moved to somewhere during the editing process. It
  is better to put it in \verb+\begin{figure}+, \verb+\end{figure}+.
\end{NB}
\begin{figure}[htbp]
  \centering
  \includegraphics{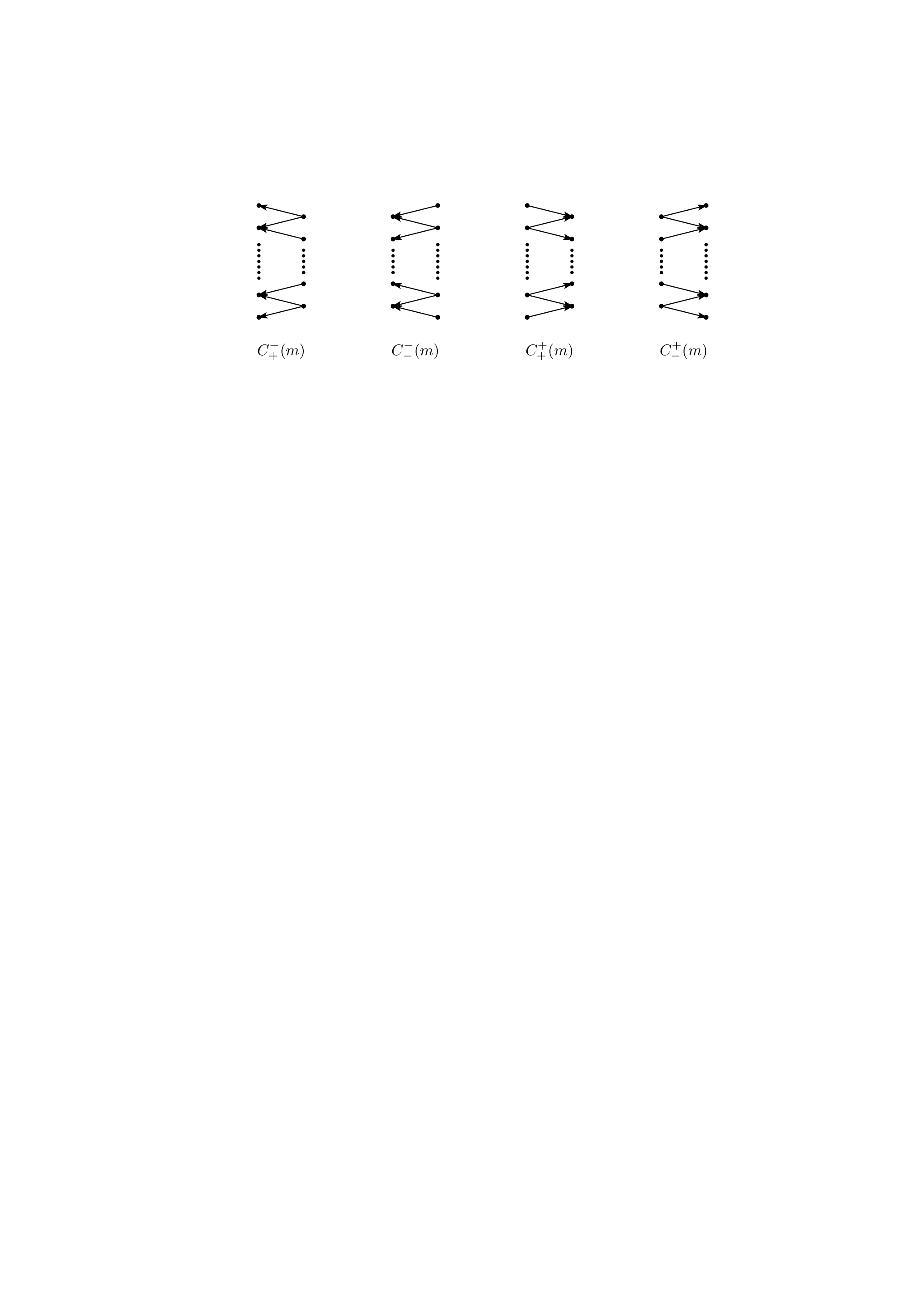}
  \caption{stable $A$-modules}
  \label{fig:C}
\end{figure}

Now, we can check the following classification:
\begin{thm}\label{thm-classification-1}
\begin{itemize}
Let $\zeta$ be a stability parameter.
\item[\textup{(1)}]
If $\zeta_0<\zeta_1$, 
then $\theta_\zeta$-stable $A$-modules $W$ are classified as follows:
\begin{itemize}
\item $C^-_+(m)$ ($m\geq 1$),
\item the $\zeta$-stable $A$-modules $W$ with $\dimv W=(1,1)$ parameterized by $Y$,
\item $C^-_-(m)$ ($m\geq 0$).
\end{itemize}
\item[\textup{(2)}]
If $\zeta_0>\zeta_1$, 
then $\theta_\zeta$-stable $A$-modules $W$ are classified as follows:
\begin{itemize}
\item $C^+_+(m)$ ($m\geq 1$),
\item the $\zeta$-stable $A$-modules $W$ with $\dimv W=(1,1)$ parameterized by $Y^+$,
\item $C^+_-(m)$ ($m\geq 0$).
\end{itemize}
\end{itemize}
\end{thm}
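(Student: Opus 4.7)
The plan is to apply the preceding lemma to reduce to its three cases and, in each case, identify the halfplane of $\zeta$ in which the resulting stable modules live.

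For case (1) ($\dim W_0 = \dim W_1 = 1$), this is precisely the situation addressed in the paragraph immediately preceding the theorem: the $\theta_\zeta$-stable $A$-modules are parameterized by $Y$ when $\zeta_0 < 0$ and by $Y^+$ when $\zeta_0 > 0$. Since stability is invariant under the shift $(\zeta_0, \zeta_1) \mapsto (\zeta_0 + c, \zeta_1 + c)$ (see Remark~\ref{rem:normalization}), any $(1,1)$-module can be normalized so that $\zeta_0 + \zeta_1 = 0$, and then $\zeta_0 < 0$ becomes equivalent to $\zeta_0 < \zeta_1$. This places the $Y$-family in part (1) of the theorem and the $Y^+$-family in part (2).

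For cases (2) and (3), the module reduces to a representation of the Kronecker quiver. By the classification cited from \cite[Lemma~2.12]{ny-perv1}, any stable Kronecker representation has dimension vector $(m, m-1)$ with $m \geq 1$ or $(m, m+1)$ with $m \geq 0$ and is unique up to isomorphism; these are the four families $C^-_\pm(m)$ and $C^+_\pm(m)$. To decide which halfplane each occupies, consider case (2), where $a_1 = a_2 = 0$. If both $\dim W_0 \geq 1$ and $\dim W_1 \geq 1$, then $(W_0, 0)$ is a proper nonzero $A$-submodule (no compatibility with $b_i$ is required since $a_i = 0$), and $\theta_\zeta$-stability forces
\begin{equation*}
\zeta_0 < \theta_\zeta(W) = \frac{\zeta_0 \dim W_0 + \zeta_1 \dim W_1}{\dim W_0 + \dim W_1}.
\end{equation*}
Clearing denominators gives $\zeta_0 \dim W_1 < \zeta_1 \dim W_1$, hence $\zeta_0 < \zeta_1$. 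Symmetrically, case (3) (using $(0, W_1)$ as test submodule) yields $\zeta_0 > \zeta_1$. The two simple modules at vertices $0$ and $1$ have no proper nonzero submodules and so are $\theta_\zeta$-stable for every $\zeta$; they occur consistently in both lists as $C^\pm_+(1)$ and $C^\pm_-(0)$ respectively.

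For the converse direction one must confirm that each listed $C^\mp_\pm(m)$ actually is $\theta_\zeta$-stable throughout the claimed halfplane. I would do this by inspecting all subrepresentations of a generic (preprojective or preinjective) Kronecker representation: beyond $(W'_0, 0)$, the remaining proper nonzero subrepresentations in case (2) have the form $(W'_0, W'_1)$ with $W'_0 \supseteq b_1(W'_1) + b_2(W'_1)$, and in the preprojective/preinjective orbit an elementary bound on $\dim(b_1(W'_1) + b_2(W'_1))$ again reduces the stability inequality to $\zeta_0 < \zeta_1$. The main bookkeeping obstacle is juggling the four index conventions $C^\pm_\pm(m)$ together with the doubled naming of the simples at $m \in \{0, 1\}$, and correctly translating the normalization $\zeta_0 \dim W_0 + \zeta_1 \dim W_1 = 0$ used inside the preceding lemma's proof into the halfplane condition $\zeta_0 < \zeta_1$ or $\zeta_0 > \zeta_1$ stated here.
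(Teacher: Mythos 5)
Your proposal is correct and follows the same outline the paper uses: reduce to the three cases of the preceding lemma, cite the Kronecker classification to get the dimension vectors $(m,m\mp 1)$, and then match each case with its halfplane of $\zeta$. The paper leaves the final step as "Now, we can check the following classification"; your explicit derivation of $\zeta_0 < \zeta_1$ via the test submodule $(W_0,0)$ (resp.\ $\zeta_0 > \zeta_1$ via $(0,W_1)$), together with the translation of the normalized condition $\zeta_0 < 0$ into $\zeta_0 < \zeta_1$ for the $(1,1)$-family, supplies exactly the bookkeeping the paper leaves to the reader, while the converse (that each $C^-_\pm(m)$ is stable on the whole halfplane) you sketch but do not carry out in detail—just as the paper also relies on \cite[Lemma~2.12]{ny-perv1} for this.
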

\begin{NB4}
If $\zeta_0=\zeta_1$, 
then $\theta_\zeta$-stable $A$-modules $W$ are classified as follows:
\begin{itemize}
\item simple modules $S_0$ and $S_1$,
\item $A$-modules $W$ with $\dimv W=(1,1)$ parameterized by $X^{\mathrm{reg}}$,
\end{itemize}
I think we don't have to write this, because this is complicated and useless.
\end{NB4}

Let $\V$ be an $\A$-module which is $\zeta$-semistable but not $\zeta$-stable. 
Let 
\[
\V=\V^0\supset \V^1\supset \cdots\supset \V^{L+1}=0
\]
be \JH of the $\theta_{\tz}$-semistable $\A$-module $\V$.
Here $L\geq 1$ because $\V$ is not $\theta_{\tz}$-stable.  
Since $\dim \V_\infty=1$, at most one of $(\V^l/\V^{l+1})_\infty$ is non-zero. In particular, there exists a non-zero $\theta_{\tz}$-stable $\A$-module $\tilde{W}$ such that $\tilde{W}_\infty=0$ and such that $\tz\cdot\dimv\,\tilde{W}=0$. In other words, there exists a non-zero $\theta_{\zeta}$-stable $A$-module $W$ such that $\zeta\cdot\dimv\,W=0$. 
We define the following walls (half lines) on the set of stability parameters:
\begin{align*}
L^-_+(m)&:=\{(\zeta_0,\zeta_1)\mid \zeta_0<\zeta_1,\ m\,\zeta_0+(m-1)\zeta_1=0\}\quad (m\geq 1),\\
L^-(\infty)&:= \{(\zeta_0,\zeta_1)\mid \zeta_0<\zeta_1,\ \zeta_0+\zeta_1=0\},\\
L^-_-(m)&:= \{(\zeta_0,\zeta_1)\mid \zeta_0<\zeta_1,\ m\,\zeta_0+(m+1)\zeta_1=0\}\quad (m\geq 0),\\
L^+_+(m)&:=\{(\zeta_0,\zeta_1)\mid \zeta_0>\zeta_1,\ m\,\zeta_0+(m-1)\zeta_1=0\}\quad (m\geq 1),\\
L^+(\infty)&:= \{(\zeta_0,\zeta_1)\mid \zeta_0>\zeta_1,\ \zeta_0+\zeta_1=0\},\\
L^+_-(m)&:=\{(\zeta_0,\zeta_1)\mid \zeta_0>\zeta_1,\ m\,\zeta_0+(m+1)\zeta_1=0\}\quad (m\geq 0).
\end{align*}
By Theorem \ref{thm-classification-1}, the set of non-generic stability parameters is the union of the origin $(0,0)$ and the walls above. 
\begin{NB3}
\begin{thm}\label{classification}
For each parameter $\zeta$, $\theta_\zeta$-stable $A$-modules $W$ such that $\zeta\cdot\dimv\,W=0$ are classified as follows: \begin{NB} added 7/7 \end{NB}
\begin{itemize}
\item For a parameter $\zeta$ on the wall $L^-_+(m)$
  \textup(resp.\ $L^-_-(m)$, $L^+_+(m)$, $L^+_-(m)$\textup) we have
  exactly one $\theta_\zeta$-stable $A$-module $C^-_+(m)$ \textup(resp.\
  $C^-_-(m)$, $C^+_+(m)$, $C^+_-(m)$\textup) up to isomorphisms.

\item For a parameter $\zeta$ on $L^-(\infty)$ \textup(resp.\
  $L^+(\infty)$\textup) the $\theta_\zeta$-stable $A$-modules are
  parameterized by $Y$ \textup(resp.\ $Y^+$\textup).

\item For $\zeta = 0$, the $\theta_\zeta$-stable $A$-modules are
  parameterized by $X\setminus \{0\}$ and simple modules $s_i$ ($i=0,1$).

\item Otherwise, there is no $\theta_\zeta$-stable $A$-modules.
\end{itemize}
\begin{NB}
  \textup{(1)} For a parameter $\zeta$ on the wall $L^-_+(m)$
  \textup(resp.\ $L^-_-(m)$, $L^+_+(m)$, $L^+_-(m)$\textup) we have
  exactly one $\theta_\zeta$-stable $A$-module $C^-_+(m)$ \textup(resp.\
  $C^-_-(m)$, $C^+_+(m)$, $C^+_-(m)$\textup) up to isomorphisms.

  \textup{(2)} For a parameter $\zeta$ on $L^-(\infty)$ \textup(resp.\
  $L^+(\infty)$\textup) the $\theta_\zeta$-stable $A$-modules are
  parameterized by $Y$ \textup(resp.\ $Y^+$\textup).

  \textup{(3)} For $\zeta = 0$, the $\theta_\zeta$-stable $A$-modules are
  parameterized by $X\setminus \{0\}$ and simple modules $s_i$ ($i=0,1$).

  \textup{(4)} Otherwise, there is no $\theta_\zeta$-stable $A$-modules.
\end{NB}
\begin{NB} we should not use "enumerate" or "itemize" ? 7/7\end{NB}%
\end{thm}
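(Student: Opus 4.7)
The plan is to invoke the preceding structural lemma to reduce the classification of $\theta_\zeta$-stable $A$-modules to three mutually exclusive cases, and then identify the stable objects in each. That lemma shows any $\theta_\zeta$-stable $A$-module $W$ satisfies one of (1) $\dim W_0 = \dim W_1 = 1$, or (2) $a_1 = a_2 = 0$, or (3) $b_1 = b_2 = 0$, so it suffices to enumerate the $\theta_\zeta$-stable objects in each case and observe which sign regime of $\zeta_0 - \zeta_1$ is consistent with that case.

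In case (1), the proof of the preceding lemma already shows that (after relabeling) $a_1$ and $b_1$ are isomorphisms. Normalizing $a_1 = b_1 = 1$ by the $(\C^*)^2/\C^*$ gauge action, the remaining data is a pair of scalars $(a_2, b_2)$ and the defining relations of $A$ become trivial, so the $\theta_\zeta$-stable locus is the standard NCCR presentation of the conifold: a GIT quotient whose two sign chambers give $Y$ and $Y^+$ respectively. The normalization $\zeta_0 \dim W_0 + \zeta_1 \dim W_1 = 0$ from Remark~\ref{rem:normalization} forces $\zeta_0 + \zeta_1 = 0$ in this case, so $\zeta_0 < \zeta_1$ corresponds to $\zeta_0 < 0$ and yields the family parameterized by $Y$, while $\zeta_0 > \zeta_1$ yields the family parameterized by $Y^+$, as recalled in \cite[\S 2.3]{ny-perv1}.

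In cases (2) and (3), only one pair of arrows survives, so $W$ is a representation of the Kronecker quiver with two arrows, and I would appeal to the analogue of \cite[Lemma 2.12]{ny-perv1}: the $\theta_\zeta$-stable Kronecker representations are indexed, up to isomorphism, by the dimension vectors $(m, m \pm 1)$, with a unique stable object for each. Matching these against the pictures in Figure~\ref{fig:C} produces $C^-_\pm(m)$ in case (2) and $C^+_\pm(m)$ in case (3). The sign of $\zeta_0 - \zeta_1$ controls which case can occur: if $\zeta_0 < \zeta_1$, testing the $A$-submodule $(\ker a_1 \cap \ker a_2, 0)$ (as in the proof of the preceding lemma) forces $b_1 = b_2 = 0$ to be inconsistent with stability, leaving only case (2); the dual argument handles $\zeta_0 > \zeta_1$.

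The main obstacle I expect is purely bookkeeping: matching each of $C^-_+(m), C^-_-(m), C^+_+(m), C^+_-(m)$ with the correct sign of $\zeta_0 - \zeta_1$ and the correct parity of $\dim W_0 - \dim W_1$, and checking the range of $m$ (whether $m \ge 0$ or $m \ge 1$) in each case. I would pin this down by directly evaluating the defining inequality of $\theta_\zeta$-stability on the lowest-rank representatives ($m = 0, 1$) against Figure~\ref{fig:C}, and then extend by the standard induction on $m$ underlying the Kronecker classification. Once the conventions are fixed at the base of the induction, the cases assemble into exactly the two lists stated in the theorem.
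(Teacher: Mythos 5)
Your reduction to the three-case dichotomy of the preceding lemma, followed by the $Y/Y^+$ identification in case (1) and the Kronecker-quiver classification (citing \cite[\S 2.3, Lemma 2.12]{ny-perv1}) in cases (2)/(3), is exactly the route the paper takes implicitly; the theorem in question is assembled from that lemma, the discussion preceding Theorem~\ref{thm-classification-1}, and the wall definitions. However, there is a genuine omission: your proof does not address the $\zeta = 0$ bullet, which lists $X\setminus\{0\}$ together with the one-dimensional simple modules $s_0$, $s_1$. For $\zeta = 0$ the slope condition degenerates, so $\theta_\zeta$-stability is equivalent to $W$ being a simple $A$-module; in case (1) the $(1,1)$-dimensional simples form the family over $X\setminus\{0\}$, while the degenerate subcases (b) of the lemma proof where \emph{both} $a_i$ and $b_j$ vanish force $W_0 = 0$ or $W_1 = 0$, giving $s_1$ or $s_0$. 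This requires the last paragraph of the lemma's proof (the $\zeta_0 = \zeta_1 = 0$ branch), which your sign-of-$\zeta_0 - \zeta_1$ dichotomy silently excludes. You also leave the final ``otherwise'' assertion implicit; it does follow from your setup since in case (1) the constraint $\zeta\cdot\dimv W = 0$ forces $\zeta_0 + \zeta_1 = 0$ and in cases (2)/(3) the dimension vectors $(m, m\pm 1)$ place $\zeta$ on one of the walls $L^\pm_\pm(m)$, but this should be stated rather than left to the reader.
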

\end{NB3}


\begin{NB}
  Comment out 
  \verb+ \smallskip+

  We should not put any \verb+skip+ command, as it should be
  determined by the publisher.
\end{NB}

\begin{rem}\label{rem-classification}\begin{NB} corrected 7/7 \end{NB}%
Recall that the derived category of finite dimensional representations of Kronecker quiver is equivalent to the derived category of coherent sheaves on $\CP^1$. 
\begin{NB} the following is not correct. it depends on the choice of tilting bundle on $\CP^1$
Under this equivalence, $C^-_+(m)$ and $C^+_+(m)$ (resp.\ $C^-_-(m)$ and $C^+_-(m)$) correspond to $\OO_{\CP^1}(-m-1)[1]$ (resp.\ $\OO_{\CP^1}(m)$). 
\end{NB}

Under the equivalence $D^b(\amod)\simeq D^b(\coh)$, $C^-_+(m)$ and $C^-_-(m)$ correspond $z_*\OO_{\CP^1}(m-1)$ and $z_*\OO_{\CP^1}(-m-1)[1]$, where $z\colon \CP^1\to Y$ is the zero section.

Under the equivalence $D^b(\amod)\simeq D^b(\fcoh)$, $C^+_+(m)$ and $C^+_-(m)$ correspond $z^+_*\OO_{\CP^1}(-m-1)[1]$ and $z^+_*\OO_{\CP^1}(m-1)$, where $z^+\colon \CP^1\to Y^+$ is the zero section.

Moreover the stable
\begin{NB}
  Changed from simple to stable, June 17.
\end{NB}%
objects on the wall $L^-(\infty)$ correspond to
skyscraper sheaves on $Y$, ones on the wall $L^+(\infty)$ correspond
to skyscraper sheaves on $Y^+$.
\begin{NB}
I think `simple objects on the wall' does not make sense. I think you
mean `simple
objects in the category of semistable objects for the stability
parameter on the wall', but this is nothing but `stable objects'.
June 17
\end{NB}
\end{rem}

\subsection{Wall-crossing formula}
\begin{NB} This subsection is updated except for Proposition \ref{prop3.7} (6/19).\end{NB}%
Let $L$ be one of the walls $L^-_+(m)$, $L^-_-(m)$, $L^+_+(m)$ or $L^+_-(m)$. 
Take a parameter $\zeta^\circ=(\zeta_0,\zeta_1)$ on $L$ and set $\zeta^\pm=(\zeta_0\pm \varepsilon,\zeta_1\pm \varepsilon)$ for sufficiently small $0<\varepsilon\ll 1$ such that they are in chambers adjacent to the wall $L$. 
Note that, by the classification in \S \ref{subsec-classification}, we have the unique 
\begin{NB4}$\zeta\to$\end{NB4}%
 $\zeta^\circ$-stable $A$-module $C$ such that $\zeta\cdot \dimv C=0$. 
We fix these notations throughout this subsection. 

\begin{lem}\label{noext}
$\Ext^1_{A}(C,C)=0$.
\end{lem}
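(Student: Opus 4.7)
The plan is to transport the computation from the quiver side to the geometric side, using the identifications in Remark \ref{rem-classification}. Under the relevant derived equivalence---$D^b(\amod)\simeq D^b(\coh)$ when $C\in\{C^-_+(m), C^-_-(m)\}$, or $D^b(\amod)\simeq D^b(\fcoh)$ when $C\in\{C^+_+(m), C^+_-(m)\}$---the module $C$ is identified, possibly up to a shift by $[1]$, with $z_*\OO_{\CP^1}(k)$ for an appropriate integer $k$, where $z$ denotes the inclusion of the exceptional $\CP^1$ into $Y$ or $Y^+$. Since the equivalence preserves $\Ext^1$ and since $\Ext^1(F,F)\cong\Ext^1(F[1],F[1])$, it suffices to prove
\[
   \Ext^1_Y(z_*\OO_{\CP^1}(k), z_*\OO_{\CP^1}(k)) = 0
\]
for every $k\in\Z$, and the analogous statement on $Y^+$.

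For this I would apply the local-to-global Ext spectral sequence
\[
   E_2^{p,q} = H^p\bigl(Y,\, \gext^q_Y(z_*\OO_{\CP^1}(k), z_*\OO_{\CP^1}(k))\bigr) \Longrightarrow \Ext^{p+q}_Y(z_*\OO_{\CP^1}(k), z_*\OO_{\CP^1}(k)).
\]
Since $z\colon \CP^1\hookrightarrow Y$ is a regular embedding of codimension $2$ with normal bundle $N := N_{\CP^1/Y}\cong\OO_{\CP^1}(-1)^{\oplus 2}$, the Koszul resolution gives the standard identification $\gext^q_Y(z_*\OO(k), z_*\OO(k)) \cong z_*\wedge^q N$, so $E_2^{p,q}=H^p(\CP^1,\wedge^q N)$. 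The only nonzero terms come from $\wedge^0 N=\OO$, $\wedge^1 N=\OO(-1)^{\oplus 2}$ and $\wedge^2 N=\OO(-2)$, and of the resulting six potential entries only $E_2^{0,0}=\C$ and $E_2^{1,2}=\C$ survive (using $H^*(\CP^1,\OO(-1))=0$). In particular $E_2^{1,0}=E_2^{0,1}=0$, so $\Ext^1=0$. The same argument applies on $Y^+$, whose exceptional curve has the identical local geometry.

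The only obstacle is bookkeeping: one must verify that Remark \ref{rem-classification} covers all four wall types $L^\pm_\pm(m)$ uniformly and that the shift by $[1]$ present in two of the four cases is harmless for $\Ext^1$. Both points are immediate from the definitions, so there is no substantive difficulty beyond the spectral-sequence calculation above. As a sanity check, the same computation also yields $\Ext^0(C,C)=\C$ and $\Ext^3(C,C)=\C$, consistent with $C$ being a simple object in the $3$-Calabi--Yau category $\afmod$ via Serre duality.
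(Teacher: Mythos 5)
Your proof is correct and takes essentially the same approach as the paper: reduce via Remark~\ref{rem-classification} to computing $\Ext^*_Y(z_*L,z_*L)$ for a line bundle $L$ on the zero section, and then use the Koszul resolution associated to the regular embedding $z\colon\CP^1\hookrightarrow Y$. The only cosmetic difference is that the paper uses adjunction $\Ext^\bullet_Y(z_*L,z_*L)=\Ext^\bullet_{\CP^1}(\LL z^*z_*L,L)$ and the spectral sequence of the resulting double complex, whereas you stay on $Y$ and run the local-to-global Ext spectral sequence with $\gext^q_Y(z_*L,z_*L)\cong z_*\wedge^q N$; both are repackagings of the same Koszul calculation and give the same nonvanishing entries $\Ext^0=\Ext^3=\C$.
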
 
\begin{NB}
Let $X$ be one of the $A$-modules $C^-_+(m)$, $C^-_-(m)$, $C^+_+(m)$ and $C^+_-(m)$ appeared in Proposition \ref{classification}.
Then $\Ext^1_{\afmod}(X,X)=0$.
\end{NB}
\begin{proof}
By Remark \ref{rem-classification}, it is enough to check $\Ext^1_Y(z_*L,z_*L)=0$ for any line bundle $L$ on $\CP^1$, 
where $z\colon \CP^1\to \OO(-1)\oplus\OO(-1)=Y$ is the zero section.
By the adjunction we have
\[
\Ext^\bullet_Y(z_*L,z_*L)=\Ext^\bullet_{\CP^1}(\LL z^*z_*L,L).
\]
Since we have the Koszul resolution 
\[
\begin{array}{ccccccc}
0 & \to & \wedge^2\bigl((\pi^*\OO(-1)\oplus\pi^*\OO(-1))^*\bigr)\otimes\pi^*L&&&&\\
& \to & \wedge^1\bigl((\pi^*\OO(-1)\oplus\pi^*\OO(-1))^*\bigr)\otimes\pi^*L&&&&\\
& \to & \wedge^0\bigl((\pi^*\OO(-1)\oplus\pi^*\OO(-1))^*\bigr)\otimes\pi^*L & \to & z_*L &\to & 0 
\end{array}
\]
of $z_*L$, the object $\LL z^*z_*L$ is quasi-isomorphic to the complex
\[
0\to L(2)\to L(1)\oplus L(1)\to L\to 0.
\]
We can compute $\Ext^*_{\CP^1}(\LL z^*z_*L,L)$ by the spectral sequence of the double complex. 
The only non-zero in the $E_1$-terms are  $\Hom_{\CP^1}(L,L)\simeq \C$ and $\Ext^1_{\CP^1}(L(2),L)\simeq \C$.
Thus the spectral sequence degenerates and we have $\Hom_Y(z_*L,z_*L)=\Ext^3_Y(z_*L,z_*L)=\C$ and $\Ext^1_Y(z_*L,z_*L)=\Ext^2_Y(z_*L,z_*L)=0$.
\end{proof}

\begin{prop}\label{prop3.5}
\begin{enumerate}
\item
Let $\V'$ be a $\zeta^+$-stable $\A$-module.
Then we have an exact sequence 
\[
0\to \V\to \V'\to C^{\oplus k}\to 0,
\]
where $\V$ is a $\zeta^\circ$-stable $\A$-module.
The integer $k$ and the isomorphism class of $\V$ are determined uniquely. 
Moreover, the composition of the maps
\[
\C^k\overset{\sim}{\longrightarrow} \Hom_{\A}(C,C^{\oplus k})\overset{\V'\circ}{\longrightarrow}\Ext^1_{\A}(C,\V)
\]
is injective, where we regard $\V'$ as an element in $\Ext^1_{\A}(C^{\oplus k},\V)$.
\item
Let $\V''$ be a $\zeta^-$-stable $\A$-module.
Then we have an exact sequence
\[
0\to C^{\oplus k}\to \V''\to\V \to 0,
\]
where $\V$ is a $\zeta^\circ$-stable $\A$-module.
The integer $l$ and the isomorphism class of $\V$ are determined uniquely.
Moreover, the composition of the maps
\[
\C^l\overset{\sim}{\longrightarrow} \Hom_{\A}(C^{\oplus k},C)\overset{\circ\V''}{\longrightarrow}\Ext^1_{\A}(\V,C)
\]
is injective, where we regard $\V''$ as an element in $\Ext^1_{\A}(\V,C^{\oplus k})$.
\end{enumerate}
\end{prop}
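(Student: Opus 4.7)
I treat part (1); part (2) is completely dual. The key inputs will be Rudakov's theorem, the classification of walls from Theorem~\ref{thm-classification-1}, and the vanishing $\Ext^1_\A(C,C) = 0$ of Lemma~\ref{noext}.

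First I show $\V'$ is $\zeta^\circ$-semistable: any $\zeta^\circ$-destabilizing subobject would, by continuity of $\theta$ in the stability parameter, also destabilize at $\zeta^+$ for $\varepsilon$ sufficiently small, contradicting $\zeta^+$-stability. Rudakov's theorem then gives a Jordan--H\"older filtration whose factors all have $\theta_{\zeta^\circ}$-value $0$; by Theorem~\ref{thm-classification-1} and $\dim V'_\infty = 1$, these factors consist of finitely many copies of $C$ together with one $\zeta^\circ$-stable $\A$-module $\V_0$ with $(V_0)_\infty = \C$. Since $\zeta^\circ\cdot\dimv C = 0$, a direct computation gives $\theta_{\zeta^+}(C) = \varepsilon > 0$; combined with $\zeta^+$-stability of $\V'$ and simplicity of $C$ in the $\zeta^\circ$-semistable category, this forces $\Hom_\A(C,\V') = 0$.

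Next I construct the exact sequence. Set $k := \dim \Hom_\A(\V',C)$, choose a basis $\phi_1,\ldots,\phi_k$, and form $\Phi := (\phi_1,\ldots,\phi_k) \colon \V' \to C^{\oplus k}$ with $\V := \ker\Phi$. By Lemma~\ref{noext}, any $\zeta^\circ$-semistable module whose Jordan--H\"older factors are all isomorphic to $C$ is in fact a direct sum of copies of $C$; hence the image of $\Phi$ equals $C^{\oplus l}$ for some $l \leq k$, and linear independence of $\phi_1,\ldots,\phi_k$ forces $l = k$, so $\Phi$ is surjective. Applying $\Hom_\A(-,C)$ to the resulting short exact sequence and using $\Ext^1_\A(C^{\oplus k},C) = 0$ yields
\[
0 \to \Hom_\A(C^{\oplus k},C) \to \Hom_\A(\V',C) \to \Hom_\A(\V,C) \to 0,
\]
whose first two terms are both $k$-dimensional by Schur's lemma, so $\Hom_\A(\V,C) = 0$.

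For stability of $\V$: $\V$ is $\zeta^\circ$-semistable as a subobject of $\V'$ with the same $\theta_{\zeta^\circ}$-value, and its Jordan--H\"older factors lie in $\{C, \V_0\}$ with $\V_0$ of multiplicity exactly one. Inspect its socle. If the socle contains a copy of $C$, then $C \hookrightarrow \V \subset \V'$ contradicts $\Hom_\A(C,\V') = 0$. Otherwise the socle consists only of copies of $\V_0$, hence equals $\V_0$; then $\V/\V_0$ is semistable with all Jordan--H\"older factors equal to $C$, so by Lemma~\ref{noext} $\V/\V_0 \cong C^{\oplus m}$ for some $m \geq 1$, and $\Hom_\A(-,C)$ applied to $0 \to \V_0 \to \V \to C^{\oplus m} \to 0$ gives $\Hom_\A(\V,C) = \C^m \neq 0$ (using $\Hom_\A(\V_0,C) = 0$, since $\V_0 \not\cong C$, and $\Ext^1_\A(C^{\oplus m},C) = 0$), contradicting the previous paragraph. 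Thus $m = 0$ and $\V \cong \V_0$ is $\zeta^\circ$-stable; uniqueness of $k$ and $[\V]$ is immediate from the canonical construction. For the injectivity of $\C^k \cong \Hom_\A(C,C^{\oplus k}) \to \Ext^1_\A(C,\V)$: if $f^*[\V']$ splits for some $f \colon C \to C^{\oplus k}$, then composing a splitting with the natural map $\V' \times_{C^{\oplus k}} C \to \V'$ yields a nonzero map $C \to \V'$ whenever $f \neq 0$, again contradicting $\Hom_\A(C,\V') = 0$. For part (2), $\theta_{\zeta^-}(C) = -\varepsilon < 0$ permits $C$ as a subobject, so one dualizes by setting $k := \dim\Hom_\A(C,\V'')$, using the evaluation map $C^{\oplus k} \hookrightarrow \V''$, and invoking the analogous vanishing $\Hom_\A(\V'',C) = 0$. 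The main obstacle will be the socle/head case analysis for the stability of $\V$, which uses $\Ext^1_\A(C,C) = 0$ twice: once to identify semisimple sub/quotients built from $C$'s, and once to transfer Hom information across $0 \to \V \to \V' \to C^{\oplus k} \to 0$.
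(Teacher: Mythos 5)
Your proof is correct, but it follows a genuinely different route from the paper's. The paper argues directly at the level of a Jordan--H\"older filtration of $\V'$ with respect to $\theta_{\tz^\circ}$: since $\dim V'_\infty = 1$, exactly one JH-factor has nonzero $\infty$-component, and the computation $\theta_{\tz^+}(\V^{l'}/\V^{l'+1}) = \varepsilon > 0$ for all other factors forces, by $\zeta^+$-stability of $\V'$, the special factor to be the \emph{bottom} one $\V^L$. Then $\V := \V^L$ is $\zeta^\circ$-stable by construction, and $\V'/\V$ is a successive extension of copies of $C$, hence $\cong C^{\oplus k}$ by Lemma~\ref{noext}. You instead build the map $\V' \to C^{\oplus k}$ explicitly by taking $k = \dim\Hom_\A(\V',C)$ and forming the evaluation, then show the kernel is $\zeta^\circ$-stable through a socle analysis. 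Both arguments are valid. The paper's is shorter because the stability of $\V$ comes for free from the JH construction; yours has the advantage of exhibiting $\V$ canonically as the kernel of the evaluation morphism, which makes the uniqueness of $\V$ as a \emph{subobject} of $\V'$ (not merely of its isomorphism class) manifest --- the paper's appeal to ``uniqueness of JH factors'' only gives uniqueness up to isomorphism and implicitly relies on the same canonicity. Your extra input, $\Hom_\A(C,\V') = 0$ from $\zeta^+$-stability, plays the same role as the paper's placement of the $\infty$-factor at the bottom of the filtration; these are two faces of the same phenomenon.
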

\begin{proof}
We set
\[
\zeta_\infty=-\zeta_0\cdot\dim V_0'-\zeta_1\cdot\dim V_1'. 
\]
Note that $\V'$ is $\theta_{\tz}$-semistable and $\theta_{\tz}(\V')=0$.
Let
\[
\V'=\V^0\supset\cdots\supset \V^{L}\supset \V^{L+1}=0
\]
be \JH of $\V'$ with respect to the $\theta_{\tz}$-stability.
As we have mentioned before, there is an integer $0\leq l\leq L$ such that $\dim(\V^l/\V^{l+1})_\infty=1$ and $\dim(\V^{l'}/\V^{l'+1})_\infty=0$ for any ${l'}\neq {l}$. 
Then for ${l'}\neq l$ we have 
\begin{align*}
&\zeta^+_0\cdot \dim(\V^{l'}/\V^{{l'}+1})_0+\zeta^+_1\cdot \dim(\V^{l'}/\V^{{l'}+1})_1\\
&=\varepsilon\cdot(\dim(\V^{l'}/\V^{{l'}+1})_0+\dim(\V^{l'}/\V^{{l'}+1})_1)>0.
\end{align*}
From the $\zeta^+$-stability of $\V'$, we have $l=L$. 

Due to the classification in \S \ref{subsec-classification} and Lemma \ref{noext}, $\V'/\V_{L}$ is isomorphic to the direct sum $C^{\oplus k}$ for some $k$. The uniqueness follows from the uniqueness of factors of a Jordan-H\"older filtration. 

The composition of the maps is injective, since otherwise $\V'$ has $C$ as a direct summand and can not be $\zeta^+$-stable.

We can verify the claim of (2) similarly.
\end{proof}

Let $\mathrm{Gr}(k,\mathcal{V})$ be the Grassmannian variety of $k$-dimensional vector subspaces of a vector space $\mathcal{V}$. 
\begin{prop}\label{prop3.6}
\begin{enumerate}
\item
Let $\V$ be a $\zeta^\circ$-stable $\A$-module. 
For an element
\[
x\in \mathrm{Gr}(k,\dim\Ext^1_{\A}(C,\V)),
\]
let $\V'$ denote the framed $A$-module given by the universal extension 
\[
0\to \V\to \V'\to C^{\oplus k}\to 0
\]
corresponding to $x$. 
Then $\V'$ is $\zeta^+$-stable.
\item
Let $\V$ be a $\zeta^\circ$-stable $\A$-module. 
For an element
\[
y\in \mathrm{Gr}(l,\dim\Ext^1_{\A}(\V,C)),
\]
let $\V''$ denote the $\A$-module given by the universal extension 
\[
0\to C^{\oplus k}\to \V'\to \V\to 0
\]
corresponding to $y$. 
Then $\V''$ is $\zeta^-$-stable.
\end{enumerate}
\end{prop}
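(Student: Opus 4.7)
The plan is to prove (1); part (2) follows by a completely dual argument with quotients in place of subobjects. Fix a proper nonzero subobject $\V''' \subsetneq \V'$ in $\tpervc$. Set $\V^a := \V''' \cap \V$ and $\V^b := \V'''/\V^a$, yielding a commutative diagram of short exact sequences
\[
\begin{array}{ccccccccc}
0 & \to & \V^a & \to & \V''' & \to & \V^b & \to & 0 \\
 & & \downarrow & & \downarrow & & \downarrow & & \\
0 & \to & \V & \to & \V' & \to & C^{\oplus k} & \to & 0
\end{array}
\]
with all vertical arrows injective. Since $\dim V_\infty = 1$ and $C_\infty = 0$, the framing of $\V'$ coincides with that of $\V$, so $V'''_\infty \in \{0,1\}$ is inherited by $\V^a$. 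Normalizing $\tz^+$ so that $\theta_{\tz^+}(\V') = 0$, the claim reduces to showing $\theta_{\tz^+}(\V''') < 0$.

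In every case the key observation is that $\V'$ is $\zeta^\circ$-semistable of slope $0$ (an extension of two $\zeta^\circ$-semistables of slope $0$, using $\zeta^\circ \cdot \dimv C = 0$), so any sub of $\V'$ has $\zeta^\circ$-slope $\le 0$. When $V'''_\infty = 0$ and $\V^a \neq 0$, the type-(A) stability of $\V$ from Lemma~\ref{spcs} gives $\zeta^\circ \cdot \dimv \V^a < 0$; combined with $\zeta^\circ \cdot \dimv \V^b \le 0$ from semistability of $C^{\oplus k}$, this makes $\zeta^\circ \cdot \dimv \V''' < 0$, and the strict inequality is preserved under the small perturbation $\zeta^+ = \zeta^\circ + \varepsilon(1,1)$. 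When $V'''_\infty = 1$ and $\V^a \subsetneq \V$, the type-(B) stability of $\V$ yields the analogous strict inequality once $\tz^+_\infty$ is expanded. When $V'''_\infty = 1$ and $\V^a = \V$, the sub $\V^b \subsetneq C^{\oplus k}$ is proper; a direct computation produces a strictly negative $\varepsilon$-contribution $\varepsilon\bigl[(\dim V^b_0 + \dim V^b_1) - k(\dim C_0 + \dim C_1)\bigr]$ that dominates, regardless of whether $\zeta^\circ \cdot \dimv \V^b$ vanishes.

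The delicate situation, and the step I expect to be the main obstacle, is $V'''_\infty = 0$ with $\V^a = 0$ and $\zeta^\circ \cdot \dimv \V''' = 0$: here $\V''' \hookrightarrow C^{\oplus k}$ with matching slope, and the $\varepsilon$-perturbation pushes the slope strictly \emph{positive}. I would rule this case out by combining two inputs. First, the classification in \S\ref{subsec-classification} identifies $C$ as the unique $\zeta^\circ$-stable $A$-module with $\zeta^\circ \cdot \dimv C = 0$, so every Jordan--H\"older factor of the $\zeta^\circ$-semistable slope-$0$ subobject $\V''' \subseteq C^{\oplus k}$ is isomorphic to $C$; Lemma~\ref{noext} then upgrades this to $\V''' \cong C^{\oplus j}$ for some $j \ge 1$. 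Second, the inclusion $\V''' \hookrightarrow \V'$ lifts the induced $C^{\oplus j} \hookrightarrow C^{\oplus k}$ and so splits the pullback of the extension defining $\V'$. Under the identification $\Ext^1_{\A}(C^{\oplus k}, \V) = \Hom_{\C}(\C^k, \Ext^1_{\A}(C, \V))$, this vanishing forces the defining map $\iota\colon \C^k \to \Ext^1_{\A}(C, \V)$ to vanish on a $j$-dimensional subspace of $\C^k$. But $\iota$ is injective with image $x$ by the definition of the universal extension, so $j = 0$, a contradiction.

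Part (2) runs entirely dually: a proper nonzero subobject $\V''' \subsetneq \V''$ corresponds to a proper nonzero quotient $Q := \V''/\V'''$, which one decomposes via $Q^a := \im(C^{\oplus l} \to Q)$ and $Q^b := Q/Q^a$ (a quotient of $\V$). The same three-plus-one case pattern applies, and the critical case has $Q$ unframed with $Q^b = 0$ and $\zeta^\circ \cdot \dimv Q = 0$, forcing $Q \cong C^{\oplus j}$. This produces a splitting of the push-forward of $[\V''] \in \Ext^1_{\A}(\V, C^{\oplus l})$ along the induced surjection $C^{\oplus l} \twoheadrightarrow C^{\oplus j}$, which is impossible for $j \ge 1$ by injectivity of the defining map $\C^l \hookrightarrow \Ext^1_{\A}(\V, C)$.
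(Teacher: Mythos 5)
Your proof is correct and takes essentially the same approach as the paper: case-split on $\V''' \cap \V$, use $\zeta^\circ$-stability of $\V$ and $\zeta^\circ$-semistability of $C^{\oplus k}$ in the intermediate cases, an $\varepsilon$-perturbation on the cokernel when $\V \subseteq \V'''$, and rule out $\V''' \cong C^{\oplus j}$ by the injectivity of the classifying map of the universal extension. The paper states the last point more compactly (``$\V'$ does not have $C$ as its direct summand''), while you unwind it via the splitting of the pulled-back extension; the content is identical.
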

\begin{proof}
We set $\zeta_\infty$ and $\zeta_\infty^+$ so that 
\[
\tz\cdot \dimv (\V)=\tz\cdot \dimv (\V')=\tz^+\cdot \dimv (\V')=0.
\]
Let $\tilde{S}$ be a nonzero proper subobject of $\V'$ in $\hamod$. 
We should check $\tz^+\cdot \dimv(\tilde{S})<0$.

Suppose $\tilde{S}\cap \V=\emptyset$, then $\tilde{S}$ is mapped into $C^{\oplus k}$ injectively. 
Since $\V'$ does not have $C$ as its direct summand, $\tilde{S}$ is not isomorphic to a direct sum of $C$. 
So we have $\zeta^\circ\cdot \dimv(\tilde{S})<0$ because of the $\zeta^\circ$-stability of $C$. 
Since $\varepsilon$ is sufficiently small we have $\tz^+\cdot \dimv(\tilde{S})<0$ as well. 

Suppose $\emptyset\neq\tilde{S}\cap \V\subsetneq\V$. 
Since $\V$ is $\zeta^\circ$-stable and $C^{\oplus k}$ is $\zeta^\circ$-semistable we have
\[
\tz\cdot\dimv(\tilde{S}\cap \V)<0,\quad \tz\cdot\dimv(\im(\tilde{S}\to C^{\oplus k}))\leq 0.
\]
So we have
\[
\tz\cdot\dimv(\tilde{S})=\tz\cdot\dimv(\tilde{S}\cap \V)+\tz\cdot\dimv(\im(\tilde{S}\to C^{\oplus k}))<0.
\]
Because $\varepsilon$ is sufficiently small we have $\tz^+\cdot\dimv(\tilde{S})<0$ as well. 

Suppose $\tilde{S}\cap \V=\V$. 
Since $C^{\oplus k}$ is $\zeta^\circ$-semistable we have
$\tz\cdot\dimv(\coker(\tilde{S}\to C^{\oplus k}))\geq 0$. 
Because $\coker(\tilde{S}\to C^{\oplus k})\neq\emptyset$ and $\coker(\tilde{S}\to C^{\oplus k})_\infty=0$ we have
$\tz^+\cdot\dimv(\coker(\tilde{S}\to C^{\oplus k}))> 0$.
Hence we get
\[
\tz^+\cdot\dimv(\tilde{S})=\tz^+\cdot\dimv(\V')-\tz^+\cdot\dimv(\coker(\tilde{S}\to C^{\oplus k}))<0.
\]
We can verify the claim of (2) similarly.
\end{proof}

\begin{prop}\label{prop3.7} 
For a $\zeta^\circ$-stable $\A$-module $\V$ we have
\[
\ext^1_{\A}(C,\V)-\ext^1_{\A}(\V,C)=\dim C_0.
\]
\end{prop}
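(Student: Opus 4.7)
The plan is to combine a careful computation of the Euler form on $\tilde A$-mod with the Calabi--Yau-3 symmetry of the underlying conifold algebra $A$, using as the key tool the canonical short exact sequence
\[
0 \to (V, 0, 0) \to \tilde V \to S_\infty \to 0
\]
in $\hafmod$, where $V = (V_0, V_1)$ is the underlying $A$-module of $\tilde V$ and $S_\infty$ is the simple $\tilde A$-module supported at the framing vertex $\infty$.

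First I would record two projective-resolution calculations. The arrow $\infty \to 0$ in $\tilde Q$ is the unique arrow incident to $\infty$ and appears in no relation, so there is a short $\tilde A$-projective resolution $0 \to P_0 \to P_\infty \to S_\infty \to 0$. Applying $\Hom_{\tilde A}(-, C)$ and using $C_\infty = 0$ collapses the complex $[\,C_\infty \to C_0\,]$ to $[\,0 \to C_0\,]$, giving $\Hom_{\tilde A}(S_\infty, C) = 0$, $\Ext^1_{\tilde A}(S_\infty, C) \cong C_0$, and $\Ext^{\geq 2}_{\tilde A}(S_\infty, C) = 0$. Also, since no paths in $\tilde Q$ go from $0$ or $1$ to $\infty$, the projectives $P_0^A, P_1^A$ agree with $P_0^{\tilde A}, P_1^{\tilde A}$; hence the $A$-projective resolution of $C$ is also a $\tilde A$-projective resolution, and $\Ext^i_{\tilde A}(C, \tilde V) = \Ext^i_A(C, V)$ for every $i$.

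Next I would apply $\Hom_{\tilde A}(-, C)$ to the displayed SES to obtain a long exact sequence relating $\Ext^*_{\tilde A}(\tilde V, C)$ to $\Ext^*_A(V, C)$ through the terms $\Ext^*_{\tilde A}(S_\infty, C)$ computed above. The $\zeta^\circ$-stability of $\tilde V$ and $C$ forces $\Hom_{\tilde A}(\tilde V, C) = \Hom_{\tilde A}(C, \tilde V) = 0$: a nonzero map in either direction would produce a proper nonzero subobject of $\tilde V$ of the same $\theta_{\tilde\zeta}$-slope (since $\zeta \cdot \dimv C = 0 = \theta_{\tilde\zeta}(\tilde V)$ after normalization), contradicting strict stability once one uses that $V_\infty \neq C_\infty$. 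These vanishings make the connecting map $\Hom_A(V, C) \to \Ext^1_{\tilde A}(S_\infty, C) = C_0$ injective and give $\Ext^i_{\tilde A}(\tilde V, C) = \Ext^i_A(V, C)$ for $i \geq 2$.

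Finally, taking the alternating sum I would isolate $\ext^1_{\tilde A}(C, \tilde V) - \ext^1_{\tilde A}(\tilde V, C)$ via Euler forms. The $\tilde A$-Euler form reads off from $\tilde Q$ and its relations as $\chi_{\tilde A}(M, N) = \chi_A(M|_A, N|_A) + m_\infty n_\infty - m_\infty n_0$; since $A$ is CY-3 the first summand vanishes identically, giving $\chi_{\tilde A}(C, \tilde V) - \chi_{\tilde A}(\tilde V, C) = \dim C_0$. The CY-3 duality $\Ext^i_A(M, N) \cong \Ext^{3-i}_A(N, M)^\vee$ on $A$-mod then makes the $\ext^2$ and $\ext^3$ differences between $(C, V)$ and $(V, C)$ cancel against one another in the alternating sum, the $\hom$ differences vanish by the previous step, and what remains is precisely the $\dim C_0$ contribution from $\Ext^1_{\tilde A}(S_\infty, C)$. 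The main obstacle will be the sign bookkeeping in the connecting map of the LES together with the CY-3 identifications, since the framing vertex is what breaks the Euler-form symmetry and produces the nonzero $\dim C_0$.
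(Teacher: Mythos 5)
Your proposal follows essentially the same route as the paper: both use the canonical short exact sequence $0\to V\to \V\to S_\infty\to 0$, compute $\Ext^*_{\A}(S_\infty,C)$ and $\Ext^*_{\A}(C,S_\infty)$ directly from the short projective resolution of $S_\infty$, use $\zeta^\circ$-stability to kill $\Hom_{\A}(\V,C)$ and $\Hom_{\A}(C,\V)$, and finish with Calabi--Yau-3 duality on $A$-mod. The paper's version reads off the numbers directly from the two long exact sequences and then collapses the result to $\dim C_0+\chi_A(C,V)-\hom_A(C,V)$; your version packages the same bookkeeping as a comparison of numerical Euler forms $\chi_{\A}(C,\V)-\chi_{\A}(\V,C)$, which is an equivalent accounting.

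One step deserves more care. You say ``since $A$ is CY-3 the first summand $\chi_A$ vanishes identically.'' The CY-3 property gives only the \emph{antisymmetry} $\chi_A(M,N)=-\chi_A(N,M)$; it does not by itself force $\chi_A$ to vanish, and for a general CY-3 quiver algebra it does not. The vanishing of $\chi_A$ on $\afmod$ is an additional input: the paper invokes Hirzebruch--Riemann--Roch (Euler pairing on $\cohc$ vanishes because $f$ has relative dimension one). For the conifold quiver one can also see it directly from the Koszul resolution, since arrows occur in opposing pairs $0\to 1$ and $1\to 0$ and all contributions cancel. Without this separate fact your alternating-sum argument only isolates $\ext^1_{\A}(C,\V)-\ext^1_{\A}(\V,C)$ up to a term $\chi_A(C,V)$, so the vanishing really is needed and should be justified rather than attributed to CY-3 alone. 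Modulo spelling this out (and modulo the overall sign convention, which is already delicate in the paper -- note that the way Proposition~\ref{prop3.7} is used in the wall-crossing argument, via $\Ms{\zeta^\circ}{\vv}_N=\Ms{\zeta^\circ}{\vv}^{N+\dim C_0}$, corresponds to $\ext^1_{\A}(\V,C)-\ext^1_{\A}(C,\V)=\dim C_0$, so careful sign bookkeeping in your alternating sum is genuinely required), your outline matches the paper's argument.
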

\begin{NB}
This lemma follows from the argument using the Koszul resolution of $A$ and $\A$. I quote the argument from the paper on 3-dimensional toric toric Calabi-Yau.
\begin{quote}
Let $A=(Q,\omega)$ be a quiver with superpotential. 
Assume that the Koszul type complex of projective $A$-bimodules
\[
\begin{array}{ccccccccc}
0 & \to & \oplus_{i\in Q_0}A\,\otimes\,\C\cdot e_i\,\otimes\,A & \overset{d_3}{\to} & \oplus_{a\in Q_1}A\,\otimes\,\C\cdot a\,\otimes\, A & & &&\\
 & \overset{d_2}{\to} & \oplus_{b\in Q_1}A\,\otimes\,\C\cdot b\,\otimes\,A & \overset{d_1}{\to} & \oplus_{i\in Q_0}A\,\otimes\,\C\cdot e_i\,\otimes\,A & \overset{m}{\to} & A &\to & 0,
\end{array}
\]
where all tensor products are taken over $A_0:=\C[Q_0]$, gives a resolution of $A$ (see \cite{graded3CY}, for example). In particular, $A$ is $3$-dimensional Calabi-Yau.

Take $(n_i)\in\Z_{\geq 0}^{Q_0}$ such that at least one of $n_i$'s is not zero. 
We make a new quiver $\Q$ by adding one vertex $\infty$ and $n_i$-arrows from the vertex $\infty$ to the vertex $i$ to the original quiver $Q$. 
The original superpotential $\omega$ gives the superpotential on the new quiver $\Q$ too. 
Let denote $\A:=(\Q,\omega)$ and consider the Koszul type complex of $\A$-bimodules
\[
\begin{array}{ccccccccc}
0 & \to & \oplus_{i\in Q_0}\A\,\otimes\,\C\cdot e_i\,\otimes\,\A & \to & \oplus_{a\in Q_1}\A\,\otimes\,\C\cdot a\,\otimes\, \A & & &&\\
 & \to & \oplus_{b\in \Q_1}\A\,\otimes\,\C\cdot b\,\otimes\,\A & \to & \oplus_{i\in \Q_0}\A\,\otimes\,\C\cdot e_i\,\otimes\,\A & \to & \A & \to & 0.
\end{array}
\]
This is also exact. 
The exactness at the last two terms is equivalent to the definition of generators and relations of the algebra $\A$.
The exactness at the first three terms is derived from that of the previous complex. 


Note that for $E$, $F\in \hafmod$ (resp.\ $\in\afmod$) we can compute $\Ext^*_{\A}(E,F)$ (resp.\ $\Ext^*_{A}(E,F)$) using the Koszul resolution. 
Let $d_i^{\A}(E,F)$ (resp.\ $d_i^{A}(E,F)$) denote the derivation in the complex derived from the Koszul resolution.
Then we have
\begin{align*}
&\homd_{\A}(E,F)-\ext^1_{\A}(E,F)\\
=\,&\sum_{i\in \Q_0}(\dim E_i\cdot \dim F_i)-\sum_{b\in\Q_1}(\dim E_{\mr{out}(b)}\cdot \dim F_{\mr{in}(b)})+\mr{rank}\left(d_2^{\A}(E,F)\right).
\end{align*}
Note that 
\[
\mr{rank}\left(d_2^{\A}(E,F)\right)=\mr{rank}\left(d_2^{A}(E,F)\right)
=\mr{rank}\left(d_2^{A}(F,E)\right)
=\mr{rank}\left(d_2^{\A}(F,E)\right),
\]
where the second equation comes from the self-duality of the Koszul complex of $A$.
Hence we have
\begin{align*}
&\homd_{\A}(E,F)-\ext^1_{\A}(E,F)+\ext^1_{\A}(F,E)-\homd_{\A}(F,E)\\
=\,& \sum_{b\in\Q_1}(\dim E_{\mr{out}(b)}\cdot \dim F_{\mr{in}(b)}-\dim E_{\mr{in}(b)}\cdot \dim F_{\mr{out}(b)}).
\end{align*}
\end{quote}
\begin{NB2}
I will update the following proof if we decide to use them (6/19).
\end{NB2}
\end{NB}

\begin{proof}
Let $S_\infty$ be the simple $\A$-module corresponding to the extended vertex $\infty$ and $V$ be the kernel of the natural map $\V\to S_\infty$
\begin{NB3}
We have the exact sequence
\[
0\to V\to \tilde{V}\to \C_\infty\to 0.
\]
of $\A$-modules,
where we regard $V$ as a framed $A$-module with trivial framing and $\C_\infty:=(0,\C,0)$ is the framed $A$-module with $1$-dimensional framing and trivial $A$-module.
\end{NB3}

First, applying the functor $\Hom_{\A}(C,-)$ to the
short exact sequence we have the following exact sequence:
\begin{equation*}
\xymatrix@R=.8pc@C=.9pc{
  & & \Hom_{\A}(C,S_\infty) \ar[lld]
\\
             \Ext^1_{\A}(C,V) \ar[r]
           & \Ext^1_{\A}(C,\V) \ar[r] 
           & \Ext^1_{\A}(C,S_\infty).
}
\end{equation*}
\begin{NB}
This is an example of the commutative diagram via \verb+xy-pic+
\begin{verbatim}
\begin{equation*}
\xymatrix@R=.8pc@C=.9pc{
  & & \Hom_{\A}(C,\C_\infty) \ar[lld]
\\
             \Ext^1_{\A}(C,V) \ar[r]
           & \Ext^1_{\A}(C,\V) \ar[r] 
           & \Ext^1_{\A}(C,\C_\infty).
}
\end{equation*}
\end{verbatim}
\[
\Hom_{\A}(C,\C_\infty)\to\Ext^1_{\A}(C,V)\to\Ext^1_{\A}(C,\V)\to\Ext^1_{\A}(C,\C_\infty).
\]
\end{NB}%
Clearly $\Hom_{\A}(C,S_\infty)=0$. 
We can also see that any extension 
\[
0\to S_\infty\to *\to C\to 0
\]
of $\A$-modules is always trivial, that is , $\Ext^1_{\A}(C,S_\infty)=0$. 
Hence we have
\[
\Ext^1_{\A}(C,\V)=\Ext^1_{\A}(C,V)=\Ext^1_{A}(C,V).
\]

On the other hand, applying the functor $\Hom_{\A}(-,C)$ to the short exact sequence we have the following exact sequence:
\begin{equation*}
\xymatrix@R=.8pc@C=.9pc{
  & \Hom_{\A}(\V,C) \ar[r] & \Hom_{\A}(V,C) \ar[lld]
\\
             \Ext^1_{\A}(\C_\infty,C) \ar[r]
           & \Ext^1_{\A}(\V,C) \ar[r] 
           & \Ext^1_{\A}(V,C).
}
\end{equation*}

Since both $\V$ and $C$ are $\zeta^\circ$-stable \begin{NB} corrected 7/7\end{NB}%
and they are not isomorphic, we have $\Hom_{\A}(\V,C)=0$.

Note that giving an extension 
\[
0\to C \to *\to S_\infty\to 0
\]
is equivalent to giving a map $\C\to C_0$
Hence we have $\ext^1_{\A}(S_\infty,C)=\dim C_0$.

Given an extension
\[
0\to C\to V'\to V\to 0
\]
of $A$-modules, by taking any lift of $\iota\colon V_\infty\simeq \C\to V_0$ with respect to the surjection $V'_0\to V_0$, we get an extension 
\[
0\to C\to \V'\to \V\to 0
\]
of framed $A$-modules. This means the map
\[
\Ext^1_{\A}(\V,C)\to\Ext^1_{\A}(V,C)\simeq\Ext^1_{A}(V,C)
\]
is surjective.

Now we have
\begin{align*}
&\ext^1_{\A}(C,\V)-\ext^1_{\A}(\V,C)\\
=&\ \ext^1_{A}(C,V)-\left(-\hom_{A}(V,C)+\dim C_0+\ext^1_{A}(V,C)\right)\\
=&\,\dim C_0+\left(\ext^1_{A}(C,V)-\ext^2_{A}(C,V)+\ext^3_{A}(C,V)\right)\\
=&\,\dim C_0+\chi_A(C,V)-\hom_{A}(C,V).
\end{align*}
Since $f$ is relative dimension $1$, the Euler form on $\cohc$ vanishes by Hirzebruch-Riemann-Roch theorem, and so does the Euler form on $\afmod$.

Both $\V$ and $C$ are $\zeta^\circ$-stable \begin{NB} corrected 7/7\end{NB}%
and they are not isomorphic, so we have $\Hom_{\A}(C,\V)=0$. Since the induced map $\Hom_{A}(C,V)\to \Hom_{\A}(C,\V)$ \begin{NB} corrected 7/7\end{NB}%
is injective, we have $\Hom_{A}(C,V)=\Hom_{\A}(C,V)=0$.

Finally the claim follows.
\end{proof}

We define stratifications on $\mf{M}^{\,\mr{s}}_{\zeta^\circ}(\vv)$ and $\M{\zeta^\pm}{\vv}$ as follows:
\begin{itemize}
\item let $\mf{M}^{\,\mr{s}}_{\zeta^\circ}(\vv)_N$ denote the subset of $\mf{M}^{\,\mr{s}}_{\zeta^\circ}(\vv)$ consisting of $\A$-modules $\V$ such that 
$\dim\Ext^1_{\Q}(C,\V)=N$,
\item let $\M{\zeta^+}{\vv'}_{N,k}$ denote the subset of $\M{\zeta^+}{\vv'}$ consisting of $\A$-modules $\V'$ such that there exists a $\zeta^\circ$-stable $\A$-module $\V\in \M{\zeta}{\vv'-k\cdot \dimv(C)}_N$ and an exact sequence
\[
0\to \V\to \V'\to C^{\oplus k}\to 0.
\]
\item let $\mf{M}^{\,\mr{s}}_{\zeta^\circ}(\vv)^N$ denote the subset of $\mf{M}^{\,\mr{s}}_{\zeta^\circ}(\vv)$ consisting of $\A$-modules $\V$ such that 
$\dim\Ext^1_{\Q}(\V,C)=N$, and 
\item let $\M{\zeta^-}{\vv'}^{N,k}$ denote the subset of $\M{\zeta^-}{\vv'}$ consisting of $\A$-modules $\V'$ such that there exists a $\zeta^\circ$-stable $\A$-module $\V\in \M{\zeta}{\vv'-k\cdot \dimv(C)}^N$ and an exact sequence
\[
0\to C^{\oplus k}\to \V'\to \V\to 0.
\]
\end{itemize}
\begin{NB}
I added explanations for the reason $\M{\zeta^\circ}{\vv}_N$ and $\M{\zeta^+}{\vv'}_{N,k}$ are subvarieties.
\end{NB}
\begin{lem}\label{lem-Koszul}
The subsets $\mf{M}^{\,\mr{s}}_{\zeta^\circ}(\vv)_N\subset\mf{M}^{\,\mr{s}}_{\zeta^\circ}(\vv)$ and $\M{\zeta^\pm}{\vv}_{N,k}\subset\M{\zeta^\pm}{\vv}$ have natural subscheme structures.
\end{lem}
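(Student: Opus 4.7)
The plan is to exploit the Koszul-type resolution of $\A$ as a bimodule, which is sketched in the commented material preceding Proposition~\ref{prop3.7} and whose name is reflected in the label of this lemma. The point is that for any flat family $\mca{V}$ of $\A$-modules, the groups $\Ext^\bullet_{\A}(C,\V)$ and $\Ext^\bullet_{\A}(\V,C)$ are computed fiberwise by the cohomology of an explicit finite complex of locally free sheaves on the parameter space, so one can stratify by rank and form universal extensions.

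First I would handle $\mf{M}^{\,\mr{s}}_{\zeta^\circ}(\vv)_N$ (the argument for $\mf{M}^{\,\mr{s}}_{\zeta^\circ}(\vv)^N$ being symmetric). Let $\mca{V}$ denote a universal (or quasi-universal) family over $\mf{M}^{\,\mr{s}}_{\zeta^\circ}(\vv)$. Applying the Koszul resolution of $\A$ to $(C,\mca{V})$ produces a complex of locally free sheaves $\mca{K}^{-1}\to\mca{K}^0\to\mca{K}^1\to\mca{K}^2$ whose cohomology computes $\gext^\bullet_{\A}(C,\mca{V})$ fiberwise. Since $\V\not\cong C$ and both are $\zeta^\circ$-stable, $\Hom_{\A}(C,\V)=0$, so $\dim\Ext^1_{\A}(C,\V)$ is the rank of an explicit map of vector bundles (up to a constant). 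The standard rank stratification then endows $\mf{M}^{\,\mr{s}}_{\zeta^\circ}(\vv)_N$ with a natural locally closed subscheme structure.

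Next, to construct $\M{\zeta^+}{\vv'}_{N,k}$, I would build a Grassmann bundle. Over $\mf{M}^{\,\mr{s}}_{\zeta^\circ}(\vv'-k\dimv(C))_N$, the sheaf $\gext^1_{\A}(C,\mca{V})$ is locally free of rank $N$, so we may form the Grassmann bundle
\[
\pi\colon G:=\mr{Gr}\bigl(k,\gext^1_{\A}(C,\mca{V})\bigr)\longrightarrow \mf{M}^{\,\mr{s}}_{\zeta^\circ}(\vv'-k\dimv(C))_N.
\]
The tautological rank-$k$ subbundle $\mca{U}\subset \pi^*\gext^1_{\A}(C,\mca{V})$ classifies a universal extension
\[
0\to \pi^*\mca{V}\to \mca{V}'\to C^{\oplus k}\boxtimes \OO_G\to 0,
\]
whose fibers are $\zeta^+$-stable by Proposition~\ref{prop3.6}(1). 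By the moduli property of $\M{\zeta^+}{\vv'}$, this yields a morphism $\varphi\colon G\to \M{\zeta^+}{\vv'}$ whose set-theoretic image is exactly $\M{\zeta^+}{\vv'}_{N,k}$ (Proposition~\ref{prop3.5}(1)). I would then define the scheme structure on $\M{\zeta^+}{\vv'}_{N,k}$ as the one transported from $G$ via $\varphi$; the case of $\M{\zeta^-}{\vv'}^{N,k}$ is entirely dual.

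The main obstacle is to check that $\varphi$ is in fact a locally closed immersion, so that the transferred scheme structure agrees with a natural one inherited from $\M{\zeta^+}{\vv'}$. Injectivity on closed points is already provided by Proposition~\ref{prop3.5}(1), including the injectivity of $\C^k\hookrightarrow\Ext^1_{\A}(C,\V)$. For injectivity on tangent spaces, one decomposes a tangent vector to $G$ at a point $([\V],U)$ into an element of $\Ext^1_{\A}(\V,\V)$ and an element of $\Hom(U,\Ext^1_{\A}(C,\V)/U)$, and traces it through the long exact sequences attached to $0\to\V\to\V'\to C^{\oplus k}\to 0$. The vanishings $\Hom_{\A}(C,\V)=\Hom_{\A}(\V,C)=0$ used in the proof of Proposition~\ref{prop3.7} should force the resulting tangent map to $\Ext^1_{\A}(\V',\V')$ to be injective, completing the argument.
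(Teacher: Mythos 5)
Your treatment of $\mf{M}^{\,\mr{s}}_{\zeta^\circ}(\vv)_N$ (and dually of $\mf{M}^{\,\mr{s}}_{\zeta^\circ}(\vv)^N$), namely rank-stratifying the Koszul-type complex of vector bundles built from the universal family, is exactly the paper's argument.

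For $\M{\zeta^\pm}{\vv'}_{N,k}$ your route diverges and leaves a real gap. You form a Grassmann bundle $G$ over $\mf{M}^{\,\mr{s}}_{\zeta^\circ}(\vv'-k\dimv(C))_N$, produce a universal extension, obtain a classifying morphism $\varphi\colon G\to\M{\zeta^+}{\vv'}$, and propose to \emph{transport} the scheme structure from $G$. You correctly flag that this only makes $\M{\zeta^+}{\vv'}_{N,k}$ a subscheme of $\M{\zeta^+}{\vv'}$ if $\varphi$ is a locally closed immersion, but that verification is left as a gesture (``the vanishings $\ldots$ should force the resulting tangent map $\ldots$ to be injective''). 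Two things are missing: the tangent-space computation itself is not carried out, and even granting injectivity on closed points and on Zariski tangent spaces, a finite-type morphism with those properties is in general only a monomorphism, not automatically a locally closed immersion — so a further argument would still be required. As written, the central step of your second construction is unproved and the lemma does not follow.

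The paper avoids the issue entirely by working intrinsically inside $\M{\zeta^+}{\vv'}$: it evaluates the same Koszul complex on the universal family $\tilde{\mca{V}}'$ over $\M{\zeta^+}{\vv'}$ to cut out a first rank stratum $\M{\zeta^+}{\vv'}_{k}$ (the locus where $\dim\Hom(\V',C)=k$), constructs a canonical morphism $\M{\zeta^+}{\vv'}_{k}\to\mf{M}^{\,\mr{s}}_{\zeta^\circ}(\vv)$ using the universal evaluation $\tilde{\mca{V}}'\to\ghom(\tilde{\mca{V}}',C)^\vee\otimes C$ (a surjection of vector bundles on the stratum, whose kernel is the universal $\V$), and then takes $\M{\zeta^+}{\vv'}_{N,k}$ to be the preimage of $\mf{M}^{\,\mr{s}}_{\zeta^\circ}(\vv)_N$. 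Each step yields a locally closed subscheme of $\M{\zeta^+}{\vv'}$ for free, with no immersion to check. Your Grassmann-bundle picture is the right geometric description of the stratum and is essentially what is used in the next paragraph of the paper to identify the fibration, but as a \emph{definition} of the subscheme structure it demands precisely the immersion statement you do not prove; the intrinsic rank-stratification route is the one that actually closes the argument.
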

\begin{proof}
Note that, for a morphism $f\colon E\to F$ of vector bundles on a scheme $X$ and an integer $n$, the subset $\{x\in X\mid \mr{rank}(f)=n\}$ has a natural subscheme structure given by the minor determinants.

We denote the subalgebra $\C Q_0=\oplus_{i\in Q_0}\C e_i$ of $A$ by $S$. 
For an $S$-module $T$ we define an $A$-bimodule $F_T$ by
\[
F_T=A\otimes_S T\otimes_S A.
\]
For $i, i'\in Q_0$ let $T_{i,i'}$ denote the $1$-dimensional $S$-module given by
\[
e_j\cdot 1=\delta_{j,i},\quad 1\cdot e_j=\delta_{j,i'},
\]
and we set 
\[
F_i:=F_{T_{i,i}},\quad F_{i,i'}:=F_{T_{i,i'}}.
\]
Note that $F_{i,i'}$ has the following natural basis:
\[
\{p\otimes 1\otimes q\mid p\in Ae_i,\ q\in e_{i'}A\}.
\]

For a quiver with superpotential $A$, the {\it Koszul complex} of $A$ is the following complex of $A$-bimodules: 
\[
0 \to \bigoplus_{i\in Q_0}F_i \overset{d_3}{\longrightarrow} \bigoplus_{a\in Q_1}F_{\mr{out}(a),\mr{in}(a)} \overset{d_2}{\longrightarrow} \bigoplus_{b\in Q_1}F_{\mr{in}(b),\mr{out}(b)} \overset{d_1}{\longrightarrow} \bigoplus_{i\in Q_0}F_i \overset{m}{\longrightarrow} A \to 0.
\]
Here the maps $m$, $d_1$, $d_3$ are given by 
\begin{align*}
m(p\otimes 1\otimes q)&=pq\quad (p\in Ae_i,\ q\in e_iA),\\
d_1(p\otimes 1\otimes q)&=(pb\otimes 1\otimes q)-(p\otimes 1\otimes bq)
\quad (p\in Ae_{\mr{in}(b)},\ q\in e_{\mr{out}(b)}A),\\
d_3(p\otimes 1\otimes q)&=\left(\sum_{a\colon \mr{in}(a)=i}pa\otimes 1\otimes q\right)-\left(\sum_{a\colon \mr{out}(a)=i}p\otimes 1\otimes aq\right)\quad(p\in Ae_i,\ q\in e_iA).
\end{align*}
The map $d_2$ is defined as follows; 
Let $c$ be a cycle in the quiver $Q$. 
We define the map $\partial_{c;a,b}\colon F_{\mr{out}(a),\mr{in}(a)}\to F_{\mr{in}(b),\mr{out}(b)}$ by 
\[
\partial_{c;a,b}(p\otimes 1\otimes q)=\sum_{
\begin{subarray}{c}
r\in e_{\mr{in}(a)}Ae_{\mr{out}(b)},\\
s\in e_{\mr{in}(b)}Ae_{\mr{out}(a)},\\
arbs=c
\end{subarray}
}ps\otimes 1\otimes rq.
\]
Then $d_2=\partial_{\omega}$ is defined as the linear combination of $\partial_c$'s. 

Since $A$ is graded $3$-dimensional Calabi-Yau algebra, the Koszul complex is exact (\cite[Theorem 4.3]{graded3CY}). 

We also consider the following Koszul type complex $\A$-bimodules:
\[
0 \to \bigoplus_{i\in Q_0}\tilde{F}_i \overset{\tilde{d}_3}{\longrightarrow} \bigoplus_{a\in Q_1}\tilde{F}_{\mr{out}(a),\mr{in}(a)} \overset{\tilde{d}_2}{\longrightarrow} \bigoplus_{b\in \Q_1}\tilde{F}_{\mr{in}(b),\mr{out}(b)} \overset{\tilde{d}_1}{\longrightarrow} \bigoplus_{i\in \Q_0}\tilde{F}_i \overset{\tilde{m}}{\longrightarrow} \A \to 0,
\]
where $\tilde{F}_i$, $\tilde{F}_{i,i'}$, $\tilde{d}_*$ and $\tilde{m}$ are defined in the same way.
This is also exact. 
The exactness at the last three terms is equivalent to the definition of generators and relations of the algebra $\A$.
The exactness at the first two terms is derived from that of the exactness of the Koszul complex of $A$.

Let $\tilde{\mca{V}}=\oplus_{i\in\Q_0}\tilde{\mca{V}}_i$ be the universal bundle on $\M{\zeta^\circ}{\vv}$. 
The Koszul complex of $\A$-bimodules induces the following complexes of the vector bundles on $\M{\zeta}{\vv}$:
\[
\bigoplus_{a\in Q_1}\Hom(C_{\mr{out}(a)},\tilde{\mca{V}}_{\mr{in}(a)})\overset{d_2}{\longrightarrow}\bigoplus_{b\in \Q_1}\Hom(C_{\mr{in}(b)},\tilde{\mca{V}}_{\mr{out}(b)})\overset{d_1}{\longrightarrow}\bigoplus_{i\in \Q_0}\Hom(C_i,\tilde{\mca{V}}_i)
{\to} 0.
\]
If we restrict this complex to some closed point $\V$ of $\M{\zeta}{\vv}$, then the right and left cohomologies give $\Hom(C,\V)$ and $\Ext^1(C,\V)$ respectively.

Note that for $\V\in\mf{M}_{\zeta^\circ}^{\,\mr{s}}(\vv)$ we have $\hom(C,\V)=0$. 
This means that the morphism $d_1$ between vector bundles are surjective, and hence $\ker(d_1)$ is a vector bundle.  
The subset $\mf{M}_{\zeta^\circ}^{\,\mr{s}}(\vv)_N$ consists of 
closed points $\V$ such that $\mr{rank}(d_2(\V))=\mr{rank}(\ker(d_1))-N$ and so has a natural subscheme structure.

Let $\M{\zeta^+}{\vv'}_{k}$ denote the subscheme of $\M{\zeta^+}{\vv'}$ consisting of closed points $\V'$ such that 
$\mr{rank}(d_1(\V'))=\dimv\,C\cdot \vv'-k$. 
We have the canonical morphism $\M{\zeta^+}{\vv'}_{k}\to \mf{M}_{\zeta^\circ}^{\,\mr{s}}(\vv)$ ($\vv=\vv'-k\cdot\dimv\,C$) such that a closed point $\V'\in\M{\zeta^+}{\vv'}_{k}$ is mapped to the closed point $\V\in\mf{M}_{\zeta^\circ}^{\,\mr{s}}(\vv)$ appeared in the exact sequence
\[
0\to \V\to \V'\to C^{\oplus k}\to 0.
\]
The subset $\M{\zeta^+}{\vv'}_{N,k}$ coincides the inverse image of $\mf{M}_{\zeta^\circ}^{\,\mr{s}}(\vv)_N$ with respect to the above morphism and so has a natural subscheme structure. 

Similarly, we can define subschemes $\mf{M}_{\zeta^\circ}^{\,\mr{s}}(\vv)^N$, $\M{\zeta^+}{\vv'}^{k}$ and $\M{\zeta^+}{\vv'}^{N,k}$ using the exact sequence
\[
\bigoplus_{a\in Q_1}\Hom(\tilde{\mca{V}}_{\mr{in}(a)},C_{\mr{out}(a)})\overset{d_2}{\longrightarrow}\bigoplus_{b\in \Q_1}\Hom(\tilde{\mca{V}}_{\mr{out}(b)},C_{\mr{in}(b)})\overset{d_1}{\longrightarrow}\bigoplus_{i\in \Q_0}\Hom(\tilde{\mca{V}}_i,C_i)
{\to} 0,
\]
whose cohomologies give $\Hom(\V,C)$ and $\Ext^1(\V,C)$.
\end{proof}

By Proposition \ref{prop3.5} and Proposition \ref{prop3.6}, 
the natural map 
\[
\M{\zeta^+}{\vv'}_{N,k}\to \Ms{\zeta^\circ}{\vv}_N
\]
is a $\mathrm{Gr}(k,N)$-fibration. 
So, we have 
\[
\chi(\M{\zeta^+}{\vv'}_{N,k})=\chi(\mathrm{Gr}(k,N))\cdot\chi(\M{\zeta}{\vv}_N)
\]
and 
\begin{align*}
\sum_{\vv'}\chi(\M{\zeta^+}{\vv'})\cdot\q^{\vv'}&=
\sum_{\vv',N,k}\chi(\M{\zeta^+}{\vv'}_{N,k})\cdot\q^{\vv'}\\
&=\sum_{\vv,N,k}\chi(\mathrm{Gr}(k,N))\cdot\chi(\Ms{\zeta^\circ}{\vv}_{N})\cdot\q^{\vv+k\cdot\dimv(C)}\\
&=\sum_{\vv,N}\left(\sum_k\chi(\mathrm{Gr}(k,N))\cdot\q^{k\cdot\dimv(C)}\right)\chi(\Ms{\zeta^\circ}{\vv}_{N})\cdot\q^\vv\\
&=\sum_{\vv,N}\left(1+\q^{\dimv(C)}\right)^N\chi(\Ms{\zeta^\circ}{\vv}_{N})\cdot\q^\vv.
\end{align*}
Similarly we have
\[
\sum_{\vv'}\chi(\M{\zeta^-}{\vv'})\cdot\q^{\vv'}=
\sum_{\vv,N}\left(1+\q^{\dimv(C)}\right)^N\chi(\Ms{\zeta^\circ}{\vv}^{N})\cdot\q^\vv.
\]
By Proposition \ref{prop3.7} we have
$\Ms{\zeta^\circ}{\vv}_N=\Ms{\zeta^\circ}{\vv}^{N+\dim C_0}$. \begin{NB} corrected 7/7\end{NB}%
Hence we have 
\begin{align*}
\sum_{\vv'}\chi(\M{\zeta^-}{\vv'})\cdot\q^{\vv'}
&=\sum_{\vv,N}\left(1+\q^{\dimv(C)}\right)^N\chi(\Ms{\zeta^\circ}{\vv}^{N})\cdot\q^\vv\\
&=\sum_{\vv,N}\left(1+\q^{\dimv(C)}\right)^N\chi(\Ms{\zeta^\circ}{\vv}_{N-\dim C_0})\cdot\q^\vv\\
&=\sum_{\vv,N}\left(1+\q^{\dimv(C)}\right)^{N+\dim C_0}\chi(\Ms{\zeta^\circ}{\vv}_{N})\cdot\q^\vv\\
&=\left(1+\q^{\dimv(C)}\right)^{\dim C_0}\cdot\sum_{\vv'}\chi(\M{\zeta^+}{\vv'})\cdot\q^{\vv'}.
\end{align*}
In summary, we have the following {\it wall-crossing formula}: 
\begin{thm}\label{wallcrossing}
\[
\mca{Z}'_{\zeta^-}(\q)=\left(1+\q^{\dimv(C)}\right)^{\dim C_0}\cdot \mca{Z}'_{\zeta^+}(\q).
\]
\end{thm}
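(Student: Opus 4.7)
The plan is to compute both sides by stratifying the $\zeta^\pm$-moduli spaces according to the unique extension data provided by Proposition~\ref{prop3.5}, and then to identify the two resulting stratifications of $\Ms{\zeta^\circ}{\vv}$ via Proposition~\ref{prop3.7}. Concretely, I would take the subschemes $\Ms{\zeta^\circ}{\vv}_N$, $\Ms{\zeta^\circ}{\vv}^N$, $\M{\zeta^+}{\vv'}_{N,k}$, $\M{\zeta^-}{\vv'}^{N,k}$ introduced before Lemma~\ref{lem-Koszul} and treat the generating function $\mca{Z}'_{\zeta^\pm}$ as a sum of Euler characteristics of these strata.

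First I would analyze the $\zeta^+$-side. By Proposition~\ref{prop3.5}(1) every $\zeta^+$-stable $\tilde{V}'$ of dimension $\vv'$ lies in a unique strata $\M{\zeta^+}{\vv'}_{N,k}$ with $\vv = \vv' - k\cdot\dimv(C)$, and the forgetful map $\tilde{V}' \mapsto \tilde{V}$ sends it to $\Ms{\zeta^\circ}{\vv}_N$. Proposition~\ref{prop3.6}(1) says conversely that every universal extension of $C^{\oplus k}$ by $\tilde{V}$ (given by choosing an injective map $\C^k\hookrightarrow \Ext^1(C,\tilde{V})$, i.e.\ a point of $\mathrm{Gr}(k,N)$) is $\zeta^+$-stable. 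Together these identify $\M{\zeta^+}{\vv'}_{N,k}\to \Ms{\zeta^\circ}{\vv}_N$ as a Zariski-locally trivial $\mathrm{Gr}(k,N)$-fibration, hence
\begin{equation*}
\sum_{\vv'}\chi(\M{\zeta^+}{\vv'})\q^{\vv'}
= \sum_{\vv,N}\Bigl(\sum_k\chi(\mathrm{Gr}(k,N))\q^{k\dimv(C)}\Bigr)\chi(\Ms{\zeta^\circ}{\vv}_N)\q^{\vv}
= \sum_{\vv,N}(1+\q^{\dimv(C)})^N\chi(\Ms{\zeta^\circ}{\vv}_N)\q^{\vv}.
\end{equation*}
The exactly parallel argument on the $\zeta^-$-side via parts (2) of Propositions~\ref{prop3.5} and \ref{prop3.6} gives the same formula with $\Ms{\zeta^\circ}{\vv}^N$ in place of $\Ms{\zeta^\circ}{\vv}_N$.

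At this point the only remaining step is to match the two stratifications of $\Ms{\zeta^\circ}{\vv}$. Proposition~\ref{prop3.7} asserts $\ext^1_{\A}(C,\tilde{V}) - \ext^1_{\A}(\tilde{V},C) = \dim C_0$, which yields the scheme-theoretic equality $\Ms{\zeta^\circ}{\vv}^N = \Ms{\zeta^\circ}{\vv}_{N-\dim C_0}$. Substituting and re-indexing,
\begin{equation*}
\sum_{\vv'}\chi(\M{\zeta^-}{\vv'})\q^{\vv'}
= \sum_{\vv,N}(1+\q^{\dimv(C)})^{N+\dim C_0}\chi(\Ms{\zeta^\circ}{\vv}_N)\q^{\vv}
= (1+\q^{\dimv(C)})^{\dim C_0}\sum_{\vv'}\chi(\M{\zeta^+}{\vv'})\q^{\vv'},
\end{equation*}
which is the desired identity.

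The main obstacle, and the place where real work is needed, is the Grassmann-fibration step. Propositions~\ref{prop3.5} and \ref{prop3.6} give the required bijection on points, but to conclude an equality of Euler characteristics one needs Lemma~\ref{lem-Koszul} to guarantee that the strata really are locally closed subschemes and that $\Ext^1_{\A}(C,-)$ has constant dimension $N$ along $\Ms{\zeta^\circ}{\vv}_N$; then the universal extension construction over $\Ms{\zeta^\circ}{\vv}_N$ gives a Grassmann bundle $\mathrm{Gr}(k,\mca{E}xt^1)$ whose total space is identified with $\M{\zeta^+}{\vv'}_{N,k}$. Everything else is a bookkeeping computation with generating functions.
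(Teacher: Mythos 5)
Your proposal is correct and follows the paper's own proof essentially step for step: stratify $\M{\zeta^\pm}{\vv'}$ by $\M{\zeta^\pm}{\vv'}_{N,k}$ (resp.\ $\M{\zeta^-}{\vv'}^{N,k}$) using the unique extension data from Proposition~\ref{prop3.5}, recognize each stratum as a $\mathrm{Gr}(k,N)$-fibration over $\Ms{\zeta^\circ}{\vv}_N$ (resp.\ $\Ms{\zeta^\circ}{\vv}^N$) via Proposition~\ref{prop3.6} and Lemma~\ref{lem-Koszul}, compute the generating functions, and match the two stratifications of $\Ms{\zeta^\circ}{\vv}$ using Proposition~\ref{prop3.7}. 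The closing remarks about needing Lemma~\ref{lem-Koszul} for the scheme structure and local triviality are exactly the point of that lemma in the paper.
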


\subsection{DT, PT and NCDT}\label{dt-pt-ncdt}
\begin{NB} This subsection is added (6/19).\end{NB}%
Let $I_n(Y,d)$ denote the moduli space of ideal sheaves $\mca{I}_{Z}$ of one dimensional subschemes $Z\subset Y$. 
whose Hilbert polynomials are given by
\[
\chi(\OO_Z\otimes \mca{L}^{\otimes K})=n+K\cdot \int_{[C]} d\cdot c_1(\mca{L})=n+dK.
\]
We define the {\it Donaldson-Thomas invariants} $I_{n,d}$ from $I_n(Y,d)$ using Behrend's function as is \S \ref{defofncdt} (\cite{thomas-dt}, \cite{behrend-dt}), 
and their generating function by
\[
\mca{Z}_{\mr{DT}}(Y;q,t):=\sum_{n,d}I_{n,d}\cdot q^nt^d.
\] 

Let $P_n(Y,d)$ denote the moduli space of stable pairs $(F,s)$ such that the Hilbert polynomials of $F$'s are given by the same equation as above. 
We define the {\it Pandharipande-Thomas invariants} $P_{n,d}$ from $P_n(Y,d)$ using Behrend's function (\cite{pt1}), 
and their generating function by
\[
\mca{Z}_{\mr{PT}}(Y;q,t):=\sum_{n,d}P_{n,d}\cdot q^nt^d.
\] 

We define the Donaldson-Thomas invariants and the Pandharipande-Thomas invariants of $Y^+$ using $[C^+]\in H_2(Y^+)$ instead of $[C]\in H_2(Y)$. 
Note that the natural isomorphism $H_2(Y)\to H_2(Y^+)$ maps $[C]$ to $-[C^+]$.

For $F\in D^b_c(Y)$ we have
\[
\chi(F\otimes \mca{L}^{\otimes K})=\chi(F)+K(\chi(F)-\chi(F\otimes \mca{L}^{-1})).
\]
and so $n=\mathrm{v}_0$, $d=\mathrm{v}_0-\mathrm{v}_1$. 
Put $q=q_0q_1$ and $t=q_1^{-1}$, then we have $q^nt^d=q_0^{\mathrm{v}_0}q_1^{\mathrm{v}_1}$.
We set $\zeta^\pm=(-1\pm\varepsilon,1)$ for sufficiently small $\varepsilon>0$. The results in \S \ref{cs-pcs} are summarized as follows:   
\begin{prop}
\begin{NB}
$(q_0q_1)^nq_1^d=q_0^{\mathrm{v}_0}q_1^{\mathrm{v}_1}$ for $Y$%
\end{NB}%
\begin{align*}
\mca{Z}_{\mr{DT}}(Y;q_0q_1,q_1^{-1})&=\mca{Z}_{\zeta^-}(\q), &
\mca{Z}_{\mr{DT}}(Y^+;q_0q_1,q_1^{-1})&=\mca{Z}_{-\zeta^+}(\q),\\
\mca{Z}_{\mr{PT}}(Y;q_0q_1,q_1^{-1})&=\mca{Z}_{\zeta^+}(\q),&
\mca{Z}_{\mr{PT}}(Y^+;q_0q_1,q_1^{-1})&=\mca{Z}_{-\zeta^-}(\q). 
\end{align*}
\end{prop}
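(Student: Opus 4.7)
The plan is to deduce each of the four identities from a scheme-theoretic isomorphism between the appropriate moduli space of $\zeta$-stable perverse coherent systems and the corresponding moduli of ideal sheaves or stable pairs, together with a matching of the discrete invariants $(n,d)\leftrightarrow \vv$. The four required bijections of stable objects are already supplied by Propositions \ref{dt-ncdt}, \ref{pt-ncdt}, \ref{fdt-ncdt}, and \ref{fpt-ncdt}. What remains is: (i) upgrade each such set-theoretic bijection to an isomorphism of schemes, (ii) match the dimension vector $(\mathrm{v}_0,\mathrm{v}_1)$ with the Chern-character data $(n,d)$, and (iii) observe that the Behrend function, being intrinsic to the scheme structure, is preserved.

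For the first identity $\mca{Z}_{\mr{DT}}(Y;q_0q_1,q_1^{-1})=\mca{Z}_{\zeta^-}(\q)$, Proposition \ref{dt-ncdt} identifies $\zeta^-$-stable objects of $\tpervc$ bijectively with ideal sheaves of $1$-dimensional compactly supported subschemes of $Y$. Under the Van den Bergh equivalence of Theorem \ref{cor:Morita}, which is induced by the tilting bundle $\mca{P}$ and hence commutes with flat base change, this bijection arises from an isomorphism of moduli schemes $I_n(Y,d)\simeq \mf{M}_{\zeta^-}(\vv)$. On the conifold $H^1(Y,\mca{L}^{-1})=0$, so the extension \eqref{eq:ext} forces $\mca{P}_0=\mca{L}$ and $r=1$. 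For $F=\OO_Z$ with $\chi(\OO_Z\otimes\mca{L}^{\otimes K})=n+dK$ the associated $\A$-module satisfies
\[
\mathrm{v}_0=\dim H^0(Y,F)=\chi(F)=n, \qquad \mathrm{v}_1=\dim H^0(Y,F\otimes\mca{L}^{-1})=\chi(F\otimes\mca{L}^{-1})=n-d,
\]
with the higher cohomologies vanishing because $F\in\pervc$. Hence $q_0^{\mathrm{v}_0}q_1^{\mathrm{v}_1}=(q_0q_1)^n(q_1^{-1})^d$, exactly the specialization on the DT side, and since the Behrend function is an intrinsic scheme invariant, $D_{\zeta^-}(\vv)=I_{n,d}$ under the identification. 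The generating functions then agree term by term.

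The PT case on $Y$ is strictly parallel, replacing Proposition \ref{dt-ncdt} by Proposition \ref{pt-ncdt}. For the flop, Propositions \ref{fdt-ncdt} and \ref{fpt-ncdt} furnish the analogous bijections on $Y^+$. Since $r=1$ on the conifold one also has $\mca{Q}_0^+=(\mca{L}^+)^{-1}$, so for $F^+=\OO_{Z^+}$ the associated $\A$-module has $\mathrm{v}_0=\chi(F^+)=n$ and $\mathrm{v}_1=\chi(F^+\otimes\mca{L}^+)=n+d^+$, where $d^+\ge 0$ denotes the degree against $c_1(\mca{L}^+)$. The specialization $t\to q_1^{-1}$ then matches once one reads the exponent of $t$ in $\mca{Z}_{\mr{DT}}(Y^+;q,t)$ as the curve class in $H_2(Y)$ via the sign-reversing isomorphism $H_2(Y)\simeq H_2(Y^+)$, $[C]\mapsto-[C^+]$, so that effective classes on $Y^+$ contribute with $t$-exponent $-d^+$; this is precisely the convention already recorded in the paper just before the statement, and with it $(q_0q_1)^n(q_1^{-1})^{-d^+}=q_0^nq_1^{n+d^+}=q_0^{\mathrm{v}_0}q_1^{\mathrm{v}_1}$.

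The principal technical task is step (i), namely showing that each object-level bijection comes from an isomorphism of moduli schemes. This reduces to the functoriality in flat families of the Van den Bergh equivalence (induced by a vector bundle, so it commutes with base change) together with that of the elementary operations invoked in the proofs of Propositions \ref{dt-ncdt}--\ref{fpt-ncdt} (image, cokernel, kernel of the section), which are all standard. The numerical matching of dimension vectors and the invariance of the Behrend function under scheme isomorphism are immediate, so I expect no additional difficulty beyond this bookkeeping.
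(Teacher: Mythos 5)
Your proposal is correct and reconstructs essentially the argument the paper leaves implicit: the Proposition is stated there as a summary of \S 2, with Propositions \ref{dt-ncdt}, \ref{pt-ncdt}, \ref{fdt-ncdt}, \ref{fpt-ncdt} supplying the four identifications of stable objects, the displayed computation of $(n,d)$ in terms of $(\mathrm{v}_0,\mathrm{v}_1)$ just before the Proposition supplying the change of variables, and the scheme-isomorphism and Behrend-function bookkeeping being taken as routine. Your handling of the flop case, in particular tracking the sign in $H_2(Y)\simeq H_2(Y^+)$ and using $r=1$ to get $\mca{Q}_0^+\simeq(\mca{L}^+)^{-1}$ and hence $\mathrm{v}_1=\chi(F^+\otimes\mca{L}^+)$, correctly accounts for the only subtlety the paper suppresses.
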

\begin{rem}
Here we denote, with a slight abuse of the notations, by $\mca{Z}_{\pm\zeta^\pm}(\q)$ the generating functions of the virtual counting of $\M{\pm\zeta^\pm}{\vv}$ for sufficiently small $\varepsilon>0$ for each $\vv$. We can not take $\varepsilon>0$ uniformly.  
\end{rem}

We set $\zeta^{(\pm)}=(\pm 1,\pm 1)$. 
Note that $\M{\zeta^{(+)}}{\vv}$ is empty unless $\vv=0$ and so $\mca{Z}_{\zeta^{(+)}}(\q)=1$.
The invariants $D_{\zeta^{(-)}}(\vv)$ are the {\it non-commutative Donaldson-Thomas invariants} defined in \cite{szendroi-ncdt}.
We denote their generating function $\mca{Z}_{\zeta^{(-)}}(\q)$ by $\mca{Z}_{\mr{NCDT}}(\q)$.

Applying the wall-crossing formula in Theorem \ref{wallcrossing}, we obtain the following relations between generating functions:
\begin{thm}\label{thm-main}
\begin{align}
&\mca{Z}_{\mr{NCDT}}(\q)=\left(\prod_{m\geq 1}^\infty (1+q_0^{m}(-q_1)^{m+1})^m\right)\cdot\mca{Z}_{\mr{DT}}(Y;q_0q_1,q_1^{-1}),\label{eq-young}\\
&\mca{Z}_{\mr{NCDT}}(\q)=\left(\prod_{m\geq 1}^\infty (1+q_0^{m}(-q_1)^{m-1})^m\right)\cdot\mca{Z}_{\mr{DT}}(Y^+;q_0q_1,q_1^{-1}),\\
&\mca{Z}_{\mr{PT}}(Y;q_0q_1,q_1^{-1})=\prod_{m\geq 1}^\infty (1+q_0^{m}(-q_1)^{m-1})^m,\label{eq-pt}\\
&\mca{Z}_{\mr{PT}}(Y^+;q_0q_1,q_1^{-1})=\prod_{m\geq 1}^\infty (1+q_0^{m}(-q_1)^{m+1})^m.
\end{align}
\end{thm}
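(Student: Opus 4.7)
The plan is to apply Theorem \ref{wallcrossing} iteratively along straight-line paths in the $(\zeta_0,\zeta_1)$-plane, starting from the chamber of $\zeta^{(+)}=(1,1)$ where $\mca{Z}'_{\zeta^{(+)}}(\q)=1$. The latter is clear: for $\vv\neq(0,0)$ the subobject $\V'\subset\V$ obtained by forgetting the framing violates $\zeta^{(+)}$-semistability after the normalization of Remark \ref{rem:normalization}(2), since $\theta_{\tz}(\V')=1>0=\theta_{\tz}(\V)$, so $\M{\zeta^{(+)}}{\vv}$ is empty. The forbidden wall $\zeta_0+\zeta_1=0$, where Theorem \ref{wallcrossing} does not apply, partitions the plane into two half-planes: $\zeta^{(+)}$, $\zeta^+$, $-\zeta^-$ lie on the positive side and $\zeta^{(-)}$, $\zeta^-$, $-\zeta^+$ on the negative side.

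I connect the target chambers to the known chambers on the same side by four straight-line paths: $\zeta^{(+)}\to\zeta^+$, $\zeta^{(+)}\to-\zeta^-$, $\zeta^{(-)}\to\zeta^-$, and $\zeta^{(-)}\to-\zeta^+$. Because $\zeta_0+\zeta_1$ is linear along the segment with values of the same sign at the two endpoints, each path stays in its half-plane and avoids $L^\pm(\infty)$. A direct check against the wall equations listed in \S\ref{subsec-classification} shows that these paths cross exactly the wall families $\{L^-_+(m)\}_{m\ge 1}$, $\{L^+_-(m)\}_{m\ge 1}$, $\{L^-_-(m)\}_{m\ge 1}$, and $\{L^+_+(m)\}_{m\ge 1}$, respectively; the $m=0$ walls $L^\pm_-(0)$ may be met but contribute trivially because $\dim C_0=0$ there, so the factor $(1+\q^{\dimv C})^{\dim C_0}$ equals $1$.

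At each wall, Theorem \ref{thm-classification-1} and Figure \ref{fig:C} identify the unique $\theta_\zeta$-stable module $C=C^\pm_\pm(m)$, whose dimension vector is $(m,m\mp 1)$ and whose component at the vertex $0$ has dimension $m$. Evaluating $\zeta\cdot\dimv C$ at the two endpoints of each path pins down the direction: the paths to $\zeta^+$ and $-\zeta^-$ cross every relevant wall from the $+$ side (where $\zeta\cdot\dimv C>0$) to the $-$ side, so Theorem \ref{wallcrossing} multiplies $\mca{Z}'$ by $(1+\q^{\dimv C})^{\dim C_0}$, while the paths to $\zeta^-$ and $-\zeta^+$ cross in the opposite direction and divide $\mca{Z}'$ by the same factor. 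Collecting all contributions and applying the substitution $\mca{Z}'_\zeta(q_0,q_1)=\mca{Z}_\zeta(q_0,-q_1)$ from \eqref{eq-virtual-nonvirtual} yields the four identities of the theorem.

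The main technical obstacle will be the accumulation of the walls $L^\pm_\pm(m)$ at the forbidden ray $L^\pm(\infty)$: for any fixed $\varepsilon>0$ the endpoints $\zeta^\pm$ and $-\zeta^\pm$ sit on opposite sides of $L^\pm_\pm(m)$ for $m$ on either side of $1/\varepsilon$, so the direction of crossing cannot be controlled uniformly in $m$. I resolve this as in the remark following the proposition identifying the PT and DT invariants with $\mca{Z}_{\pm\zeta^\pm}$: for each fixed dimension vector $\vv$, only walls with $\dimv C\le\vv$ can change $\M{\zeta}{\vv}$, and only finitely many $L^\pm_\pm(m)$ satisfy this, so one chooses $\varepsilon$ small enough to put $\zeta^\pm$ on the correct side of all of them simultaneously. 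The infinite products in the theorem then converge coefficient-wise, giving well-defined identities of formal power series.
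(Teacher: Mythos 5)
Your proposal is correct and takes essentially the same approach as the paper: the paper's proof of Theorem \ref{thm-main} is literally the single sentence ``Applying the wall-crossing formula in Theorem \ref{wallcrossing}, we obtain the following relations between generating functions,'' and your write-up supplies exactly the intended bookkeeping (choice of paths from $\zeta^{(\pm)}$ to $\zeta^\pm$ and $-\zeta^\mp$, identification of the walls crossed, dimension vectors of the corresponding modules $C^\pm_\pm(m)$, crossing directions, and the coefficient-wise resolution of the accumulating walls, which is also noted in the paper's remark after the proposition identifying $\mca{Z}_{\mr{DT}/\mr{PT}}$ with $\mca{Z}_{\pm\zeta^\pm}$). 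Your wall and direction calculations check out against Theorem \ref{thm-classification-1} and Theorem \ref{wallcrossing}, and the treatment of the trivially-contributing $m=0$ walls is right.
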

\begin{rem}\label{rem-final}
\begin{enumerate}
\item The formula \eqref{eq-young} was shown by a combinatorial method in \cite{young-conifold}.
\item The generating function of Donaldson-Thomas invariants is described in terms of the topological vertex (\cite{mnop}). 
The topological vertex for the conifold is computed in \cite{superrigid-dt}:
\begin{equation}\label{eq-bb}
\mca{Z}_{\mr{DT}}(Y;q,t)=\left(\prod_{m\geq 1}^\infty (1-(-q)^{m})^{-m}\right)^2\left(\prod_{m\geq 1}^\infty (1-(-q)^{m}t)^m\right).
\end{equation}
\item The DT-PT correspondence conjecture (\cite{pt1}) asserts that 
\[
\mca{Z}_{\mr{DT}}(Y;q,t)=\mca{Z}_{\mr{PT}}(Y;q,t)\cdot \left(\prod_{m\geq 1}^\infty (1-(-q)^{m})^{-m}\right)^{e(Y)}.
\]
The conjecture for the conifold follows from formula \eqref{eq-pt} and \eqref{eq-bb}, although Theorem \ref{wallcrossing} does not cover the wall $L^-(\infty)$.
The wall-crossing for the wall $L^-(\infty)$ requires Joyce's general theory (see \cite{toda-dtpt} and \cite{thomas_stoppa}).
\item 
Since the contribution of the wall $L^-_+(m)$ (resp.\ $L^-_-(m)$) coincides with that of $L^+_+(m)$ (resp.\ $L^+_-(m)$), we have the following formula, which provides a conceptual interpretation of the result in \cite{szendroi-ncdt} and \cite{young-conifold}:
\begin{align*}
\mca{Z}_{\mr{NCDT}}(\q)&=\mca{Z}_{\mr{DT}}(Y;q_0q_1,q_1^{-1})\cdot\mca{Z}_{\mr{PT}}(Y^+;q_0q_1,q_1^{-1})\\
&=\mca{Z}_{\mr{PT}}(Y;q_0q_1,q_1^{-1})\cdot\mca{Z}_{\mr{DT}}(Y^+;q_0q_1,q_1^{-1}). 
\end{align*}
\item 
Since the contribution of the wall $L^-_+(m)$ coincides with that of $L^-_-(m)$ after the change $(q_0q_1,q_1^{-1})\mapsto (q_0q_1,q_1)$ of variables, we get the flop invariance of DT and PT invariants:
\begin{align*}
\mca{Z}_{\mr{PT}}(Y;q_0q_1,q_1^{-1})=\mca{Z}_{\mr{PT}}(Y^+;q_0q_1,q_1),\\
\mca{Z}_{\mr{DT}}(Y;q_0q_1,q_1^{-1})=\mca{Z}_{\mr{DT}}(Y^+;q_0q_1,q_1).\\
\end{align*}
\end{enumerate}
\end{rem}

\section{Replacement of tilting bundles and stabilities}\label{appendix}
In the final section, we provide an alternative description of the moduli spaces $\M{\zeta}{\vv}$ for generic stability parameter $\zeta$ in the case of the conifold.
As by-products, we can see that the torus fixed point set $\M{\zeta}{\vv}^T$ is isolated and parameterized by the "pyramid partitions" which appeared in \cite{szendroi-ncdt}, \cite{young-conifold} and \cite{chuang-jafferis}. 

\subsection{Characterization of stable objects}
Let $\zeta_{\mathrm{triv}}$ and $\zeta_{\mathrm{cyclic}}$ be stability parameters such that
\[
\zeta_{\mathrm{triv},0}, \zeta_{\mathrm{triv},1}>0,\quad 
\zeta_{\mathrm{cyclic},0}, \zeta_{\mathrm{cyclic},1}<0.
\]
For $m\geq 1$, let $\zeta^{m,\pm}=(\zeta^{m,\pm}_{\,0},\zeta^{m,\pm}_{\,1})$ be stability parameters such that 
\begin{align*}
\zeta^{m,+}_{\,0}<\zeta^{m,+}_{\,1},\quad m\zeta^{m,+}_{\,0}+(m-1)\zeta^{m,+}_{\,1}<0,\quad (m+1)\zeta^{m,+}_{\,0}+m\zeta^{m,+}_{\,1}>0,\\ 
\zeta^{m,-}_{\,0}<\zeta^{m,-}_{\,1},\quad (m-1)\zeta^{m,-}_{\,0}+m\zeta^{m,-}_{\,1}>0,\quad m\zeta^{m,-}_{\,0}+(m+1)\zeta^{m,-}_{\,1}<0
\end{align*}
(see Figure \ref{fig:zeta2}).

\begin{figure}[htbp]
\def\JPicScale{.8}
  \centering
   \input{pic12.tpc}
  \caption{$\zeta^{m,\pm}$, $\zeta_{\mathrm{triv}}$ and $\zeta_{\mathrm{cyclic}}$}
\label{fig:zeta2}
\end{figure}

\begin{lem}\label{lem-4.1}
\begin{enumerate}
\item
A \pc system $(F,s)\in \tpervc$ is $\zeta^{m,+}$-stable if and only if the following three conditions are satisfied: 
\begin{align}
&\Hom_{\tpervc}((F,s),(z_*\OO_{\CP^1}(m-1),0,0))=0,\label{eq-4.1.1}\\
&\Hom_{\tpervc}((z_*\OO_{\CP^1}(m),0,0),(F,s))=0,\label{eq-4.1.2}\\
&\Hom_{\tpervc}((\OO_x,0,0),(F,s))=0\quad (\forall x\in Y).\label{eq-4.1.3}
\end{align}
\item
A \pc system $(F,s)\in \tpervc$ is $\zeta^{m,-}$-stable if and only if the following three conditions are satisfied:
\begin{align}
&\Hom_{\tpervc}((z_*\OO_{\CP^1}(-m)[1],0,0),(F,s))=0,\\
&\Hom_{\tpervc}((F,s),(z_*\OO_{\CP^1}(-m-1)[1],0,0))=0,\\
&\Hom_{\tpervc}((F,s),(\OO_x,0,0))=0\quad (\forall x\in Y).
\end{align}
\end{enumerate}
\end{lem}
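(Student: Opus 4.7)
I would prove each implication in (1) separately; part (2) follows by a dual argument. The essential input is Theorem~\ref{thm-classification-1}, which classifies the $\zeta$-stable $A$-modules in the half-plane $\zeta_0<\zeta_1$, together with the three chamber inequalities $m\zeta^{m,+}_{\,0}+(m-1)\zeta^{m,+}_{\,1}<0$, $(m{+}1)\zeta^{m,+}_{\,0}+m\zeta^{m,+}_{\,1}>0$ and $\zeta^{m,+}_{\,0}+\zeta^{m,+}_{\,1}>0$ that define the chamber of $\zeta^{m,+}$.

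For the ``only if'' direction, suppose $(F,s)$ is $\zeta^{m,+}$-stable. A nonzero morphism $(F,s)\to(z_*\OO_{\CP^1}(m-1),0,0)$ in $\tpervc$ has kernel $K\subsetneq(F,s)$ through which $s$ factors, and its cokernel is a nonzero subobject of $C^-_+(m)=z_*\OO_{\CP^1}(m-1)$; since $C^-_+(m)$ is $\zeta^\circ$-stable, hence simple in $\pervc$, the cokernel is all of $C^-_+(m)$, with dimension vector $(m,m-1)$. The chamber inequality then contradicts Lemma~\ref{spcs}(B). The vanishings \eqref{eq-4.1.2} and \eqref{eq-4.1.3} are dual: a nonzero map into $(F,s)$ from $(C^-_+(m{+}1),0,0)$ or $(\OO_x,0,0)$ produces a nonzero sub-object with trivial framing whose dimension vector $(m{+}1,m)$ or $(1,1)$ violates Lemma~\ref{spcs}(A) by the remaining two chamber inequalities.

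For the converse, assume \eqref{eq-4.1.1}--\eqref{eq-4.1.3} and, for contradiction, that $(F,s)$ is not $\zeta^{m,+}$-stable. The Harder--Narasimhan and Jordan--H\"older filtrations in the finite-length category $\tpervc$ then produce either a $\zeta^{m,+}$-stable sub-object $E\subseteq F$ with trivial framing satisfying $\zeta^{m,+}\cdot\dimv(E)>0$, or a $\zeta^{m,+}$-stable quotient $T$ of $(F,s)$ with trivial framing satisfying $\zeta^{m,+}\cdot\dimv(T)<0$. Sign computations using Theorem~\ref{thm-classification-1} restrict the candidates to $T=C^-_+(m')$ for $1\le m'\le m$, and $E$ in $\{C^-_+(m'):m'\ge m{+}1\}\cup\{\OO_x:x\in Y\}\cup\{C^-_-(m'):m'\ge 0\}$.

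The easy sub-cases of the case analysis are: (i) $T=C^-_+(m')$ with $m'\le m$, where post-composing the quotient map with the nonzero injection $z_*\OO_{\CP^1}(m'-1)\hookrightarrow z_*\OO_{\CP^1}(m-1)$ (multiplication by a section of $\OO_{\CP^1}(m-m')$) yields a nonzero morphism in $\tpervc$ violating \eqref{eq-4.1.1}; (ii) $E=C^-_+(m')$ with $m'\ge m{+}1$, where iterated injections $z_*\OO_{\CP^1}(m)\hookrightarrow\cdots\hookrightarrow z_*\OO_{\CP^1}(m'-1)$ embed $C^-_+(m{+}1)$ into $E\subseteq F$, violating \eqref{eq-4.1.2}; (iii) $E=\OO_x$, which contradicts \eqref{eq-4.1.3} directly; and (iv) $E=C^-_-(m')$ with $m'\ge 1$, where applying perverse cohomology to the Koszul sequence $0\to z_*\OO_{\CP^1}(-m'{-}1)\to z_*\OO_{\CP^1}(-m')\to\OO_x\to 0$ (for $x\in C$) yields a short exact sequence $0\to\OO_x\to C^-_-(m')\to C^-_-(m'{-}1)\to 0$ in $\pervc$, so $\OO_x\hookrightarrow F$ again contradicts \eqref{eq-4.1.3}. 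The main obstacle is the remaining case $E=s_1=C^-_-(0)$, since $s_1$ is simple in $\pervc$ and contains no skyscraper sub-object; here the strategy is to exploit the framing by combining the long exact sequence for $\Hom(-,F)$ applied to $0\to s_1\to\OO_x\to s_0\to 0$ ($x\in C$) with the framing compatibility $\phi\circ s=0$ that is built into the Hom in $\tpervc$ appearing in \eqref{eq-4.1.1}, producing a nonzero map $\OO_x\to F$ that contradicts \eqref{eq-4.1.3}. Part (2) follows by the dual argument obtained by reversing arrows and replacing $\pervc$ by ${}^{0}\fperv$.
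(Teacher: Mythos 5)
Your overall plan coincides with the paper's: both arguments run through a filtration of $(F,s)$ combining Harder--Narasimhan and Jordan--H\"older, identify the first quotient (slope $<0$, framing $0$) and last subobject (slope $>0$, framing $0$) as $\theta$-stable objects, and then invoke the classification of Theorem~\ref{thm-classification-1} together with the three chamber inequalities. Your list of candidate quotients $T$ and subobjects $E$ is correct. Cases (i)--(iii) are fine.

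There is, however, a genuine gap in the single case you yourself flag: $E = C^-_-(0) = s_1 = z_*\OO_{\CP^1}(-1)[1]$. Your Koszul trick $0\to\OO_x\to C^-_-(m')\to C^-_-(m'-1)\to 0$ legitimately produces $\OO_x\hookrightarrow F$ for $m'\ge 1$, but it breaks down at $m'=0$ since $z_*\OO_{\CP^1}[1]\notin\pervc$ (it has $\R^0 f_*\ne 0$), so the rotated triangle does not stay in the heart. Your proposed remedy --- combining the long exact sequence for $\Hom(-,F)$ applied to $0\to s_1\to\OO_x\to s_0\to 0$ with the framing constraint from \eqref{eq-4.1.1} --- is not demonstrated: the obstruction to lifting a nonzero $s_1\to F$ along $\OO_x\twoheadrightarrow$\,\dots lives in $\Ext^1(s_0,F)$, and it is unclear why it should vanish, nor how the compatibility $\phi\circ s=0$ from \eqref{eq-4.1.1} controls it. The paper sidesteps the whole issue of finding a skyscraper inside $F$: for every $m'\ge 0$ it uses a nonzero element of $\Hom_{\pervc}\bigl(z_*\OO_{\CP^1}(m),z_*\OO_{\CP^1}(-m'-1)[1]\bigr)=\Ext^1_Y\bigl(z_*\OO_{\CP^1}(m),z_*\OO_{\CP^1}(-m'-1)\bigr)\ne 0$ (computed via the Koszul resolution exactly as in Lemma~\ref{noext}), composed with the monomorphism $C^-_-(m')\hookrightarrow F$, to contradict \eqref{eq-4.1.2} rather than \eqref{eq-4.1.3}. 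Adopting this uniform argument would close your gap. One further small inaccuracy: in the ``only if'' direction you state that $C^-_+(m)=z_*\OO_{\CP^1}(m-1)$ is simple in $\pervc$; it is $\theta_{\zeta^\circ}$-stable but not simple for $m\ge 2$ (it has subobjects $z_*\OO_{\CP^1}(k)$ for $k<m-1$). The conclusion still holds because any nonzero image of $F$ in $z_*\OO_{\CP^1}(m-1)$ has $\zeta^{m,+}\cdot\dimv<0$, but the justification needs that correction.
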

\begin{proof}
Let $(F,s)\in \tpervc$ be a \pc system satisfying the conditions \eqref{eq-4.1.1}-\eqref{eq-4.1.3}.
Assume that $(F,s)$ is not $\zeta^{m,+}$-stable. 
Let 
\[
(F,s)=\tilde{F}^0\supset\tilde{F}^1\supset\cdots\supset\tilde{F}^L\supset 0
\]
be a filtration of $(F,s)$ by $\theta_{\tilde{\zeta}^{m,+}}$-stable subquotients, which is given by combining \HN of $(F,s)$ and Jordan-H\"older filtrations of its factors. 
Here $\theta_{\tilde{\zeta}^{m,+}}$ is chosen so that $\theta_{\tilde{\zeta}^{m,+}}(F,s)=0$.
Hence we have $\theta_{\tilde{\zeta}^{m,+}}(\tilde{F}_0/\tilde{F}_1)<0$,  
$\theta_{\tilde{\zeta}^{m,+}}(\tilde{F}_k)>0$ and 
either $(\tilde{F}_0/\tilde{F}_1)_\infty=0$ or $(\tilde{F}_k)_\infty=0$.
First suppose $(\tilde{F}_0/\tilde{F}_1)_\infty=0$.
By the classification in \S \ref{subsec-classification}, $\tilde{F}_0/\tilde{F}_1$ is isomorphic to $(z_*\OO_{\CP^1}(m'-1),0,0)$ for some $m'\leq m$. 
Next suppose $(\tilde{F}_k)_\infty=0$. Then $\tilde{F}_k$ is isomorphic to one of the following objects:
\begin{align*}
&(z_*\OO_{\CP^1}(m'-1),0,0) &(m'> m),\\ 
&(\OO_x,0,0) &(x\in Y),\\
&(z_*\OO_{\CP^1}(-m'-1)[1],0,0)&(m'\geq 0). 
\end{align*}
In both cases, the existence of nonzero homomorphisms
\begin{align*}
&\Hom_{\pervc}(z_*\OO_{\CP^1}(m'-1),z_*\OO_{\CP^1}(m-1))\neq0\quad &(m\geq m'),\\
&\Hom_{\pervc}(z_*\OO_{\CP^1}(m),z_*\OO_{\CP^1}(m'-1))\neq0\quad &(m< m'),\\
&\Hom_{\pervc}(z_*\OO_{\CP^1}(m),z_*\OO_{\CP^1}(-m'-1)[1])\neq0\quad &(1\leq m,\,0\leq m').
\end{align*}
contradicts the conditions \eqref{eq-4.1.1}-\eqref{eq-4.1.3}.
Hence $(F,s)$ is $\zeta^{m,+}$-stable.
The opposite direction is trivial.
We can show the claim (2) in the same way.
\end{proof}

\subsection{New framed quivers}\label{subsec-new-framed-quivers}
\begin{NB3}rewrittend\end{NB3}%
Recall that we put $\mca{L}=\pi^*\OO_{\CP^1}(1)$. 
For an integer $m$, we set $\mathcal{L}_m:=\mathcal{L}^{\otimes m}$.
Let $\mathcal{P}_m^+$ (resp. $\mathcal{P}_m^-$) be the full subcategory of $D^b(\coh)$ consisting of objects $F$ such that $F\otimes \mathcal{L}_{-m}[1]\in \perv$ (resp. $F\otimes \mathcal{L}_{m+1}\in \perv$). 
Note that $\mathcal{L}_m[-1]\oplus \mathcal{L}_{m+1}[-1]$ (resp. $\mathcal{L}_{-m-1}\oplus \mathcal{L}_{-m}$) is a projective generator in $\mathcal{P}_m^+$ (resp. $\mathcal{P}_m^-$) and gives the equivalence 
\[
\Phi_m^\pm\colon \mathcal{P}_m^\pm\to \amod.
\]
Note that $A$ is the algebra defined in \S \ref{subsec-quiver-for-conifold}, which is independent of $m$.
We set ${}^c\mathcal{P}_m^\pm:=\mathcal{P}_m^\pm\cap D^b_c(\coh)$, which is equivalent to $\afmod$.

Let $A_m^+$ be the algebra defined by the following quiver with relations:
\begin{figure}[htbp]
  \centering
  \includegraphics{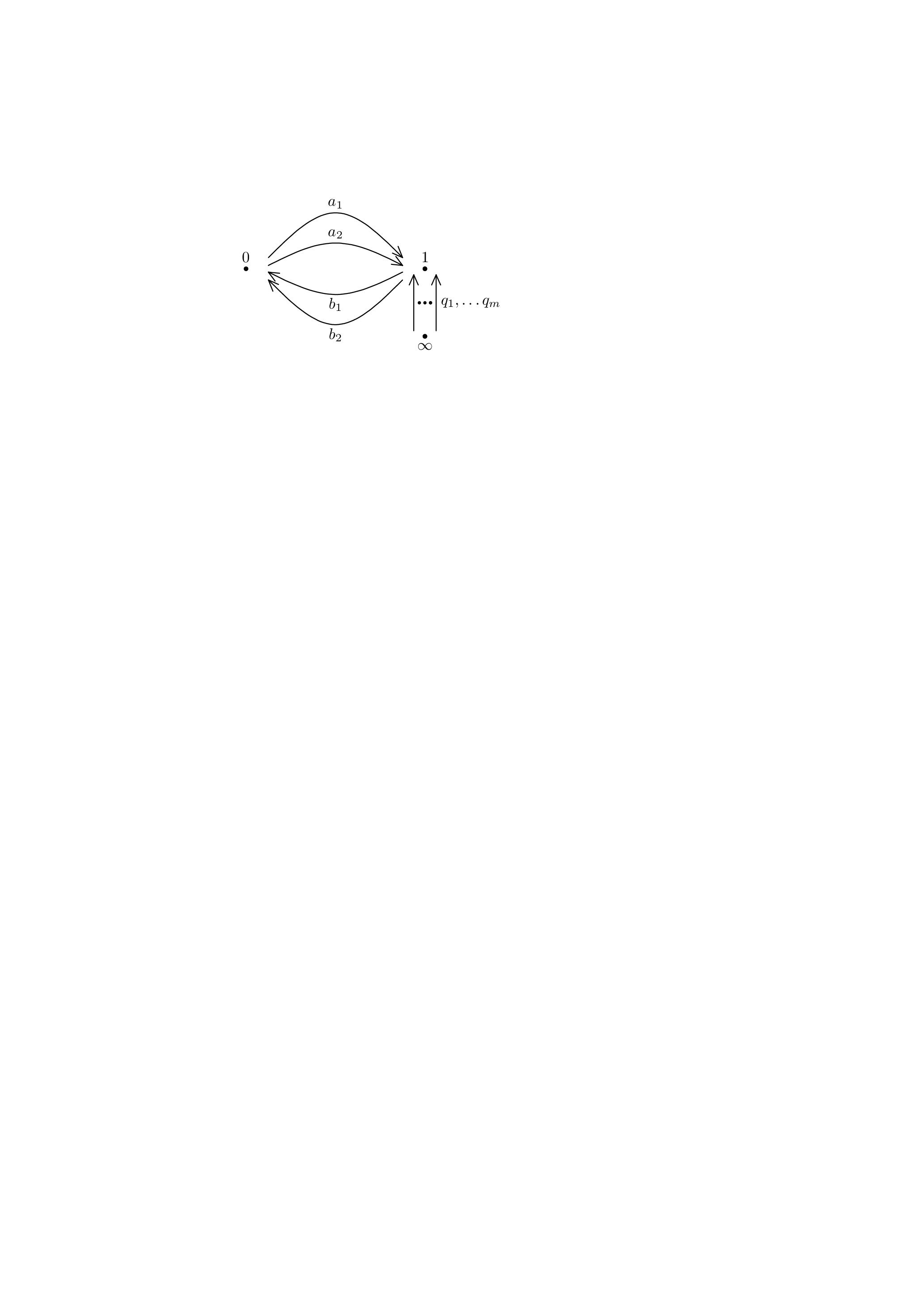}
  \caption{quiver $Q_m^+$}\label{quiver(Qm+)}
\end{figure}
the quiver $Q_m^+$ is given as in Figure \ref{quiver(Qm+)} and the following relations are added to the usual ones:
\begin{equation}\label{eq-rel+}
b_1q_1=0,\quad b_1q_{i+1}=b_2q_{i}\ (i=1,\ldots m-1),\quad b_2q_m=0.
\end{equation}
Similarly, we define the algebra $A_m^-$ as follows:
\begin{figure}[htbp]
  \centering
  \includegraphics{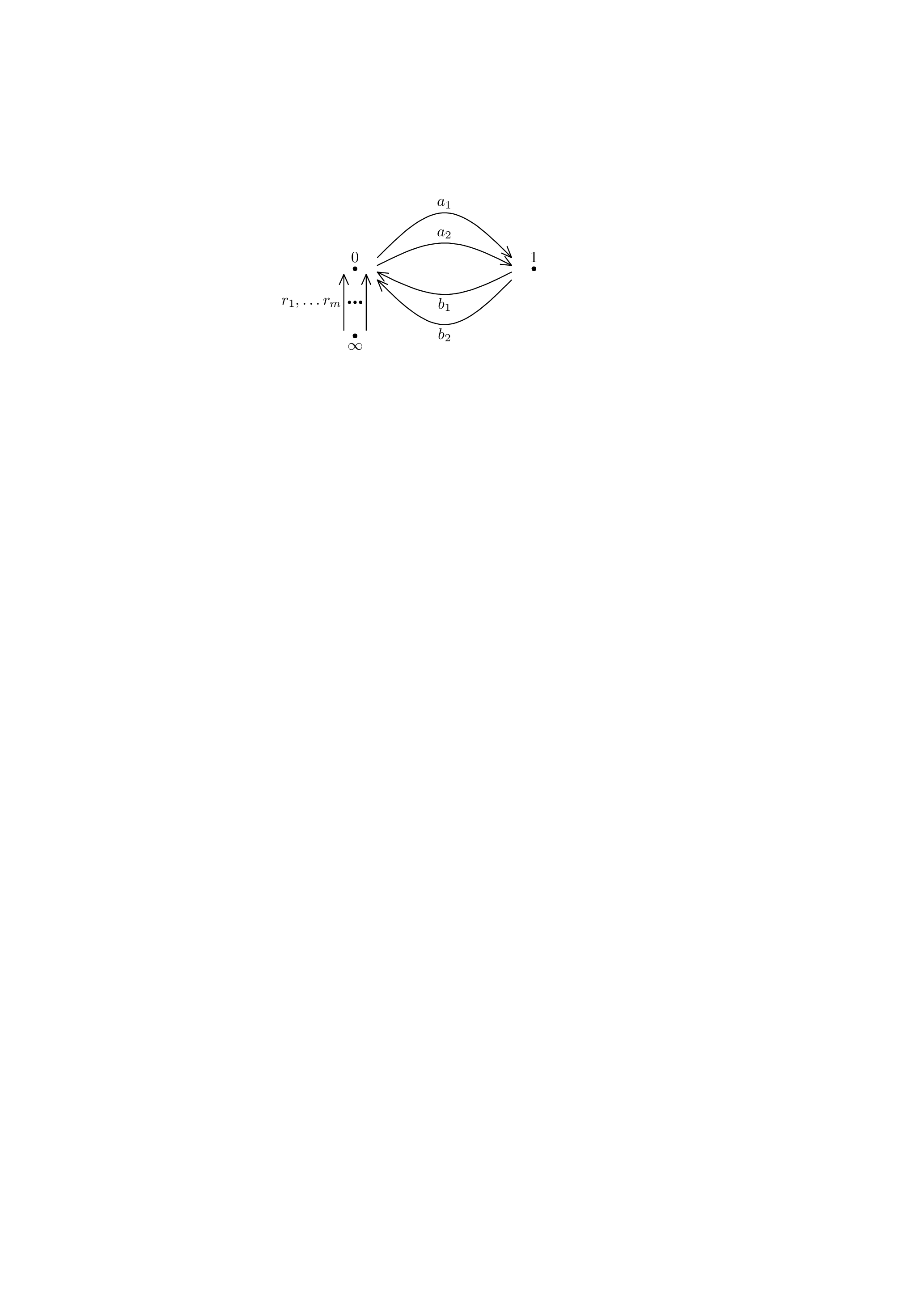}
  \caption{quiver $Q_m^-$}\label{quiver-m-}
\end{figure}
the quiver $Q_m^-$ is given as in Figure \ref{quiver-m-} and the following relations are added to the usual ones:
\begin{equation}\label{eq-rel-}
a_1r_{i+1}=a_2r_{i}\ (i=1,\ldots m-1).
\end{equation}

Let $S_\infty$ and $P_\infty$  be the simple and indecomposable projective $A_m^\pm$-modules corresponding to the extended vertex $\infty$. 
Let $P$ denote the kernel of the canonical map $P_\infty\to S_\infty$. 
\begin{prop}\label{prop-O_Y}
\[
\Phi_m^\pm(\OO_Y)=P.
\]
\end{prop}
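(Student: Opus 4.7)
The plan is to establish the identification $\Phi_m^\pm(\OO_Y)=P$ by writing down an explicit Beilinson-type resolution of $\OO_Y$ in terms of the tilting pair $(\mathcal{L}_m,\mathcal{L}_{m+1})$ (resp.\ $(\mathcal{L}_{-m-1},\mathcal{L}_{-m})$), applying $\Phi_m^\pm$, and reading off the defining relations of $P$ from the resulting presentation. I describe the $+$ case; the $-$ case is entirely symmetric on the flop side.

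First, pull back to $Y$ the Beilinson resolution of $\OO_{\CP^1}$ associated with the tilting pair $(\OO_{\CP^1}(m),\OO_{\CP^1}(m+1))$: this yields a short exact sequence
\[
0\to \OO_Y\to \mathcal{L}_m^{\oplus(m+1)}\xrightarrow{d}\mathcal{L}_{m+1}^{\oplus m}\to 0
\]
on $Y$, in which $d$ is the catalecticant map whose $j$-th coordinate ($j=1,\dots,m$) is $(f_0,\dots,f_m)\mapsto x_1 f_{j-1}-x_0 f_j$, with $x_0,x_1\in H^0(Y,\mathcal{L})$ the pullbacks of the two coordinate sections on $\CP^1$.

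Next, apply the derived equivalence $\Phi_m^+$ induced by the tilting bundle $\mathcal{L}_m[-1]\oplus\mathcal{L}_{m+1}[-1]$. Under this equivalence the summands are sent to the shifted projective $A$-modules $(e_0A)[1]$ and $(e_1A)[1]$, and the sections $x_0,x_1$ are identified, via Morita, with two distinguished arrows of the quiver $Q$ (the identification chosen, up to relabeling, so that $x_0\leftrightarrow b_2$ and $x_1\leftrightarrow b_1$ produce the relations of $A_m^+$). The short exact sequence becomes a distinguished triangle whose relevant piece presents $\Phi_m^+(\OO_Y)$ as the cokernel of the induced map
\[
(e_0A)^{\oplus(m+1)}\xrightarrow{\bar d}(e_1A)^{\oplus m}.
\]

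Finally, denoting by $q_1,\dots,q_m$ the images in this cokernel of the standard basis vectors of $(e_1A)^{\oplus m}$ and feeding the standard basis of the source through $\bar d$, the image of $\bar d$ yields precisely the relations
\[
b_1 q_1=0,\quad b_1 q_{i+1}=b_2 q_i\ (i=1,\dots,m-1),\quad b_2 q_m=0,
\]
which are exactly the relations \eqref{eq-rel+} defining $A_m^+$. Since $P=\ker(P_\infty\to S_\infty)$ is, by its very construction, the $A$-module generated by such $q_1,\dots,q_m$ at vertex $1$ subject to these relations, we conclude $\Phi_m^+(\OO_Y)=P$. For $\Phi_m^-$ the same argument applies with the flop-side pair $(\mathcal{L}_{-m-1},\mathcal{L}_{-m})$, yielding instead the dual relations \eqref{eq-rel-}.

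The main obstacle is the careful bookkeeping: pinning down the correspondence $x_i\leftrightarrow b_j$ (or $a_j$) under the Morita equivalence, organizing the signs in the catalecticant differential, and checking that the relations read off from $\operatorname{coker}(\bar d)$ match exactly the defining relations of $A_m^\pm$. The possible contribution from higher cohomology of the tilting computation does not interfere because the comparison with the finite-dimensional module $P$ takes place in the compactly supported subcategory, where that contribution vanishes.
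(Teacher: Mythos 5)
Your proposal takes essentially the same route as the paper's proof: write down a two-term complex built out of the line bundles that make up the projective generator of the tilted heart, apply $\Phi_m^\pm$, and match the resulting presentation with the defining presentation of $P = \ker(P_\infty \to S_\infty)$. The only real difference is cosmetic: you use the exact sequence $0\to\OO_Y\to\mathcal{L}_m^{\oplus(m+1)}\to\mathcal{L}_{m+1}^{\oplus m}\to 0$ (placing $\OO_Y$ as the kernel, the natural choice when the projectives in $\mathcal{P}_m^+$ are $\mathcal{L}_m[-1]\oplus\mathcal{L}_{m+1}[-1]$), whereas the paper uses the dual sequence $(\mathcal{L}_{-m-1})^{\oplus m}\to(\mathcal{L}_{-m})^{\oplus m+1}\to\OO_Y\to0$ (the natural choice for $\mathcal{P}_m^-$, whose projective generator is $\mathcal{L}_{-m-1}\oplus\mathcal{L}_{-m}$; the paper labels its computation "$\Phi_m^+$" but the bundles it writes down are those of the $-$ case). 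Both you and the paper assert rather than verify the final step — that the induced map of projective $A$-modules is injective with cokernel exactly $P$ — so your level of detail matches the original.
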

\begin{proof}
We will prove the claim for $\Phi_m^+$.
Let $B_1, B_2\in H^0(Y,\mathcal{L})$ be basis elements in 
\[
\C^2\simeq H^0(\CP^1,\mathcal{O}_{\CP^1}(1)) \hookrightarrow H^0(Y,\mathcal{L})
\] 
We consider the map 
\[
\sum (B^{(i,i)}_1+B^{(i.i+1)}_2)\colon (\mathcal{L}_{-m-1})^{\oplus m}\to(\mathcal{L}_{-m})^{\oplus m+1},
\]
where 
$B^{(i,j)}_\varepsilon=\pi^{j}\circ B_\varepsilon\circ \eta^i$ and $\pi^{i}$ and $\eta^i$ are the canonical projection to inclusion of the $i$-th factor of the direct sum.
This map is injective and the cokernel is isomorphic to the structure sheaf $\OO_Y$.
Applying $\Phi_m^+$ we get the following map:
\[
\sum (b^{(i,i)}_1+b^{(i.i+1)}_2)\colon P_1^{\oplus m}\to P_0^{\oplus m+1},
\]
where $P_0$ and $P_1$ are the indecomposable projective $A$-modules and $b^{(i,j)}$ is defined as above. 
We can verify that this map is injective and the cokernel is isomorphic to $P$.
Hence the claim follows.
\end{proof}
Let $\tilde{\mathcal{P}}_m^\pm$ denote the category of pairs $(F,W,s)$, where $F\in\mathcal{P}_m^\pm$ and $s\colon W\otimes \OO_Y\to F$.
Let ${}^c\tilde{\mathcal{P}}_m^\pm$ denote the full subcategory of pairs $(F,W,s)$ such that $F\in{}^c\mathcal{P}_m^\pm$ and such that $W$ is finite dimensional.
\begin{prop}\label{prop-equiv}
\[
\tilde{\mathcal{P}}_m^\pm\simeq A_m^\pm\text{-}\mathrm{Mod},\quad
{}^c\tilde{\mathcal{P}}_m^\pm\simeq A_m^\pm\text{-}\mathrm{mod}.
\]
\end{prop}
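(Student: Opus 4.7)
My plan is to extend the equivalence $\Phi_m^{\pm}\colon \mathcal{P}_m^{\pm} \to A\text{-mod}$ of Section \ref{subsec-new-framed-quivers} to the framed categories, with Proposition \ref{prop-O_Y} serving as the bridge that identifies $\Phi_m^{\pm}(\OO_Y) = P$.

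First I would define a functor $\tilde{\Phi}_m^+\colon \tilde{\mathcal{P}}_m^+ \to A_m^+\text{-Mod}$ as follows. Given $(F,W,s)$ with $F\in \mathcal{P}_m^+$ and $s\colon W\otimes\OO_Y\to F$, set $V:=\Phi_m^+(F)$ (an $A$-module) and $V_\infty := W$. Applying $\Phi_m^+$ to $s$ and using Proposition \ref{prop-O_Y} gives a morphism $\Phi_m^+(s)\colon V_\infty \otimes P \to V$ in $A\text{-mod}$. The $m$ linear maps $q_i\colon V_\infty \to V_0$ are then extracted by composing $\Phi_m^+(s)$ with the $m$ canonical elements of $P$ that correspond to the arrows $q_i$ of $Q_m^+$ (viewed inside $P_\infty = e_\infty A_m^+$ and projected into $P = P_\infty/S_\infty$). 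The defining relations \eqref{eq-rel+} are automatically satisfied: they hold in $P$ because they are the relations of $A_m^+$ applied to the idempotent $e_\infty$, and any $A$-module map $V_\infty\otimes P\to V$ preserves them.

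Next I would construct the quasi-inverse. Given an $A_m^+$-module $\tilde{V}$, restricting to the vertices $0,1$ produces an $A$-module $V$, and the collection $(V_\infty, q_1,\ldots,q_m)$ with relations \eqref{eq-rel+} packages, by the universal property of the presentation of $P$, into a unique $A$-module map $V_\infty\otimes P \to V$. Applying $(\Phi_m^+)^{-1}$ gives back an object $F\in \mathcal{P}_m^+$ with a framing $s\colon V_\infty\otimes \OO_Y\to F$. Natural transformations on both sides match by construction, and full faithfulness on morphisms follows from the full faithfulness of $\Phi_m^+$ together with the naturality of the extraction $\Phi_m^+(s)\mapsto (q_i)$.

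The main obstacle is to make precise the equivalence between (a) $A$-module homomorphisms $V_\infty\otimes P\to V$ and (b) tuples $(q_i)$ satisfying \eqref{eq-rel+}. The cleanest route is to exhibit a free presentation of $P$ that directly matches the relations \eqref{eq-rel+}: either by reinterpreting the resolution $P_1^{\oplus m}\to P_0^{\oplus (m+1)}\to P\to 0$ of the proof of Proposition \ref{prop-O_Y} in the basis coming from the $q_i$, or, more intrinsically, by computing the $A$-module structure of the indecomposable projective $P_\infty = e_\infty A_m^+$ restricted to the subquiver $Q\subset Q_m^+$ and observing that $(P_\infty)_0$ is generated by $q_1,\ldots,q_m$ modulo precisely the images of \eqref{eq-rel+} at vertex $1$. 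The case of $A_m^-$ is strictly parallel, with the arrows $r_i$ playing the role of the $q_i$ and relations \eqref{eq-rel-} replacing \eqref{eq-rel+}; the quasi-inverse construction goes through verbatim after swapping the roles of the vertices $0$ and $1$. Finally, the compact version ${}^c\tilde{\mathcal{P}}_m^\pm \simeq A_m^\pm\text{-mod}$ is immediate: under $\Phi_m^\pm$ the condition $F\in {}^c\mathcal{P}_m^\pm$ corresponds to $V$ being finite dimensional, and $W = V_\infty$ is finite dimensional by definition on both sides.
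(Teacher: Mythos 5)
Your proposal takes essentially the same route as the paper: both identify $\Hom(\OO_Y,F)\simeq\Hom_A(P,V)\simeq\Hom_{A_m^\pm}(P,V)$ via Proposition \ref{prop-O_Y} and then recognize that prescribing the maps $q_i$ (equivalently, a linear map $W\to\Ext^1_{A_m^\pm}(S_\infty,V)$) is the same as prescribing the $A_m^\pm$-module structure extending $V$ with framing $W$. (A minor slip: $P$ is $\ker(P_\infty\to S_\infty)$, i.e.\ the radical of $P_\infty$, not the quotient $P_\infty/S_\infty$; this does not affect the argument.)
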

\begin{proof}
First, giving a pair $(F,W,s)\in \tilde{\mathcal{P}}_m^\pm$ is equivalent to giving a linear map $W\to \Hom(\OO_Y,F)$. 
Note that
\[
\Hom(\OO_Y,F)\simeq \Hom_A(P,V)\simeq \Hom_{A_m^\pm}(P,V)\simeq \Ext^1_{A_m^\pm}(S_\infty,V)
\]
where $V:=\Phi^\pm_m(P)$.
The claim follows, since giving a linear map $W\to \Ext^1_{A_m^\pm}(S_\infty,V)$ is equivalent to giving an $A_m^\pm$-module, .
\end{proof}

\begin{NB3}
\begin{prop}
The category $\tilde{\mathcal{P}}_m^+$ (resp. $\tilde{\mathcal{P}}_m^-$) is equivalent to the category of finite dimensional $A_m^+$-modules (resp. $A_m^-$-modules).
\end{prop}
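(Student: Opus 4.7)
The plan is to bootstrap from the equivalence $\Phi_m^\pm\colon \mathcal{P}_m^\pm \to \amod$ established just before the statement, by systematically adding the framing datum on both sides. Given a triple $(F,W,s)\in \tilde{\mathcal{P}}_m^\pm$, the morphism $s\colon W\otimes \OO_Y\to F$ is by adjunction the same as a linear map $W\to \Hom_Y(\OO_Y,F)$, so the first step is to translate $\Hom_Y(\OO_Y,F)$ across the equivalence. Setting $V:=\Phi_m^\pm(F)\in\amod$, Proposition \ref{prop-O_Y} yields $\Hom_Y(\OO_Y,F)\cong \Hom_A(P,V)$, where $P$ is the kernel of $P_\infty\twoheadrightarrow S_\infty$ viewed as an $A$-module by restriction along $A\hookrightarrow A_m^\pm$.

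The second step is to repackage $\Hom_A(P,V)$ as $\Ext^1_{A_m^\pm}(S_\infty,V)$. Extending $V$ to an $A_m^\pm$-module by putting $0$ at the vertex $\infty$, the short exact sequence $0\to P\to P_\infty\to S_\infty\to 0$ of projective $A_m^\pm$-modules yields a long exact sequence in which $\Hom_{A_m^\pm}(P_\infty,V)=V_\infty=0$, so $\Ext^1_{A_m^\pm}(S_\infty,V)\cong \Hom_{A_m^\pm}(P,V)\cong \Hom_A(P,V)$. A linear map $W\to \Ext^1_{A_m^\pm}(S_\infty,V)$ is precisely the datum of an $A_m^\pm$-module structure on $V\oplus W$ extending the $A$-module structure on $V$ and placing $W$ at the vertex $\infty$; this gives the object-level correspondence. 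Functoriality in both directions is formal.

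For the finite-dimensional version, I would observe that $F\in{}^c\mathcal{P}_m^\pm$ corresponds under $\Phi_m^\pm$ to a finite-dimensional $A$-module, and $W$ is imposed to be finite dimensional in ${}^c\tilde{\mathcal{P}}_m^\pm$; since the assignment $(F,W,s)\mapsto (V,W,\text{extension class})$ respects the dimension at each vertex, the equivalence restricts to a bijection onto finite-dimensional $A_m^\pm$-modules.

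The main obstacle I anticipate is checking that the extra relations \eqref{eq-rel+} and \eqref{eq-rel-} imposed on $A_m^\pm$ are exactly what makes the correspondence work---that is, verifying that $\Ext^1_{A_m^\pm}(S_\infty,V)\cong \Hom_A(P,V)$ holds with the $P$ furnished by Proposition \ref{prop-O_Y}. Concretely, one must use the explicit presentation $P_1^{\oplus m}\to P_0^{\oplus m+1}\twoheadrightarrow P$ from the proof of Proposition \ref{prop-O_Y} to compute $\Hom_A(P,V)$ as the cokernel of a map between $\bigoplus V_0$ and $\bigoplus V_1$, and then match this cokernel presentation against the subspace of $V_\infty\text{-valued}$ data cut out by the relations $b_1q_1=0$, $b_1q_{i+1}=b_2q_i$, $b_2q_m=0$ (respectively $a_1r_{i+1}=a_2r_i$). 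This is a direct but bookkeeping-heavy computation; once it is carried out, all the pieces assemble into the desired equivalences.
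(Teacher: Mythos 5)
Your proof is correct and matches the paper's argument step for step: adjunction turns $s$ into a linear map $W\to\Hom_Y(\OO_Y,F)$, and then, writing $V:=\Phi_m^\pm(F)$ (note the paper misprints this as $\Phi_m^\pm(P)$), one identifies $\Hom_Y(\OO_Y,F)\cong\Hom_A(P,V)\cong\Hom_{A_m^\pm}(P,V)\cong\Ext^1_{A_m^\pm}(S_\infty,V)$ exactly as you describe, so that a linear map $W\to\Ext^1_{A_m^\pm}(S_\infty,V)$ is an $A_m^\pm$-module structure extending $V$. The obstacle you anticipate in your last paragraph is not actually present, and your earlier paragraph already dispatches it: the isomorphism $\Hom_{A_m^\pm}(P,V)\cong\Ext^1_{A_m^\pm}(S_\infty,V)$ is a formal consequence of the projectivity of $P_\infty$ together with $\Hom_{A_m^\pm}(P_\infty,V)=V_\infty=0$, while the identification $\Hom_A(P,V)\cong\Hom_{A_m^\pm}(P,V)$ holds simply because both $P$ and $V$ vanish at the vertex $\infty$; the bookkeeping of the relations \eqref{eq-rel+}, \eqref{eq-rel-} against the presentation $P_1^{\oplus m}\to P_0^{\oplus m+1}\twoheadrightarrow P$ is precisely the content of Proposition~\ref{prop-O_Y}, which is already proved and can be taken as given here.
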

\begin{proof}
Let $V=V_0\oplus V_1$ be a finite dimensional $A$-module and 
\[
b_1,b_2\colon V_0\to V_1
\]
be the linear maps associated to the $A$-module structure. 
We consider the following map:
\begin{equation}\label{eq-new-quiver}
\sum (b^{(i,i)}_1+b^{(i.i+1)}_2)\colon V_0^{\oplus m}\to V_1^{\oplus m+1},
\end{equation}
where 
$b^{(i,j)}_\varepsilon=\pi^{j}\circ b_\varepsilon\circ \eta^i$ and $\pi^{i}$ and $\eta^i$ are the canonical projection to inclusion of the $i$-th factor of the direct sum.
Note that giving a finite dimensional $A_m^+$-module $\tilde{V}=V_0\oplus V_1\oplus V_\infty$ is equivalent to giving a finite dimensional $A$-module $V=V_0\oplus V_1$ and a linear map from $V_\infty$ to the kernel of the map \eqref{eq-new-quiver}.

Let $B_1, B_2\in H^0(Y,\mathcal{L})$ be basis elements in 
\[
\C^2\simeq H^0(\CP^1,\mathcal{O}_{\CP^1}(1)) \hookrightarrow H^0(Y,\mathcal{L})
\] 
For an element $\F\in \mathcal{P}_m^+$, the corresponding $A$-module is given 
\[
H^1(Y,F\otimes \mathcal{L}_{-m-1})\oplus H^1(Y,F\otimes \mathcal{L}_{-m})
\]
as a vector space and the linear maps associated to the arrows $b_1$ and $b_2$ is induced by $B_1$ and $B_2$.
For $m\geq 1$, we consider the map 
\[
\sum (B^{(i,i)}_1+B^{(i.i+1)}_2)\colon (\mathcal{L}_{-m-1})^{\oplus m}\to(\mathcal{L}_{-m})^{\oplus m+1},
\]
where we use similar notations as those in \eqref{eq-new-quiver}.
This map is injective and the cokernel is isomorphic to the structure sheaf $\OO_Y$.
Moreover, this map is the universal extension corresponding to the subspace
\[
\C^m\simeq \Ext^1(\CP^1,\mathcal{O}_{\CP^1}(-m-1))\hookrightarrow \Ext^1(\OO_Y,\mathcal{L}_{-m-1}).
\]
For $F\in\mathcal{P}_m^+$ we have an exact sequence
\begin{equation}\label{eq-exact}
0\to H^0(Y,F)\to H^1(Y,F\otimes \mathcal{L}_{-m-1})^{\oplus m}\to H^1(Y,F\otimes \mathcal{L}_{-m})^{\oplus m+1}.
\end{equation}
Giving a morphism $s\colon W\otimes \OO_Y\to F$ ia equivalent to giving a linear map from $W$ to $H^0(Y,F)$, which is isomorphic to the kernel of the last map in \eqref{eq-exact}.
As we mentioned in the previous paragraph, this is equivalent to giving an $A_m^+$-module structure on 
\[
H^1(Y,F\otimes \mathcal{L}_{-m-1})\oplus H^1(Y,F\otimes \mathcal{L}_{-m})\oplus  W.
\]

We can show the statement for $F\in\mathcal{P}_m^-$ in the same way.
\end{proof}
\end{NB3}

\begin{NB3}
Similarly, we define the algebra $A_m^-$ as follows:
\begin{figure}[htbp]
  \centering
  \includegraphics{pic6.eps}
  \caption{quiver $Q_m^-$}\label{quiver-m-}
\end{figure}
the quiver $Q_m^-$ is given as in Figure \ref{quiver-m-} and the following relations are added to the usual ones:
\begin{equation}\label{eq-rel-}
a_1r_{i+1}=a_2r_{i}\ (i=1,\ldots m-1).
\end{equation}
\end{NB3}

We can define $\zeta$-(semi)stability for finite dimensional $A_m^\pm$-modules as in Definition \ref{def-st}. 
In order to make it clear in what category we work, we use the notation``$(\zeta,{}^c\tilde{\mathcal{P}}_m^\pm)$-(semi)stability''.
From now on, the $\zeta$-(semi)stability for modules of the original quiver $\tilde{Q}$ is written ``$(\zeta,\tpervc)$-(semi)stability''. 
We can construct the moduli spaces $\mathfrak{M}^{A_m^\pm}_{\zeta}(\mathrm{v}_0,\mathrm{v}_1)$
of $(\zeta,{}^c\tilde{\mathcal{P}}_m^\pm)$-semistable $A_m^\pm$-modules $V$ with $\dimv\,V=(\mathrm{v}_0,\mathrm{v}_1,1)$. 

\begin{NB3}old version:
For an integer $m$, we set $\mathcal{L}_m:=\mathcal{L}^{\otimes m}$.
Let $\mathcal{P}_m^+$ (resp. $\mathcal{P}_m^-$) be the full subcategory of $D^b_c(\coh)$ consisting of objects $F$ such that $F\otimes \mathcal{L}_{-m}[1]\in \pervc$ (resp. $F\otimes \mathcal{L}_{m+1}\in \pervc$). 
Note that $\mathcal{L}_m[-1]\oplus \mathcal{L}_{m+1}[-1]$ (resp. $\mathcal{L}_{-m-1}\oplus \mathcal{L}_{-m}$) is a projective generator in $\mathcal{P}_m^+$ (resp. $\mathcal{P}_m^-$).
Let $\tilde{\mathcal{P}}_m^\pm$ denote the category of pairs $(F,W,s)$, where $F\in\mathcal{P}_m^\pm$ and $s\colon W\otimes \OO_Y\to F$.

Let $b_1, b_2\in H^0(Y,\mathcal{L})$ be basis elements in 
\[
\C^2\simeq H^0(\CP^1,\mathcal{O}_{\CP^1}(1)) \hookrightarrow H^0(Y,\mathcal{L}).
\] 
For $m\geq 1$, we consider the map 
\[
\sum (b^{(i,i)}_1+b^{(i.i+1)}_2)\colon (\mathcal{L}_{-m-1})^{\oplus m}\to(\mathcal{L}_{-m})^{\oplus m+1},
\]
where 
$b^{(i,j)}_\varepsilon=\pi^{j}\circ b_\varepsilon\circ \eta^i$ and $\pi^{i}$ and $\eta^i$ are the canonical projection to inclusion of the $i$-th factor of the direct sum.
This map is injective and the cokernel is isomorphic to the structure sheaf $\OO_Y$.
Moreover, this map is the universal extension corresponding to the subspace
\[
\C^m\simeq \Ext^1(\CP^1,\mathcal{O}_{\CP^1}(-m-1))\hookrightarrow \Ext^1(\OO_Y,\mathcal{L}_{-m-1}).
\]
For $F\in\mathcal{P}_m^+$ we have an exact sequence
\[
0\to H^0(Y,F)\to H^1(Y,F\otimes \mathcal{L}_{-m-1})^{\oplus m}\to H^1(Y,F\otimes \mathcal{L}_{-m})^{\oplus m+1}.
\]
Hence $\tilde{\mathcal{P}}_m^+$ is equivalent to the module category of the algebra $A_m^+$ defined by the following quiver with relations:
\begin{figure}[htbp]
  \centering
  \includegraphics{pic8.eps}
  \caption{quiver $Q_m^+$}\label{quiver(Qm+)}
\end{figure}
the quiver $Q_m^+$ is given as in Figure \ref{quiver(Qm+)} and the following relations are added to the usual ones:
\begin{equation}\label{eq-rel+}
b_1q_1=0,\quad b_1q_{i+1}=b_2q_{i}\ (i=1,\ldots m-1),\quad b_2q_m=0.
\end{equation}

Similarly, $\tilde{\mathcal{P}}_m^-$ is equivalent to the module category of the algebra $A_m^-$ defined by the following quiver with relations:
\begin{figure}[htbp]
  \centering
  \includegraphics{pic6.eps}
  \caption{quiver $Q_m^-$}\label{quiver-m-}
\end{figure}
the quiver $Q_m^-$ is given as in Figure \ref{quiver-m-} and the following relations are added to the usual ones:
\begin{equation}\label{eq-rel-}
a_1r_{i+1}=a_2r_{i}\ (i=1,\ldots m-1).
\end{equation}

We can define $\zeta$-(semi)stability for $A_m^\pm$-modules as in Definition \ref{def-st}. 
In order to make it clear in what category we work, we use the notation ``$(\zeta,\tilde{\mathcal{P}}_m^\pm)$-(semi)stability''.
From now on, the $\zeta$-(semi)stability for modules of the original quiver $\tilde{Q}$ is written ``$(\zeta,\tpervc)$-(semi)stability''. 
We can construct the moduli spaces $\mathfrak{M}^{A_m^\pm}_{\zeta}(\mathrm{v}_0,\mathrm{v}_1)$
of $(\zeta,\tilde{\mathcal{P}}_m^\pm)$-semistable $A_m^\pm$-modules $V$ with $\dimv\,V=(\mathrm{v}_0,\mathrm{v}_1,1)$. 
As we will see in \S \ref{subsec-potential}, the relations derived from a potential, and hence we can also define a symmetric obstruction theory and virtual counting.
\end{NB3}

\subsection{Potentials}\label{subsec-potential}
\begin{NB3}we change the order of the subsections and rewrite\end{NB3}%
Let $\tilde{Q}_m^+$ be the quiver in Figure \ref{quiver(Qm+hat)} and $\omega_m^+$ be the following potential: 
\[
a_1b_1a_2b_2-a_1b_2a_2b_1
+p_1b_1q_1
+p_2(b_1q_2-b_2q_1)
+\cdots
+p_m(b_1q_m-b_2q_{m-1})
-p_{m+1}b_2q_m.
\]
Let $\tilde{A}_m^+$ be the algebra defined by the quiver with the potential $(\tilde{Q}_m^+,\omega_m^+)$ (see \cite{quiver-with-potentials}).
\begin{figure}[htbp]
  \centering
  \includegraphics{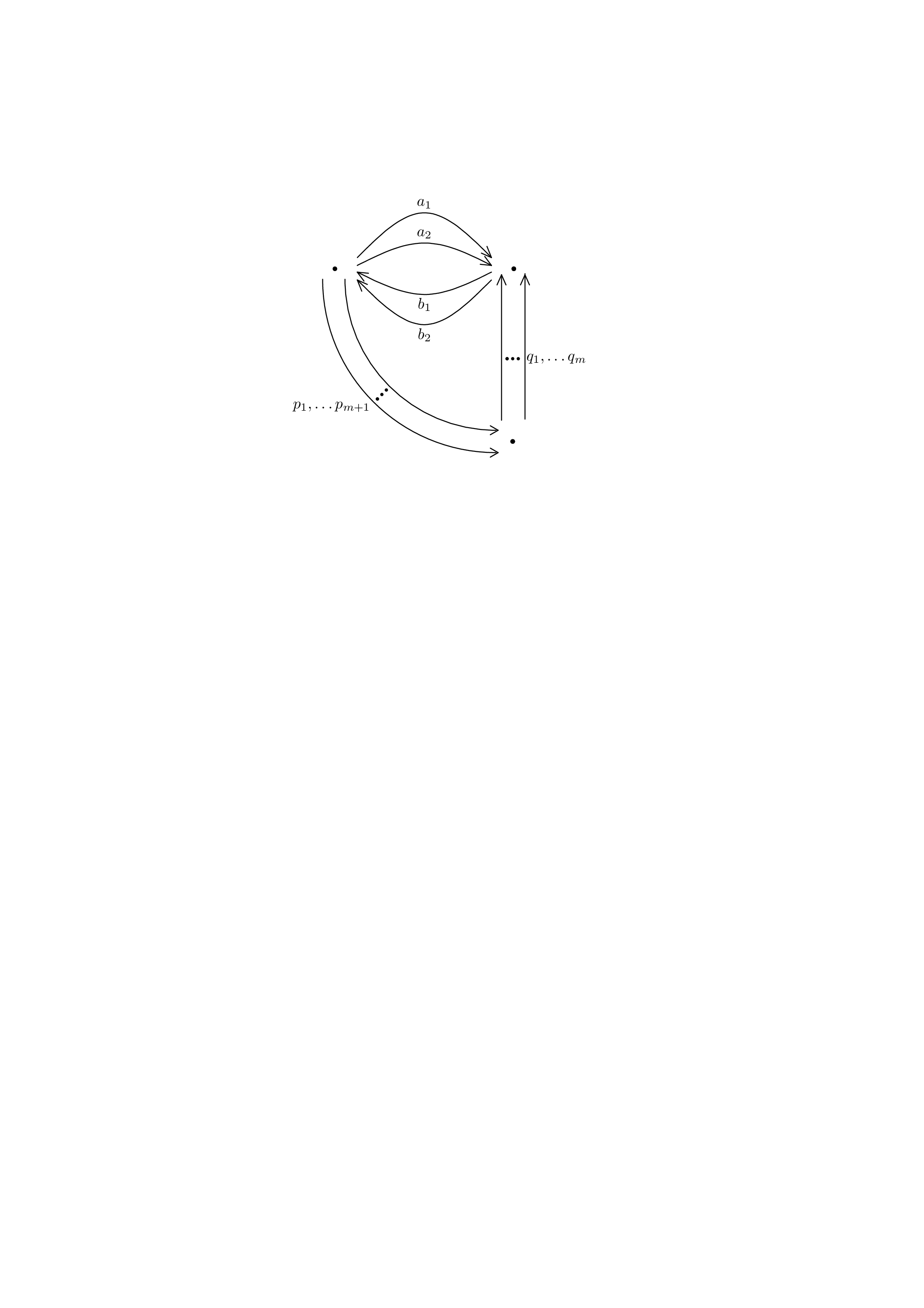}
  \caption{quiver $\tilde{Q}_m^+$}\label{quiver(Qm+hat)}
\end{figure}
\begin{lem}\label{lem-A-hat1}
\[
p_jb_{\varepsilon_1}a_{\varepsilon_2}b_{\varepsilon_3}\dots b_{\varepsilon_L}q_{j'}=0\quad (\varepsilon_l=1,2).
\]
\end{lem}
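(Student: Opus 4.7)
The plan is to extract Jacobian relations from the superpotential $\omega_m^+$ and combine them into a sliding argument that pushes the cycle $p_j b_{\varepsilon_1} a_{\varepsilon_2} \cdots b_{\varepsilon_L} q_{j'}$ to a boundary where it vanishes. The cyclic derivatives $\partial_{p_j}\omega_m^+$ and $\partial_{q_j}\omega_m^+$ yield
\[
b_1 q_j = b_2 q_{j-1} \ (j=1,\ldots,m+1), \qquad p_j b_1 = p_{j+1} b_2 \ (j=1,\ldots,m),
\]
with the convention $q_0 = q_{m+1} = 0$, so in particular $b_1 q_1 = 0$ and $b_2 q_m = 0$. The derivatives $\partial_{a_i}\omega_m^+$ recover the conifold relations $b_1 a_i b_2 = b_2 a_i b_1$ for $i = 1, 2$.

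I would first establish a \emph{swap identity} in $\tilde{A}_m^+$ by induction on $n$: for every alternating word $X = a_{i_1} b_{k_1} a_{i_2} \cdots b_{k_n} a_{i_{n+1}}$ starting and ending with an $a$-letter, one has $b_1 X b_2 = b_2 X b_1$. The base $n=0$ is the conifold relation; the inductive step splits $X = a_{i_1} b_{k_1} X'$ and combines the hypothesis applied to the shorter word with one use of $b_1 a_{i_1} b_2 = b_2 a_{i_1} b_1$ (the subcases $k_1 = 1, 2$ are handled analogously).

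Using the two $pq$-relations, together with the swap identity when $L \geq 3$, any $p_j b_{\varepsilon_1} a_{\varepsilon_2} \cdots b_{\varepsilon_L} q_{j'}$ reduces to the canonical form $p_{j^\ast} b_1 X b_1 q_{j^{\ast\ast}}$ (with $X$ empty when $L = 1$), by applying $p_? b_2 = p_{?-1} b_1$ on the left end and $b_2 q_? = b_1 q_{?+1}$ on the right end and killing boundary configurations directly via $b_1 q_1 = 0$ or $b_2 q_m = 0$. For the canonical form, the chain
\[
p_j b_1 X b_1 q_{j'} = p_j b_1 X b_2 q_{j'-1} = p_j b_2 X b_1 q_{j'-1} = p_{j-1} b_1 X b_1 q_{j'-1}
\]
(applying $b_1 q_{j'} = b_2 q_{j'-1}$, then the swap, then $p_j b_2 = p_{j-1} b_1$) gives the sliding identity valid for $j \geq 2$; symmetrically $p_j b_1 X b_1 q_{j'} = p_{j+1} b_1 X b_1 q_{j'+1}$ when $j \leq m$.

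Iterating the slides: if $j \geq j'$, performing $j' - 1$ slides down lands on $p_{j-j'+1} b_1 X (b_1 q_1) = 0$; if $j \leq j'$, performing $m - j'$ slides up followed by one more application of ``$p_? b_1 = p_{?+1} b_2$, swap, $b_2 q_m = 0$'' also gives $0$, so in either case the expression vanishes. The main obstacle will be the index bookkeeping in the sliding steps, especially verifying that $p_k b_1 = p_{k+1} b_2$ remains applicable throughout (namely $k \leq m$) and that the two complementary cases $j \geq j'$ and $j \leq j'$ together exhaust all possibilities.
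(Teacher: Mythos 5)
Your proof is correct and takes essentially the same approach as the paper's: iterate the conifold relations $b_1 a_i b_2 = b_2 a_i b_1$ to get a swap identity, then use the $pq$-relations from the potential to slide the $p$- and $q$-indices in tandem until one of the boundary relations $b_1 q_1 = 0$ or $b_2 q_m = 0$ kills the term. The paper argues the case $\varepsilon_L = 1$, $j \ge j'$ explicitly and declares the rest analogous, while you route through a canonical form first; that is only a presentational difference (and like the paper you are terse about the boundary configurations such as $p_1 b_2$, where the left-end relation is unavailable and one must first adjust the right end and swap).
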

\begin{proof}
Assume $\varepsilon_L=1$ and ${j}\geq {j'}$. Then we have
\begin{align*}
p_{j}b_{\varepsilon_1}a_{\varepsilon_2}b_{\varepsilon_3}\dots b_{1}q_{j'}
&=
p_{j}b_{\varepsilon_1}a_{\varepsilon_2}b_{\varepsilon_3}\dots b_{2}q_{j'-1}\\
&=
p_{j}b_{2}a_{\varepsilon_2}b_{\varepsilon_1}\dots b_{\varepsilon_{L-2}}q_{j'-1}\\
&=
p_{j-1}b_{1}a_{\varepsilon_2}b_{\varepsilon_1}\dots b_{\varepsilon_{L-2}}q_{j'-1}\\
&=
p_{j-1}b_{\varepsilon_1}a_{\varepsilon_2}b_{\varepsilon_3}\dots b_{1}q_{j'-1}\\
&=\cdots\\
&=
p_{j-j'+1}b_{\varepsilon_1}a_{\varepsilon_2}b_{\varepsilon_3}\dots b_{1}q_{1}\\
&=0.
\end{align*}
We can show the claims for other cases in the same way.
\end{proof}
Fix an element $(\mathrm{v}_0,\mathrm{v}_1)\in(\mathbb{Z}_{\geq 0})^2$. 
For $\zeta=(\zeta_0,\zeta_1)$, we put $\theta_\zeta=(\zeta_0,\zeta_1,-\zeta_0\mathrm{v}_0-\zeta_1\mathrm{v}_1)$. 
Let $\mathfrak{M}^{\tilde{A}_m^+}_{\zeta}(\mathrm{v}_0,\mathrm{v}_1)$ denote the moduli space of 
$\theta_\zeta$-stable $\tilde{A}_m^+$-modules with dimension vectors $(\mathrm{v}_0,\mathrm{v}_1,1)$.
\begin{lem}\label{prop-A-hat2}
Let $V$ be an $\tilde{A}_m^+$-module with dimension vector $(\mathrm{v}_0,\mathrm{v}_1,1)$.
If $V$ is $\theta_{{\zeta}_{\mathrm{cyclic}}}$-stable, then $p_j=0$ for $j=1,\ldots m+1$.
\end{lem}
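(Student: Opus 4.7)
The plan is to use $\theta_{\zeta_{\mathrm{cyclic}}}$-stability to force $V$ to be cyclic over $\tilde{A}_m^+$ with $V_\infty$ as generator, and then to read off $p_j=0$ from the fact that Lemma~\ref{lem-A-hat1} forces every cycle of positive length at the vertex $\infty$ to vanish in $\tilde{A}_m^+$.

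For cyclicity, I would introduce the $\tilde{A}_m^+$-submodule $U\subseteq V$ generated by $V_\infty$ and normalize the stability parameter so that $\theta_{\tilde\zeta_{\mathrm{cyclic}}}(V)=0$ in the sense of Remark~\ref{rem:normalization}. Since $\dim V_\infty=1$, we have $U_\infty=V_\infty$, so $U$ is a nonzero subobject. If $U$ were \emph{proper}, stability would yield
\[
\zeta_{\mathrm{cyclic},0}(\dim U_0-\mathrm{v}_0)+\zeta_{\mathrm{cyclic},1}(\dim U_1-\mathrm{v}_1)<0,
\]
but $\dim U_i\le\mathrm{v}_i$ and $\zeta_{\mathrm{cyclic},0},\zeta_{\mathrm{cyclic},1}<0$, so the left-hand side is non-negative, a contradiction. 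Hence $U=V$; in particular $V_1$ is spanned over $\C$ by elements $\gamma\cdot\xi$, where $\xi\in V_\infty$ and $\gamma$ ranges over paths in $\tilde{Q}_m^+$ from $\infty$ to $1$.

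To conclude, observe that at the vertex $\infty$ the only outgoing arrows are the $q_{j'}$ and the only incoming arrows are the $p_i$. Consequently any cycle $p_j\gamma$ at $\infty$ factors uniquely in the path algebra as an ordered product of ``atoms'' of the form $p_{i_s}b_{\varepsilon^{(s)}_1}a_{\varepsilon^{(s)}_2}\cdots b_{\varepsilon^{(s)}_{L_s}}q_{j'_s}\in e_\infty\tilde{A}_m^+ e_\infty$, one per successive passage through $\infty$ (the final atom ending with $p_j$). Lemma~\ref{lem-A-hat1} says every such atom vanishes in $\tilde{A}_m^+$, so $p_j\gamma=0$ there too, and therefore $p_j(\gamma\cdot\xi)=(p_j\gamma)\cdot\xi=0$ for every generator of $V_1$. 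Thus $p_j=0$ as a linear map $V_1\to V_\infty$.

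The substance of the argument is the cyclicity step; once $V$ is known to be generated by $V_\infty$, everything else is a formal consequence of Lemma~\ref{lem-A-hat1}. The atomic factorization of cycles at $\infty$ uses only the shape of $\tilde{Q}_m^+$ (exactly one class of arrows entering and leaving $\infty$) and requires no further relations.
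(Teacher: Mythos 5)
Your proposal is correct and follows essentially the same route as the paper's proof: first use $\theta_{\zeta_{\mathrm{cyclic}}}$-stability to deduce that $V$ is cyclic over $V_\infty$ (the subobject generated by $V_\infty$ cannot be proper because $\zeta_{\mathrm{cyclic},0},\zeta_{\mathrm{cyclic},1}<0$), and then apply Lemma~\ref{lem-A-hat1} to kill every composition $p_j\gamma$ for $\gamma$ a path out of $\infty$. Your explicit factorization of an arbitrary cycle at $\infty$ into ``atoms'' (one per passage through $\infty$, using that $\infty$ has only the $q_{j'}$ leaving and only the $p_i$ entering) is a slightly more careful rendering of the step that the paper phrases tersely as ``$V_0$ coincides with the union of the images of $b_{\varepsilon_1}a_{\varepsilon_2}\dots b_{\varepsilon_L}q_j$'s.'' One small slip: on the paper's conventions for $\tilde Q_m^+$ the arrows $q_j$ go $\infty\to 1$, the alternating $b,a,\dots,b$ string ends at $0$, and $p_j$ is an arrow $0\to\infty$; so the map being killed is $p_j\colon V_0\to V_\infty$, not $V_1\to V_\infty$ as you wrote. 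This does not affect the argument, since cyclicity gives that every $V_i$ is spanned by $\gamma\cdot\xi$ with $\gamma$ a path from $\infty$.
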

\begin{proof}
By the $\theta_{{\zeta}_{\mathrm{cyclic}}}$-stability, $V_0$ coincides with the union of the images of $b_{\varepsilon_1}a_{\varepsilon_2}b_{\varepsilon_3}\dots b_{\varepsilon_L}q_j$'s.
Thus the claim follows from Lemma \ref{lem-A-hat1}.
\end{proof}

\begin{prop}\label{cor-A-hat-3}
\[
\mathfrak{M}^{A_m^+}_{\zeta_{\mathrm{cyclic}}}(\mathrm{v}_0,\mathrm{v}_1)\simeq
\mathfrak{M}^{\tilde{A}_m^+}_{\zeta_{\mathrm{cyclic}}}(\mathrm{v}_0,\mathrm{v}_1).
\]
In particular, $\mathfrak{M}^{A_m^+}_{\zeta_{\mathrm{cyclic}}}(\mathrm{v}_0,\mathrm{v}_1)$ has a symmetric obstruction theory.
\end{prop}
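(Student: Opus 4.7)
The plan is to identify $\mathfrak{M}^{A_m^+}_{\zeta_{\mathrm{cyclic}}}(\mathrm{v}_0,\mathrm{v}_1)$ with the closed subscheme of $\mathfrak{M}^{\tilde{A}_m^+}_{\zeta_{\mathrm{cyclic}}}(\mathrm{v}_0,\mathrm{v}_1)$ cut out by $p_1=\cdots=p_{m+1}=0$, and then to use Lemma \ref{prop-A-hat2} to prove that this subscheme is in fact the whole moduli space. First I will inspect the cyclic derivatives of $\omega_m^+$: the derivatives with respect to $p_j$ reproduce exactly the relations \eqref{eq-rel+}; the derivatives with respect to $a_\varepsilon$ give the conifold relations; the derivatives with respect to $b_\varepsilon$ give the conifold relations up to extra terms of the form $\sum q_j p_{j'}$; and the derivatives with respect to $q_j$ produce relations of the form $p_{j'} b_\varepsilon = p_{j''} b_{\varepsilon'}$. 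When all $p_j$ vanish, the extra terms disappear and the $q_j$-relations become vacuous, so the defining relations of $\tilde{A}_m^+$ collapse exactly to those of $A_m^+$. Setting $p_j=0$ therefore defines a closed embedding of representation varieties, and since the submodule structure is unaffected by extra arrows that act as zero, this descends to a closed embedding $\mathfrak{M}^{A_m^+}_{\zeta_{\mathrm{cyclic}}}\hookrightarrow \mathfrak{M}^{\tilde{A}_m^+}_{\zeta_{\mathrm{cyclic}}}$.

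The heart of the argument will be to upgrade Lemma \ref{prop-A-hat2} from a statement about closed points to a scheme-theoretic identity: $p_j$ vanishes as a morphism of universal bundles over $\mathfrak{M}^{\tilde{A}_m^+}_{\zeta_{\mathrm{cyclic}}}$. Lemma \ref{lem-A-hat1} supplies algebra identities $p_j\cdot b_{\varepsilon_1} a_{\varepsilon_2}\cdots b_{\varepsilon_L}\cdot q_{j'}=0$ in $\tilde{A}_m^+$, and these translate immediately to the vanishing of the corresponding composite morphisms of universal bundles on the representation variety. On the $\zeta_{\mathrm{cyclic}}$-stable locus, stability forces the one-dimensional framing to generate the entire $\tilde{A}_m^+$-module, so the sum, over all paths from $\infty$ to vertex $1$, of the associated morphisms from the trivial bundle to the universal bundle at vertex $1$ is a surjection of coherent sheaves. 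The composite of $p_j$ with this surjection vanishes by the identities above, so $p_j$ itself vanishes as a morphism of universal bundles. This establishes the isomorphism of moduli schemes.

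Finally, because $\tilde{A}_m^+$ is the Jacobi algebra of a quiver with superpotential, the general construction recalled in \S \ref{defofncdt} (cf.\ \cite[Theorem 1.3.1]{szendroi-ncdt}) endows $\mathfrak{M}^{\tilde{A}_m^+}_{\zeta_{\mathrm{cyclic}}}$ with a symmetric perfect obstruction theory, which the preceding isomorphism transports to $\mathfrak{M}^{A_m^+}_{\zeta_{\mathrm{cyclic}}}$. The main obstacle I anticipate is the scheme-theoretic step in the second paragraph: the set-theoretic vanishing of the $p_j$ is immediate from Lemma \ref{prop-A-hat2}, but packaging the cyclicity condition as a genuine surjection of coherent sheaves on the stable locus and then applying the algebra-level identities of Lemma \ref{lem-A-hat1} as identities of morphisms of universal bundles requires some care with the bookkeeping on the universal family.
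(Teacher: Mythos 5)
Your proposal is correct and follows essentially the same route as the paper, whose entire proof is the single line ``The claim follows directly from Lemma \ref{prop-A-hat2} and the definition of the potential''; what you have done is fill in the two implicit steps (the cyclic-derivative analysis showing that killing the $p_j$ collapses the $\tilde A_m^+$-relations to the $A_m^+$-relations, and the upgrade of Lemma \ref{prop-A-hat2} to a scheme-theoretic vanishing of the $p_j$ on the stable locus), and both steps are the right ones. One small slip to fix: since each $p_j$ is an arrow $0\to\infty$ (it appears in the cycle $p_jb_1q_j$ with $q_j\colon\infty\to 1$ and $b_1\colon1\to0$), the surjection you need on the stable locus is the evaluation map from the trivial bundle, summed over paths from $\infty$ to vertex $0$, onto the universal bundle at vertex $0$ — not at vertex $1$ as written; with that replacement the composition with $p_j$ vanishes by Lemma \ref{lem-A-hat1} and the argument closes as you intend.
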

\begin{proof}
The claim follows directly from Lemma \ref{prop-A-hat2} and the definition of the potential.
\end{proof}

Similarly, we define $\tilde{A}_m^-=(\tilde{Q}_m^-,\omega_m^-)$ by the quiver $\tilde{Q}_m^-$ in Figure \ref{quiver(Qm-hat)} and the following potential:
\[
\omega_m^-=a_1b_1a_2b_2-a_1b_2a_2b_1
+s_1(a_1r_2-a_2r_1)
+\cdots
+s_{m-1}(b_1r_m-b_2r_{m-1}).
\]

\begin{figure}[htbp]
  \centering
  \includegraphics{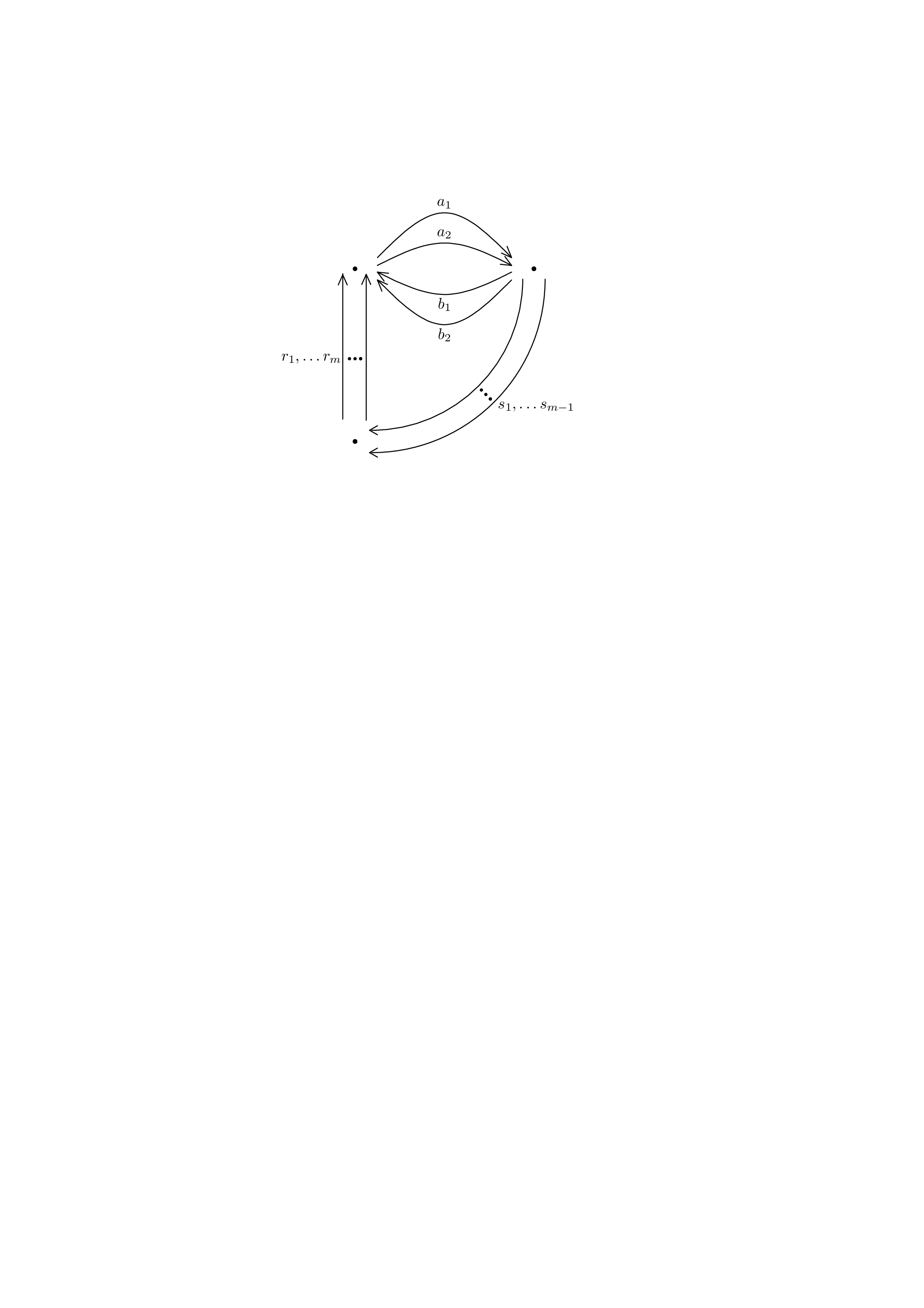}
  \caption{quiver $\tilde{Q}_m^-$}\label{quiver(Qm-hat)}
\end{figure}

We can prove the following claim in the same way:
\begin{prop}\label{cor-A-hat-4}
\[
\mathfrak{M}^{A_m^-}_{\zeta_{\mathrm{cyclic}}}(\mathrm{v}_0,\mathrm{v}_1)\simeq
\mathfrak{M}^{\tilde{A}_m^-}_{\zeta_{\mathrm{cyclic}}}(\mathrm{v}_0,\mathrm{v}_1).
\]
In particular, $\mathfrak{M}^{A_m^-}_{\zeta_{\mathrm{cyclic}}}(\mathrm{v}_0,\mathrm{v}_1)$ has a symmetric obstruction theory.
\end{prop}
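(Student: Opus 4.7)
The plan is to mirror the proof of Proposition \ref{cor-A-hat-3}, exploiting the fact that $\tilde{A}_m^-$ is obtained from $\tilde{A}_m^+$ by interchanging the roles of $(a_1,a_2)$ with $(b_1,b_2)$ and of the framing arrow pair $(q_j,p_j)$ with $(r_i,s_i)$. Under this interchange, the strategy that forced $p_j=0$ under $\theta_{\zeta_{\mathrm{cyclic}}}$-stability translates directly to forcing $s_i=0$, after which the potential reduces to the defining relations of $A_m^-$.

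First I would compute the cyclic derivatives of $\omega_m^-$: the derivatives $\partial_{s_i}\omega_m^- = a_1 r_{i+1} - a_2 r_i$ for $i=1,\ldots,m-1$ give precisely the relations \eqref{eq-rel-} defining $A_m^-$; the boundary derivatives $\partial_{r_1}\omega_m^- = -s_1 a_2$ and $\partial_{r_m}\omega_m^- = s_{m-1} a_1$ give $s_1 a_2=0$ and $s_{m-1} a_1=0$; the interior derivatives $\partial_{r_j}\omega_m^- = s_{j-1} a_1 - s_j a_2$ for $2\le j\le m-1$ give $s_{j-1} a_1 = s_j a_2$; and the derivatives $\partial_{a_i}\omega_m^-$, $\partial_{b_i}\omega_m^-$ yield the usual conifold relations $a_1 b_\varepsilon a_2 = a_2 b_\varepsilon a_1$ and $b_1 a_\varepsilon b_2 = b_2 a_\varepsilon b_1$ (together with $r$-$s$-cycle terms at $\infty$). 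Using these I would establish the analog of Lemma \ref{lem-A-hat1}: for any alternating word $w=a_{\varepsilon_1} b_{\varepsilon_2} a_{\varepsilon_3}\cdots a_{\varepsilon_L}$ starting and ending with an $a$-arrow, one has $s_j\, w\, r_{j'}=0$ in $\tilde{A}_m^-$. The induction is the same as in Lemma \ref{lem-A-hat1}: shift $j'$ downward using $a_1 r_{j'} = a_2 r_{j'-1}$, simultaneously shift $j$ upward using $s_{j-1}a_1 = s_j a_2$, permute adjacent $a$-$b$ factors using the conifold relations, and terminate at one of the boundary vanishings $s_1 a_2=0$ or $s_{m-1} a_1=0$.

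Second, I would apply $\theta_{\zeta_{\mathrm{cyclic}}}$-stability. With $\zeta_{\mathrm{cyclic},0},\zeta_{\mathrm{cyclic},1}<0$ (so $\zeta_\infty>0$ after normalization), stability of $V$ is equivalent to cyclicity: $V$ is generated by $V_\infty\cong\mathbb{C}$. Concretely, $V_0$ is spanned by vectors of the form $a_?(b_?a_?)^k r_{j'}(1)$, so the previous step shows that $s_j$ annihilates a spanning set of $V_0$ and hence $s_j=0$ identically. Once $s_j\equiv 0$ is imposed, the surviving relations in $\tilde{A}_m^-$ are exactly the conifold relations together with $a_1 r_{i+1}=a_2 r_i$, which is precisely the definition of $A_m^-$. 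This identifies $\theta_{\zeta_{\mathrm{cyclic}}}$-stable $\tilde{A}_m^-$-modules with $\theta_{\zeta_{\mathrm{cyclic}}}$-stable $A_m^-$-modules; applied in families over the representation parameter space, the same argument shows that the natural closed immersion $\mathfrak{M}^{A_m^-}_{\zeta_{\mathrm{cyclic}}}(\mathrm{v}_0,\mathrm{v}_1)\hookrightarrow \mathfrak{M}^{\tilde{A}_m^-}_{\zeta_{\mathrm{cyclic}}}(\mathrm{v}_0,\mathrm{v}_1)$ is also surjective scheme-theoretically (the $s_j$ coordinates being cut out by nilpotent equations forced by the lemma), giving the claimed isomorphism. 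The symmetric obstruction theory on $\mathfrak{M}^{A_m^-}_{\zeta_{\mathrm{cyclic}}}$ is then inherited from the one on $\mathfrak{M}^{\tilde{A}_m^-}_{\zeta_{\mathrm{cyclic}}}$, which exists by the standard formalism for moduli of representations of a quiver with superpotential.

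The main obstacle is the combinatorial bookkeeping in the analog of Lemma \ref{lem-A-hat1}: one must verify that the interleaved reductions in $j$ and $j'$, combined with conifold permutations, always terminate at a boundary vanishing rather than cycling. A preliminary subtlety is to pin down the precise form of $\omega_m^-$, since the displayed formula mixes $a$'s and $b$'s in the final summand; consistency with \eqref{eq-rel-} determines the intended potential to be $\sum_{i=1}^{m-1} s_i(a_1 r_{i+1} - a_2 r_i)$, after which everything proceeds by direct analogy with the $+$ case.
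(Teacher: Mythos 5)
Your proposal is correct and takes precisely the route the paper intends; the paper's own proof of Proposition \ref{cor-A-hat-4} consists of the single sentence ``We can prove the following claim in the same way,'' referring back to Lemma \ref{lem-A-hat1}, Lemma \ref{prop-A-hat2} and Proposition \ref{cor-A-hat-3}, and your write-up supplies exactly the translation of that argument to the $\tilde{A}_m^-$ side (including the correct reading of the evident typo $b_1r_m-b_2r_{m-1}$ in the displayed $\omega_m^-$). Two small slips, neither affecting the outcome: since $s_j$ is an arrow $1\to\infty$ and the paths $a_?(b_?a_?)^k r_{j'}$ land in $V_1$ rather than $V_0$, what cyclicity gives you is a spanning set of $V_1$, not $V_0$; and the phrase ``nilpotent equations'' understates the conclusion --- the Jacobi identity $s_j w r_{j'}=0$ together with scheme-theoretic surjectivity of the spanning map on the stable locus forces $s_j$ to vanish identically there, so the closed immersion is an honest isomorphism and no reduction is needed.

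One structural point worth being aware of if you write out the induction in full: unlike the $+$ case, where both boundary vanishings ($b_1q_1=0$ and $b_2q_m=0$) sit on the $q$ (incoming) side while the $p$-side relations $p_jb_1=p_{j+1}b_2$ have no truncation, in the $-$ case the truncations $s_1a_2=0$ and $s_{m-1}a_1=0$ sit on the $s$ (outgoing) side while the $r$-side relations $a_1r_{j'}=a_2r_{j'-1}$ are untruncated. So the induction terminates on the $s$-index rather than the $r$-index, and depending on the sign of $j-j'$ you drift downward to $s_1a_2=0$ or upward to $s_{m-1}a_1=0$; the conifold permutations needed to move the newly created $a$-factor to the position adjacent to $s_j$ come from $\partial_{b_\varepsilon}\omega_m^-$, which is pure conifold since $\omega_m^-$ has no extra $b$-terms, so those permutations are genuinely valid relations in $\tilde{A}_m^-$.
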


\subsection{Moduli spaces}

\begin{lem}\label{lem-4.2}
Let $(F,s)\in\tpervc$ be a $(\zeta^{m,\pm},\tpervc)$-stable object, then $F\in {}^c{\mathcal{P}}_m^\pm$.
\end{lem}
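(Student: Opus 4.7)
I would treat the $+$ case explicitly; the $-$ case proceeds by a parallel argument in which the roles of subobject and quotient are exchanged. The key reformulation is that $\mca{P}_m^+ = \mca{L}_m[-1]\otimes\perv$, so $F\in\mca{P}_m^+$ iff $F\otimes\mca{L}_{-m}[1]\in\perv$. Comparing cohomological amplitudes in the standard $t$-structure, $F\in\perv$ has amplitude $\{-1,0\}$ while $F\in\mca{P}_m^+$ forces the amplitude of $F$ to be $\{0,1\}$; the intersection is $\{0\}$, so $F$ must be a coherent sheaf. Once $F$ is a sheaf, $F\otimes\mca{L}_{-m}[1]$ is concentrated in degree $-1$ and the perverse coherence conditions reduce to the single requirement $f_*(F\otimes\mca{L}_{-m})=0$. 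The lemma thus reduces to two claims: (I) $H^{-1}(F)=0$ and (II) $f_*(F\otimes\mca{L}_{-m})=0$.

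By Lemma~\ref{lem-4.1}, $(\zeta^{m,+},\tpervc)$-stability unpacks (using that framed morphisms into a zero-framed target are exactly those killing $\im(s)$, while morphisms from a zero-framed source are unconstrained) into: (a) every $\phi\colon F\to z_*\OO_{\CP^1}(m-1)$ in $\pervc$ with $\phi\circ s=0$ vanishes; (b) $\Hom_{\pervc}(z_*\OO_{\CP^1}(m),F)=0$; (c) $\Hom_{\pervc}(\OO_x, F)=0$ for each $x\in Y$ with $\OO_x\in\pervc$.

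For (I), assume $H^{-1}(F)\neq 0$; then the truncation triangle gives a nonzero inclusion $H^{-1}(F)[1]\hookrightarrow F$ in $\perv$, whence left-exactness of $\Hom_{\pervc}(z_*\OO_{\CP^1}(m),-)$ together with (b) forces $\Hom_{\pervc}(z_*\OO_{\CP^1}(m),H^{-1}(F)[1])=0$. Under the Morita equivalence $\pervc\simeq\afmod$, the object $z_*\OO_{\CP^1}(m)$ corresponds to the stable module $C^-_+(m+1)$ of dimension vector $(m+1,m)$, and the sheaf $H^{-1}(F)$---nonzero, compactly supported with $f_*H^{-1}(F)=0$, hence supported on a thickening of the exceptional curve $C$---produces, after the $[1]$ shift, an $A$-module with nontrivial contribution at the vertex $1$. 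A direct computation in the quiver category then shows this forces a nonzero morphism $C^-_+(m+1)\to H^{-1}(F)[1]$, the desired contradiction.

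For (II), a nonzero global section $\sigma\in H^0(Y,F\otimes\mca{L}_{-m})$ corresponds to a nonzero morphism $\mca{L}_m\to F$ in $\coh$. One then shows, depending on whether this section is already generated by the image of $s$ or not, that either a nonzero quotient $F\twoheadrightarrow z_*\OO_{\CP^1}(m-1)$ annihilating $\im(s)$ arises (contradicting (a)), or $F$ acquires a generic skyscraper subobject $\OO_x\hookrightarrow F$ (contradicting (c)). The main obstacle is step (I): since $H^{-1}(F)$ can be supported on arbitrary scheme-theoretic thickenings of $C$, the uniform computation of Hom groups in $\afmod$ must be carried out via devissage along the $\mca{I}_C$-adic filtration of $H^{-1}(F)$, carefully tracking the consequences of $f_*H^{-1}(F)=0$ on each graded piece.
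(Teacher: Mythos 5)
Your strategy is genuinely different from the paper's. You reduce the claim to verifying the two defining conditions of $\mathcal{P}_m^+$ directly (that $F$ is a sheaf and that $f_*(F\otimes\mathcal{L}_{-m})=0$), unpacking $\zeta^{m,+}$-stability via Lemma~\ref{lem-4.1}. The paper instead sidesteps this entirely: it filters $(F,s)$ into $\theta_{\tilde\zeta_{\mathrm{triv}}}$-stable subquotients (combining the Harder--Narasimhan and Jordan--H\"older filtrations for a translated, generic parameter $\zeta_{\mathrm{triv}}=(\zeta_0+c,\zeta_1+c)$), observes that $\zeta^{m,+}$-stability forces the zero-framed pieces to have negative slope, reads off from the wall classification in \S\ref{subsec-classification} that each such piece is a $z_*\OO_{\CP^1}(m'-1)$ with $1\le m'\le m$, and concludes that $F$ lies in $\mathcal{P}_m^+$ because $\mathcal{P}_m^+$ is a heart closed under extensions. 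This is one step and in particular subsumes both of your points (I) and (II) simultaneously.

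The core problem with your approach is in step (I), and it is larger than you let on. You assert that a nonzero $H^{-1}(F)$ would force a nonzero morphism $C^-_+(m+1)\to H^{-1}(F)[1]$ "by a direct computation in the quiver category," but you do not carry this out, and it is not clear it is true without substantial work. More seriously, the devissage you gesture at is broken at the first step: if $C=H^{-1}(F)$ is supported on a thickening of $\CP^1$, the $\mathcal{I}_{\CP^1}$-adic graded pieces $\mathcal{I}^nC/\mathcal{I}^{n+1}C$ need \emph{not} satisfy $f_*=0$ (e.g., an extension $0\to z_*\OO(-2)\to C\to z_*\OO\to 0$ with nontrivial connecting map has $f_*C=0$ while the quotient $z_*\OO$ has nonzero sections), so the condition you are trying to exploit does not propagate to the graded pieces, and the induction you envisage does not start. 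Step (II) is likewise only a sketch: a nonzero map $\mathcal{L}_m\to F$ does not obviously produce a quotient $F\twoheadrightarrow z_*\OO_{\CP^1}(m-1)$ killing $\im(s)$, nor a skyscraper subobject, and you never say which case occurs or why. Incidentally, condition (I) can in fact be dispatched without any devissage or Hom computation at all: for $C=H^{-1}(F)\neq 0$ one has, writing $D$ for a fiber of $\pi$, that $\chi(C(-k))=\chi(C)-k\dim(C|_D)$, so (using $h^0(C(-k))=0$) the $A$-module $C[1]$ has dimension vector $(h^1(C),\,h^1(C(-1)))$ with $h^1(C(-1))=h^1(C)+\dim(C|_D)>h^1(C)$, and with $\zeta_0^{m,+}+\zeta_1^{m,+}>0$, $\zeta_1^{m,+}>0$ the quantity $\zeta_0^{m,+}h^1(C)+\zeta_1^{m,+}h^1(C(-1))$ is strictly positive, violating condition (A) of Lemma~\ref{spcs}. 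Still, either of these routes is substantially more work than the paper's filtration argument, which proves a stronger structural statement about $F$ for free.
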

\begin{proof}
Take a sufficiently large $c\in\R$ such that $\zeta_0+c,\zeta_1+c>0$ and set
$\zeta_{\mathrm{triv}}=(\zeta_0+c,\zeta_1+c)$ (see Figure \ref{fig:zeta2}).
Let
\[
(F,s)=\tilde{F}^0\supset\tilde{F}^1\supset\cdots\supset\tilde{F}^L\supset 0
\]
be a filtration of a $(\zeta^{m,+},\tpervc)$-stable object $(F,s)\in\tpervc$ by $(\theta_{\tilde{\zeta}_{\mathrm{triv}}},\tpervc)$-stable subquotients, which is given by combining \HN of $(F,s)$ and Jordan-H\"older filtrations of its factors. 
Since $\theta_{\tilde{\zeta}_{\mathrm{triv}}}(\tilde{F}^0/\tilde{F}^1)<0$, we have $(\tilde{F}^0/\tilde{F}^1)_\infty=\C$.
Moreover, we have $(\tilde{F}^0/\tilde{F}^1)_0=(\tilde{F}^0/\tilde{F}^1)_1=0$ from the $(\theta_{\tilde{\zeta}_{\mathrm{triv}}},\tpervc)$-stability of $\tilde{F}^0/\tilde{F}^1$. 
Note that we have
\[
\theta_{\tilde{\zeta}_{\mathrm{triv}}}(\mathrm{v}_0,\mathrm{v}_1,0)\leq 
\theta_{\tilde{\zeta}_{\mathrm{triv}}}(\mathrm{v}'_0,\mathrm{v}'_1,0)
\iff
\theta_{\tilde{\zeta}^{m,+}}(\mathrm{v}_0,\mathrm{v}_1,0)\leq 
\theta_{\tilde{\zeta}^{m,+}}(\mathrm{v}'_0,\mathrm{v}'_1,0)
\]
for any $\mathrm{v}_0,\mathrm{v}_1,\mathrm{v}'_0$ and $\mathrm{v}'_1$.
Since $\theta_{\tilde{\zeta}_{\mathrm{triv}}}(\tilde{F}^l/\tilde{F}^{l+1})\leq \theta_{\tilde{\zeta}_{\mathrm{triv}}}(\tilde{F}^L)$, we also have
\[
\theta_{\tilde{\zeta}^{m,+}}(\tilde{F}^l/\tilde{F}^{l+1})\leq \theta_{\tilde{\zeta}^{m,+}}(\tilde{F}^L)<0,
\]
where the last inequality is the consequence of $\zeta^{m,+}$-stability of $(F,s)$.
By the classification in \S \ref{subsec-classification}, a $(\theta_{\tilde{\zeta}_{\mathrm{triv}}},\tpervc)$-stable object $\tilde{F}$ with a $0$-dimensional framing such that $\theta_{\tilde{\zeta}^{m,+}}(\tilde{F})<0$ is isomorphic to $z_*\OO_{\CP^1}(m'-1)$ for some $1\leq m'\leq m$.
Thus we get a description of $F\in\pervc$ as successive extensions of $z_*\OO_{\CP^1}(m'-1)$'s ($1\leq m'\leq m$). 
Since $z_*\OO_{\CP^1}(m'-1)\in {\mathcal{P}}_m^+$ for $m'\leq m$, we have $F\in {\mathcal{P}}_m^+$.
We can show the claim for a $\zeta^{m,-}$-stable object in the same way.
\end{proof}

Let $\zeta_{\mathrm{cyclic}}$ be a stability parameter such that $(\zeta_{\mathrm{cyclic}})_0,(\zeta_{\mathrm{cyclic}})_1<0$.
\begin{lem}\label{lem-4.3}
Let $(F,s)\in\tpervc$ be a $(\zeta^{m,\pm},\tpervc)$-stable object, then $(F,s)\in {}^c\tilde{\mathcal{P}}_m^\pm$ is $(\zeta_{\mathrm{cyclic}},{}^c\tilde{\mathcal{P}}_m^\pm)$-stable.
\end{lem}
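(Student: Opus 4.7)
The strategy is to reduce $(\zeta_{\mathrm{cyclic}}, {}^c\tilde{\mathcal{P}}_m^+)$-stability to a cyclic generation condition and then verify it using Lemma~\ref{lem-4.1}. Since $\zeta_{\mathrm{cyclic},0}$ and $\zeta_{\mathrm{cyclic},1}$ are both negative and the normalization $\theta_{\tilde{\zeta}_{\mathrm{cyclic}}}(F,s)=0$ forces $\zeta_\infty>0$, every nonzero proper sub-object of $(F,s)$ with trivial framing automatically satisfies $\theta_{\tilde{\zeta}_{\mathrm{cyclic}}}<0$, while every proper sub-object sharing the full framing has $\theta_{\tilde{\zeta}_{\mathrm{cyclic}}}>0$. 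Hence $\zeta_{\mathrm{cyclic}}$-stability in ${}^c\tilde{\mathcal{P}}_m^+$ is equivalent to the non-existence of a proper sub-object $(F',s)\subsetneq(F,s)$ through which the framing factors, i.e., the corresponding $A_m^+$-module $V$ is generated by the image of the projective $P_\infty$. Passing to the semisimple top, this is further equivalent to the two vanishings $\Hom_{A_m^+\text{-}\mathrm{mod}}(V,S_i)=0$ for $i=0,1$, where $S_0,S_1$ are the simple $A_m^+$-modules at the two non-framing vertices, considered with trivial framing.

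I would next identify these simples as objects of $D^b(\coh)$. The equivalence $\Phi_m^+$ is related to the standard equivalence $\perv \simeq \amod$ by the twist-and-shift $F\mapsto F\otimes\mathcal{L}_{-m}[1]$, which sends the projective generator $\mathcal{L}_m[-1]\oplus\mathcal{L}_{m+1}[-1]$ of $\mathcal{P}_m^+$ to the standard generator $\OO_Y\oplus\mathcal{L}$ of $\perv$. Using the fact, recorded in Remark~\ref{rem-classification}, that the two standard simples of $\pervc$ are $z_*\OO_{\CP^1}$ and $z_*\OO_{\CP^1}(-1)[1]$, the corresponding simples of ${}^c\mathcal{P}_m^+$ are $z_*\OO_{\CP^1}(m)[-1]$ and $z_*\OO_{\CP^1}(m-1)$; a direct check of dimension vectors $(1,0)$ and $(0,1)$ against the generators confirms this.

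Finally, both simples carry trivial framing, so the required $\Hom$ in ${}^c\tilde{\mathcal{P}}_m^+$ equals the $\Hom$ in $D^b(\coh)$ from $F$. The Hom to $z_*\OO_{\CP^1}(m-1)$ coincides with $\Hom_{\tpervc}((F,s),(z_*\OO_{\CP^1}(m-1),0,0))$ because both $F$ and the target lie in $\pervc$, and this vanishes by condition \eqref{eq-4.1.1} of Lemma~\ref{lem-4.1}(1). The Hom to $z_*\OO_{\CP^1}(m)[-1]$ vanishes automatically: applying the truncation triangle $H^{-1}(F)[1]\to F\to H^0(F)$ to $F\in\pervc$ and using that coherent sheaves have no negative Ext groups gives $\Hom_{D^b(\coh)}(F,z_*\OO_{\CP^1}(m)[-1])=0$. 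The $\zeta^{m,-}$-case is entirely symmetric, with the simples of $A_m^-$-mod corresponding to $z_*\OO_{\CP^1}(-m)$ and $z_*\OO_{\CP^1}(-m-1)[1]$ under $\Phi_m^-$, the analogous degree argument, and the analog of condition \eqref{eq-4.1.1} provided by Lemma~\ref{lem-4.1}(2). The main technical point is the explicit identification of the new simples; once that is in hand, matching the required vanishings to Lemma~\ref{lem-4.1} (with one being forced by cohomological degree) is immediate, and notably only the first of the three conditions of the characterization is needed for this direction.
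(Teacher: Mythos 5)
Your reduction of $(\zeta_{\mathrm{cyclic}},{}^c\tilde{\mathcal{P}}_m^\pm)$-stability to cyclicity, and then to the vanishing of $\Hom$ to the two non-framing simples, is correct and in fact slightly cleaner than the paper's verification (the paper also lists the redundant condition involving $\OO_x[-1]$). Your handling of the $\zeta^{m,+}$ case is correct and matches the paper: $z_*\OO_{\CP^1}(m)[-1]$ is killed by cohomological degree, and $z_*\OO_{\CP^1}(m-1)$ lies in $\pervc\cap{}^c\mathcal{P}_m^+$ so the framed $\Hom$ agrees with the one in $\tpervc$ and vanishes by \eqref{eq-4.1.1}.

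However, the $\zeta^{m,-}$ case is not ``entirely symmetric,'' and your identification of the simples there is off by one. Applying $-\otimes\mathcal{L}_{-m-1}$ (the inverse of the twist $F\mapsto F\otimes\mathcal{L}_{m+1}$ defining $\mathcal{P}_m^-$) to the simples $z_*\OO_{\CP^1}$ and $z_*\OO_{\CP^1}(-1)[1]$ of $\perv$ gives $z_*\OO_{\CP^1}(-m-1)$ and $z_*\OO_{\CP^1}(-m-2)[1]$, not $z_*\OO_{\CP^1}(-m)$ and $z_*\OO_{\CP^1}(-m-1)[1]$. Indeed $z_*\OO_{\CP^1}(-m)$ has dimension vector $(2,1)$ in ${}^c\mathcal{P}_m^-$, and $z_*\OO_{\CP^1}(-m-1)[1]$ does not even lie in ${}^c\mathcal{P}_m^-$ (since $z_*\OO_{\CP^1}(-m-1)[1]\otimes\mathcal{L}_{m+1}=z_*\OO_{\CP^1}[1]\notin\perv$). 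With the correct simples, your verification scheme breaks: both live in cohomological degrees $\leq 0$, so there is no analogue of the vacuous degree argument, and the object $z_*\OO_{\CP^1}(-m-1)[1]$ appearing in Lemma~\ref{lem-4.1}(2) is not equal to either simple. One still gets the required vanishings, but by different routes: a nonzero map $F\to z_*\OO_{\CP^1}(-m-2)[1]$ killing $s$ can be composed with a nonzero map $z_*\OO_{\CP^1}(-m-2)[1]\to z_*\OO_{\CP^1}(-m-1)[1]$ to contradict Lemma~\ref{lem-4.1}(2); and a nonzero map $F\to z_*\OO_{\CP^1}(-m-1)$ factors through $H^0(F)$ and would have image $z_*\OO_{\CP^1}(a)$ with $a\leq -m-1\leq -2$, which by Lemma~\ref{lem1} would be a perverse quotient of $H^0(F)$, contradicting $\R^1 f_*z_*\OO_{\CP^1}(a)\neq 0$. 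As written, your proof has a genuine gap in the $\zeta^{m,-}$ case.
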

\begin{proof}
Note that the simple $A_m^+$-modules $S_0$ and $S_1$ correspond to $\OO_{\CP^1}(m)[-1]$ and $\OO_{\CP^1}(m-1)$ in $\mathcal{P}_m^+$ respectively.
For a $(\zeta^{m,+},\tpervc)$-stable object $(F,s)\in\tpervc$, as in Lemma \ref{lem-4.1}, it is enough to show that 
\begin{align*}
\Hom_{\tilde{\mathcal{P}}_m^+}((F,s),(\OO_x[-1],0,0))&=0,\\
\Hom_{\tilde{\mathcal{P}}_m^+}((F,s),(z_*\OO_{\CP^1}(m)[-1],0,0))&=0
\end{align*}
for any $x\in Y$ and
\[
\Hom_{\tilde{\mathcal{P}}_m^+}((F,s),(z_*\OO_{\CP^1}(m-1),0,0))=0.
\]
The first two equalities hold since $F,\,\OO_x, \OO_{\CP^1}(m)\in \pervc$.
For the third equation, we have
\begin{align*}
&\Hom_{\tilde{\mathcal{P}}_m^+}((F,s),(z_*\OO_{\CP^1}(m-1),0,0))\\
&=\{f\in \Hom_{\mathcal{P}_m^-}(F,z_*\OO_{\CP^1}(m-1))\mid f\circ s=0\}\\
&=\{f\in \Hom_{\pervc}(F,z_*\OO_{\CP^1}(m-1))\mid f\circ s=0\}\\
&=\Hom_{\tpervc}((F,s),(z_*\OO_{\CP^1}(m-1),0,0))\\
&=0,
\end{align*}
where the last equality follows from the $(\zeta^{m,+},\tpervc)$-stability of $(F,s)\in\tpervc$.
We can show the claim for a $(\zeta^{m,-},\tpervc)$-stable object in the same way.
\end{proof}

The chamber structure in the space of stability parameters on ${}^c\tilde{\mathcal{P}}_m^\pm$ is the same as that on $\tpervc$. 
The stable objects $\tilde{F}\in {}^c\tilde{\mathcal{P}}_m^\pm$ on a wall with $\tilde{F}_\infty=0$ is obtained from those for $\tpervc$ by applying $-\otimes\mathcal{L}_m[-1]$ (resp. $-\otimes\mathcal{L}_{-m-1}$).
Note that the parameter $\zeta^{m,\mp}$ for ${}^c\tilde{\mathcal{P}}_m^\pm$ is in the chamber between the walls $L^-_\mp(m)$ and $L^-_\mp(m+1)$ with stable objects $z_*\OO_{\CP^1}$ and $z_*\OO_{\CP^1}(-1)$ on them.

\begin{lem}\label{lem-4.4}
Let $(F,s)\in{}^c\tilde{\mathcal{P}}_m^\pm$ be a $(\zeta^{m,\mp},{}^c\tilde{\mathcal{P}}_m^\pm)$-stable object, then  $F=0$.
\end{lem}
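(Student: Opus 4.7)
The plan is to parallel Lemma~\ref{lem-4.1}: first I will translate $\zeta^{m,-}$-stability in ${}^c\tilde{\mathcal{P}}_m^+$ into a list of Hom-vanishing conditions involving the stable unframed objects on the two walls bordering the chamber of $\zeta^{m,-}$, and then use these together with a socle-and-cosocle analysis of the corresponding $A_m^+$-module to force $F=0$.

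For the first step, the remark preceding the lemma identifies the adjacent stable unframed objects in $\mathcal{P}_m^+$ as $z_*\OO_{\CP^1}$ and $z_*\OO_{\CP^1}(-1)$. Subtracting the defining inequalities of $\zeta^{m,-}$ yields $\zeta^{m,-}_0+\zeta^{m,-}_1<0$, so that
\[
\theta_{\zeta^{m,-}}(z_*\OO_{\CP^1})>0,\quad \theta_{\zeta^{m,-}}(z_*\OO_{\CP^1}(-1))<0,\quad \theta_{\zeta^{m,-}}(\OO_x[-1])<0
\]
for $x\in Y$, where $\OO_x[-1]\in\mathcal{P}_m^+$ is the image of the point $\OO_x\in\pervc$ under $-\otimes\mathcal{L}_m[-1]$. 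Repeating the HN/JH argument of Lemma~\ref{lem-4.1} verbatim then shows that $\zeta^{m,-}$-stability of $(F,s)\in{}^c\tilde{\mathcal{P}}_m^+$ is equivalent to the simultaneous vanishing of $\Hom_{{}^c\tilde{\mathcal{P}}_m^+}((z_*\OO_{\CP^1},0,0),(F,s))$, $\Hom_{{}^c\tilde{\mathcal{P}}_m^+}((F,s),(z_*\OO_{\CP^1}(-1),0,0))$, and $\Hom_{{}^c\tilde{\mathcal{P}}_m^+}((F,s),(\OO_x[-1],0,0))$ for all $x\in Y$.

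For the second step, suppose $F\neq 0$ and let $V:=\Phi_m^+(F)\neq 0$ denote the underlying $A$-module of $\tilde{V}=\Phi_m^+(F,s)$. Any $A$-submodule inclusion $S_1\hookrightarrow V$ lifts to an $A_m^+$-submodule $(S_1,0,0)\hookrightarrow\tilde{V}$ since $(S_1)_\infty=0$ forces the $q_j$-action on $S_1$ to vanish, and $\theta_{\zeta^{m,-}}(S_1)=\zeta^{m,-}_1>0$ would destabilize; thus $(a_1,a_2)\colon V_1\to V_0^{\oplus 2}$ is injective. Dually, any $A_m^+$-surjection $\tilde{V}\twoheadrightarrow S_1$ produces a destabilizing framed quotient, which forces $V_1=b_1(V_0)+b_2(V_0)+\sum_{j=1}^{m+1}q_j(V_\infty)$. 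I then combine these socle/cosocle constraints with the Hom-vanishings involving the Kronecker modules $C^-_-(m-1)=\Phi_m^+(z_*\OO_{\CP^1})$ and $C^-_-(m)=\Phi_m^+(z_*\OO_{\CP^1}(-1))$ (both Kronecker-type with $a=0$) together with the vanishing of Hom into the point modules of dimension vector $(1,1,0)$ to force $V_1=0$ by a dimension count. Once $V_1=0$, both $a$ and $b$ vanish and $\tilde{V}\simeq S_0^{\oplus v_0}\oplus S_\infty$ splits as an $A_m^+$-direct sum, so $(0,0,\C)=S_\infty\hookrightarrow\tilde{V}$ is a subobject with $\theta_{\zeta^{m,-}}(S_\infty)=-\zeta^{m,-}_0 v_0>0$ (using $\zeta^{m,-}_0<0$), which contradicts stability unless $v_0=0$. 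Hence $V=0$ and $F=0$.

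The main obstacle is the dimension-count step that upgrades the socle/cosocle constraints to $V_1=0$, using the Hom-vanishings to the Kronecker modules and to all point modules. This will require a careful tracking of how the generators $w_j:=q_j(1)\in V_1$ interact with $a_1,a_2$ and the images $b_1(V_0),b_2(V_0)\subseteq V_1$ under the relations \eqref{eq-rel+}, in a spirit parallel to the Koszul-resolution argument of Lemma~\ref{lem-Koszul}.
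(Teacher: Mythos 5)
Your proposal takes a genuinely different route from the paper, and it has a real gap: the step you yourself identify as the ``main obstacle'' --- the dimension count that is supposed to upgrade the socle/cosocle constraints plus the Hom-vanishings into $V_1=0$ --- is left entirely as a plan, and it is not clear that it would go through. The socle analysis only gives injectivity of one pair of the quiver maps, the cosocle only gives generation, and the Hom-vanishings to $C^-_-(m-1)$, $C^-_-(m)$ and the point modules do not obviously interact with these in a way that kills $V_1$; in fact the $\theta_{\tilde\zeta_{\mathrm{triv}}}$-JH factors that appear in the paper's own proof (the $C^-_-(m')$'s with $m'\geq m$, the point modules, and $C^-_+(m')$'s) all have nonzero vertex-$1$ component, so any argument showing $V_1=0$ has to use the framing in an essential way, which your outline does not do. Step~3, deducing $V=0$ once $V_1=0$, and the first part of your Step~1 (the sign calculations $\theta_{\zeta^{m,-}}(z_*\OO_{\CP^1})>0$, $\theta_{\zeta^{m,-}}(z_*\OO_{\CP^1}(-1))<0$, $\theta_{\zeta^{m,-}}(\OO_x[-1])<0$) are correct, but the core of the argument is missing.

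The paper's proof is shorter and avoids the Hom-vanishing characterization entirely. It takes the HN filtration of $(F,s)$ with respect to $\theta_{\tilde\zeta_{\mathrm{triv}}}$-stability (where $\zeta_{\mathrm{triv}}=\zeta^{m,-}+(c,c)$) and Jordan--H\"older filtrations of its factors, exactly as in Lemma~\ref{lem-4.2}. One factor is $S_\infty$, and the classification in \S\ref{subsec-classification} together with the $(\zeta^{m,-},{}^c\tilde{\mathcal{P}}_m^+)$-stability of $(F,s)$ forces every remaining factor to be one of $z_*\OO_{\CP^1}(-m')$ ($m'\geq 1$), $\OO_x[-1]$, or $z_*\OO_{\CP^1}(m')[-1]$ ($m'\geq m$). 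Each of these has $\Hom_Y(\OO_Y,\cdot)=0$, so $H^0(Y,F)=0$, hence $s=0$, hence $(F,\C,0)\simeq(F,0,0)\oplus(0,\C,0)$ is decomposable and stability forces $F=0$. If you want to salvage your approach, the cleanest fix is to replace the dimension count by precisely this observation: translate the filtration into the statement $\Ext^1_{A_m^+}(S_\infty,V)=0$ (equivalently $\Hom(\OO_Y,F)=0$), which splits $\tilde V$, rather than attempting to show $V_1=0$ directly.
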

\begin{proof}
Take \HN of a $(\zeta^{m,-},{}^c\tilde{\mathcal{P}}_m^+)$-stable object $(F,s)\in{}^c\tilde{\mathcal{P}}_m^+$ and Jordan-H\"older filtrations of its factors with respect to the $\theta_{\tilde{\zeta}_{\mathrm{triv}}}$-stability. 
Then, as in the proof of Lemma \ref{lem-4.2}, we get a description of $F\in{\mathcal{P}}_m^+$ as successive extensions of $z_*\OO_{\CP^1}(-m')$'s ($m'\geq 1$), $\OO_x[-1]$'s ($x\in Y$) and $z_*\OO_{\CP^1}(m')[-1]$'s ($m'\geq m$).
So we have $H^0(Y,F)=0$ and hence $s=0$. 
Then the stability requires that $F=0$. 
We can show the claim for a $(\zeta^{m,+},{}^c\tilde{\mathcal{P}}_m^-)$-stable object in ${}^c\tilde{\mathcal{P}}_m^-$ similarly.
\end{proof}

\begin{lem}\label{lem-4.5}
Let $(F,s)\in{}^c\tilde{\mathcal{P}}_m^\pm$ be a $(\zeta_{\mathrm{cyclic}},{}^c\tilde{\mathcal{P}}_m^\pm)$-stable object, then $F\in \pervc$.
\end{lem}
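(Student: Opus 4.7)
The plan is to mirror the Harder--Narasimhan argument used for Lemma~\ref{lem-4.2}, but carried out inside the tilted heart ${}^c\tilde{\mathcal{P}}_m^\pm$. First, since $(\zeta_{\mathrm{cyclic}})_0, (\zeta_{\mathrm{cyclic}})_1 < 0$, any proper subobject $(F', W', s') \subsetneq (F, W, s)$ in ${}^c\tilde{\mathcal{P}}_m^\pm$ with $W' = W$ would yield $\theta_{\tilde{\zeta}_{\mathrm{cyclic}}}(F', W', s') \geq 0$ under the normalisation $\theta_{\tilde{\zeta}_{\mathrm{cyclic}}}(F, W, s) = 0$, contradicting stability. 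Hence every proper subobject has $W' = 0$, which is the same as saying that the framing map $s \colon W \otimes_{\C} \OO_Y \to F$ is surjective in ${}^c\mathcal{P}_m^\pm$.

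Next I would take the Harder--Narasimhan filtration of $(F,s)$ in ${}^c\tilde{\mathcal{P}}_m^\pm$ with respect to $\zeta_{\mathrm{triv}}$-stability, refined by Jordan--H\"older filtrations of the factors, to obtain
\[
(F, s) = \tilde F^0 \supset \tilde F^1 \supset \cdots \supset \tilde F^L \supset 0
\]
with $\zeta_{\mathrm{triv}}$-stable subquotients. Paralleling the computation in Lemma~\ref{lem-4.2}, the positivity of $(\zeta_{\mathrm{triv}})_0$ and $(\zeta_{\mathrm{triv}})_1$ forces the top quotient $\tilde F^0/\tilde F^1$ to have dimension vector $(0, 0, 1)$ (the pure framing), while for $l \geq 1$ the subquotient $\tilde F^l/\tilde F^{l+1}$ is an unframed $\zeta_{\mathrm{triv}}$-stable object in ${}^c\mathcal{P}_m^\pm$.

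I would then classify these unframed subquotients using Theorem~\ref{thm-classification-1}, transported to ${}^c\mathcal{P}_m^\pm$ via the identification described just before Lemma~\ref{lem-4.4} (stable unframed objects in ${}^c\tilde{\mathcal{P}}_m^\pm$ are obtained from those in $\tpervc$ by tensoring with $\mca{L}_m[-1]$, respectively $\mca{L}_{-m-1}$). Combining the surjectivity of $s$ with cyclic stability, any putative HN factor whose underlying object in $D^b(\coh)$ lies outside $\pervc$ would produce either a proper subobject of $(F,s)$ through which the framing factors, or a non-zero morphism from $\OO_Y$ incompatible with the way $F$ is generated by the image of $s$; both contradict our setup. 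The surviving HN factors all lie in $\pervc$, and since $\pervc$ is closed under extensions in $D^b(\coh)$, we conclude $F \in \pervc$.

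The main obstacle is the last step: carefully verifying that each potentially problematic twisted or shifted stable object in ${}^c\mathcal{P}_m^\pm$ is ruled out by the combination of cyclic stability and the surjectivity of $s$. This runs in the reverse direction to Lemmas~\ref{lem-4.2} and~\ref{lem-4.4}, and the combinatorics of the twist by $\mca{L}_m[-1]$ (respectively $\mca{L}_{-m-1}$) is precisely what makes the argument delicate.
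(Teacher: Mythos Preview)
Your approach diverges from the paper's in a crucial way, and the step you flag as ``the main obstacle'' is precisely where your argument breaks down.

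The paper does \emph{not} filter by $\zeta_{\mathrm{triv}}$. Instead, it takes the Harder--Narasimhan plus Jordan--H\"older filtration of $(F,s)$ with respect to $\theta_{\tilde{\zeta}^{m,\mp}}$ (note the sign flip: in ${}^c\tilde{\mathcal{P}}_m^+$ one uses $\zeta^{m,-}$, and vice versa). The point of this choice is that Lemma~\ref{lem-4.4} then identifies the unique framed stable subquotient as $(0,\C,0)$, and the complementary list from the proof of Lemma~\ref{lem-4.4} tells you that the unframed $\zeta^{m,-}$-stable objects whose $\theta_{\zeta^{m,-}}$-slope is \emph{positive} are exactly $z_*\OO_{\CP^1}(m'-1)$ for $1\le m'\le m$, all of which lie in $\pervc$. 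Cyclic stability forces the framed factor to sit at the top of the filtration, so $F$ is a successive extension of these good objects.

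Your route via $\zeta_{\mathrm{triv}}$ does not give access to this dichotomy. After the first step you correctly observe that cyclic stability is equivalent to surjectivity of $s$ in ${}^c\mathcal{P}_m^\pm$, but this is the \emph{only} constraint that $\zeta_{\mathrm{cyclic}}$ imposes: for any nonzero unframed $G$ one has $\theta_{\zeta_{\mathrm{cyclic}}}(G)<0$ automatically, so the slope-comparison trick from Lemma~\ref{lem-4.2} yields no restriction whatsoever on which $\zeta_{\mathrm{triv}}$-stable unframed objects can occur as subquotients. Your attempt to rule out the bad factors (those of the form $\OO_x[-1]$, $z_*\OO_{\CP^1}(m'')[-1]$, or $z_*\OO_{\CP^1}(m'')$ with $m''\le -1$) by appealing to surjectivity of $s$ only works for the \emph{top} unframed quotient $F/F'$; it says nothing about factors buried deeper in the filtration, which are subquotients of proper subobjects of $F$ and need not receive any map from $\OO_Y$.

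In short, the missing idea is to filter by $\zeta^{m,\mp}$ rather than $\zeta_{\mathrm{triv}}$ and then invoke Lemma~\ref{lem-4.4}; once you do that, the remaining pieces are forced to lie in $\pervc$ by the classification, with no delicate case analysis required.
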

\begin{proof}
Take \HN of $(F,s)\in{}^c\tilde{\mathcal{P}}_m^+$ and Jordan-H\"older filtrations of its factors with respect to the $\theta_{\tilde{\zeta}^{m,-}}$-stability. 
Then, by Lemma \ref{lem-4.4}, we get a description of $F\in{}^c{\mathcal{P}}_m^+$ as successive extensions of $z_*\OO_{\CP^1}(m'-1)$'s ($1\leq m'\leq m$) and hence we have $F\in \pervc$.
We can show the claim for an object in ${}^c\tilde{\mathcal{P}}_m^-$ similarly.
\end{proof}

\begin{lem}\label{lem-4.6}
Let $(F,s)\in{}^c\tilde{\mathcal{P}}_m^\pm$ be a $(\zeta_{\mathrm{cyclic}},{}^c\tilde{\mathcal{P}}_m^\pm)$-stable object, then $(F,s)\in \tpervc$ is $\zeta^{m,\pm}$-stable.
\end{lem}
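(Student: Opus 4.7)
The strategy is to apply Lemma~\ref{lem-4.1}, which characterizes $\zeta^{m,\pm}$-stability of $(F,s)\in\tpervc$ by three Hom-vanishing conditions, and to verify each condition using the $\zeta_\mathrm{cyclic}$-stability in ${}^c\tilde{\mathcal{P}}_m^\pm$ together with the derived equivalences $\pervc\simeq A\text{-mod}\simeq\mathcal{P}_m^\pm$. By Lemma~\ref{lem-4.5}, $F\in\pervc$, so $(F,s)\in\tpervc$ is well-defined.

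For each test object $G$ appearing in Lemma~\ref{lem-4.1}, either $G\in\pervc\cap\mathcal{P}_m^\pm$ (``Type~I''), or $G=G'[1]$ for some $G'\in\mathcal{P}_m^\pm$ (``Type~II''). For Type~I, a morphism in $\tpervc$ between $(F,s)$ and $(G,0,0)$ coincides with one in ${}^c\tilde{\mathcal{P}}_m^\pm$ (both compute morphisms in $D^b(Y)$ with the framing compatibility), and a nonzero such morphism produces, via its kernel or image in $\mathcal{P}_m^\pm$, a proper framed subobject of $(F,s)$ through which $s$ factors---contradicting $\zeta_\mathrm{cyclic}$-stability. For Type~II, applying $\Phi_m^\pm$ places $F$ in cohomological degree $0$ and $G$ in degree $-1$; when $G$ appears as the \emph{source} of the Hom, the computation reduces to $\Ext^{-1}_A=0$, so the Hom vanishes automatically.

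In the plus case, condition \eqref{eq-4.1.1} is Type~I (handled by cyclic stability) and conditions \eqref{eq-4.1.2}, \eqref{eq-4.1.3} are Type~II with $G$ as source (automatic). In the minus case, condition (a) of Lemma~\ref{lem-4.1}(2) is Type~II with $G$ as source (automatic) and condition (c) is Type~I (cyclic stability). The main obstacle is condition (b) of the minus case, where the test object $z_*\OO_{\CP^1}(-m-1)[1]$ is Type~II but appears as the \emph{target}, so the naive degree argument only yields $\Ext^1_A(F_A,S_0)$, which is not automatically zero.

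To handle (b), I would interpret a nonzero $\phi\colon F\to z_*\OO_{\CP^1}(-m-1)[1]$ with $\phi\circ s=0$ as a nontrivial extension $0\to z_*\OO_{\CP^1}(-m-1)\to G\to F\to 0$ in $\mathcal{P}_m^-$. The condition $\phi\circ s=0$ means that the pull-back extension class in $\Ext^1(\OO_Y,z_*\OO_{\CP^1}(-m-1))=H^1(Y,z_*\OO_{\CP^1}(-m-1))$ vanishes, so $s$ lifts to $\tilde s\colon\OO_Y\to G$, producing $(G,\tilde s)\in\tilde{\mathcal{P}}_m^-$ surjecting onto $(F,s)$ with kernel $(z_*\OO_{\CP^1}(-m-1),0,0)$. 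Cyclicity of $(F,s)$ (i.e., $s$ generates $F$) forces the $\tilde s$-generated subobject of $(G,\tilde s)$ to surject onto $F$; simplicity of $z_*\OO_{\CP^1}(-m-1)$ in $\mathcal{P}_m^-$ then either splits the extension (so $\phi=0$, contradicting $\phi\ne 0$) or makes $(G,\tilde s)$ itself cyclic. In the latter case a comparison of cyclic stable objects of different numerical types, combined with the Calabi-Yau-$3$ Euler-form vanishing on $\afmod$ used in Proposition~\ref{prop3.7}, rules out the nonzero $\phi$.
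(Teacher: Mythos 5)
Your overall plan -- reduce to Lemma~\ref{lem-4.1}'s three Hom-vanishing conditions, use cyclic stability for targets lying in $\mathcal{P}_m^\pm\cap\pervc$, and use derived-category degree arguments ($\Ext^{-1}=0$) for shifted sources -- is the same organizing idea as the paper's proof. One small deviation: for conditions \eqref{eq-4.1.2}--\eqref{eq-4.1.3} the paper does not use the bare degree argument but the explicit filtration of $F$ from Lemma~\ref{lem-4.5} (factors $z_*\OO_{\CP^1}(m'-1)$, $1\leq m'\leq m$), together with the vanishing of $\Hom_{\pervc}(E, z_*\OO_{\CP^1}(m'-1))$. Your $\Ext^{-1}_{\mathcal{P}_m^\pm}$ argument is a clean alternative for these conditions and, I believe, correct.

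The genuine gap is exactly where you flag it: condition (b) in the $\zeta^{m,-}$ case, $\Hom_{\tpervc}((F,s),(z_*\OO_{\CP^1}(-m-1)[1],0,0))=0$. This target is not in $\mathcal{P}_m^-$, so cyclic stability in ${}^c\tilde{\mathcal{P}}_m^-$ does not directly constrain it; and under $\Phi_m^-$ it becomes an $\Ext^1_A(\Phi_m^-(F),S_0)$, which the t-structure degree argument cannot kill. Your extension-lifting argument (interpret $\phi$ as an extension $0\to z_*\OO_{\CP^1}(-m-1)\to G\to F\to 0$ in $\mathcal{P}_m^-$; use $\phi\circ s = 0$ to lift $s$ to $\tilde s:\OO_Y\to G$; study $\im(\tilde s)$) is a reasonable beginning, and the dichotomy you extract (the extension splits, or $(G,\tilde s)$ is itself cyclic) is correct. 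But the final step -- ``a comparison of cyclic stable objects of different numerical types, combined with the Calabi--Yau-$3$ Euler-form vanishing\ldots rules out the nonzero $\phi$'' -- is not an argument. Producing a cyclic-stable $(G,\tilde s)$ of larger dimension vector is not in itself a contradiction: such objects exist in abundance, and the Euler-form vanishing on $\afmod$ (used in Proposition~\ref{prop3.7} for an entirely different purpose) does not by itself preclude a cyclic-stable $(G,\tilde s)$ mapping onto a cyclic-stable $(F,s)$ with simple kernel. The paper disposes of the $\zeta^{m,-}$ case with ``similarly,'' implicitly invoking the same extension-description mechanism as in the $\zeta^{m,+}$ case; your proposal does not reproduce a complete argument for condition (b), and the sketched closing step does not close the gap.
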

\begin{proof}
For a $(\zeta_{\mathrm{cyclic}},{}^c\tilde{\mathcal{P}}_m^+)$-stable object $(F,s)\in{}^c\tilde{\mathcal{P}}_m^+$, 
as in the proof of Lemma \ref{lem-4.3}, we have
\begin{align*}
&\Hom_{\tpervc}((F,s),(z_*\OO_{\CP^1}(m-1),0,0))\\
&=\Hom_{\tilde{\mathcal{P}}_m^+}((F,s),(z_*\OO_{\CP^1}(m-1),0,0))=0.
\end{align*}
As we claimed in the proof of Lemma \ref{lem-4.5}, $F$ is described as a successive extensions of $z_*\OO_{\CP^1}(m'-1)$'s ($1\leq m'\leq m$). 
So we have 
\begin{align*}
\Hom_{\tpervc}((E,0,0),(F,s))
=\Hom_{\pervc}(E,F)
=0
\end{align*}
for $E=z_*\OO_{\CP^1}(m)$ or $E=\OO_x$ ($x\in Y$).
By Lemma \ref{lem-4.1}, $(F,s)\in \tpervc$ is $(\zeta^{m,+},\tpervc)$-stable.
We can show the claim for an object in ${}^c\tilde{\mathcal{P}}_m^-$ similarly.
\end{proof}

\begin{thm}\label{thm-moduli}
\begin{align*}
\mathfrak{M}_{\zeta^{m,+}}(\mathrm{v}_0,\mathrm{v}_1)&\simeq \mathfrak{M}^{A_m^+}_{\zeta_{\mathrm{cyclic}}}((m-1)\mathrm{v}_0+(-m)\mathrm{v}_1,m\mathrm{v}_0+(-m-1)\mathrm{v}_1),\\
\mathfrak{M}_{\zeta^{m,-}}(\mathrm{v}_0,\mathrm{v}_1)&\simeq \mathfrak{M}^{A_m^-}_{\zeta_{\mathrm{cyclic}}}(m\mathrm{v}_0+(-m+1)\mathrm{v}_1,(m+1)\mathrm{v}_0+(-m)\mathrm{v}_1).
\end{align*}
\end{thm}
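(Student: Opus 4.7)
The plan is to combine Lemmas \ref{lem-4.2}--\ref{lem-4.6} to obtain a bijection between the closed points of the two moduli spaces, verify that the dimension vectors transform as claimed, and then promote this bijection to a scheme isomorphism using Proposition \ref{prop-equiv}. I will write out the $+$ case; the $-$ case is entirely parallel.

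For the set-level bijection: given a closed point $(F,s)$ of $\mathfrak{M}_{\zeta^{m,+}}(\mathrm{v}_0,\mathrm{v}_1)$, Lemma \ref{lem-4.2} shows that $F \in {}^c\mathcal{P}_m^+$, so $(F,s)$ is an object of ${}^c\tilde{\mathcal{P}}_m^+ \simeq A_m^+\text{-}\mathrm{mod}$ (Proposition \ref{prop-equiv}), and Lemma \ref{lem-4.3} shows it is $(\zeta_{\mathrm{cyclic}}, {}^c\tilde{\mathcal{P}}_m^+)$-stable. Conversely, starting from a $(\zeta_{\mathrm{cyclic}}, {}^c\tilde{\mathcal{P}}_m^+)$-stable object, Lemma \ref{lem-4.5} gives $F \in \pervc$, and Lemma \ref{lem-4.6} upgrades it to a $(\zeta^{m,+}, \tpervc)$-stable object, yielding a closed point of $\mathfrak{M}_{\zeta^{m,+}}(\mathrm{v}_0,\mathrm{v}_1)$. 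These constructions are clearly inverse to each other.

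To identify the dimension vector, recall that $\Phi_m^+$ arises from the projective generator $\mathcal{L}_m[-1] \oplus \mathcal{L}_{m+1}[-1]$ of $\mathcal{P}_m^+$, so for $F \in \pervc \cap \mathcal{P}_m^+$ the non-framing components of the corresponding $A_m^+$-module are $V_0^{\mathrm{new}} = \Ext^1_Y(\mathcal{L}^m, F)$ and $V_1^{\mathrm{new}} = \Ext^1_Y(\mathcal{L}^{m+1}, F)$. Since $F$ is supported on the exceptional $\CP^1$, one has $\Ext^i_Y(\mathcal{L}^k, F) = H^i(Y, F \otimes \mathcal{L}^{-k}) = 0$ for $i \geq 2$; the condition $F \otimes \mathcal{L}^{-m}[1] \in \perv$ together with $\R^0 f_* = 0$ and the affineness of $f$ over $X$ forces $H^0(F \otimes \mathcal{L}^{-m}) = 0$ (and similarly for $\mathcal{L}^{-m-1}$). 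Hence $\Ext^1_Y(\mathcal{L}^m, F) = -\chi(F \otimes \mathcal{L}^{-m})$. By Lemma \ref{lem2.3} (with $r=1$), the Hilbert polynomial equals $\chi(F \otimes \mathcal{L}^k) = \mathrm{v}_0 + k(\mathrm{v}_0 - \mathrm{v}_1)$, giving $\chi(F \otimes \mathcal{L}^{-m}) = (1-m)\mathrm{v}_0 + m\mathrm{v}_1$. Therefore $V_0^{\mathrm{new}} = (m-1)\mathrm{v}_0 - m\mathrm{v}_1$ and analogously $V_1^{\mathrm{new}} = m\mathrm{v}_0 - (m+1)\mathrm{v}_1$, exactly as in the statement.

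The main obstacle is promoting the bijection of closed points to a morphism of schemes. The equivalence in Proposition \ref{prop-equiv} comes from a tilting object and is compatible with arbitrary base change, so it induces an equivalence between flat families of objects in ${}^c\tilde{\mathcal{P}}_m^+$ over any scheme $S$ and flat families of finite dimensional $A_m^+$-modules over $S$ with the prescribed dimension vector. By Lemmas \ref{lem-4.2}--\ref{lem-4.6} applied pointwise, such a family has $(\zeta^{m,+}, \tpervc)$-stable fibers if and only if it has $(\zeta_{\mathrm{cyclic}}, {}^c\tilde{\mathcal{P}}_m^+)$-stable fibers. This identifies the two moduli functors, hence induces an isomorphism of their coarse moduli schemes. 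The $-$ case follows by the symmetric argument using the generator $\mathcal{L}^{-m-1} \oplus \mathcal{L}^{-m}$ of $\mathcal{P}_m^-$, where the relevant $\Hom$ spaces replace $\Ext^1$ and the dimension vector transformation yields the second formula.
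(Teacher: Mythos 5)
Your proof assembles Lemmas \ref{lem-4.2}--\ref{lem-4.6} into a set-level bijection, verifies the dimension-vector transformation, and promotes it to a scheme isomorphism via the family-compatibility of the tilting equivalence; this is exactly the route the paper takes implicitly (the theorem is stated right after the lemmas with no further proof supplied). One small wrinkle in your dimension-vector argument: you justify $H^0(F\otimes\mathcal{L}^{-m-1})=0$ by saying "and similarly for $\mathcal{L}^{-m-1}$," but $F\in\mathcal{P}_m^+$ only guarantees $F\otimes\mathcal{L}^{-m}[1]\in\perv$, not $F\otimes\mathcal{L}^{-m-1}[1]\in\perv$; the vanishing instead follows because $G:=F\otimes\mathcal{L}^{-m}[1]\in\pervc$ implies $\R^1 f_*(G\otimes\mathcal{P}_0^\vee)=0$, i.e.\ $H^1(Y,G\otimes\mathcal{L}^{-1})=H^2(Y,F\otimes\mathcal{L}^{-m-1})=0$, while the degree computation uses $\dim V_1 = \chi(G\otimes\mathcal{L}^{-1})$ from the Morita equivalence, forcing $H^0(Y,F\otimes\mathcal{L}^{-m-1})=0$. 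The conclusion and the final dimension formulas you obtain are nevertheless correct and match the theorem.
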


\subsection{Fixed points}
\begin{NB3}rewritten\end{NB3}%
The readers may refer \cite{szendroi-ncdt} and \cite{young-conifold} for the definition of ``pyramid partitions with length $m$'' (see Figure \ref{erc+}), 
and \cite{chuang-jafferis} for the definition of ``finite type pyramid partitions with length $m$'' (see Figure \ref{erc}).
\begin{figure}[htbp]
  \centering
  \includegraphics{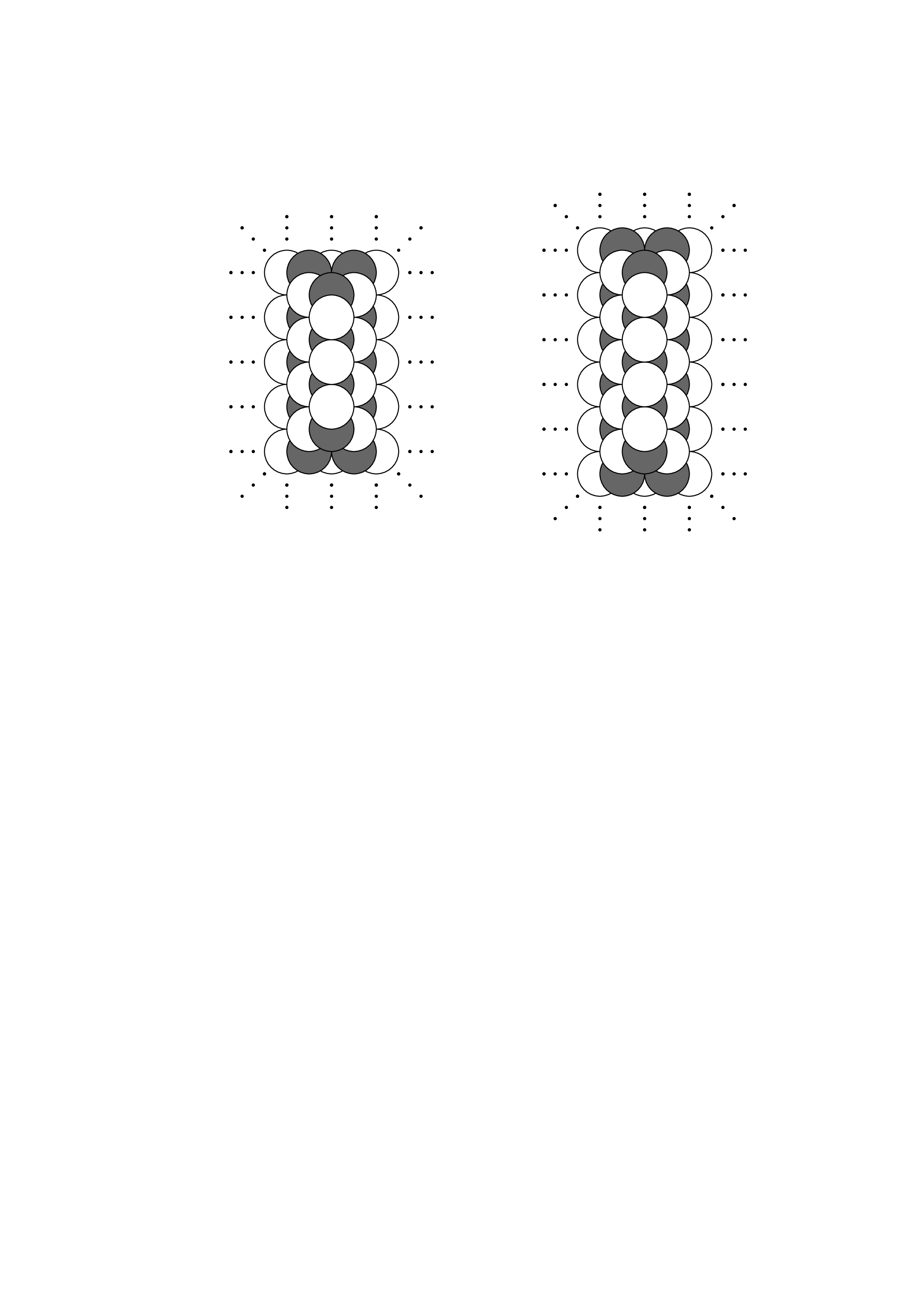}
  \caption{the empty room configurations for pyramid partitions with length $3$ and with length $4$}\label{erc+}
\end{figure}
\begin{figure}[htbp]
  \centering
  \includegraphics{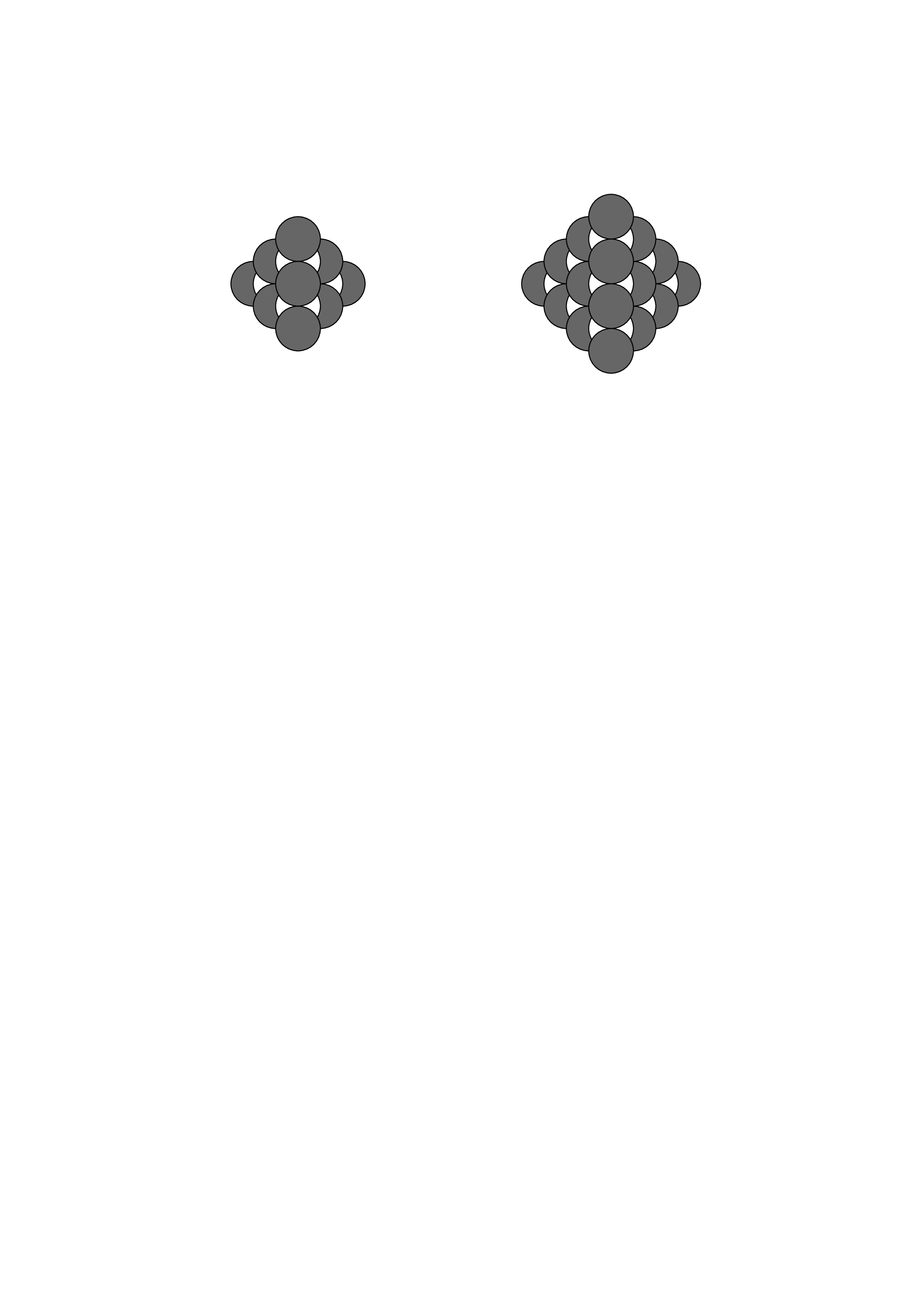}
  \caption{the empty room configurations for finite type pyramid partitions with length $3$ and with length $4$}\label{erc}
\end{figure}
\begin{prop}\label{prop-parameterization}
\begin{enumerate}
\item[(1)] The set of $T'$-fixed closed points 
\[
\mathfrak{M}_{\zeta^{m,+}}(\mathrm{v}_0,\mathrm{v}_1)^{T'}=\mathfrak{M}^{A_m^+}_{\zeta_{\mathrm{cyclic}}}((m-1)\mathrm{v}_0+(-m)\mathrm{v}_1,m\mathrm{v}_0+(-m-1)\mathrm{v}_1)^{T'}
\]
is isolated and parameterized by finite type pyramid partitions with length $m$ and with $(m-1)\mathrm{v}_0+(-m)\mathrm{v}_1$ white stones and $m\mathrm{v}_0+(-m-1)\mathrm{v}_1$ black stones.
\item[(2)] The set of ${T'}$-fixed closed points 
\[\mathfrak{M}_{\zeta^{m,-}}(\mathrm{v}_0,\mathrm{v}_1)^{T'}=\mathfrak{M}^{A_m^-}_{\zeta_{\mathrm{cyclic}}}(m\mathrm{v}_0+(-m+1)\mathrm{v}_1,(m+1)\mathrm{v}_0+(-m)\mathrm{v}_1)^{T'}
\]
is isolated and parameterized by pyramid partitions with length $m$ and with $m\mathrm{v}_0+(-m+1)\mathrm{v}_1$ white stones and $(m+1)\mathrm{v}_0+(-m)\mathrm{v}_1$ black stones.
\end{enumerate}
\end{prop}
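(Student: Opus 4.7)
The plan is to use Theorem \ref{thm-moduli} together with Propositions \ref{cor-A-hat-3} and \ref{cor-A-hat-4} to reduce the claim to a statement about $T'$-fixed closed points of $\mathfrak{M}^{\tilde A_m^\pm}_{\zeta_{\mathrm{cyclic}}}$, and then to classify such points combinatorially. First I would set up the $T'$-action on $\tilde A_m^\pm$-modules: since $T = (\C^*)^4/\C^*$ acts by rescaling $a_1,a_2,b_1,b_2$ (and is lifted to the arrows $p_j,q_j$, respectively $r_j,s_j$, by the requirement that the potential $\omega_m^\pm$ be $T$-homogeneous of weight $0$), the subtorus $T' = \{\alpha_1\alpha_2\beta_1\beta_2=1\}$ acts on the quiver path algebra preserving the ideal of relations. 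A closed point $[V]$ is $T'$-fixed iff $V$ admits a $T'$-weight-space decomposition compatible with all arrows.

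Next I would exploit $\zeta_{\mathrm{cyclic}}$-stability. A representation $V$ with $\dim V_\infty = 1$ is $\zeta_{\mathrm{cyclic}}$-stable iff it is generated, as a $\tilde A_m^\pm$-module, by the image of the framing vector $1_\infty \in V_\infty$. Consequently, the map $\tilde A_m^\pm e_\infty \twoheadrightarrow V$ sending $x \mapsto x\cdot 1_\infty$ is surjective, and $V$ is determined (up to unique isomorphism compatible with the framing) by the kernel $I_V$, a left ideal containing the relations. For a $T'$-fixed $V$, the ideal $I_V$ is $T'$-stable, hence monomial (spanned by paths, because any $T'$-weight space of $\tilde A_m^\pm e_\infty$ is one-dimensional—this is the essential point which I would verify by computing weights of paths and noting that Lemma \ref{lem-A-hat1} forces each weight space to be spanned by a single equivalence class of paths modulo the ``commuting'' relations). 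Thus each $T'$-fixed point corresponds to a monomial quotient of $\tilde A_m^\pm e_\infty$, which is specified by the set $\Pi$ of paths from $\infty$ which remain nonzero.

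The combinatorial step identifies these sets $\Pi$ with pyramid partitions. For the $-$ side, the relevant relations ($a_\varepsilon b_\varepsilon$-commuting, the relations \eqref{eq-rel-}, and the derivations of $\omega_m^-$) turn the admissible path configurations into three-dimensional ``molten crystals'' on the empty-room configuration of Figure \ref{erc+}, which is precisely the definition of pyramid partitions of length $m$ (cf.\ \cite{szendroi-ncdt,young-conifold}); I would give an explicit bijection where each stone of the partition corresponds to a weight space of $V$, with the dimension count $\dim V_0, \dim V_1$ matching the number of white and black stones after translating via the rank formula in Theorem \ref{thm-moduli}. For the $+$ side, the extra annihilation relations \eqref{eq-rel+} truncate the allowed configurations to the finite type pyramids of Figure \ref{erc} and Chuang--Jafferis \cite{chuang-jafferis}; the same bijection then applies. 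Isolation is automatic: a cyclic $\tilde A_m^\pm$-module is determined up to isomorphism by $I_V$, so distinct $\Pi$'s give distinct closed points with no positive-dimensional $T'$-orbit.

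Finally I would verify the equality $\mathfrak{M}_\zeta(\vv)^{T'} = \mathfrak{M}_\zeta(\vv)^T$: the quotient $T/T'\simeq\C^*$ acts by simultaneously rescaling all arrows by a common character, and for a cyclic module this rescaling is implemented by scaling the framing vector, giving an isomorphic point in the moduli space. The main obstacle will be a careful match between the combinatorics of surviving monomial bases of $\tilde A_m^\pm e_\infty$ and the pyramid partitions as defined in the literature—in particular checking, using Lemma \ref{lem-A-hat1} and the commutation derivatives of $\omega_m^\pm$, that every $T'$-weight space of $\tilde A_m^\pm e_\infty$ is at most one-dimensional, so that the set of nonzero monomials really does determine the module up to isomorphism.
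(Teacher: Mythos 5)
Your outline reduces the claim via Theorem \ref{thm-moduli} to a statement about cyclic modules over $A_m^\pm$ (equivalently $\tilde A_m^\pm$), which is the same reduction the paper uses. You are also right that $\zeta_{\mathrm{cyclic}}$-stability of a framed module with $\dim V_\infty = 1$ is equivalent to $V$ being a quotient of the projective $P_\infty$ (or of $P=\ker(P_\infty\to S_\infty)$) by a $T'$-stable left ideal. The problem is the step you flagged yourself as ``the essential point'': the claim that every $T'$-weight space of $\tilde A_m^\pm e_\infty$ (or of $P$) is one-dimensional is false, and no amount of bookkeeping via Lemma \ref{lem-A-hat1} will make it true. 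The central element
\[
c = b_2a_2b_1a_1 + a_2b_2a_1b_1 \in Z(A)
\]
has $T$-weight $\alpha_1\alpha_2\beta_1\beta_2$, which restricts to the \emph{trivial} character of $T' = \{\alpha_1\alpha_2\beta_1\beta_2=1\}$. Hence multiplication by $c$ preserves every $T'$-weight space, and each such weight space is a $\C[c]$-module of the form $\C[c]\cdot v_B$ for a stone $B$; these are higher-dimensional (finite but $>1$-dimensional for $A_m^+$, infinite-dimensional for $A_m^-$). Consequently, a $T'$-stable ideal need not intersect each $T'$-weight space in either $0$ or the whole space, so $T'$-invariance alone does \emph{not} force the kernel ideal to be spanned by $T$-weight vectors (``monomial''), and your proposed bijection with pyramid partitions does not follow.

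The missing ingredient is precisely what the paper supplies: since the $T'$-fixed quotient is finite-length and $T'$-invariant, it is supported at the unique $T'$-fixed point of $\operatorname{Spec} Z(A)$, the conifold singularity. On each $T'$-weight space $\C[c]\cdot v_B$, the kernel ideal is then an ideal of $\C[c]$ with cosupport at $c=0$, hence of the form $(c^k)$; that is what makes the ideal monomial in the combinatorial sense and restores the bijection with (finite-type) pyramid partitions of length $m$. Without invoking the central element $c$ and the support-at-the-origin argument, your proof has a genuine gap at exactly the place where the real work happens.
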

\begin{proof}
Recall that $P$ denotes the kernel of the canonical map $P_\infty\to S_\infty$.
The $A$-module $P$ has the canonical $T$-weight decomposition such that each weight space is $1$-dimensional and parameterized by the empty room configuration for finite type pyramid partitions with length $m$ (resp. for pyramids partition with length $m$).

We put $c:=b_2a_2b_1a_1+a_2b_2a_1b_1(=xy=zw)\in Z(A)\subset A$. 
Here $Z(A)$ is the center of $A$, which is isomorphic to the coordinate ring $\C[x,y,z,w]/(xy-zw)$ of the conifold. 
Let $v_B\in P$ be a $T$-weight vector corresponding to a stone $B$ in the empty room configuration. 
Then $c\cdot v_B$ (unless $=0$ in the case (2)) is the $T$-weight vector corresponding to the stone just behind $B$. 
Any $T'$-weight space of $P$ is described as $\C[c]\cdot v_B$ for some stone $B$ in the empty room configuration.

Let $(F,s)\in{}^c\tilde{\mca{P}}_m^+$ (resp. $\in{}^c\tilde{\mca{P}}_m^-$) be a $\zeta_{\mathrm{cyclic}}$-stable object, then $F$ is a quotient of $P$ as an $A$-module. 
Any $T'$-weight space of $F$ is described as $I\cdot v_B$ for some stone $b$ and for some ideal $I\in \C[c]$. 
Assume that $(F,s)$ is $T'$-invariant. 
Then $P/F$ must be supported at the singularity $0\in \mathrm{Spec}Z(A)$ and so $I$ must be a monomial ideal.
Thus the claims follow.
\end{proof}

Let $N^m_{\mathrm{pyramid}}(n_0,n_1)$ (resp. $N^m_{\mathrm{fin}\text{-}\mathrm{pyramid}}(n_0,n_1)$) denote the number of pyramid partitions (resp. finite type pyramid partitions) with length $m$ and with $n_0$ white stones and $n_1$ black stones.
We encode them into the generating functions
\begin{align*}
\mathcal{Z}^m_{\mathrm{pyramid}}(\mathbf{p})&:=\sum_{(n_0,n_1)\in(\Z_{\geq 0})^2}N^m_{\mathrm{pyramid}}(n_0,n_1)\cdot p_0^{n_0}p_1^{n_1},\\
\mathcal{Z}^m_{\mathrm{fin}\text{-}\mathrm{pyramid}}(\mathbf{p})&:=\sum_{(n_0,n_1)\in(\Z_{\geq 0})^2}N^m_{\mathrm{fin}\text{-}\mathrm{pyramid}}(n_0,n_1)\cdot p_0^{n_0}p_1^{n_1},
\end{align*}
where $p_0$ and $p_1$ are formal variables.
\begin{thm}\label{thm-phyramid}
\begin{align*}
&\mathcal{Z}^m_{\mathrm{fin}\text{-}\mathrm{pyramid}}(\mathbf{p})=
\prod_{m'=1}^m \left(1+p_0^{m-m'}p_1^{m-m'+1}\right)^{m'},\\
&\mathcal{Z}^m_{\mathrm{pyramid}}(\mathbf{p})=\\
&\left(\prod_{m'\geq 1}^\infty (1-p_0^{m'}p_1^{m'})^{-m'}\right)^2
\left(\prod_{m'\geq 1}^\infty \left(1+p_0^{m+m'-1}q_1^{m+m'}\right)^{m'}\right)
\left(\prod_{m'\geq m}^\infty (1+q_0^{-m+m'+1}q_1^{-m+m'})^m\right).
\end{align*}
\end{thm}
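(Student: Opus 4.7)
My plan is to deduce both identities by identifying the stone-counting generating functions with the Euler-characteristic generating functions $\mca{Z}'_{\zeta^{m,\pm}}$ of the moduli spaces $\M{\zeta^{m,\pm}}{\vv}$ (via Proposition~\ref{prop-parameterization} together with Theorem~\ref{thm-moduli}), and then computing the latter by successive application of the wall-crossing formula Theorem~\ref{wallcrossing}.

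For the first identity, Proposition~\ref{prop-parameterization}(1) combined with Theorem~\ref{thm-moduli} gives
\[
\mca{Z}^m_{\mr{fin}\text{-}\mr{pyramid}}(p_0,p_1)=\sum_{\vv}\left|\M{\zeta^{m,+}}{\vv}^{T'}\right|\cdot p_0^{(m-1)\mathrm{v}_0-m\mathrm{v}_1}p_1^{m\mathrm{v}_0-(m+1)\mathrm{v}_1}=\mca{Z}'_{\zeta^{m,+}}(q_0,q_1)
\]
under the substitution $q_0=p_0^{m-1}p_1^m$, $q_1=p_0^{-m}p_1^{-m-1}$. Starting from the chamber containing $\zeta_{\mr{triv}}$, where $\mca{Z}'_{\zeta_{\mr{triv}}}=1$ (no nonzero semistable framed module exists), the classification in \S\ref{subsec-classification} shows that a path into the chamber of $\zeta^{m,+}$ crosses exactly the walls $L^-_+(1),\ldots,L^-_+(m)$. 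At $L^-_+(m')$ the unique $\theta_\zeta$-stable $A$-module is $C^-_+(m')$ with $\dimv C^-_+(m')=(m',m'-1)$ and $\dim (C^-_+(m'))_0=m'$, so Theorem~\ref{wallcrossing} contributes the factor $(1+q_0^{m'}q_1^{m'-1})^{m'}$ at each such wall. A direct substitution shows $q_0^{m'}q_1^{m'-1}=p_0^{m-m'}p_1^{m-m'+1}$, yielding the first identity.

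For the second identity, the analogous identification gives
\[
\mca{Z}^m_{\mr{pyramid}}(p_0,p_1)=\mca{Z}'_{\zeta^{m,-}}(q_0,q_1)
\]
under $q_0=p_0^m p_1^{m+1}$, $q_1=p_0^{1-m}p_1^{-m}$. I compute the right-hand side starting from the chamber of $\zeta^{(-)}$, where $\mca{Z}'_{\zeta^{(-)}}(\q)=\mca{Z}_{\mr{NCDT}}(q_0,-q_1)$ is already known explicitly by combining Szendroi's formula \eqref{eq-young}, the topological-vertex formula \eqref{eq-bb}, and the sign rule $\mca{Z}'_\zeta(\q)=\mca{Z}_\zeta(q_0,-q_1)$ from \S\ref{defofncdt}. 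A path from $\zeta^{(-)}$ to $\zeta^{m,-}$ staying in the half-plane $\zeta_0<\zeta_1$ crosses only the walls $L^-_-(0),L^-_-(1),\ldots,L^-_-(m-1)$, each in the direction from the $(-)$-side to the $(+)$-side in the convention of Theorem~\ref{wallcrossing}; hence the generating function is divided by the factor $(1+q_0^{m'}q_1^{m'+1})^{m'}$ at each such wall (trivial at $L^-_-(0)$, where $\dim C_0=0$). Substituting $q_0q_1=p_0p_1$ together with the expressions for $q_0^{m'}q_1^{m'\pm 1}$ in the $(p_0,p_1)$-variables and reorganising the three resulting infinite products yields the stated formula.

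The main obstacle is the bookkeeping for the second identity. One must verify using Figure~\ref{fig:zeta2} and the classification of walls that the chosen path meets only the advertised walls; in particular it must avoid the wall $L^-(\infty)$, which is not covered by Theorem~\ref{wallcrossing} but whose MacMahon-type contribution is already packaged into the starting value $\mca{Z}'_{\zeta^{(-)}}$ via Theorem~\ref{thm-main}. One must also track, wall by wall, whether Theorem~\ref{wallcrossing} multiplies or divides the generating function (using the convention $\zeta^\pm=\zeta^\circ\pm\varepsilon(1,1)$ fixed there), and then simplify the product after the multiplicative change of variables, which is the most error-prone step.
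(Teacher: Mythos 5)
Your proposal is correct and reconstructs essentially the same argument as the paper's own (very terse) proof, which simply cites Theorem~\ref{wallcrossing}, Proposition~\ref{prop-parameterization}, and the Behrend--Bryan formula~\eqref{eq-bb}. Your identification of the fixed-point generating functions with $\mca{Z}'_{\zeta^{m,\pm}}$, the choice of paths from $\zeta_{\mathrm{triv}}$ (respectively $\zeta^{(-)}$), the lists of walls crossed, the direction of the wall-crossing (multiply toward cyclic, divide toward trivial), and the variable substitutions $q_0=p_0^{m-1}p_1^m,\ q_1=p_0^{-m}p_1^{-m-1}$ (resp.\ $q_0=p_0^m p_1^{m+1},\ q_1=p_0^{1-m}p_1^{-m}$) all check out; note only that the printed statement of the theorem contains minor typographical slips (in the last two factors the letters $q_0,q_1$ should read $p_0,p_1$, and the exponent on the final product should be $m'$, not $m$), which your derivation in fact produces correctly.
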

\begin{proof}
The claim follows from Theorem \ref{wallcrossing}, Proposition \ref{prop-parameterization} and the Behrend-Bryan's formula \eqref{eq-bb}.
\end{proof}

\begin{NB3}
Let $S_\infty$, $P_\infty$ be the simple and indecomposable projective $A_m^+$-modules corresponding to the extended vertex and set $P:={P}_\infty/S_\infty$.
The $A$-module $P$ has the canonical $T$-weight decomposition such that each weight space is $1$-dimensional and parameterized by``the empty room configuration for the finite type pyramid partition with length $m$'' under the notation in \cite{chuang-jafferis} (see Figure \ref{erc}).

\begin{figure}[htbp]
  \centering
  \includegraphics{pic10.eps}
  \caption{the empty room configurations for finite type pyramid partitions with length $3$ and with length $4$}\label{erc}
\end{figure}

\begin{figure}[htbp]
  \centering
  \includegraphics{pic11.eps}
  \caption{the empty room configurations for pyramid partitions with length $3$ and with length $4$}\label{erc+}
\end{figure}

Let $(F,s)\in\tilde{P}_m^+$ be a $\zeta_{\mathrm{cyclic}}$-stable object.
Then $F$ is a quotient of $P$ as an $A_m^+$-module. 
Thus the $T$-fixed point set 
\[
\mathfrak{M}_{\zeta^{m,+}}(\mathrm{v}_0,\mathrm{v}_1)^T\simeq \mathfrak{M}^{A_m^+}_{\zeta_{\mathrm{cyclic}}}((m-1)\mathrm{v}_0+(-m)\mathrm{v}_1,m\mathrm{v}_0+(-m-1)\mathrm{v}_1)^T
\]
is isolated and parameterized by ``finite type pyramid partitions with length $m$''. 
In particular, we get the formula for the generating function of the number of finite type pyramid partitions with length $m$, which is conjectured in \cite{chuang-jafferis}, as an application of the wall-crossing formula. 

Similarly, 
\[
\mathfrak{M}_{\zeta^{m,-}}(\mathrm{v}_0,\mathrm{v}_1)^T\simeq \mathfrak{M}^{A_m^-}_{\zeta_{\mathrm{cyclic}}}(m\mathrm{v}_0+(-m+1)\mathrm{v}_1,(m+1)\mathrm{v}_0+(-m)\mathrm{v}_1)^T
\]
is isolated and parameterized by ``pyramid partitions with length $m$'' (see Figure \ref{erc+}). 
In particular, we can recover Young's formula (\cite{young-conifold}) for the generating function of the number of pyramid partitions with length $m$.
\end{NB3}

\begin{NB3}
\subsection{Potentials}\label{subsec-potential}
Let $\hat{Q}_m^+$ be the quiver in Figure \ref{quiver(Qm+hat)} and $\omega_m^+$ be the following potential: 
\[
a_1b_1a_2b_2-a_1b_2a_2b_1
+p_1b_1q_1
+p_2(b_1q_2-b_2q_1)
+\cdots
+p_m(b_1q_m-b_2q_{m-1})
-p_{m+1}b_2q_m.
\]
Note that the quiver with the potential $\hat{A}_m^+=(\hat{Q}_m^+,\omega_m^+)$ has relations \eqref{eq-rel+} and $b_1a_ib_2=b_2a_ib_1$ ($i=0,1$). 
\begin{figure}[htbp]
  \centering
  \includegraphics{pic9.eps}
  \caption{quiver $\hat{Q}_m^+$}\label{quiver(Qm+hat)}
\end{figure}

\begin{lem}\label{lem-A-hat1}
\[
p_ib_{\varepsilon_1}a_{\varepsilon_2}b_{\varepsilon_3}\dots b_{\varepsilon_k}q_j=0\quad (\varepsilon_l=1,2).
\]
\end{lem}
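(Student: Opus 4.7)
The plan is to reduce any such product to one of the absorbing expressions $b_1q_1$ or $b_2q_m$, which vanish in $\tilde{A}_m^+$. The cyclic derivatives of $\omega_m^+$ with respect to $p_j$ and $q_{j'}$ furnish the boundary relations
\[
b_1q_1=0,\quad b_1q_{i+1}=b_2q_i\ (1\le i\le m-1),\quad b_2q_m=0,\quad p_jb_1=p_{j+1}b_2\ (1\le j\le m),
\]
and the cyclic derivatives with respect to $a_1$ and $a_2$ produce the old commutations $b_1 a_\eta b_2=b_2 a_\eta b_1$ from the superpotential of $A$ (untouched by the $p$-$q$ summands, which contain no $a$'s). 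Applied repeatedly, these commutations permute the labels $\varepsilon_1,\varepsilon_3,\ldots,\varepsilon_L$ on the $b$-positions of the word arbitrarily, subject only to preserving the multiset of labels; in particular we may prescribe the trailing label $\varepsilon_L$ to be $1$ or $2$ as we wish, provided that label occurs somewhere in the word.

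I would then split on $\varepsilon_L$ together with the comparison of $j$ and $j'$. In the canonical case $\varepsilon_L=1$ and $j\ge j'$, iterate the following loop: apply $b_1q_{j'}=b_2q_{j'-1}$ to convert the trailing $b_1$ into a $b_2$, decrementing $j'$; use commutation to shuttle this new $b_2$ to the head of the word, producing $p_jb_2(\cdots)q_{j'-1}$; apply $p_{j-1}b_1=p_jb_2$ (legal since $j\ge j'\ge 2$) to rewrite this as $p_{j-1}b_1(\cdots)q_{j'-1}$; and commute the leading $b_1$ back so that the trailing letter is again $b_1$. Each pass decreases both $j$ and $j'$ by one while preserving the alternating shape, so after $j'-1$ passes the expression becomes $p_{j-j'+1}(\cdots)b_1q_1$, which vanishes. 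The complementary case $\varepsilon_L=2$ is dual: the symmetric loop uses $b_2q_{j'}=b_1q_{j'+1}$ (legal while $j'<m$) together with $p_jb_1=p_{j+1}b_2$ to increment $j'$ and $j$ toward the absorbing relation $b_2q_m=0$.

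To see that one of these two loops is always available, let $\alpha$ denote the number of $b_1$'s in the word; one checks that $j+\alpha-j'$ is conserved by every move above, and a short enumeration on its attainable range shows that either the descent to $b_1q_1$ or the ascent to $b_2q_m$ is reachable without the indices ever leaving their legal ranges $1\le j\le m+1$ and $1\le j'\le m$. The principal obstacle will be precisely this bookkeeping — verifying exhaustiveness of the case split and that no intermediate step runs an index out of range — rather than any subtle algebra; the essential calculation is the short loop displayed in the canonical case.
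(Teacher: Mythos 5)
Your proof is correct and takes essentially the same route as the paper: the paper displays exactly your ``canonical'' decrement loop for $\varepsilon_L=1$, $j\ge j'$ and simply says the ``other cases'' are handled ``in the same way.'' The conserved quantity $j+\alpha-j'$ is a genuinely useful addition for organizing that residual case analysis, which the paper leaves to the reader. One small caveat worth flagging if you were to write out the ``short enumeration'': your claim that ``one of these two loops is always available'' is literally false in the edge cases where the word has only one label type and the indices are on the wrong side (all $b_1$'s with $j<j'$, or all $b_2$'s with $j>j'+1$) --- there one preliminary application of a $q$-relation (or $p$-relation) is needed to introduce a label of the other type before either loop can begin. Your conserved quantity survives that preliminary move, so the bookkeeping you describe does cover these cases correctly once that extra step is included.
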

\begin{proof}
Assume $\varepsilon_k=1$ and $i\geq j$. Then we have
\begin{align*}
p_ib_{\varepsilon_1}a_{\varepsilon_2}b_{\varepsilon_3}\dots b_{1}q_j
&=
p_ib_{\varepsilon_1}a_{\varepsilon_2}b_{\varepsilon_3}\dots b_{2}q_{j-1}\\
&=
p_ib_{2}a_{\varepsilon_2}b_{\varepsilon_1}\dots b_{\varepsilon_{k-2}}q_{j-1}\\
&=
p_{i-1}b_{1}a_{\varepsilon_2}b_{\varepsilon_1}\dots b_{\varepsilon_{k-2}}q_{j-1}\\
&=
p_{i-1}b_{\varepsilon_1}a_{\varepsilon_2}b_{\varepsilon_3}\dots b_{1}q_{j-1}\\
&=\cdots\\
&=
p_{i-1}b_{\varepsilon_1}a_{\varepsilon_2}b_{\varepsilon_3}\dots b_{1}q_{1}\\
&=0.
\end{align*}
We can show the claims for other cases in the same way.
\end{proof}

\begin{prop}\label{prop-A-hat2}
Let $V$ be an $\hat{A}_m^+$-module which is $\theta_{\tilde{\zeta}_{\mathrm{cyclic}}}$-stable.
Then $p_i=0$ for $i=1,\ldots m+1$.  
\end{prop}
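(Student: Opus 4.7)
The plan is to exploit the cyclicity forced by $\zeta_{\mathrm{cyclic}}$ and then invoke Lemma \ref{lem-A-hat1}. Concretely, I would first normalize $\tilde{\zeta}_{\mathrm{cyclic}}$ so that $\theta_{\tilde{\zeta}_{\mathrm{cyclic}}}(V)=0$ (Remark \ref{rem:normalization}(2)), and show that $V$ is generated by $V_\infty$ as an $\tilde{A}_m^+$-module. Indeed, let $V'\subseteq V$ be the submodule generated by $V_\infty$, so $V'_\infty=V_\infty\neq 0$. If $V'\subsetneq V$, then the quotient $V/V'$ is a nonzero module with $(V/V')_\infty=0$, and
\[
\theta_{\tilde{\zeta}_{\mathrm{cyclic}}}(V/V')=\zeta_{\mathrm{cyclic},0}\dim(V/V')_0+\zeta_{\mathrm{cyclic},1}\dim(V/V')_1<0
\]
since both coordinates of $\zeta_{\mathrm{cyclic}}$ are negative. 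By additivity of $\theta$, this forces $\theta_{\tilde{\zeta}_{\mathrm{cyclic}}}(V')>0$, contradicting stability. Hence $V=V'$, so $V_0$ is spanned by vectors of the form $P\cdot v$ with $v\in V_\infty$ and $P$ a path in $\tilde{Q}_m^+$ from $\infty$ to $0$.

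It therefore suffices to show the algebra identity $p_j P=0$ in $\tilde{A}_m^+$ for every such path $P$, since then $p_j$ annihilates each spanning vector of $V_0$, forcing $p_j=0$. Write $P=\alpha_n\cdots\alpha_2\alpha_1$ composed right-to-left. Because $\infty$ has only the outgoing arrows $q_1,\dots,q_m$, one has $\alpha_1=q_{j_0}$ for some $j_0$, landing at vertex $1$. If none of $\alpha_2,\dots,\alpha_n$ is a $p_i$, then (since an additional $q$ would require a prior $p$) all of them lie in $\{a_1,a_2,b_1,b_2\}$, and the alternation of source/target forces $P$ into the form $b_{\varepsilon_1}a_{\varepsilon_2}b_{\varepsilon_3}\cdots b_{\varepsilon_L}q_{j_0}$. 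In that case Lemma \ref{lem-A-hat1} gives $p_j P=0$ directly.

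The main obstacle, as expected, is the case where $P$ itself contains one or more $p_i$ arrows, so Lemma \ref{lem-A-hat1} does not apply to $p_j P$ as a single word. I would handle this by picking the smallest index $k\ge 2$ with $\alpha_k=p_{i_0}$. By minimality, $\alpha_{k-1}\cdots\alpha_1$ is a path from $\infty$ to $0$ using no $p$'s, hence by the reasoning above it must have the form $b_{\varepsilon_1}a_{\varepsilon_2}\cdots b_{\varepsilon_{L'}}q_{j_0}$. Then the subword $p_{i_0}b_{\varepsilon_1}a_{\varepsilon_2}\cdots b_{\varepsilon_{L'}}q_{j_0}$ of $P$ vanishes by Lemma \ref{lem-A-hat1}, hence $P=0$ in $\tilde{A}_m^+$ and a fortiori $p_j P=0$. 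Combining the two cases, $p_j$ acts as zero on every generator of $V_0$, so $p_j=0$, completing the proof.
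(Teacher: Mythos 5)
Your proof is correct and follows the same route as the paper's: deduce from $\zeta_{\mathrm{cyclic}}$-stability that $V$ is generated by $V_\infty$, and then apply Lemma~\ref{lem-A-hat1} to kill $p_j$ on every spanning vector of $V_0$. The published proof is terser---it asserts directly that $V_0$ is spanned by images of the monomials $b_{\varepsilon_1}a_{\varepsilon_2}\cdots b_{\varepsilon_L}q_{j'}$---so your explicit case split over paths that do or do not contain a $p_i$ is a faithful unpacking of the step left tacit there rather than a different argument (one small quibble: the slope $\theta_{\tz}$ is not itself additive, only its numerator $\tz\cdot\dimv$ is, but that is exactly what your computation uses, so the conclusion $\theta_{\tz}(V')>0$ still contradicts stability).
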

\begin{proof}
By the $\theta_{\tilde{\zeta}_{\mathrm{cyclic}}}$-stability, $V_0$ coincides with the union of the images of $b_{\varepsilon_1}a_{\varepsilon_2}b_{\varepsilon_3}\dots b_{\varepsilon_k}q_j$'s.
Thus the claim follows from Lemma \ref{lem-A-hat1}.
\end{proof}

\begin{cor}\label{cor-A-hat-3}
\[
\mathfrak{M}^{A_m^+}_{\zeta_{\mathrm{cyclic}}}(\mathrm{v}_0,\mathrm{v}_1)\simeq
\mathfrak{M}^{\hat{A}_m^+}_{\zeta_{\mathrm{cyclic}}}(\mathrm{v}_0,\mathrm{v}_1).
\]
In particular, $\mathfrak{M}^{A_m^+}_{\zeta_{\mathrm{cyclic}}}(\mathrm{v}_0,\mathrm{v}_1)$ has a symmetric obstruction theory.
\end{cor}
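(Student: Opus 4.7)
The strategy is to match the defining relations of $\tilde{A}_m^+$, obtained as cyclic derivatives of $\omega_m^+$, against those of $A_m^+$, and then use Lemma~\ref{prop-A-hat2} to eliminate the extra arrows $p_j$ on the stable locus.

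First I would write out $\partial_x \omega_m^+$ for every arrow $x$ of $\tilde{Q}_m^+$. The derivatives with respect to the $p_j$ give precisely the relations \eqref{eq-rel+}, namely $b_1 q_1 = 0$, $b_1 q_{i+1} = b_2 q_i$ for $1 \le i \le m-1$, and $b_2 q_m = 0$. The derivatives with respect to $a_1, a_2$ recover the conifold relations $b_1 a_i b_2 = b_2 a_i b_1$. The derivatives with respect to $b_1, b_2$ yield $a_1 b_i a_2 - a_2 b_i a_1$ plus a correction of the form $\sum_j q_j p_{j'}$ that is linear in the $p_j$. The derivatives with respect to the $q_j$ yield linear combinations of the form $p_j b_1 - p_{j+1} b_2$ (with the obvious boundary conventions). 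In particular, once every $p_j$ is set to zero, the set of $\tilde{A}_m^+$-relations collapses exactly onto the defining relations of $A_m^+$ (the two conifold relations together with \eqref{eq-rel+}), and the extra $\partial_{q_j}\omega_m^+$ relations become vacuous.

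Now Lemma~\ref{prop-A-hat2} shows that every $\theta_{\tilde{\zeta}_{\mathrm{cyclic}}}$-stable $\tilde{A}_m^+$-representation has all $p_j = 0$; conversely any $A_m^+$-module extended by the zero maps $p_j := 0$ satisfies each cyclic-derivative relation of $\omega_m^+$. Stability is manifestly preserved in both directions, since the dimension vector, the group $\mathrm{GL}(\mathrm{v}_0)\times\mathrm{GL}(\mathrm{v}_1)$, and the character determining $\theta_{\tilde{\zeta}_{\mathrm{cyclic}}}$ are the same on both sides. Taking GIT quotients yields the claimed isomorphism $\mathfrak{M}^{A_m^+}_{\zeta_{\mathrm{cyclic}}}(\mathrm{v}_0,\mathrm{v}_1) \simeq \mathfrak{M}^{\tilde{A}_m^+}_{\zeta_{\mathrm{cyclic}}}(\mathrm{v}_0,\mathrm{v}_1)$. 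For the ``in particular'' assertion, the right-hand moduli space carries a symmetric obstruction theory by the standard construction for a Jacobi algebra of a quiver with potential (cf.~\cite[Theorem 1.3.1]{szendroi-ncdt}), which we pull back along the isomorphism.

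\textbf{Main obstacle.} The delicate point will be the scheme-theoretic upgrade of Lemma~\ref{prop-A-hat2}: as stated, that lemma forces $p_j = 0$ at every closed point of the stable locus, whereas what is needed is that the coordinate functions $p_j$ vanish identically on the $\theta_{\tilde{\zeta}_{\mathrm{cyclic}}}$-semistable open subscheme of the representation scheme of $\tilde{A}_m^+$. This should follow by running the proof of Lemma~\ref{prop-A-hat2} on the universal family (the stability condition is open, and Lemma~\ref{lem-A-hat1} is an identity inside $\tilde{A}_m^+$ itself, hence holds universally), but this step deserves explicit verification before one may identify the two moduli schemes rather than just their underlying reduced varieties.
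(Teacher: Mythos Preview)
Your approach is correct and is essentially the same as the paper's. The paper's proof is a single line—``The claim follows directly from Lemma~\ref{prop-A-hat2} and the definition of the potential''—and your write-up simply unpacks what that means: computing the cyclic derivatives $\partial_x\omega_m^+$, observing that setting $p_j=0$ collapses the $\tilde{A}_m^+$-relations exactly onto those of $A_m^+$, and then invoking Lemma~\ref{prop-A-hat2}. The scheme-theoretic concern you flag is not addressed in the paper at all; your own suggested resolution (that Lemma~\ref{lem-A-hat1} is an identity in $\tilde{A}_m^+$ and hence holds for the universal family, combined with openness of stability) is the right one.
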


Similarly, we define $A_m^+=(\hat{Q}_m^-,\omega_m^-)$ by the quiver $\hat{Q}_m^-$ in Figure \ref{quiver(Qm-hat)} and the following potential:
\[
\omega_m^+=a_1b_1a_2b_2-a_1b_2a_2b_1
+s_1(a_1r_2-a_2r_1)
+\cdots
+s_{m-1}(b_1r_m-b_2r_{m-1}).
\]
Then we have
\[
\mathfrak{M}^{A_m^-}_{\zeta_{\mathrm{cyclic}}}(\mathrm{v}_0,\mathrm{v}_1)\simeq
\mathfrak{M}^{\hat{A}_m^-}_{\zeta_{\mathrm{cyclic}}}(\mathrm{v}_0,\mathrm{v}_1).
\]

\begin{figure}[htbp]
  \centering
  \includegraphics{pic7.eps}
  \caption{quiver $\hat{Q}_m^-$}\label{quiver(Qm-hat)}
\end{figure}
\end{NB3}

\subsection{Zariski tangent spaces at the fixed points}
\begin{NB3}added\end{NB3}%

\begin{lem}\label{lem-ext^2}
Let $V$ be an $A$-module such that 
\[
\left(\Phi^\pm_m\right)^{-1}(V)\in \mathcal{P}_m^\pm\cap \pervc.
\]
Then we have 
\begin{equation}\label{eq-ext^2}
\Ext^2_{A_m^\pm}(S_\infty,V)=0.
\end{equation}
\end{lem}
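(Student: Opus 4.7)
\emph{Plan.} The idea is to chain a few identifications reducing the Ext-vanishing to a cohomology vanishing on $Y$.

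First I would apply the functor $\Hom_{A_m^\pm}(-, V)$ to the short exact sequence
\[
0 \to P \to P_\infty \to S_\infty \to 0
\]
of $A_m^\pm$-modules defining $P$. Since $P_\infty$ is projective and $\Hom_{A_m^\pm}(P_\infty, V) = V_\infty = 0$ (as $V$ is concentrated at the vertices $0, 1$), the long exact sequence collapses to give $\Ext^2_{A_m^\pm}(S_\infty, V) \cong \Ext^1_{A_m^\pm}(P, V)$.

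Next I would identify this with an Ext group on $Y$. Any extension $0 \to V \to E \to P \to 0$ in $A_m^\pm$-$\mathrm{Mod}$ has $E$ with zero component at $\infty$ (since both $V$ and $P$ do), so every arrow of $\tilde Q_m^\pm$ incident to $\infty$ must act as zero on $E$, and the $A_m^\pm$-module structure on $E$ is determined by its underlying $A$-structure. This forces $\Ext^1_{A_m^\pm}(P, V) = \Ext^1_A(P, V)$. By the equivalence $\Phi_m^\pm \colon \mca{P}_m^\pm \xrightarrow{\sim} \amod$ and Proposition \ref{prop-O_Y}, the latter is
\[
\Ext^1_{\mca{P}_m^\pm}(\OO_Y, F) = \Hom_{D^b(\coh)}(\OO_Y, F[1]) = H^1(Y, F),
\]
with $F = (\Phi_m^\pm)^{-1}(V)$, where the first equality uses that $\mca{P}_m^\pm$ is the heart of a bounded t-structure on $D^b(\coh)$.

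Finally I would appeal to the hypothesis $F \in \pervc$. Because the fibres of $f$ have dimension $< 2$ and $\R^1 f_* H^0(F) = 0$ by the defining conditions of $\perv$, the complex $\R f_* F$ is concentrated in degree zero; as $X$ is affine in the conifold setting, the Leray spectral sequence gives $H^1(Y, F) = H^1(X, \R f_* F) = 0$, which is the desired vanishing.

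The only delicate step is the second one, where one must verify that no extra constraints on extensions of $P$ by $V$ are imposed by the enlarged quiver $\tilde Q_m^\pm$. The cleanest packaging is through the equivalence $\tilde{\mca{P}}_m^\pm \simeq A_m^\pm$-$\mathrm{Mod}$ of Proposition \ref{prop-equiv}: there $P$ and $V$ correspond to $(\OO_Y, 0, 0)$ and $(F, 0, 0)$, and any extension of $(\OO_Y, 0, 0)$ by $(F, 0, 0)$ in $\tilde{\mca{P}}_m^\pm$ must have trivial framing on the middle term, reducing the computation directly to $\Ext^1_{\mca{P}_m^\pm}(\OO_Y, F)$.
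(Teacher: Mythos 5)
Your proof is correct and follows the same chain of isomorphisms that the paper uses, namely $\Ext^2_{A_m^\pm}(S_\infty,V)\simeq \Ext^1_{A_m^\pm}(P,V)\simeq \Ext^1_{A}(P,V)\simeq \Ext^1_{Y}(\OO_Y,F)$ followed by the vanishing from $F\in\pervc$ and affineness of $X$; the paper states these isomorphisms without justification, and you supply the expected justifications (projectivity of $P_\infty$, $P_\infty=V_\infty=0$ so the extension-closed subcategory of $A_m^\pm$-modules vanishing at $\infty$ is equivalent to $\amod$, the tilting equivalence $\Phi_m^\pm$, and degeneration of the Leray spectral sequence). One small point worth tightening: to see that $\R f_*F$ is concentrated in degree zero you should cite $\R^0 f_*(H^{-1}(F))=0$ and $\R^{\geq 2}f_*=0$ in addition to $\R^1 f_*(H^0(F))=0$, since $F$ is a priori a two-term complex.
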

\begin{proof}
Note that we have
\[
\Ext^2_{A_m^\pm}(S_\infty,V)\simeq \Ext^1_{A_m^\pm}(P,V)\simeq \Ext^1_{A}(P,V)\simeq \Ext^1_{Y}\left(\OO_Y,\left(\Phi^\pm_m\right)^{-1}(V)\right).
\]
The last one vanishes by the assumption $\left(\Phi^\pm_m\right)^{-1}(V)\in \pervc$ .
\end{proof}

\begin{prop}\label{prop-ext^1}
Let $\tilde{V}$ be a $\zeta_{\mathrm{cyclic}}$-stable $A_m^\pm$-module with $V_\infty\simeq \C$ and $V$ be the kernel of the natural map $\V\to S_\infty$.
Then we have
\[
\dim \Ext^1_{A_m^\pm}(\tilde{V},\tilde{V})=
\dim \Ext^1_{A}({V},{V})-\dim \Hom_{A}({V},{V})+
\dim \Hom_Y (\OO_Y,F).
\]
\end{prop}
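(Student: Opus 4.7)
The plan is to extract the identity from the short exact sequence
\[
0 \to V \to \tilde{V} \to S_\infty \to 0
\]
of $A_m^\pm$-modules, by applying $\Hom_{A_m^\pm}(-,\tilde{V})$, $\Hom_{A_m^\pm}(V,-)$ and $\Hom_{A_m^\pm}(S_\infty,-)$ and taking alternating sums of dimensions in the three resulting long exact sequences.

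First I would collect the required vanishings. Since $V_\infty=0$, a direct inspection of the quiver $Q_m^\pm$ gives $\Hom_{A_m^\pm}(S_\infty,V)=\Hom_{A_m^\pm}(V,S_\infty)=0$. For any extension $0\to S_\infty\to E\to V\to 0$, the arrows out of $\infty$ in $Q_m^\pm$ must annihilate the submodule $S_\infty\subset E$; since $V_\infty=0$ forces $E_\infty=S_\infty$, those arrows vanish on all of $E$ and the extension splits, giving $\Ext^1_{A_m^\pm}(V,S_\infty)=0$. The same argument with $V$ replaced by the indecomposable projective $P$ gives $\Ext^1_{A_m^\pm}(P,S_\infty)=0$, and combined with the short exact sequence $0\to P\to P_\infty\to S_\infty\to 0$ this yields $\Ext^2_{A_m^\pm}(S_\infty,S_\infty)=0$. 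The absence of loops at $\infty$ in $Q_m^\pm$ gives $\Ext^1_{A_m^\pm}(S_\infty,S_\infty)=0$. The crucial input $\Ext^2_{A_m^\pm}(S_\infty,V)=0$ is Lemma \ref{lem-ext^2}, applicable because Lemma \ref{lem-4.5} ensures $F=(\Phi_m^\pm)^{-1}(V)\in\pervc$. Stability of $\tilde{V}$ gives $\End_{A_m^\pm}(\tilde{V})=\C$, and indecomposability of $\tilde{V}$ forces the extension class $[\tilde{V}]\in\Ext^1_{A_m^\pm}(S_\infty,V)\cong\Hom_Y(\OO_Y,F)$ (the isomorphism from the proof of Proposition \ref{prop-equiv}) to be non-zero, whence $\Hom_{A_m^\pm}(S_\infty,\tilde{V})=0$ follows from the long exact sequence for $\Hom_{A_m^\pm}(S_\infty,-)$.

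With these ingredients the three long exact sequences collapse. The one for $\Hom_{A_m^\pm}(S_\infty,-)$ reduces to
\[
0\to\C\to\Hom_Y(\OO_Y,F)\to\Ext^1_{A_m^\pm}(S_\infty,\tilde{V})\to 0
\]
together with $\Ext^2_{A_m^\pm}(S_\infty,\tilde{V})=0$, so $\dim\Ext^1_{A_m^\pm}(S_\infty,\tilde{V})=\hom_Y(\OO_Y,F)-1$. The one for $\Hom_{A_m^\pm}(V,-)$ gives $\Hom_{A_m^\pm}(V,\tilde{V})\cong\Hom_A(V,V)$ and $\Ext^1_{A_m^\pm}(V,\tilde{V})\cong\Ext^1_A(V,V)$, where I additionally use that any extension of modules with trivial $\infty$-part is again $A$-valued, so Ext groups over $A_m^\pm$ and $A$ agree for such modules. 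Finally the one for $\Hom_{A_m^\pm}(-,\tilde{V})$ truncates to
\[
0\to\C\to\Hom_A(V,V)\to\Ext^1_{A_m^\pm}(S_\infty,\tilde{V})\to\Ext^1_{A_m^\pm}(\tilde{V},\tilde{V})\to\Ext^1_A(V,V)\to 0,
\]
and the alternating sum of dimensions produces the desired identity.

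The main delicate step is the vanishing $\Ext^2_{A_m^\pm}(S_\infty,V)=0$, which is exactly Lemma \ref{lem-ext^2}: it rests on the Morita equivalence $\Phi_m^\pm$ to identify this Ext group with $\Ext^1_Y(\OO_Y,F)$, which vanishes because $F\in\pervc$. Everything else reduces to elementary bookkeeping with the quivers $Q_m^\pm$ together with the observation that indecomposability of the stable module $\tilde{V}$ forces the framing extension class to be non-zero.
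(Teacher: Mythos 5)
Your proof is correct and rests on the same essential ingredient as the paper's, namely the vanishing $\Ext^2_{A_m^\pm}(S_\infty,V)=0$ from Lemma~\ref{lem-ext^2} (applied via Lemma~\ref{lem-4.5}), extracted from the long exact sequences attached to the short exact sequence $0\to V\to\tilde V\to S_\infty\to 0$. The bookkeeping differs mildly: the paper applies only $\Hom_{A_m^\pm}(\tilde V,-)$ and $\Hom_{A_m^\pm}(-,V)$, using $\Hom_{A_m^\pm}(\tilde V,V)=0$ from stability to collapse the second sequence directly to a four-term exact sequence whose alternating sum gives the formula, whereas you route through all three functors $\Hom(S_\infty,-)$, $\Hom(V,-)$, $\Hom(-,\tilde V)$, using indecomposability of $\tilde V$ to get $\Hom(S_\infty,\tilde V)=0$ via nonvanishing of the extension class $[\tilde V]$. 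Your extra detour yields $\ext^1(S_\infty,\tilde V)=\hom_Y(\OO_Y,F)-1$, and the $-1$ cancels against $\hom(\tilde V,\tilde V)=1$ in the final alternating sum, landing on the same identity; the paper's variant is a little more economical but there is no substantive difference.
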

\begin{proof}
Applying the functor $\Hom_{A_m^\pm}(\tilde{V},-)$ for the
short exact sequence
\begin{equation}\label{eq-short-exact}
0\to V\to \tilde{V}\to S_\infty\to 0
\end{equation}
we have the following exact sequence:
\begin{equation*}
\xymatrix@R=.8pc@C=.9pc{
	& \Hom_{A_m^\pm}(\tilde{V},\tilde{V}) \ar[r]
	& \Hom_{A_m^\pm}(\tilde{V},S_\infty) \ar[lld]
\\
             \Ext^1_{A_m^\pm}(\tilde{V},V) \ar[r]
           & \Ext^1_{A_m^\pm}(\tilde{V},\tilde{V}) \ar[r] 
           & \Ext^1_{A_m^\pm}(\tilde{V},S_\infty).
}
\end{equation*}
Note that 
\[
\Hom_{A_m^\pm}(\tilde{V},S_\infty)\simeq (V_\infty)^*\simeq \C
\]
and this is spanned by the image of $\mathrm{id}_{\tilde{V}}\in\Hom_{A_m^\pm}(\tilde{V},\tilde{V})$. Thus the map in the upper line is surjective.
It is clear that any exact sequence
\[
0\to S_\infty \to * \to \tilde{V}\to 0
\]
of $A_m^\pm$-modules splits, that is, $\Ext^1_{A_m^\pm}(\tilde{V},S_\infty)=0$. Then we have
\[
\Ext^1_{A_m^\pm}(\tilde{V},V)\simeq \Ext^1_{A_m^\pm}(\tilde{V},\tilde{V}).
\]
On the other hand, applying the functor $\Hom_{A_m^\pm}(-,V)$ to the short exact sequence \eqref{eq-short-exact}, we have the following exact sequence:
\begin{equation*}
\xymatrix@R=.8pc@C=.9pc{
	& \Hom_{A_m^\pm}(\tilde{V},V) \ar[r]
	& \Hom_{A_m^\pm}(V,V) \ar[lld]
\\
		\Ext^1_{A_m^\pm}(S_\infty,V) \ar[r]
	&	\Ext^1_{A_m^\pm}(\tilde{V},V) \ar[r]
	&	\Ext^1_{A_m^\pm}(V,V) \ar[lld]
\\
		\Ext^2_{A_m^\pm}(S_\infty,V).	&	&
}
\end{equation*}
Since $\tilde{V}$ is $\zeta_{\mathrm{cyclic}}$-stable, $\Hom_{A_m^\pm}(\tilde{V},V)=0$.
Moreover, $V$ satisfies the assumption of Lemma \ref{lem-ext^2} by Lemma \ref{lem-4.5}.
Hence the claim follows.
\end{proof}

\begin{lem}\label{lem-alt}
\[
\dim \Ext^1_{A}({V},{V})-\dim \Hom_{A}({V},{V})\equiv \dim V_0 + \dim V_1\quad (\mathrm{mod}\ 2).
\]
\end{lem}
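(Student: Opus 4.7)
The plan is to compute both sides of the congruence by means of the Koszul resolution of the 3-Calabi-Yau algebra $A$, reducing the claim to a parity statement about the rank of the middle differential, which I then propose to establish via the Calabi-Yau self-duality of the complex.

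First, I would apply the Koszul complex of $A$ that is recalled in the proof of Lemma \ref{lem-Koszul} to compute $\Ext^i_A(V,V)$. This yields a four-term complex
\[
C^0 \xrightarrow{d_1} C^1 \xrightarrow{d_2} C^2 \xrightarrow{d_3} C^3,
\]
whose cohomologies are $\Ext^i_A(V,V)$ for $i=0,1,2,3$. Since $A$ comes from the conifold quiver $Q$ (two vertices, four arrows, four relations coming from the cyclic derivatives of $\omega$), direct inspection gives $\dim C^0 = \dim C^3 = v_0^2 + v_1^2$ (one $\End(V_i)$ for each vertex) and $\dim C^1 = \dim C^2 = 4v_0 v_1$ (one $\Hom(V_{s(a)},V_{t(a)})$ for each of the four arrows).

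Next, from $\hom_A(V,V)=\dim C^0 - \mathrm{rk}(d_1)$ and $\ext^1_A(V,V)=\dim C^1 - \mathrm{rk}(d_1) - \mathrm{rk}(d_2)$ one deduces
\[
\ext^1_A(V,V)-\hom_A(V,V) \;=\; \dim C^1 - \dim C^0 - \mathrm{rk}(d_2).
\]
Reducing this modulo $2$, and using $v_i^2 \equiv v_i \pmod 2$ together with $4v_0v_1 \equiv 0 \pmod 2$, the right-hand side becomes $v_0 + v_1 + \mathrm{rk}(d_2)$. Consequently, the congruence in the lemma is equivalent to the assertion that $\mathrm{rk}(d_2)$ is always even.

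Finally, to show that $\mathrm{rk}(d_2)$ is even, I would use the 3-Calabi-Yau self-duality of the Koszul complex. The natural trace pairing $C^1 \otimes C^2 \to \C$ (identifying arrow-indexed summands via $\Hom(V_{s(a)},V_{t(a)}) \otimes \Hom(V_{t(a)},V_{s(a)}) \to \C$) identifies $C^2 \cong (C^1)^*$. Under this identification, the map $d_2$ is given by the Hessian of the cyclic potential $\omega=a_1b_1a_2b_2 - a_1b_2a_2b_1$ at the representation $V$; because each term of $\omega$ uses each arrow at most once, the Hessian has zero diagonal blocks, while the cyclicity of the trace (together with the sign in $\omega$) forces the off-diagonal blocks $H_{a_1 a_2}$ and $H_{b_1 b_2}$ to be antisymmetric on $\Hom(V_0,V_1)$ and $\Hom(V_1,V_0)$ respectively. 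A direct inspection then shows that in block form
\[
d_2 \;=\; \begin{pmatrix} 0 & A & X & Y \\ -A & 0 & Z & W \\ X^T & Z^T & 0 & B \\ Y^T & W^T & -B & 0 \end{pmatrix},
\]
whose rank is always even, either by a Schur complement argument or by observing that $d_2$ is conjugate (through a sign change on one $a$-summand and one $b$-summand) to an antisymmetric matrix. The main technical obstacle is thus verifying the skew-symmetry of these central Hessian blocks and confirming that the rank-evenness propagates through the cross blocks; once these signs are correctly tracked via the CY-3 Koszul duality, the parity claim follows and the lemma is proved.
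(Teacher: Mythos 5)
Your overall plan matches the underlying argument the paper relies on, but the paper itself does not carry it out: the paper's proof of this lemma is a one-line citation of \cite[Theorem 7.1]{ncdt-brane}, which asserts $\dim\Ext^1_A(V,V)-\dim\Hom_A(V,V)\equiv\sum_i(\dim V_i)^2-\sum_{h}\dim V_{\mathrm{in}(h)}\dim V_{\mathrm{out}(h)}\pmod{2}$ for a quiver with potential; the lemma then follows since for the conifold quiver the right side is $v_0^2+v_1^2-4v_0v_1\equiv v_0+v_1$. Your Koszul-complex computation $\mathrm{ext}^1-\mathrm{hom}=\dim C^1-\dim C^0-\mathrm{rk}(d_2)$ and the reduction to ``$\mathrm{rk}(d_2)$ is even'' are both correct, and this is indeed the content of the cited theorem.

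The gap is in the final step. Under the natural trace pairing $C^1_h\times C^2_h\to\C$, the Hessian $d_2$ is \emph{symmetric}, not antisymmetric: one checks directly that $(H_{h,h'})^{\ast}=H_{h',h}$ (this is just the symmetry of the Hessian of the holomorphic function $\operatorname{tr}W$ on the representation space). You are right that the diagonal-type blocks $A=H_{a_1,a_2}$ and $B=H_{b_1,b_2}$ are each antisymmetric, because in addition $H_{a_2,a_1}=-H_{a_1,a_2}$; combining the two identities gives $A^{\ast}=-A$. But the proposed sign change does not turn $d_2$ into an antisymmetric matrix: if $\rho=\operatorname{diag}(\epsilon_1,\dots,\epsilon_4)$ with $\epsilon_i=\pm1$, then $\rho d_2$ is antisymmetric (with respect to the trace pairing) precisely when $\epsilon_i\epsilon_j=-1$ for every off-diagonal block, and this system has no solution once three or more indices are involved (e.g.\ $\epsilon_1\epsilon_2=\epsilon_1\epsilon_3=-1$ forces $\epsilon_2\epsilon_3=+1$). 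The same obstruction kills the Schur-complement variant: with $S=\begin{pmatrix}0&A\\-A&0\end{pmatrix}$ the block $S$ is symmetric (not antisymmetric) precisely because $A^{\ast}=-A$, so the reduction does not inherit an antisymmetric structure. Thus the evenness of $\mathrm{rk}(d_2)$, while true, is not established by the argument given; you should either cite \cite[Theorem 7.1]{ncdt-brane} as the paper does, or give a correct proof of the evenness, which is more delicate than a sign change.
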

\begin{proof}
It is shown in the proof of \cite[Theorem 7.1]{ncdt-brane} that
\begin{align*}
&\dim \Ext^1_{A}({V},{V})-\dim \Hom_{A}({V},{V})\\
\equiv &\sum_{i\in Q_0}(\dim V_i)^2-\sum_{h\in Q_1}\left(\dim V_{\mathrm{in}(h)}\right)\left(\dim V_{\mathrm{out}(h)}\right)\quad (\mathrm{mod}\, 2).
\end{align*}
\end{proof}

\begin{cor}\label{cor-zariski}
For a $T'$-invariant closed point $x\in \mathfrak{M}_{\zeta^{m,\pm}}(\mathrm{v}_0,\mathrm{v}_1)^{T'}$, the parity of the dimension of the Zariski tangent space to $\mathfrak{M}_{\zeta^{m,\pm}}(\mathrm{v}_0,\mathrm{v}_1)$ at $x$ equals to the parity of $\mathrm{v}_1$.
\end{cor}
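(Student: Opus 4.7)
The plan is to combine Proposition \ref{prop-ext^1}, Lemma \ref{lem-alt}, and the dimension vector formulas from Theorem \ref{thm-moduli} into a direct parity computation. Since $\mathfrak{M}_{\zeta^{m,\pm}}(\mathrm{v}_0,\mathrm{v}_1)$ is built via King's GIT construction and the point $x$ is stable (hence corresponds to a simple framed module), the Zariski tangent space at $x$ is $\Ext^1_{A_m^\pm}(\tilde V,\tilde V)$, where $\tilde V$ is the $\zeta_{\mathrm{cyclic}}$-stable $A_m^\pm$-module associated to $x$ under the isomorphism in Theorem \ref{thm-moduli}.

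Next I would apply Proposition \ref{prop-ext^1} to rewrite
\[
\dim\Ext^1_{A_m^\pm}(\tilde V,\tilde V)=\dim\Ext^1_A(V,V)-\dim\Hom_A(V,V)+\dim\Hom_Y(\OO_Y,F),
\]
where $V=\ker(\tilde V\to S_\infty)$ is regarded as an $A$-module and $F=(\Phi_m^\pm)^{-1}(V)\in\pervc$ by Lemma \ref{lem-4.5}. By Lemma \ref{lem-alt}, the difference $\dim\Ext^1_A(V,V)-\dim\Hom_A(V,V)$ is congruent modulo $2$ to $\dim V_0+\dim V_1$.

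Then I would substitute the dimension vectors given by Theorem \ref{thm-moduli}. In the $+$ case,
\[
\dim V_0+\dim V_1=\bigl((m-1)\mathrm{v}_0-m\mathrm{v}_1\bigr)+\bigl(m\mathrm{v}_0-(m+1)\mathrm{v}_1\bigr)=(2m-1)\mathrm{v}_0-(2m+1)\mathrm{v}_1\equiv\mathrm{v}_0+\mathrm{v}_1\pmod 2,
\]
and analogously $\dim V_0+\dim V_1\equiv\mathrm{v}_0+\mathrm{v}_1\pmod 2$ in the $-$ case. For the final term, the Morita equivalence of \S\ref{subsec-quiver-for-conifold} identifies $H^0(Y,F)$ with the degree-$0$ component of the framed $\A$-module $\tilde W$ with $\dimv\tilde W=(\mathrm{v}_0,\mathrm{v}_1,1)$, so $\dim\Hom_Y(\OO_Y,F)=\mathrm{v}_0$. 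Assembling these ingredients yields
\[
\dim\Ext^1_{A_m^\pm}(\tilde V,\tilde V)\equiv(\mathrm{v}_0+\mathrm{v}_1)+\mathrm{v}_0\equiv\mathrm{v}_1\pmod 2,
\]
which is the desired statement.

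The argument is essentially formal once the preceding lemmas are in hand; the only bookkeeping that requires care is matching the two different framed quiver descriptions of the same moduli point, namely the $\A$-module encoding used to define $\mathrm{v}_0$ and $\mathrm{v}_1$ in \S\ref{defofncdt} versus the $A_m^\pm$-module encoding used in Proposition \ref{prop-ext^1}. I expect no real obstacle, since the dictionary between the two is furnished explicitly by Theorem \ref{thm-moduli} together with Proposition \ref{prop-O_Y}.
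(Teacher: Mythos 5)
Your proof is correct and follows essentially the same approach as the paper's own one-line proof, which cites Theorem~\ref{thm-moduli}, Proposition~\ref{prop-ext^1} and Lemma~\ref{lem-alt}; you have simply spelled out the parity bookkeeping (the congruence $\dim V_0+\dim V_1\equiv\mathrm{v}_0+\mathrm{v}_1\pmod 2$ from the dimension-vector change of coordinates in Theorem~\ref{thm-moduli}, and the identification $\dim\Hom_Y(\OO_Y,F)=\mathrm{v}_0$ via the Morita equivalence) that the paper leaves implicit.
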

\begin{proof}
Under the isomorphism in Theorem \ref{thm-moduli}, $x$ can be regarded as a closed point in the moduli space of $A_m^\pm$-modules. 
Let $\tilde{V}$ denote the $A^\pm_m$-module corresponding to $x$.
Then the Zariski tangent space is isomorphic to $\Ext^1_{A_m^\pm}(\tilde{V},\tilde{V})$. 
Thus the claim is a consequence of Proposition \ref{prop-ext^1} and Lemma \ref{lem-alt}.
\end{proof}

\begin{prop}\label{prop-isolated}
For each ${T'}$-fixed closed point $x\in\M{\zeta}{\vv}^{T'}$, the Zariski tangent space to $\M{\zeta}{\vv}$ at $x$ has no non-trivial ${T'}$-invariant subspace.
\end{prop}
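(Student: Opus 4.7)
The plan is to apply Theorem~\ref{thm-moduli} together with Proposition~\ref{cor-A-hat-3} (or~\ref{cor-A-hat-4}) to reinterpret $\M{\zeta^{m,\pm}}{\vv}$ as a moduli of $\tilde A_m^\pm$-modules, so that the Zariski tangent space at $x$ becomes $\Ext^1_{\tilde A_m^\pm}(\tilde V,\tilde V)$ for the $\tilde A_m^\pm$-module $\tilde V$ representing $x$, and then to compute this $\Ext^1$ as a $T'$-representation. The computation is $T'$-equivariant because the potential $\omega_m^\pm$ is $T'$-invariant: the conifold monomials $a_1b_1a_2b_2$, $a_1b_2a_2b_1$ have $T$-weight $c:=\alpha_1\alpha_2\beta_1\beta_2$, which is trivial on $T'$ by definition of the latter, and compatible $T$-weights can be assigned to $p_j,q_j$ (resp.\ $r_j,s_j$) so that every monomial of $\omega_m^\pm$ has the common weight $c$.

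First, I would use Proposition~\ref{prop-parameterization} to write $\tilde V=\C v_\infty\oplus\bigoplus_B\C v_B$, where $B$ runs over the stones of the (finite type) pyramid partition attached to $x$, with each $T$-weight $\chi_B$ an explicit Laurent monomial in $\alpha_i,\beta_j$. The key combinatorial input, to be checked from the empty room configurations in Figures~\ref{erc} and~\ref{erc+}, is: for any two distinct stones $B\neq B'$, the ratio $\chi_{B'}/\chi_B$ is \emph{not} a power of $c$. Equivalently, distinct stones lie on distinct $c$-orbits, i.e., no two stones sit on the same ``depth ray'' $xy=zw$ in $Y$; this is manifest from the stepped shape of the pyramid.

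Second, I would compute $\Ext^1_{\tilde A_m^\pm}(\tilde V,\tilde V)$ using the $T'$-equivariant Koszul-type bimodule resolution of $\tilde A_m^\pm$ coming from $\omega_m^\pm$. Each term in the resulting complex decomposes under $T$ into $\Hom$-lines $\Hom(\C v_{B_1},\C v_{B_2})$ of weight $\chi_{B_2}/\chi_{B_1}$. By the combinatorial input, the pieces with $B_1,B_2$ both among the stones have $T'$-invariant weight only when $B_1=B_2$; these diagonal contributions assemble into the ``weight-one'' part of the Koszul complex of the Calabi-Yau algebra $A$ for the trivial coefficient module, whose alternating sum vanishes by the $3$-Calabi-Yau property of $A$ together with Hirzebruch-Riemann-Roch on fibers of $f$ (as used in Proposition~\ref{prop3.7}). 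The remaining pieces involve $v_\infty$ and, via the $T'$-equivariant refinement of Proposition~\ref{prop-ext^1}, reduce to showing $\Hom_Y(\OO_Y,F)^{T'}=0$, which follows from the combinatorial input applied at the framing vertex: any $T'$-invariant global section of $F$ would supply a stone on the same $c$-orbit as $v_\infty$, contradicting the injectivity.

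The main obstacle will be the careful bookkeeping in the Koszul-complex cancellation and the rigorous justification of the combinatorial injectivity for both families of pyramid partitions; once these are in hand, the vanishing of $\Ext^1_{\tilde A_m^\pm}(\tilde V,\tilde V)^{T'}$, and hence the vanishing of the $T'$-invariant part of the Zariski tangent space, follows formally.
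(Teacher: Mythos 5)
Your proposal has a critical flaw in the key combinatorial input. You claim that ``for any two distinct stones $B\neq B'$, the ratio $\chi_{B'}/\chi_B$ is \emph{not} a power of $c$'' and that this is ``manifest from the stepped shape of the pyramid.'' This is false: pyramid partitions (both the finite type and the ordinary ones in Figures~\ref{erc} and \ref{erc+}) are genuinely three-dimensional objects in which stones do get stacked \emph{behind} one another along the depth direction, and adjacent stones in such a column differ precisely by the $T$-weight of $c$. This is why the paper's proof of Proposition~\ref{prop-parameterization} emphasizes that $T'$-weight spaces of $P$ have the form $\C[c]\cdot v_B$ (so they can be multi-dimensional), and why the proof of Lemma~\ref{lem_T'1} explicitly manipulates $T'$-weight spaces $W\simeq \C[c]/(c^m)$ with $m>1$. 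If your combinatorial claim were true, every $T'$-weight space of $V$ would be one-dimensional and the statement $\M{\zeta}{\vv}^{T'}=\M{\zeta}{\vv}^{T}$ would be vacuous; in reality it requires a nontrivial argument.

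Because this input fails, the later steps collapse as well. The assertion that ``the remaining pieces \ldots reduce to showing $\Hom_Y(\OO_Y,F)^{T'}=0$'' is also incorrect: the framing map $s\colon\OO_Y\to F$ is a nonzero $T'$-invariant element of $\Hom_Y(\OO_Y,F)$, so that space is never $T'$-invariant-free. What actually needs to be proven (and what the paper proves via the exact sequence in Proposition~\ref{prop-isolated} and Lemma~\ref{lem_T'1}) is that $\Hom_A(V,V)^{T'}\to \Hom_A(P,V)^{T'}\simeq \Hom_Y(\OO_Y,F)^{T'}$ is \emph{surjective} — a genuinely different, more delicate statement, established by a hands-on analysis of $T'$-invariant homomorphisms $I=\ker(P\to V)\to V$. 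Similarly, your use of the $3$-Calabi-Yau property only controls an alternating sum, which cannot by itself yield vanishing of the single cohomology group $\Ext^1_A(V,V)^{T'}$; the paper's Lemma~\ref{lem_T'2} instead shows that the degree-one term of the Koszul complex already has no $T'$-invariant subspace. Thus the proposed argument is not a correct proof and does not reduce to the paper's approach.
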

\begin{proof}
Recall that we have the following exact sequence:
\[
0 \to 
\Hom_{A_m^\pm}(V,V) \to
\Ext^1_{A_m^\pm}(S_\infty,V) \to
\Ext^1_{A_m^\pm}(\tilde{V},V) \to
\Ext^1_{A_m^\pm}(V,V) \to 0.
\]
So, it is enough to show that neither 
\begin{equation}\label{eq-left}
\coker\left(\Hom_{A_m^\pm}(V,V) \to
\Ext^1_{A_m^\pm}(S_\infty,V)\right)
\end{equation}
nor 
\begin{equation*}
\Ext^1_{A_m^\pm}(V,V)
\end{equation*}
have non-trivial ${T'}$-invariant subspace.

First, we have the exact sequence
\[
0\to \Hom_{A}(V,V) \to \Hom_{A}(P,V)\to \Hom_{A}\left(\ker(P\to V),V\right)\to \Ext^1_{A}(V,V)
\]
of which the first map coincides with the map in \eqref{eq-left} under the isomorphisms
\[
\Hom_{A_m^\pm}\left(\ker(P\to V),V\right)=\Hom_{A}\left(\ker(P\to V),V\right), \Ext^1_{A_m^\pm}(S_\infty,V)= \Hom_{A_m^\pm}(P,V)=\Hom_{A}(P,V).
\]
Here $P$ denotes the kernel of the canonical map $P_\infty\to S_\infty$ as before.

The claim follows from Lemma \ref{lem_T'1} and Lemma \ref{lem_T'2}.
\end{proof}
\begin{lem}\label{lem_T'1}
\begin{itemize}
We put $I:=\ker(P\to V)$.
\item[(1)] In the case $\zeta=\zeta^{m,+}$, there is no non-trivial ${T'}$-invariant subspace in 
$\ker\left(\Hom_{A}\left(I,V\right)
 \to
\Ext^1_{A}(V,V)\right)$.
\item[(2)] In the case $\zeta=\zeta^{m,-}$, there is no non-trivial ${T'}$-invariant subspace in $\Hom_{A}\left(I,V\right)$.
\end{itemize}
\end{lem}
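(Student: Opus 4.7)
The approach I would take is to exploit the pyramid-partition description of $T'$-fixed closed points from Proposition \ref{prop-parameterization}: both $P$ and $V$ admit $T$-weight decompositions whose one-dimensional weight spaces are indexed by the stones of the empty room configuration, and $V$ is obtained from $P$ by quotienting out the stones outside the finite sub-pyramid parameterizing $\tilde V$. Thus $I=\ker(P\to V)$ is spanned by the stones outside $V$, and any $T'$-equivariant $A$-linear map $\psi\colon I\to V$ must send each weight vector $v_B\in I$ into the (at most one-dimensional) weight subspace of $V$ having the same $T'$-weight.

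For part (2), with $\zeta=\zeta^{m,-}$, I would show $\Hom_A(I,V)^{T'}=0$ directly. Using the central element $c=b_2a_2b_1a_1+a_2b_2a_1b_1\in Z(A)$, which acts on $P$ by moving one stone deeper along the pyramid (cf. the proof of Proposition \ref{prop-parameterization}), each $T'$-weight space of $P$ is a free $\C[c]$-module of rank one on a boundary stone of the pyramid of Figure \ref{erc+}. In the $\zeta^{m,-}$ chamber the weights of the stones in the finite sub-pyramid $V$ are pairwise distinct, and furthermore no stone in $I$ shares a $T'$-weight with any stone in $V$: any such coincidence would force a non-trivial integral linear relation among the weights of $a_1,a_2,b_1,b_2$ incompatible with the shape of the pyramid partition for this chamber. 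Hence $\psi=0$.

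For part (1), with $\zeta=\zeta^{m,+}$, the kernel in question equals $\Hom_A(P,V)/\Hom_A(V,V)$, so the statement is equivalent to the surjectivity of $\Hom_A(V,V)^{T'}\to\Hom_A(P,V)^{T'}$. Since $\tilde V$ is $\zeta_{\mathrm{cyclic}}$-stable, $V$ is cyclically generated over $A$ by the image $\pi(v_{B_0})$ of the top stone of the pyramid of Figure \ref{erc}. Any $T'$-invariant $\tilde f\colon P\to V$ is therefore determined by the value $\tilde f(v_{B_0})$, which by $T'$-invariance lies in the trivial-weight subspace of $V$. Defining $\phi\in\End_A(V)^{T'}$ by $\phi(\pi(v_{B_0})):=\tilde f(v_{B_0})$ and extending $A$-linearly produces the desired lift; the $A$-linearity of $\tilde f$ ensures that every relation satisfied by $\pi(v_{B_0})$ in $V$ is automatically satisfied by $\tilde f(v_{B_0})$, so $\phi$ is well-defined and satisfies $\phi\circ\pi=\tilde f$.

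The main obstacle will be the weight-matching analysis in part (2): one must explicitly identify the $T$-weight of each stone in the empty room configuration of Figure \ref{erc+} under $\Phi_m^-$ and verify that no accidental weight coincidence can occur between a stone of $I$ and a stone of $V$ for any finite sub-pyramid cut out by a $\zeta_{\mathrm{cyclic}}$-stable $\tilde V$. A secondary concern in part (1) is the consistency of the endomorphism $\phi$ defined by its value on the cyclic generator, which is handled by the $A$-linearity observation above.
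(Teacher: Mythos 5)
Your proposal has genuine gaps in both parts.

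In part (2), you claim that ``no stone in $I$ shares a $T'$-weight with any stone in $V$'' and would deduce $\Hom_A(I,V)^{T'}=0$ immediately. This is false: the central element $c=b_2a_2b_1a_1+a_2b_2a_1b_1$ has \emph{trivial} $T'$-weight, because $T'$ is defined by the constraint $\alpha_1\alpha_2\beta_1\beta_2=1$. Consequently $v_B$ and $c^n\cdot v_B$ always lie in the same $T'$-weight space, and at the boundary of the pyramid one has $v_B\in V$ while $c\cdot v_B\in I$ --- so stones of $I$ and $V$ do share $T'$-weights in abundance. The paper must therefore grapple with $T'$-invariant maps of the shape $\phi(v_B)=v_{B'}$ when $c^n v_{B'}=v_B$; the actual contradiction is reached by chasing such a $\phi$ around a ridge stone using $A$-linearity of $\phi$ in the $(b_1a_1)$ and $(b_2a_2)$ directions, not by a weight disjointness argument.

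In part (1), after correctly identifying the kernel with $\Hom_A(P,V)/\Hom_A(V,V)$, you try to lift any $T'$-invariant $\tilde f\colon P\to V$ to $\phi\in\End_A(V)$ with $\phi\circ\pi=\tilde f$, asserting that ``the $A$-linearity of $\tilde f$ ensures that every relation satisfied by $\pi(v_{B_0})$ in $V$ is automatically satisfied by $\tilde f(v_{B_0})$.'' This is exactly what needs to be proved, not something automatic: if $a\cdot v_{B_0}\in I$ (so $a\cdot\pi(v_{B_0})=0$), well-definedness of $\phi$ requires $a\cdot\tilde f(v_{B_0})=\tilde f(a\cdot v_{B_0})=0$, i.e.\ that $\tilde f|_I$ kills $a\cdot v_{B_0}$. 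But $\tilde f|_I$ is precisely the element whose vanishing is the conclusion of the lemma, so the argument is circular. The paper instead shows directly that the image of a nonzero $T'$-invariant $\phi\in\Hom_A(I,V)$ under the connecting map is a non-trivial class in $\Ext^1_A(V,V)$, by restricting to a single $T'$-weight space $W$ of $P$ with $W\simeq\C[c]/(c^m)$, $W\cap I\simeq(c^k)/(c^m)$, and noting that the induced extension of $\C[c]/(c^k)$ by itself is $\C[c]/(c^{k+n})\oplus\C[c]/(c^{k-n})\not\simeq\C[c]/(c^k)^{\oplus 2}$.
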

\begin{proof}
In the proof of Proposition \ref{prop-isolated}, we see that
\begin{itemize}
\item every $T$-weight vector in $P$, $V$ and $I$ is associated to a stone in the empty room configuration, and
\item every $T'$-weight space in $P$ is described as $\C[c]\cdot v_B$ for some stone $B$.
\end{itemize}
Suppose we have a nonzero ${T'}$-invariant element $\phi\in \Hom_{A}\left(I,V\right)$. 
We may assume that there is a positive integer $n$ such that
\[
\phi(v_B)=\begin{cases}
v_{B'} & \text{if $\exists\, [v_{B'}]\in V$ such that $c^n\cdot v_{B'}=v_B\in P$,}\\
0 & \text{otherwise}
\end{cases}
\]
for $v_B\in I$.
\begin{itemize}
\item[(1)]
In the case $\zeta=\zeta^{m,+}$, we will show that the image of $\phi$ under the map $\Hom_{A}\left(I,V\right) \to \Ext^1_{A}(V,V)$ gives a non-trivial extension. 
Since $\phi$ is ${T'}$-invariant it gives self-extensions of $T'$-weight spaces. 
Take a $T'$-weight space $W$ of $P$ such that the restriction of $\phi$ to $W\cap I$ is nontrivial.
Recall that $W$ admits a $\C[c]$-module structere.
It is enough to show that the self extension of $[W]\in V$ is not trivial as a $\C[c]$-module. 
We have
\[
W\simeq \C[c]/(c^m),\quad  W\cap I\simeq (c^{k})/(c^m)
\]
for some positive integers $m$ and $k$ such that $m-n\geq k\geq n$. 
Then the ${T'}$-weight space in the extension associated to $\phi$ is isomorphic to $\C[c]/(c^{k+n})\oplus \C[c]/(c^{k-n})$, which is not isomorphic to $\C[c]/(c^{k})\oplus \C[c]/(c^{k})$.
\item[(2)]
In the case $\zeta=\zeta^{m,-}$, take a stone $B$ from the ridge of the empty configuraion such that $v_B\notin I$.
Let $\alpha$ be the positive integer such that such that $(b_2a_2)^{\alpha-1}\cdot v_{B}\notin I$ and $(b_2a_2)^{\alpha}\cdot v_{B}\in I$ and 
put $v_1:=(b_2a_2)^{\alpha}\cdot v_{B}$. 
Note that $\phi(v_1)=0$.

Let $\beta$ be the positive integer such that $(b_1a_1)^{\beta-1}\cdot(b_2a_2)^{\alpha-1}\cdot v_{B}\notin I$ and $(b_1a_1)^\beta\cdot(b_2a_2)^{\alpha-1}\cdot v_{B}\in I$.

We put
\[
v_2:=c^n\cdot (b_1a_1)^{\beta-1}\cdot(b_2a_2)^{\alpha-1}\cdot v_{B}=c^{n-1}\cdot (b_1a_1)^{\beta}\cdot v_1\in I.
\]
Then we have
\[
0\neq [(b_1a_1)^{\beta-1}\cdot(b_2a_2)^{\alpha-1}\cdot v_{B}]=\phi(v_2)
=c^{n-1}\cdot (b_1a_1)^{\beta}\cdot \phi(v_1)=0.
\]
This is a contradiction.
\end{itemize}
\end{proof}
\begin{lem}\label{lem_T'2}
There is no non-trivial ${T'}$-invariant subspace in $\Ext^1_{A_m^\pm}(V,V)$.
\end{lem}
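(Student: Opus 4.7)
Since $V_\infty=0$, any short exact sequence $0\to V\to W\to V\to 0$ of $A_m^\pm$-modules forces $W_\infty=0$, so $W$ is an $A$-module and the extension is the same as one of $A$-modules. Hence $\Ext^1_{A_m^\pm}(V,V)\cong\Ext^1_A(V,V)$, and it suffices to prove $\Ext^1_A(V,V)^{T'}=0$.

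The plan is to use the Koszul resolution of $A$ as a graded $3$-Calabi-Yau algebra (as recalled in the proof of Proposition \ref{prop3.7}) to realize $\Ext^1_A(V,V)$ as the middle cohomology of the $T$-equivariant four-term complex
\[
\bigoplus_{i\in Q_0}\Hom(V_i,V_i)\xrightarrow{d_1}\bigoplus_{h\in Q_1}\Hom(V_{\mathrm{in}(h)},V_{\mathrm{out}(h)})\xrightarrow{d_2}\bigoplus_{h\in Q_1}\Hom(V_{\mathrm{out}(h)},V_{\mathrm{in}(h)})\xrightarrow{d_3}\bigoplus_{i\in Q_0}\Hom(V_i,V_i).
\]
Recall from the proof of Lemma \ref{lem_T'1} that each $T$-weight space of $V$ is one-dimensional and indexed by a retained stone of the empty-room configuration, and that each $T'$-weight space of $P$ has the form $\C[c]\cdot v_B$ for some stone $B$, where $c=b_2a_2b_1a_1+a_2b_2a_1b_1\in Z(A)$. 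Consequently, a $T'$-invariant element of $\Hom(V_i,V_j)$ is a linear combination of rank-one maps $v_B\mapsto v_{B'}$ for stones $B,B'$ lying in the same $\C[c]$-orbit.

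First, I would restrict the complex to its $T'$-invariant subcomplex. Using the above identifications, the terms become sums over pairs of $\C[c]$-orbits of $\Hom$-spaces between the corresponding cyclic $\C[c]$-modules, and the differentials $d_1,d_2,d_3$ become explicit ``ladder'' operators between orbits shifted by the arrows $a_i,b_j$. Second, given a $T'$-invariant cocycle $\phi=(\phi_h)$, I would construct a primitive $\psi\in\bigoplus_i\Hom(V_i,V_i)^{T'}$ with $d_1(\psi)=\phi$ by a shifting argument parallel to that in Lemma \ref{lem_T'1}: define $\psi$ on each ridge stone as the prescribed $c^n$-translation, and propagate inward by requiring $\psi$ to intertwine the $A$-action. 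The cocycle condition $d_2(\phi)=0$ together with the defining relations of $A$ guarantees this propagation is unambiguous.

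The main obstacle will be the combinatorial consistency check in this last step: one must verify that two different chains of arrows leading to the same interior stone give the same value of $\psi$. After using the relations $a_1b_ja_2=a_2b_ja_1$ and $b_1a_jb_2=b_2a_jb_1$, this reduces to a statement inside the commutative subalgebra $Z(A)\cong\C[x,y,z,w]/(xy-zw)$ acting on the monomial $Z(A)$-module underlying $V$; here the key fact is that distinct stones of the pyramid either have distinct $T'$-weights (in which case they cannot interfere) or are related by a unique power of $c$ (which is central), so the propagation is forced and well-defined. This rigidity of monomial modules on the conifold, together with the Koszul/Calabi-Yau symmetry, yields $\Ext^1_A(V,V)^{T'}=0$ as required.
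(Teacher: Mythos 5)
Your first two steps --- reducing to $\Ext^1_A(V,V)$ via $V_\infty=0$, and realizing $\Ext^1$ as a cohomology of the four-term Koszul complex --- are correct and match what the paper does implicitly. But the core of the argument is missing. The paper's proof is essentially a one-line observation: for $V$ attached to a $T'$-fixed closed point, the Koszul term at which $\Ext^1$ is taken, namely $\bigoplus_{h\in Q_1}\Hom(V_{\mathrm{out}(h)},V_{\mathrm{in}(h)})$ with its natural $T'$-linearization, has \emph{no nonzero $T'$-fixed subspace at all}; since taking $T'$-invariants is an exact functor and $\Ext^1$ is a subquotient of that term, $(\Ext^1)^{T'}=0$ follows immediately, with no cocycle analysis whatsoever.

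Your plan takes a quite different and much heavier route, and it never reaches a proof. You pass to the $T'$-invariant subcomplex, reinterpret its terms via $\C[c]$-orbits of stones, and then try to show that every $T'$-invariant cocycle $\phi$ is a coboundary by constructing a primitive $\psi$ through a stone-by-stone propagation modelled on Lemma~\ref{lem_T'1}. You explicitly flag the well-definedness of that propagation as ``the main obstacle'' and leave it unresolved, so the argument is incomplete. More importantly, the plan is premised on the middle term of the complex having a nonzero $T'$-fixed part that the coboundaries must absorb, whereas the paper's key point is that there is nothing there to absorb in the first place. The delicate shifting machinery you invoke is genuinely needed in Lemma~\ref{lem_T'1}, where $\Hom_A(\ker(P\to V),V)$ really does carry $T'$-invariant vectors; importing it here replaces a short weight-count on a single term with an unfinished combinatorial project.
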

\begin{proof}
Using the Koszul complex (see the proof of Lemma \ref{lem-Koszul}), $\Ext^1_{A_m^\pm}(V,V)$ is given as the second cohomology of the following complex:
\begin{align*}
&\bigoplus_{i\in Q_0} \Hom\left(V_i,V_i\right)
 \to \bigoplus_{b\in Q_1}\Hom\left(V_{\mr{out}(b)},V_{\mr{in}(b)}\right)\\
& \to \bigoplus_{a\in Q_1}\Hom\left(V_{\mr{in}(a)},V_{\mr{out}(a)}\right)
 \to \bigoplus_{i\in Q_0} \Hom\left(V_i,V_i\right).
\end{align*}
For $P\in\M{\zeta}{\vv}^{T'}$, the second term has no non-trivial ${T'}$-invariant subspace, and hence neither does $\Ext^1_{A_m^\pm}(V,V)$.
\end{proof}
\begin{NB}
\begin{lem}\label{lem_T'}
There is no non-trivial ${T'}$-invariant subspace in $\Hom_{A}\left(\ker(P\to V),V\right)$.
\end{lem}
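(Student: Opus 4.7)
The plan is to deduce the lemma from Lemmas \ref{lem_T'1} and \ref{lem_T'2}. In the chamber $\zeta=\zeta^{m,-}$ this is immediate, since Lemma \ref{lem_T'1}(2) already asserts $\Hom_A(I,V)^{T'}=0$. The remaining case $\zeta=\zeta^{m,+}$ requires combining Lemma \ref{lem_T'1}(1) with a vanishing statement for the relevant first $\Ext$.

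Apply $\Hom_A(-,V)$ to the short exact sequence $0\to I\to P\to V\to 0$ of $A$-modules, giving
\begin{equation*}
\Hom_A(P,V)\to \Hom_A(I,V)\xrightarrow{\,\delta\,}\Ext^1_A(V,V)\to \Ext^1_A(P,V).
\end{equation*}
First I would verify that $\Ext^1_A(P,V)=0$: by Proposition \ref{prop-O_Y}, $P$ corresponds to $\OO_Y$ under the equivalence $\mathcal{P}_m^\pm\simeq \amod$, and by Lemma \ref{lem-4.5}, $V$ corresponds to some $F\in\pervc$. Hence
\begin{equation*}
\Ext^1_A(P,V)\cong \Ext^1_Y(\OO_Y,F)=H^1(Y,F)=H^1(X,\R f_*F)=0,
\end{equation*}
because $\R f_*F$ is a zero-dimensional coherent sheaf on the affine scheme $X$. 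Consequently $\delta$ is surjective, and since $T'$ is a torus, taking $T'$-invariants is exact, yielding
\begin{equation*}
\Hom_A(P,V)^{T'}\to \Hom_A(I,V)^{T'}\xrightarrow{\,\delta^{T'}\,}\Ext^1_A(V,V)^{T'}\to 0.
\end{equation*}

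Lemma \ref{lem_T'1}(1) says $\ker(\delta)^{T'}=0$, so $\delta^{T'}$ is injective, and it suffices to show $\Ext^1_A(V,V)^{T'}=0$. For this I would identify $\Ext^1_A(V,V)\cong \Ext^1_{A_m^+}(V,V)$ $T'$-equivariantly via the Yoneda interpretation: because $V_\infty=0$, any short exact sequence $0\to V\to E\to V\to 0$ of $A_m^+$-modules automatically satisfies $E_\infty=0$ (by the snake lemma applied to the $\infty$-component), so extension classes in $A_m^+\text{-}\mr{mod}$ and $\amod$ coincide. Lemma \ref{lem_T'2} then provides $\Ext^1_{A_m^+}(V,V)^{T'}=0$, so $\Hom_A(I,V)^{T'}=0$ in the chamber $\zeta=\zeta^{m,+}$ as well. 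The only non-formal ingredient is the Yoneda identification of the two $\Ext^1$'s, which is routine once one observes the vanishing of $V_\infty$; the rest of the argument is just bookkeeping with the long exact sequence.
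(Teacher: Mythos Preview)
Your proof is correct but takes a genuinely different route from the paper's. The paper's own proof of this lemma (which sits inside an excluded \texttt{NB} block) attempts a direct combinatorial argument in the spirit of Lemma~\ref{lem_T'1}(2): assume a nonzero $T'$-invariant $\phi\in\Hom_A(I,V)$, normalize it so that $\phi(v_B)=v_{B'}$ whenever $c^n v_{B'}=v_B$, then choose a stone $B$ on the boundary of the pyramid and derive a contradiction from $\phi(v)=0$ versus $\phi(v)\neq 0$ for a suitable $v$. That argument is tailored to the $\zeta^{m,-}$ case, where the pyramid is infinite and one can walk off an edge; in the $\zeta^{m,+}$ case the pyramid is finite and a nonzero $T'$-invariant $\phi$ need not vanish --- rather, its image in $\Ext^1_A(V,V)$ is genuinely non-trivial. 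This is presumably why the authors abandoned this lemma and replaced it by the pair Lemmas~\ref{lem_T'1} and~\ref{lem_T'2}, asserting only the weaker $\ker(\delta)^{T'}=0$ in the $\zeta^{m,+}$ case.

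Your argument recovers the full statement by closing the loop: $\ker(\delta)^{T'}=0$ from Lemma~\ref{lem_T'1}(1), together with $\Ext^1_A(V,V)^{T'}\cong\Ext^1_{A_m^+}(V,V)^{T'}=0$ from Lemma~\ref{lem_T'2}, forces $\Hom_A(I,V)^{T'}=0$. The vanishing $\Ext^1_A(P,V)=0$ you invoke is exactly Lemma~\ref{lem-ext^2}, and the Yoneda identification $\Ext^1_A(V,V)\cong\Ext^1_{A_m^\pm}(V,V)$ is the same one the paper uses implicitly in its proof of Lemma~\ref{lem_T'2} (the Koszul complex written there is that of $A$, not of $A_m^\pm$). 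So your approach is cleaner than the direct combinatorics the paper first attempted, and it explains why the weakened pair of lemmas already suffices for Proposition~\ref{prop-isolated}.
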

\begin{proof}
We put $I:=\ker(P\to V)$.
In the proof of Proposition \ref{prop-isolated}, we see that
\begin{itemize}
\item every $T$-weight vector in $P$, $V$ and $I$ is associated to a stone in the empty room configuration, and
\item every $T'$-weight space in $P$ is described as $\C[c]\cdot v_B$ for some stone $B$.
\end{itemize}
Assume we have a nonzero ${T'}$-invariant element $\phi\in \Hom_{A}\left(I,V\right)$. 
We may also assume that there is a positive integer $n$ such that
\[
\phi(v_B)=\begin{cases}
v_{B'} & \text{if $\exists\, [v_{B'}]\in V$ such that $c^n\cdot v_{B'}=v_B\in P$,}\\
0 & \text{otherwise}
\end{cases}
\]
for $v_B\in I$.

We take a stone $B$ and $B'$ such that 
\begin{itemize}
\item $(b_2a_2+a_2b_2)\cdot v_{B'}\in v_B$,
\item $v_{B'}\in I$ and $v_{B}\notin I$, and
\item there is no stone $B''$ such that $c\cdot v_{B''}\in v_B$.
\end{itemize}
In particular, we have $\phi(v_B)=0$.
Let $\beta$ be the minimal positive integer such that $(b_1a_1+a_1b_1)^\beta\cdot v_{B'}\in I$.

We put
\[
v:=c^n\cdot (b_1a_1+a_1b_1)^{\beta-1}\cdot v_{B'}=c^{n-1}\cdot (b_1a_1+a_1b_1)^{\beta}\cdot v_B\in I.
\]
Then we have
\[
0\neq (b_1a_1+a_1b_1)^{\beta-1}\cdot v_{B'}=\pi\bigl(c^n\cdot (b_1a_1+a_1b_1)^{\beta-1}\cdot v_{B'}\bigr)=\bigl(c^{n-1}\cdot (b_1a_1+a_1b_1)^{\beta} \bigr)=0
\]
This is a contradiction.
\end{proof}
\end{NB}
\begin{thm}\label{thm-sign}
\[
\mca{Z}_{\zeta}(\q)=\sum_{\vv\in\left(\Z_{\geq0}\right)^2}(-1)^{\mathrm{v}_1}\left|\,\M{\zeta}{\vv}^{T'}\right|\cdot\q^\vv.
\]
\end{thm}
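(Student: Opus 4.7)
The plan is to deduce Theorem \ref{thm-sign} from the general principle, due to Behrend--Fantechi, that the Behrend function at a $T'$-isolated fixed point of a scheme carrying a $T'$-equivariant symmetric obstruction theory is controlled by the parity of the Zariski tangent space, together with torus localization for weighted Euler characteristics. Concretely, I will proceed in four steps.

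First, I reduce to the chambers handled in \S \ref{subsec-new-framed-quivers}--\S \ref{subsec-potential}. For $\zeta$ in the chamber with $\zeta_0,\zeta_1>0$ the moduli space is empty for $\vv\ne 0$, so both sides are $1$. For any other generic $\zeta$, Theorem \ref{thm-moduli} identifies $\M{\zeta}{\vv}$ with a moduli space $\mathfrak{M}^{A_m^\pm}_{\zeta_{\mathrm{cyclic}}}(\vv')$ of stable modules over the quiver with potential $(\tilde Q_m^\pm,\omega_m^\pm)$ of \S \ref{subsec-potential}. Since the defining relations come from a potential, this moduli space carries a canonical symmetric obstruction theory (\cite[Theorem~1.3.1]{szendroi-ncdt} applied to $\tilde A_m^\pm$), and this obstruction theory is $T'$-equivariant because the torus action rescales only the arrows and preserves the potential up to the character $\alpha_1\alpha_2\beta_1\beta_2=1$, which is trivial on $T'$.

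Second, I invoke the localization of the weighted Euler characteristic. Because $\nu$ is constructible and $T'$-invariant, its value at any point of $\M{\zeta}{\vv}$ equals its value at a limit point in $\M{\zeta}{\vv}^{T'}$, so
\begin{equation*}
  \sum_{n\in\Z} n\,\chi(\nu^{-1}(n)) \;=\; \sum_{P\in \M{\zeta}{\vv}^{T'}} \nu(P),
\end{equation*}
using Proposition \ref{prop-parameterization} to guarantee that the $T'$-fixed locus is a finite set of reduced points.

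Third, I compute $\nu(P)$ at each fixed point. By \cite[Theorem 3.4]{behrend-fantechi}, if $P\in M^{T'}$ is isolated and the $T'$-representation on the Zariski tangent space $T_PM$ contains no trivial subrepresentation, then $\nu(P) = (-1)^{\dim T_P M}$. The absence of a $T'$-invariant subspace is exactly Proposition \ref{prop-isolated}, and the parity statement
\(\dim T_P \M{\zeta}{\vv} \equiv \mathrm{v}_1 \pmod 2\)
is exactly Corollary \ref{cor-zariski}. Hence $\nu(P) = (-1)^{\mathrm{v}_1}$ for every $T'$-fixed point.

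Combining the three steps,
\begin{equation*}
\mca{Z}_{\zeta}(\q)
 = \sum_{\vv}\Bigl(\sum_{P\in \M{\zeta}{\vv}^{T'}} (-1)^{\mathrm{v}_1}\Bigr)\q^{\vv}
 = \sum_{\vv}(-1)^{\mathrm{v}_1}\bigl|\M{\zeta}{\vv}^{T'}\bigr|\cdot \q^{\vv},
\end{equation*}
which is the desired identity. The main obstacle I anticipate is a careful justification of the two ingredients that enter the Behrend--Fantechi theorem: that the natural symmetric obstruction theory on $\mathfrak{M}^{\tilde A_m^\pm}_{\zeta_{\mathrm{cyclic}}}(\vv')$ is $T'$-equivariant and transports to the isomorphic moduli space $\M{\zeta}{\vv}$ respecting the $T'$-action, and that the non-properness of these moduli spaces does not affect the pointwise formula for $\nu$. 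Both issues are routine (the torus acts by rescaling arrows, and Behrend's function is defined for arbitrary finite type schemes so that $\nu(P)$ depends only on the formal neighborhood), but they require the explicit descriptions of \S \ref{subsec-potential} to pin down.
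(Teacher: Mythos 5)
Your proof is correct and takes essentially the same route as the paper: the paper's own proof of Theorem~\ref{thm-sign} is a one-line citation of Behrend--Fantechi \cite[Theorem~3.4]{behrend-fantechi} combined with Propositions~\ref{prop-parameterization}, \ref{prop-isolated} and Corollary~\ref{cor-zariski} (together with the remark in \S\ref{defofncdt} that the symmetric obstruction theory lifts $T'$-equivariantly), and your write-up simply unfolds that citation into the underlying torus-localization steps. One small imprecision worth fixing in your second step: the equality $\sum_n n\,\chi(\nu^{-1}(n))=\sum_{P\in\M{\zeta}{\vv}^{T'}}\nu(P)$ does not come from ``the value at any point equals its value at a limit fixed point'' (the Behrend function is only constructible, not continuous along orbits); rather it follows from the standard fact that $\chi(Z)=\chi(Z^{T'})$ for any $T'$-invariant constructible set $Z$, applied to each level set $\nu^{-1}(n)$, together with the isolatedness of the fixed locus. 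Also note that the chambers with $\zeta_0>\zeta_1$ are handled by the flop $Y^+$ and the $\zeta_{\mathrm{cyclic}}$ chamber by \cite{szendroi-ncdt} directly, but this is implicit in the paper as well.
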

\begin{proof}
As we mentioned in \S \ref{defofncdt}, this is a consequence of Behrend-Fantechi's result \cite[Theorem3.4]{behrend-fantechi}, Proposition \ref{prop-parameterization}, Corollary \ref{cor-zariski} and Proposition \ref{prop-isolated}.
\end{proof}

\bibliographystyle{amsalpha}
\bibliography{bib-ver6}

\end{document}